\newtheorem{conject}[thm]{Conjecture}
\newcommand{\fun}[2][2]{[#2 \to #1]}
\newcommand{\Ss}{\mathcal{S}}
\newcommand{\intrr}{\operatorname{int}}
\newcommand{\link}{\operatorname{link}}
\newcommand{\codim}{\operatorname{codim}}
\newcommand{\reg}{\operatorname{reg}}
\newcommand{\kk}{\mathbbm{k}}
\renewcommand{\parity}{\textsc{parity}}
\newcommand{\monconj}{\textsc{monconj}}
\newcommand{\cDelta}{\textsc{delta}}
\newcommand{\CLconj}{\textsc{conj}}
\newcommand{\linthr}{\textsc{linthr}}
\newcommand{\linfun}{\textsc{linfun}}
\newcommand{\polythr}{\textsc{polythr}}
\newcommand{\cochain}[1]{\mathcal{C}^{#1}}
\newcommand{\betti}{\beta}
\newcommand{\lcm}{\operatorname{lcm}}
\newcommand{\es}{\mathbf}
\newcommand{\resf}{\mathfrak} 
\newcommand{\cresf}{\mathsf} 
\newcommand{\clsf}{\mathtt} 
\newcommand{\pf}{\mathsf} 
\newcommand{\hdim}{\operatorname{\dim_{\mathrm{h}}}}
\newcommand{\srdim}{\operatorname{\dim_{\mathrm{SR}}}}
\newcommand{\vcdim}{\operatorname{\dim_{\mathrm{VC}}}}
\newcommand{\mingen}{\operatorname{mingen}}
\newcommand{\xx}{\mathbf{x}}
\newcommand{\splx}{\triangle}
\newcommand{\Sbpx}[1]{\mathcal{S}_{\clsf{#1}}}
\newcommand{\cani}[1]{I_{#1}^\star} 
\newcommand{\ind}{\mathbb{I}} 
\newcommand{\ex}{\operatorname{ex}}
\newcommand{\vcr}{\operatorname{rad}_{\mathrm{VC}}}
\newcommand{\ballres}{\cresf{BALL}}
\newcommand{\flagres}{\cresf{FLAG}}
\newcommand{\cocres}{\cresf{COC}}
\newcommand{\rH}{\widetilde{H}}
\newcommand{\emptycell}{\varnothing}
\newcommand{\cartcap}{\boxtimes}
\newcommand{\thr}{\operatorname{thr}}
\newcommand{\pbd}{\overline{\pd}} 
\newcommand{\nei}{\sim}
\newcommand{\Shx}{\mathcal{SH}}
\newcommand{\pclsf}{\mathcal}
\newcommand{\bomega}{\boldsymbol{\omega}}
\newcommand{\bgamma}{\boldsymbol{\gamma}}
\newcommand{\FM}{\mathbf{FM}} 
\newcommand{\cube}{\text{\mancube}}
\newcommand{\Monom}{\mathbf{M}}
\newcommand{\projdim}{\operatorname{projdim}}
\newcommand{\ocell}{\mathring}
\newcommand{\emptyfun}{\dagger}
\newcommand{\filt}{\downharpoonright}
\newcommand{\St}{\operatorname{St}}
\newcommand{\monom}[1]{\mathbf{m}^{#1}}
\newcommand{\makeconvo}[1]{
\vskip 6mm
\begin{center}
{\fontfamily{qag}\fontshape{it}\selectfont #1}
\end{center}
\vskip 6mm}
\newcommand*\supstar{\@ifnextchar_\supst@r\supst@@}
\newcommand*\supst@@{\sup\nolimits^{\star}}
\newcommand*\supst@r[2]{\mathchoice
    {\mathop{\supst@@}_{#2\phantom{\star}}}%
    {\jj@@n_{#2}}%
    {\jj@@n_{#2}}%
    {\jj@@n_{#2}}%
}
\newcolumntype{L}{>{\centering\arraybackslash}m{2cm}}
\newcolumntype{M}{>{$}l<{$}} 
\title{A Homological Theory of Functions}
\author{Greg Yang\\
 	{\small Harvard University}\\
     \texttt{\small gyang@college.harvard.edu}
}
\let\theauthor\@author
\let\thetitle\@title
\begin{document}
\pagenumbering{roman}
\maketitle

\begin{abstract}
In computational complexity, a complexity class is given by a set of problems or functions, and a basic challenge is to show separations of complexity classes $\clsf A \not= \clsf B$ especially when $\clsf A$ is known to be a subset of $\clsf B$.
In this paper we introduce a homological theory of functions that can be used to establish complexity separations, while also providing other interesting consequences.
We propose to associate a topological space $\Sbpx{A}$ to each class of functions $\clsf A$, such that,
to separate complexity classes $\clsf A \sbe \clsf B'$, it suffices to observe a change in ``the number of holes'', i.e. homology, in $\Sbpx A$ as a subclass $\clsf B \sbe \clsf B'$ is added to $\clsf A$.
In other words, if the homologies of $\Sbpx A$ and $\Ss_{\clsf A \cup \clsf B}$ are different, then $\clsf A \not = \clsf B'$.
We develop the underlying theory of functions based on combinatorial and homological commutative algebra and Stanley-Reisner theory, and recover Minsky and Papert's result \cite{minsky_perceptrons:_1969} that parity cannot be computed by nonmaximal degree polynomial threshold functions.
In the process, we derive a ``maximal principle'' for polynomial threshold functions that is used to extend this result further to arbitrary symmetric functions.
A surprising coincidence is demonstrated, where the maximal dimension of ``holes'' in $\Sbpx A$ upper bounds the VC dimension of $\clsf A$, with equality for common computational cases such as the class of polynomial threshold functions or the class of linear functionals in $\Fld_2$, or common algebraic cases such as when the Stanley-Reisner ring of $\Sbpx A$ is Cohen-Macaulay.
As another interesting application of our theory, we prove a result that a priori has nothing to do with complexity separation: it characterizes when a vector subspace intersects the positive cone, in terms of homological conditions.
By analogy to Farkas' result doing the same with {\it linear conditions}, we call our theorem the Homological Farkas Lemma.
\end{abstract}

%
%
%
\section{Introduction}

\pagenumbering{arabic}

\subsection{Intuition}
Let $\clsf A \sbe \clsf B'$ be classes of functions.
To show that $\clsf B' \not = \clsf A$, it suffices to find some $\clsf B \sbe \clsf B'$ such that
$$\clsf A \cup \clsf B \not = \clsf A.$$
In other words, we want to add something to $\clsf A$ and watch it {\it change}.

\makeconvo{Let's take a step back}

Consider a more general setting, where $A$ and $B$ are ``nice'' subspaces of a larger topological space $C$.
We can produce a certificate of $A \cup B \not = A$ by observing a difference in the number of ``holes'' of $A \cup B$ and $A$.
\cref{fig:union_changes_homology} shows two examples of such certificates.

\begin{figure*}[h]
	\begin{subfigure}[t]{0.5\textwidth}
	\centering
	\includegraphics[height=0.12\textheight]{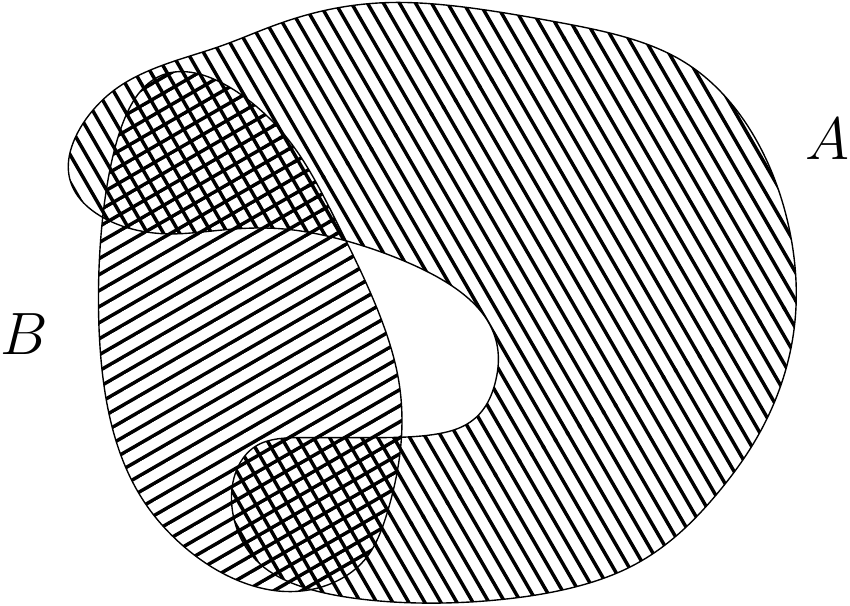}
	\caption{$A$ and $B$ are both contractible (do not have holes), but their union $A \cup B$ has a hole.}
	\label{fig:union_creates_hole}
	\end{subfigure}
	~
	\begin{subfigure}[t]{0.5\textwidth}
	\centering
	\includegraphics[height=0.12\textheight]{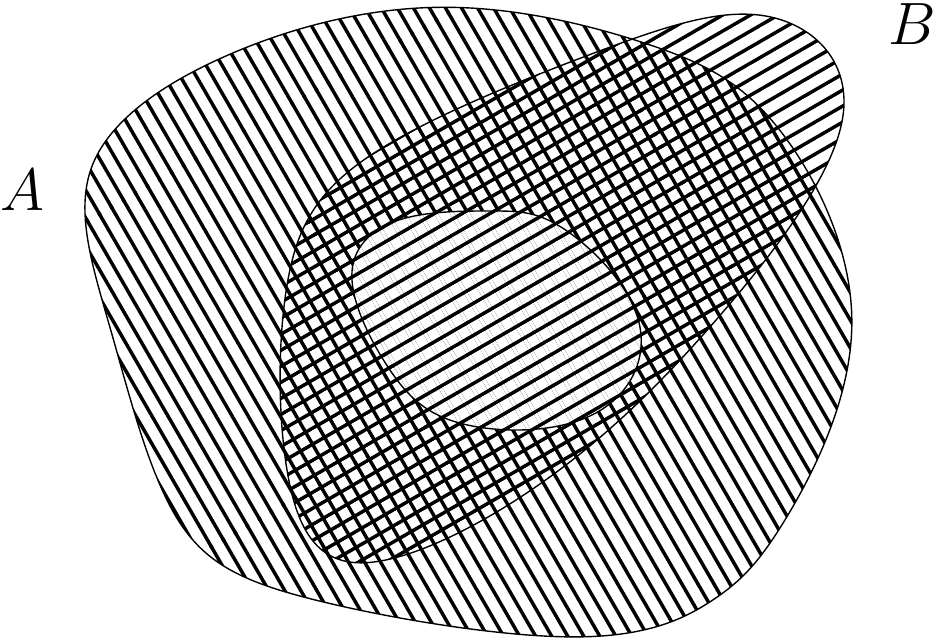}
	\caption{$A$ has a hole in its center, but $B$ covers it, so that $A \cup B$ is now contractible.}
	\label{fig:union_removes_hole}
	\end{subfigure}
\caption{Certifying $A\cup B \not=A$ by noting that the numbers of 1-dimensional holes are different between $A\cup B$ and $A$.}
\label{fig:union_changes_homology}
\end{figure*}

Sometimes, however, there could be no difference between the number of holes in $A \cup B$ and $A$.
For example, if $B$ in \cref{fig:union_creates_hole} is slightly larger, then $A \cup B$ no longer has a hole in the center (see \cref{fig:slice_homology1}).
But if we take a slice of $A \cup B$, we observe a change in the number of connected components (zeroth dimensional holes) from $A$ to $A \cup B$.

\begin{figure}[h]
\centering
\includegraphics[height=0.16\textheight]{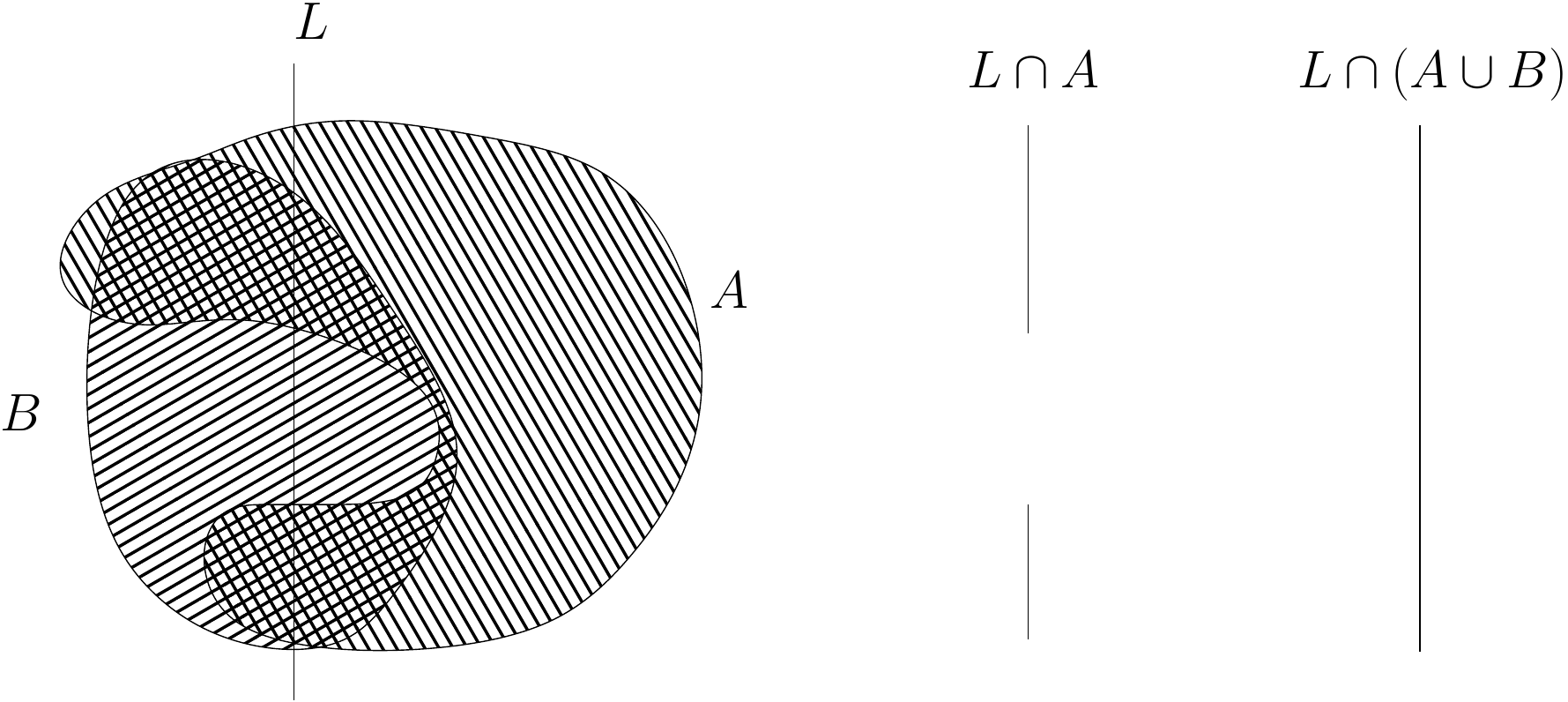}
\caption{$A \cup B$ and $A$ are both contractible, but if we look at a section $L$ of $A \cup B$, we see that $L \cap A$ has 2 connected components, but $L \cap (A \cup B)$ has only 1.}
\label{fig:slice_homology1}
\end{figure}

From this intuition, one might daydream of attacking complexity separation problems this way:
\begin{enumerate}
  \item
	For each class $\clsf A$, associate a unique topological space (specifically, a simplicial complex) $\Sbpx A$.
  \item
	Compute the number of holes in $\Sbpx A$ and $\Sbpx{A \cup B}$ of each dimension, and correspondingly for each section by an affine subspace.
  \item
	Attempt to find a difference between these quantities (a ``homological'' certificate).
\end{enumerate}

\makeconvo{It turns out this daydream is not so dreamy after all!}

This work is devoted to developing such a homological theory of functions for complexity separation, which incidentally turns out to have intricate connection to other areas of computer science and combinatorics.
Our main results can be summarized as follows: 1) Through our homological framework, we recover Marvin Minsky and Seymour Papert's classical result that polynomial threshold functions do not compute parity unless degree is maximal \cite{minsky_perceptrons:_1969}, and in fact we discover multiple proofs, each ``coresponding to a different hole''; the consideration of lower dimension holes yields a {\it maximal principle} for polynomial threshold functions that is used to extend Minsky and Papert's result to arbitrary symmetric functions \cite{aspnes_expressive_1994}. 2) We show that an algebraic/homological quantity arising in our framework, the homological dimension $\hdim \clsf A$ of a class $\clsf A$, upper bounds the VC dimension $\vcdim \clsf A$ of $\clsf A$.
Informally, this translates to the following remarkable statement: ``The highest dimension of any holes in $\Sbpx A$ or its sections upper bounds the number of samples needed to learn an unknown function from $\clsf A$, up to multiplicative constants.''
We furthermore show that equality holds in many common cases in computation (for classes like polynomial thresholds, $\Fld_2$ linear functionals, etc) or in algebra (when the Stanley-Reisner ring of $\Sbpx A$ is Cohen-Macaulay).
3) We formulate the {\it Homological Farkas Lemma}, which characterizes by homological conditions when a linear subspace intersects the interior of the positive cone, 
and obtain a proof for free from our homological theory of functions.

While the innards of our theory relies on homological algebra and algebraic topology, we give an extended introduction in the remainder of this section to the flavor of our ideas in what follows, assuming only comfort with combinatorics, knowledge of basic topology, and a geometric intuition for ``holes.''
A brief note about notation: $[n]$ denotes the set $\{0, \ldots, n-1\}$, and $[n \to m]$ denotes the set of functions from domain $[n]$ to codomain $[m]$.
The notation $\pf f:\sbe A \to B$ specifies a partial function from domain $A$ to codomain $B$.
$\emptyfun$ represents the partial function with empty domain.
%

\subsection{An Embarassingly Simple Example}
\newcommand{\oneff}{\mathbf{1}}
\newcommand{\indone}{\ind_\oneff}
Let $\linfun_d \cong (\Fld_2^d)^*$ be the class of linear functionals of a $d$-dimensional vector space $V$ over $\Fld_2$.
If $d \ge 2$, then $\linfun_d$ does not compute the indicator function $\indone$ of the singleton set $\{\mathbf 1 := 11\cdots 1\}$.
This is obviously true, but let's try to reason via a ``homological way.''
This will provide intuition for the general technique and set the stage for similar analysis in more complex settings.

Let $\pf g: \mathbf 0 \to 0, \mathbf 1 \to 1$.
Observe that for every partial linear functional $\pf h \supset \pf g$ {\it strictly extending} $\pf g$, $\indone$ intersects $\pf h$ nontrivially.
(Because $\indone$ is zero outside of $\pf g$, and every such $\pf h$ must send at least one element to zero outside of $\pf g$).
I claim this completes the proof.

\makeconvo{Why?}

{\it Combinatorially}, this is because if $\indone$ were a linear functional, then for any 2-dimensional subspace $W$ of $V$ containing $\{\mathbf 0, \mathbf 1\}$, the partial function $\pf h:\sbe V \to \Fld_2, \dom \pf h = W$,
$$
	\pf h(u) = \begin{cases}
		\pf g(u) & \text{if $u \in \dom \pf g$}\\
	
	1 - \indone(u) & \text{if $u \in \dom \pf h \setminus \dom \pf g$}
		\end{cases} 
$$
	 is a linear functional, and by construction, does not intersect $\indone$ on $W \setminus \{\mathbf 0, \mathbf 1\}$.

  	{\it Homologically}, we are really showing the following
  	\begin{equation*}
  	\parbox{.8\textwidth}{\it The space associated to $\linfun_d$, in its section by an affine subspace corresponding to $\pf g$, ``has a hole'' that is ``filled up'' when $\indone$ is added to $\linfun_d$.}
  	\end{equation*}
  	
\makeconvo{``Wait, what?
I'm confused.
I don't see anything in the proof resembling a hole?''}

\subsection{The Canonical Suboplex}
OK. No problem. Let's see where the holes come from.

Let's first define the construction of the simplicial complex $\Sbpx C$
associated to any function class $\clsf C$, called the {\bf canonical suboplex}.
In parallel, we give the explicit construction in the case of $\clsf C = \linfun_d' := \linfun_2 \filt \{\mathbf 0 \mapsto 0\}$.
This is the same class as $\linfun_2$, except we delete $\mathbf 0$ from the domain of every function.
It gives rise to essentially the same complex as $\linfun_2$, and we will recover $\Sbpx{\linfun_2}$ explicitly at the end.

Pick a domain, say $[n] = \{0, \ldots, n-1\}$.
Let $\clsf C \sbe [n \to 2]$ be a class of boolean functions on $[n]$.
We construct a simplicial complex $\Sbpx C$ as follows:

\begin{enumerate}
	\item To each $f \in \clsf C$ we associate an $(n-1)$-dimensional simplex $F_f \cong \splx^{n-1}$, which will be a facet of $\Sbpx C$.


	\item Each of the $n$ vertices of $F_f$ is labeled by an input/output pair $i \mapsto f(i)$ for some $i \in [n]$, and each face $G$ of $F_f$ is labeled by a partial function $\pf f \sbe f$, whose graph is specified by the labels of the vertices of $G$.
	See \cref{fig:linfun2_associate_and_label} for the construction in Step 1 and Step 2 for $\linfun_2'$.

\begin{figure*}
\centering
\begin{subfigure}[t]{.6\textwidth}
\centering
\includegraphics[height=.14\textheight]{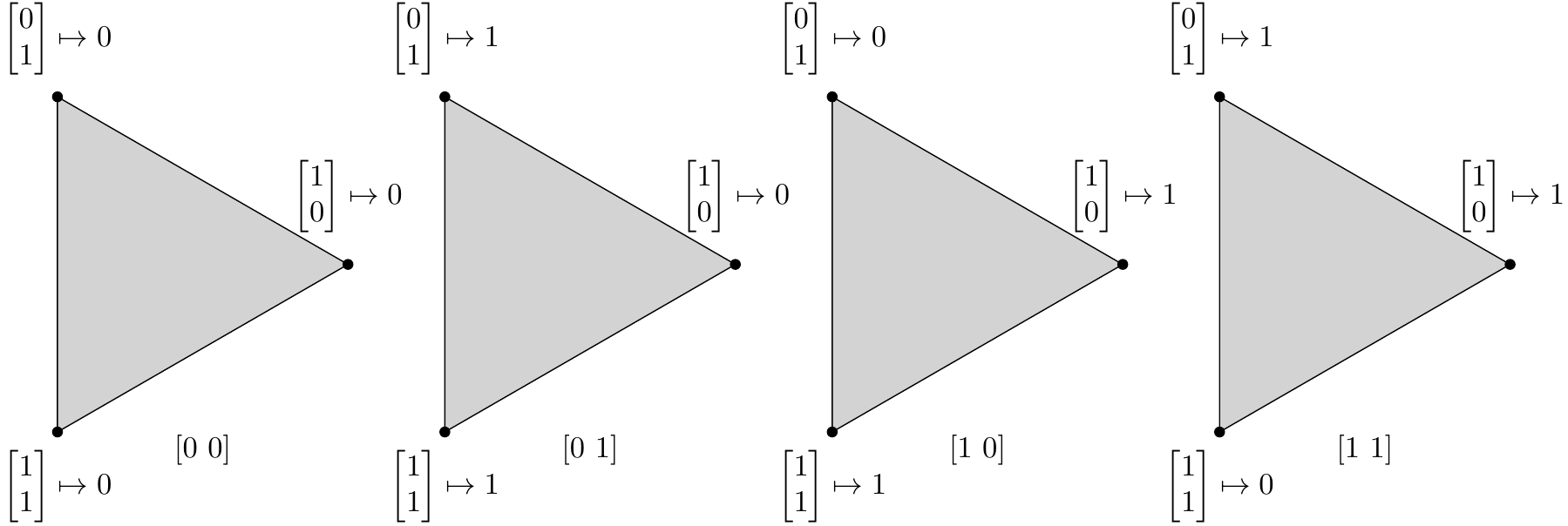}
\caption{Step 1 and Step 2 for $\linfun'_2$.
Step 1: Each simplex is labeled with a function $f \in \linfun_2'$, represented as a row vector.
Step 2: Each vertex of each simplex is labeled by an input/output pair, here presented in the form of a column vector to a scalar.
The collection of input/output pairs in a simplex $F_f$ recovers the graph of $f$.
Each face of $F_f$ has an induced partial function label, given by the collection of input/output pairs on its vertices (not explicitly shown).}
\label{fig:linfun2_associate_and_label}
\end{subfigure}
~
\begin{subfigure}[t]{.3\textwidth}
\centering
\includegraphics[height=.15\textheight]{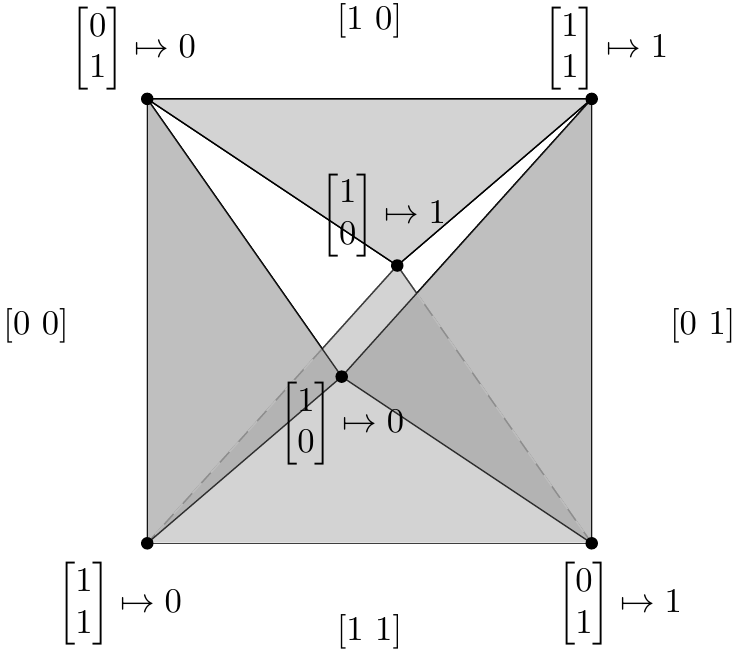}
\caption{Step 3 for $\linfun'_2$.
The simplices $F_f$ are glued together according to their labels.
For example, $F_{[0\ 0]}$ and $F_{[0\ 1]}$ are glued together by their vertices with the common label $[1\ 0]^T \mapsto 0$, and not anywhere else because no other faces share a common label.}
\label{fig:linfun2_glue_together}
\end{subfigure}
\caption{}
\end{figure*}

	\item For each pair $f, g \in \clsf C$, $F_f$ is glued together with $F_g$ along the subsimplex $G$ (in both facets) with partial function label $f \cap g$.
	See \cref{fig:linfun2_glue_together} for the construction for $\linfun_2'$.

\end{enumerate}

This is the simplicial complex associated to the class $\clsf C$, called the {\bf canonical suboplex} $\Sbpx C$ of $\clsf C$.
Notice that in the case of $\linfun_d'$, the structure of ``holes'' is not trivial at all: $\Ss_{\linfun_d'}$ has 3 holes in dimension 1 but no holes in any other dimension. 
An easy way to visualize this it to pick one of the triangular holes;
If you put your hands around the edge, pull the hole wide, and flatten the entire complex onto a flat plane, then you get \cref{fig:linfun2_stretched_apart}.
\begin{figure*}
\centering
\begin{subfigure}[t]{.45\textwidth}
\centering
\includegraphics[height=.15\textheight]{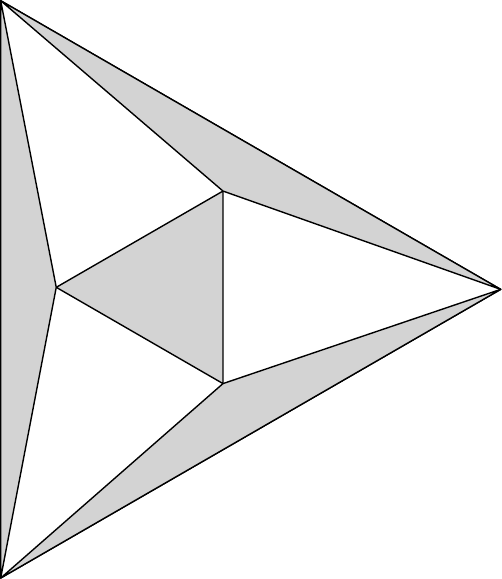}
\caption{The shape obtained by stretching $\Ss_{\linfun_d'}$ along one of its triangular holes and then flatten everything onto a flat plane.
This deformation preserves all homological information, and from this picture we see easily that $\Ss_{\linfun_d'}$ has 3 holes, each of dimension 1.}
\label{fig:linfun2_stretched_apart}
\end{subfigure}
~
\begin{subfigure}[t]{.45\textwidth}
\centering
\includegraphics[height=0.15\textheight]{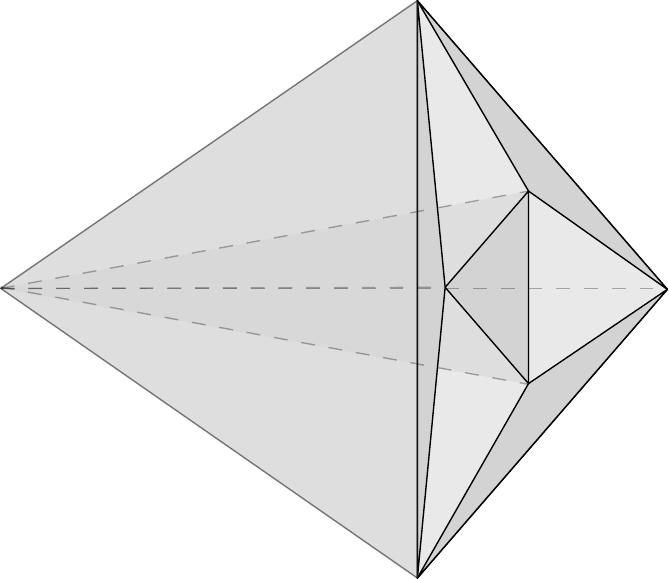}
\caption{The canonical suboplex of $\linfun_d$ is just a cone over that of $\linfun'_d$.
Here we show the case $d = 2$.}
\label{fig:linfun2_cone}
\end{subfigure}
\caption{}
\end{figure*}

It is easy to construct the canonical suboplex of $\linfun_d$ from that of $\linfun'_d$: $\Ss_{\linfun_d}$ is just a cone over $\Ss_{\linfun'_d}$, where the cone vertex has the label $[0\ 0]^T \mapsto 0$ (\cref{fig:linfun2_cone}).
This is because every function in $\linfun_d$ shares this input/output pair.
Note that a cone over any base has no hole in any dimension, because any hole can be contracted to a point in the vertex of the cone.
This is a fact we will use very soon.

Let's give another important example, the class of all functions.
If $\clsf C = [n \to 2]$, then one can see that $\Sbpx C$ is isomorphic to the 1-norm unit sphere (also known as {\it orthoplex}) $S_1^{n-1} := \{\|x\|_1 = 1: x \in \R^n\}$ (\cref{fig:canonical_suboplex_cAll3}).
For general $\clsf C$, $\Sbpx C$ can be realized as a subcomplex of $S_1^{n-1}$.
Indeed, for $\clsf C = \linfun_2' \sbe [3 \to 2]$, it is easily seen that $\Sbpx C$ is a subcomplex of the boundary of an octahedron, which is isomorphic to $S_1^2$.

Let $\clsf C \sbe [n \to 2]$, and let $\pf f:\sbe [n] \to [2]$ be a partial function.
Define the {\bf filtered class} $\clsf C\filt \pf f$ to be
$$\{g \setminus \pf f: g \in \clsf C, g \spe \pf f \} \sbe [[n] \setminus \dom \pf f \to [2]]$$ 

Unwinding the definition: $\clsf C \filt \pf f$ is obtained by taking all functions of $\clsf C$ that extend $\pf f$ and ignoring the inputs falling in the domain of $\pf f$.
 
The canonical suboplex $\Sbpx {\clsf C \filt \pf f}$ can be shown to be isomorphic to an affine section of $\Sbpx C$, when the latter is embedded as part of the $L_1$ unit sphere $S_1^{n-1}$.
\cref{fig:filtered_subcomplex-linear_section} shows an example when $\pf f$ has a singleton domain.
Indeed, recall $\linfun_d'$ is defined as $\linfun_d \filt \{\mathbf 0 \mapsto 0\}$, and we may recover $\Ss_{\linfun_d'}$ as a linear cut through the ``torso'' of $\Ss_{\linfun_d}$ (\cref{fig:linfun2_section}).  
\begin{figure}
\centering
\begin{subfigure}[t]{.3\textwidth}
\centering
\includegraphics[height=0.2\textheight]{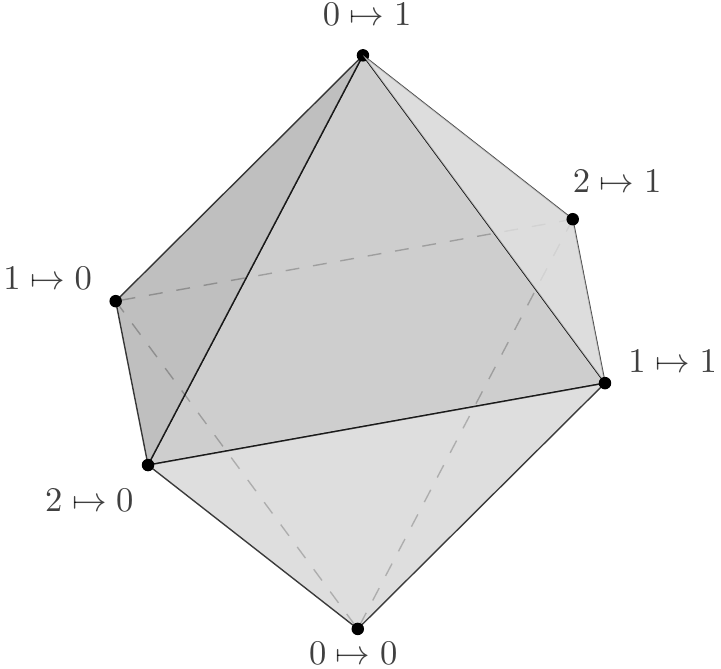}
\caption{The canonical suboplex of $[3 \to 2]$.}
\label{fig:canonical_suboplex_cAll3}
\end{subfigure}
~
\begin{subfigure}[t]{.3\textwidth}
\centering
\includegraphics[height=.18\textheight]{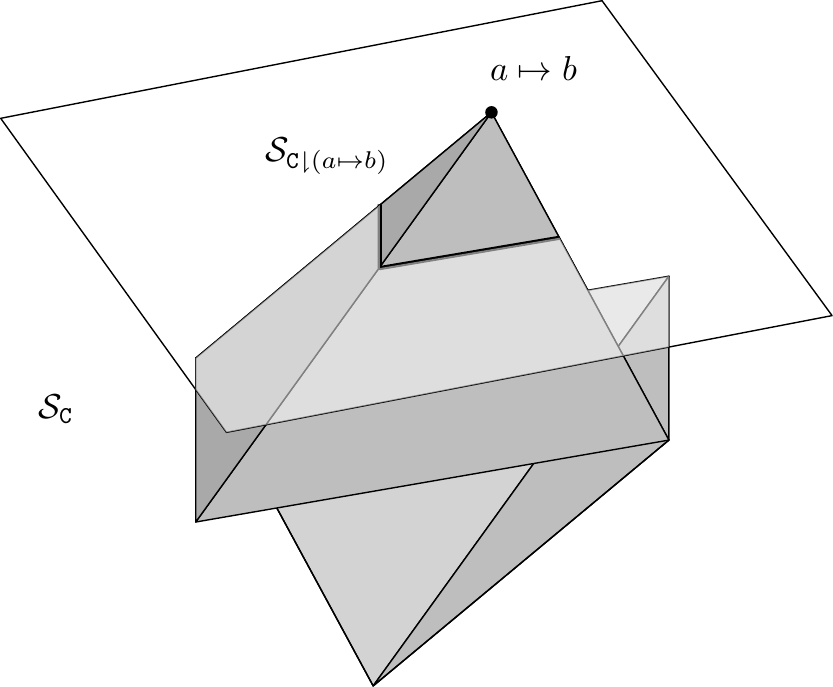}
\caption{$\Ss_{\clsf C \filt (a \mapsto b)}$ is an affine section of $\Sbpx C$.}
\label{fig:filtered_subcomplex-linear_section}
\end{subfigure}
~
\begin{subfigure}[t]{.3\textwidth}
\centering
\includegraphics[height=.16\textheight]{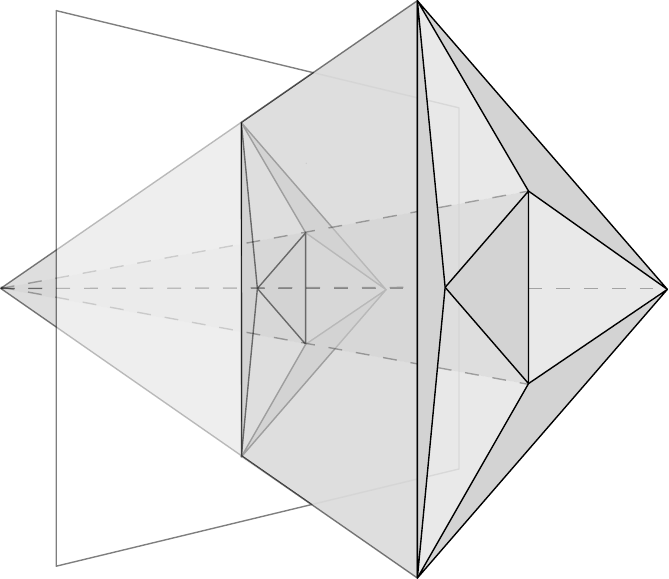}
\caption{we may recover $\Ss_{\linfun_d'}$ as a linear cut through the ``torso'' of $\Ss_{\linfun_d}$.}
\label{fig:linfun2_section}
\end{subfigure}
\caption{}
\end{figure}

\makeconvo{``OK. I see the holes. But how does this have anything to do with our proof of $\indone \not \in \linfun_d$?''}
Hold on tight! We are almost there.

First let me introduce a ``duality principle'' in algebraic topology called the {\bf Nerve Lemma}.
Readers familiar with it can skip ahead to the next section.

\subsection{Nerve Lemma}
Note that the canonical suboplex of $\linfun_2'$ can be continuously deformed as shown in \cref{fig:linfun2_nerve_deform} into a 1-dimensional complex (a graph), so that all of the holes are still preserved.
Such a deformation produces a complex 
\begin{itemize*}
    \item whose vertices correspond exactly to the facets of the original complex, and
    \item whose edges correspond exactly to intersections of pairs of facets,
\end{itemize*}
all the while preserving the holes of the original complex, and producing no new ones.
\begin{figure}
\centering
\includegraphics[height=.15\textheight]{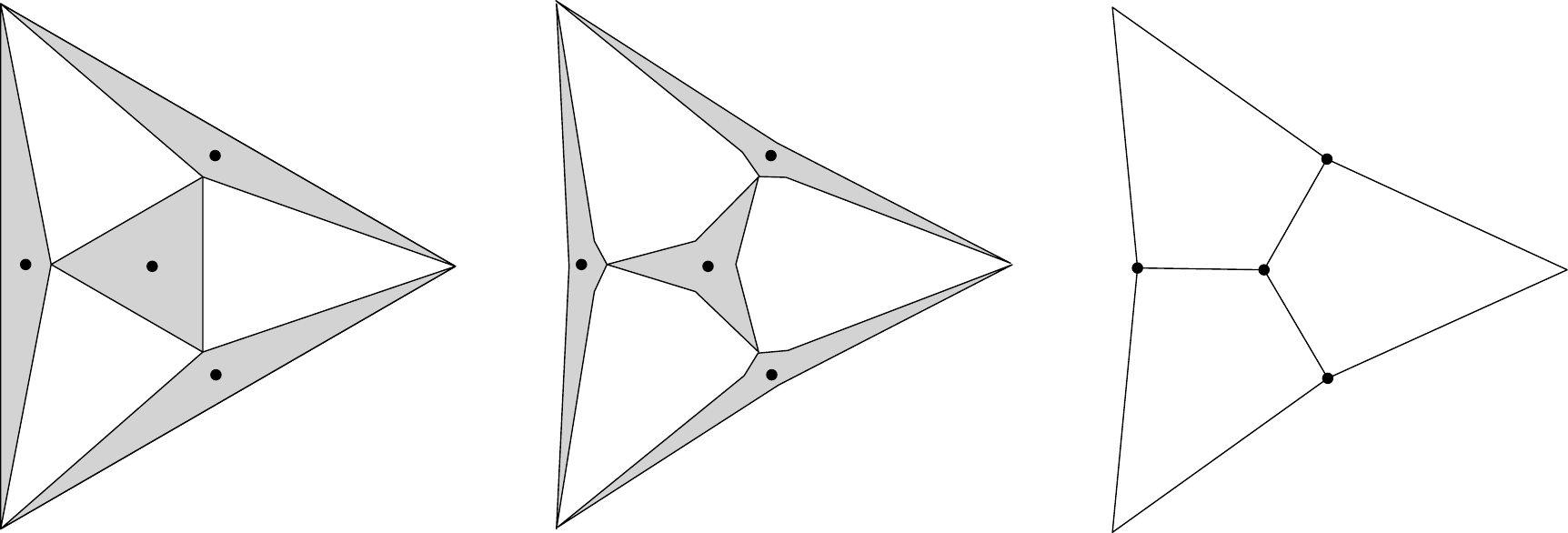}
\caption{A continuous deformation of $\Ss_{\linfun_2'}$ into a complete graph with 4 vertices (where we ignore the sharp bends of the ``outer'' edges).}
\label{fig:linfun2_nerve_deform}
\end{figure}

Such an intuition of deformation is vastly generalized by the Nerve Lemma:
\begin{lemma}[Nerve Lemma (Informal)]
Let $\mathcal U = \{U_i\}_i$ be a \textnormal{``nice''} cover (to be explained below) of a topological space $X$.
The {\bf nerve} $\mathcal N_{\mathcal U}$ of $\mathcal U$ is defined as the simplicial complex with vertices $\{V_i: U_i \in \mathcal U\}$, and with simplices $\{V_i\}_{i \in S}$ for each index set $S$ such that $\bigcap \{U_i: i \in S\}$ is nonempty.

Then, for each dimension $d$, the set of $d$-dimensional holes in $X$ is bijective with the set of $d$-dimensional holes in $\mathcal N_{\mathcal U}$.
\end{lemma}

\begin{wrapfigure}{r}{0.2\textwidth} 
\centering
\includegraphics[height=0.1\textheight]{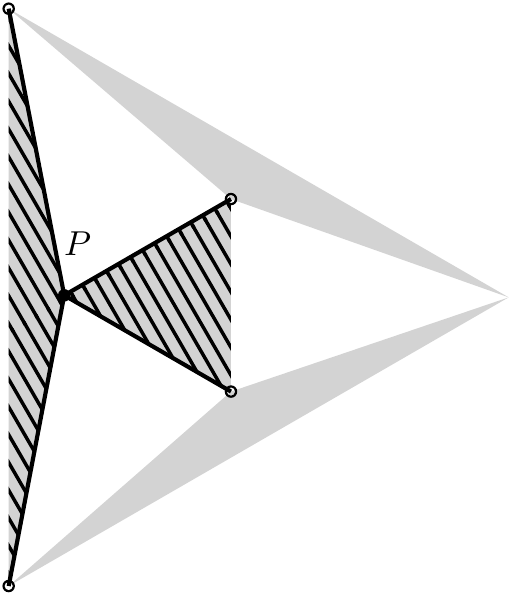}
\caption{The open star $\St P$ of vertex $P$}
\label{fig:linfun2_star_example}
\end{wrapfigure}
What kind of covers are nice?
Open covers in general spaces, or subcomplex covers in simplicial (or CW) complexes, are considered ``nice'', if in addition they satisfy the following requirements ({\it acyclicity}).
\begin{itemize}
    \item Each set of the cover must have no holes.
    \item Each nontrivial intersection of a collection of sets must have no holes.
\end{itemize}
The example we saw in \cref{fig:linfun2_star_example} is an application of the Nerve Lemma for the cover by facets.
Another example is the star cover:
For vertex $V$ in a complex, the {\bf open star} $\St V$ of $V$ is defined as the union of all open simplices whose closure meets $V$ (see \cref{fig:linfun2_star_example} for an example).
If the cover $\mathcal U$ consists of the open stars of every vertex in a simplicial complex $X$, then $\mathcal N_{\mathcal U}$ is isomorphic to $X$ as complexes.

\makeconvo{
OK! We are finally ready to make the connection to complexity!}

\subsection{The Connection}
It turns out that $\Sbpx{\linfun_d'} = \Ss_{\linfun_d \filt (\mathbf 0 \mapsto 0)}$ (a complex of dimension $2^d - 2$) has holes in dimension $d-1$. 
The proof is omitted here but will be given in \cref{sec:linear_functionals}.
This can be clearly seen in our example when $d = 2$ (\cref{fig:linfun2_stretched_apart}), which has 3 holes in dimension $d-1 = 1$.
Furthermore, for every partial linear functional $\pf h$ (a linear functional defined on a linear subspace), $\Ss_{\linfun_d \filt \pf h}$ also has holes, in dimension $d - 1 - \dim(\dom \pf h)$.
\cref{fig:linfun2_linear_section} show an example for $d = 2$ and $\pf h = [1 \ 1]^T \mapsto 1$.
\begin{figure*}
\centering
\begin{subfigure}[t]{.45\textwidth}
\centering
\includegraphics[height=.16\textheight]{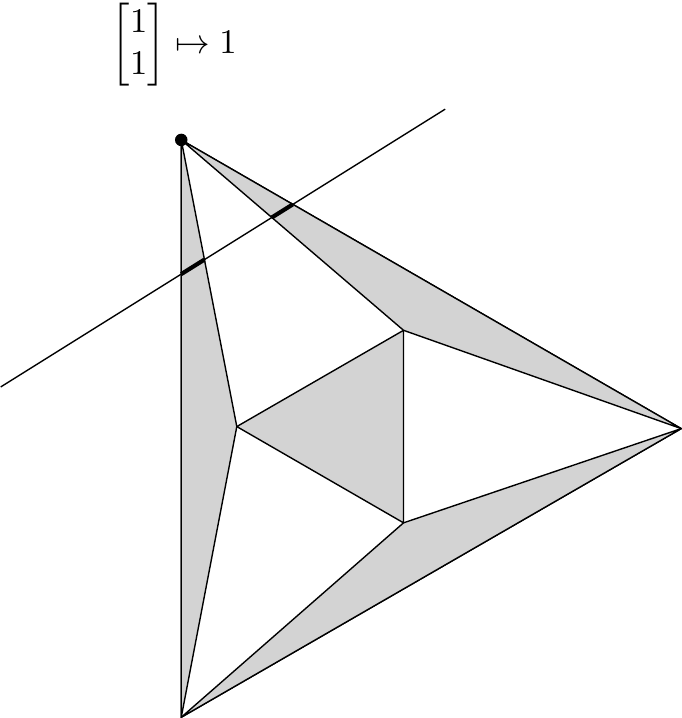}
\caption{The canonical suboplex of $\linfun_2 \filt \{[0\ 0]^T \mapsto 0, [1\ 1]^T \mapsto 1\}$ is isomorphic to the affine section as shown, and it has two disconnected components, and thus ``a single zeroth dimensional hole.''
}
\label{fig:linfun2_linear_section}
\end{subfigure}
~
\begin{subfigure}[t]{.45\textwidth}
\centering
\includegraphics[height=.16\textheight]{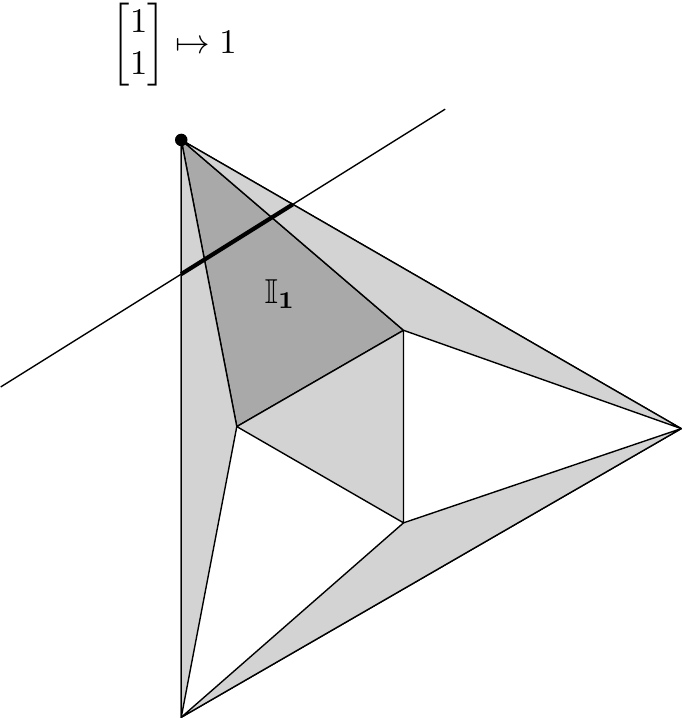}
\caption{When we add $\indone$ to $\linfun_d$ to obtain $\clsf D := \linfun_d \cup \{\indone\}$, $\Ss_{\clsf D \filt \pf g}$ now does not have any hole!}
\label{fig:linfun2_add_f}
\end{subfigure}
\caption{}
\end{figure*}

But when we add $\indone$ to $\linfun_d$ to obtain $\clsf D := \linfun_d \cup \{\indone\}$, $\Ss_{\clsf D \filt \pf g}$ now does not have any hole!
\cref{fig:linfun2_add_f} clearly demonstrates the case $d = 2$.
For general $d$, note that $\Ss_{\linfun_d'}$ has a ``nice'' cover by the open stars
$$\mathcal C := \{\St V: V \text{ has label $u \mapsto r$ for some $u \in \Fld_2^d \setminus \{\mathbf 0\}$ and $r \in \Fld_2$}\}.$$
When we added $\indone$ to form $\clsf D$, the collection $\mathcal C' := \mathcal C \cup \splx_{\indone}$ obtained by adding the simplex of $\indone$ to $\mathcal C$ is a ``nice'' cover of $\Ss_{\clsf D}$. 
Thus the nerve $\mathcal N_{\mathcal C'}$ has the same holes as $\Ss_{\clsf D}$, by the Nerve Lemma.
But observe that $\mathcal N_{\mathcal C'}$ is a cone!
\ldots which is what our ``combinatorial proof'' of $\indone \not \in \linfun_d$ really showed.

\begin{wrapfigure}{r}{.3\textwidth}
\centering
\includegraphics[height=0.16\textheight]{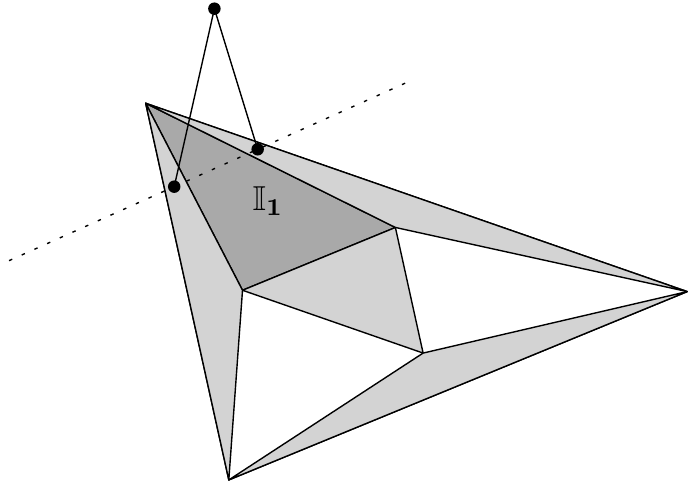}
\caption{The nerve $\mathcal N_{\mathcal C'}$ overlayed on $\clsf D = \linfun_2 \cup \{\indone\}$.
Note that $\mathcal N_{\mathcal C'}$ is a cone over its base of 2 points.}
\label{fig:linfun2_nerve_is_cone}
\end{wrapfigure}
More precisely, 
\begin{enumerate*}[label=\!]
    \item a collection of stars $S := \{\St V: V \in \mathcal V\}$ has nontrivial intersection iff there is a partial linear functional extending the labels of each $V \in \mathcal V$.
    \item We showed $\indone$ intersects every partial linear functional strictly extending $\pf g: \mathbf 0 \mapsto 0, \mathbf 1 \mapsto 1$.
    \item Therefore, a collection of stars $S$ in $\mathcal C'$ intersects nontrivially iff $\bigcap (S \cup \{\splx_{\indone}\}) \not=\emptyset$.
\end{enumerate*}

In other words, in the nerve of $\mathcal C'$, $\splx_{\indone}$ forms the vertex of a cone over all other $\St V \in \mathcal C$.
In our example of $\linfun_2$, this is demonstrated in \cref{fig:linfun2_nerve_is_cone}.

Thus, to summarize,
\begin{itemize*}
    \item $\mathcal N_{\mathcal C'}$, being a cone, has no holes.
    \item By the Nerve Lemma, $\Ss_{\clsf D \filt \pf g}$ has no holes either.
    \item Since $\Ss_{\linfun_d \filt \pf g}$ has holes, we know $\clsf D \not = \linfun_d$, i.e. $\indone \not \in \linfun_d,$ as desired. 
\end{itemize*}

While this introduction took some length to explain the logic of our approach, much of this is automated in the theory we develop in this paper, which leverages existing works on Stanley-Reisner theory and cellular resolutions. 

\begin{center}
***
\end{center}
In our proof, we roughly did the following
\begin{itemize}
    \item (Local) Examined the intersection of $\indone$ with fragments of functions in $\linfun_d$.
    \item (Global) Pieced together the fragments with nontrivial intersections with $\indone$ to draw conclusions about the ``holes'' $\indone$ creates or destroys.
\end{itemize}

This is the {\it local-global philosophy} of this homological approach to complexity, inherited from algebraic topology.
This is markedly different from conventional wisdom in computer science, which seeks to show that a function, such as $f = \textsc{3sat}$, has some property that no function in a class, say $\clsf C = \P$, has.
In that method, there is no {\it global} step that argues that some global property of $\clsf C$ changes after adding $f$ into it.

Using our homological technique, we show, in \cref{chap:app}, a proof of Minsky and Papert's classical result that the class $\polythr_d^k$ of polynomial thresholds of degree $k$ in $d$ variables does not contain the parity function $\parity_d$ unless $k=d$ (\cref{exmp:parity_polythr}).
Homologically, there are many reasons.
By considering high dimensions, we deduce that $\Ss_{\polythr_d^k}$ has a hole in dimension $\sum_{i=0}^k \binom{d}{i}$ that is filled in by $\parity_d$.
By considering low dimensions, we obtain a {\bf maximal principle} for polynomial threshold functions from which we obtain not only Minsky and Papert's result but also extensions to arbitrary symmetric functions.
This maximal principle \cref{thm:maximal_principle_thr} says
\begin{thm}[Maximal Principle for Polynomial Threshold]
Let $\clsf C := \polythr^k_d$, and let $f: \{-1, 1\}^d \to \{-1, 1\}$ be a function.
We want to know whether $f \in \clsf C$.

Suppose there exists a function $g \in \clsf C$ (a ``local maximum'' for approximating $g$) such that
\begin{itemize}
    \item for each $h \in \clsf C$ that differs from $g$ on exactly one input $u$, we have $g(u) = f(u) = \neg h(u)$.
\end{itemize}
If $g \not = f$, then $f \not \in \clsf C$. (In other words, if $f \in \clsf C$, then the ``local maximum'' $g$ must be a ``global maximum'').
\end{thm}
Notice that the maximal principle very much follows the local-global philosophy.
The ``local maximum'' condition is saying that when one looks at the intersection with $f$ of $g$ and its ``neighbors'' (local), these intersections together form a hole that $f$ creates when added to $\clsf C$ (global).
The homological intuition, in more precise terms, is that a local maximum $g \not= f\in \clsf C$ implies that the filtered class $\clsf C \filt (f \cap g)$ consists of a single point with label $g$, so that when $f$ is added to $\clsf C$, a zero-dimensional hole is created.

We also obtain an interesting characterization of when a function can be {\it weakly represented} by a degree bounded polynomial threshold function.
A real function $\varphi: U \to \R$ on a finite set $U$ is said to {\bf weakly represent} a function $f: U \to \{-1, 1\}$ if $\varphi(u) > 0 \iff f(u) = 1$ and $\varphi(u) < 0 \iff f(u) = -1$, but we don't care what happens when $\varphi(u) = 0$.
Our homological theory of function essentially says that $f \in \polythr_d^k$ (``$f$ is strongly representable by a polynomial of degree $k$'') iff $\Ss_{\polythr_d^k \cup \{f\} \filt \pf g}$ has the same number of holes as $\Ss_{\polythr_d^k \filt \pf g}$ in each dimension and for each $\pf g$.
But, intriguingly, $f$ is {\it weakly representable} by a polynomial of degree $k$ iff $\Ss_{\polythr_d^k \cup \{f\}}$ has the same number of holes as $\Ss_{\polythr_d^k }$ in each dimension (\cref{cor:weak_rep_betti}) --- in other words, we only care about filtering by $\pf g = \emptyfun$ but no other partial functions.

\subsection{Dimension theory}

Let $\clsf C \sbe [n \to 2]$.
The {\bf VC Dimension} $\vcdim \clsf C$ of $\clsf C$ is the size of the largest set $U \sbe [n]$ such that $\clsf C \restrict U = \{0, 1\}^U$.

Consider the following setting of a learning problem:
You have an oracle, called the sample oracle, such that every time you call upon it, it will emit a sample $(u, h(u))$ from an unknown distribution $P$ over $u \in [n]$, for a fixed $h \in \clsf C$.
This sample is independent of all previous and all future samples.
Your task is to learn the identity of $h$ with high probability, and with small error (weighted by $P$).

A central result of statistical learning theory says roughly that
\begin{thm}[\cite{kearns_introduction_1994}]
In this learning setting, one only needs $O(\vcdim \clsf C)$ samples to learn $h \in \clsf C$ with high probability and small error. 
\end{thm}
It is perhaps surprising, then, that the following falls out of our homological approach.
\begin{thm}[Colloquial version of \cref{thm:hdim_ge_vcdim}]
Let $\clsf C \sbe [n \to 2]$.
Then $\vcdim \clsf C$ is upper bounded by one plus the highest dimension, over any partial function $\pf g$, of any hole in $\Ss_{\clsf C \filt \pf g}$.
This quantity is known as the {\bf homological dimension} $\hdim \clsf C$ of $\clsf C$.
\end{thm}

In fact, equality holds for common classes in the theory of computation like $\linfun_d$ and $\polythr_d^k$, and also when certain algebraic conditions hold.
More precisely --- for readers with algebraic background ---
\begin{thm}[Colloquial version of \cref{cor:CM_implies_hdim=vcdim}]
$\vcdim \clsf C = \hdim \clsf C$ if the Stanley-Reisner ring of $\Sbpx C$ is Cohen-Macaulay.
\end{thm}


These results suggest that our homological theory captures something essential about computation, that it's not a coincidence that we can use ``holes'' to prove complexity separation.

\subsection{Homological Farkas}
Farkas' Lemma is a simple result from linear algebra, but it is an integral tool for proving weak and strong dualities in linear programming, matroid theory, and game theory, among many other things.

\begin{lemma}[Farkas' Lemma]
Let $L \sbe \R^n$ be a linear subspace not contained in any coordinate hyperplanes, and let $P = \{x \in \R^n: x > 0\}$ be the positive cone.
Then either 
\begin{itemize}
    \item $L$ intersects $P$, or
    \item $L$ is contained in the kernel of a nonzero linear functional whose coefficients are all nonnegative.
\end{itemize}
but not both.
\end{lemma}

Farkas' Lemma is a characterization of when a linear subspace intersects the positive cone in terms of {\it linear conditions}.
An alternate view important in computer science is that Farkas' Lemma provides a {\it linear certificate} for when this intersection does not occur.
Analogously, our Homological Farkas' Lemma will characterize such an intersection in terms of {\it homological conditions}, and simultaneously provide a {\it homological certificate} for when this intersection does not occur.

Before stating the Homological Farkas' Lemma, we first introduce some terminology.
\newcommand{\onef}{\mathbf{1}}

For $g: [n] \to \{1, -1\}$, let $P_g \sbe \R^n$ denote the open cone whose points have signs given by $g$.
Consider the intersection $\splx_g$ of $\setcl{P_g}$ with the unit sphere $S^{n-1}$ and its interior $\ocell \splx_g$.
$\ocell\splx_g$ is homeomorphic to an open simplex.
For $g \not = \neg \onef$, define $\Lambda(g)$ to be the union of the facets $F$ of $\splx_g$ such that $\ocell\splx_g$ and $\ocell\splx_{\mathbbm 1}$ sit on opposite sides of the affine hull of $F$.
Intuitively, $\Lambda(g)$ is the part of $\pd \splx_g$ that can be seen from an observer in $\ocell\splx_{\onef}$ (illustrated by \cref{fig:lambda_illustration}).

\begin{figure}
\centering
\begin{subfigure}[t]{.3\textwidth}
\centering
\includegraphics[height=0.16\textheight]{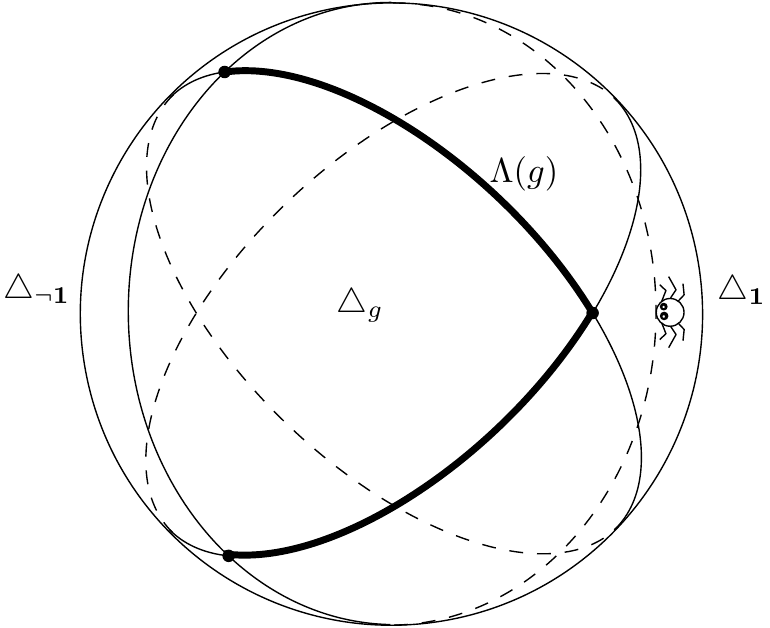}
\caption{An example of a $\Lambda(g)$.
Intuitively, $\Lambda(g)$ is the part of $\pd \splx_g$ that can be seen from an observer in $\splx_{\onef}$.}
\label{fig:lambda_illustration}
\end{subfigure}
~
\begin{subfigure}[t]{.6\textwidth}
\centering
\includegraphics[height=0.16\textheight]{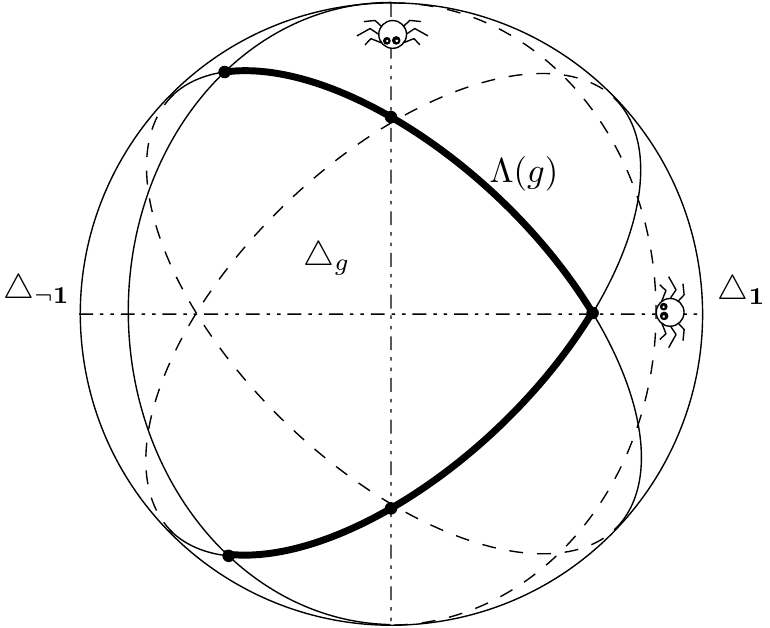}
\caption{An illustration of Homological Farkas' Lemma.
The horizontal dash-dotted plane intersects the interior of $\splx_\onef$, but its intersection with any of the $\Lambda(f), f\not=\onef, \neg\onef$ has no holes.
The vertical dash-dotted plane misses the interior of $\splx_\onef$, and we see that its intersection with $\Lambda(g)$ as shown has two disconnected components.}
\label{fig:homological_farkas_illustration}
\end{subfigure}
\caption{}
\end{figure}
The following homological version of Farkas' Lemma naturally follows from our homological technique of analyzing the complexity of threshold functions.
\begin{thm}[Homological Farkas' Lemma \cref{thm:hom_farkas}]\label{thm:informal_hom_farkas}
Let $L \sbe \R^n$ be a linear subspace.
Then either
\begin{itemize}
    \item $L$ intersects the positive cone $P = P_\onef$, or
    \item $L \cap \Lambda(g)$ for some $g \not = \onef, \neg \onef$ is nonempty and has holes.
\end{itemize}
but not both.
\end{thm}

\cref{fig:homological_farkas_illustration} illustrates an example application of this result.

One direction of the Homological Farkas' Lemma has the following intuition.
As mentioned before, $\Lambda(g)$ is essentially the part of $\pd\splx_g$ visible to an observer Tom in $\ocell\splx_{\onef}$.
Since the simplex is convex, the image Tom sees is also convex.
Suppose Tom sits right on $L$ (or imagine $L$ to be a subspace of Tom's visual field).
If $L$ indeed intersects $\ocell\splx_{\onef}$, then for $L \cap \Lambda(g)$ he sees some affine space intersecting a convex body, and hence a convex body in itself.
Since Tom sees everything (i.e. his vision is homeomorphic with the actual points), $L \cap \Lambda(g)$ has no holes, just as Tom observes.

In other words, if Tom is inside $\ocell\splx_{\onef}$, then he cannot tell $\Lambda(g)$ is nonconvex by his vision alone, for any $g$.
Conversely, the Homological Farkas' Lemma says that if Tom is outside of $\ocell\splx_{\onef}$ and if he looks away from $\ocell \splx_{\onef}$, he will always see a nonconvex shape in some $\Lambda(g)$.

As a corollary to \cref{thm:informal_hom_farkas}, we can also characterize when a linear subspace intersects a region in a linear hyperplane arrangement (\cref{cor:linear_hom_farkas}), and when an affine subspace intersects a region in an affine hyperplane arrangement (\cref{cor:affine_glue_hom_farkas}), both in terms of homological conditions.
A particular simple consequence, when the affine subspace either intersects the interior or does not intersect the closure at all, is illustrated in \cref{fig:intro_generalized_homological_farkas}.

\begin{figure}
\centering
\includegraphics[height=.2\textheight]{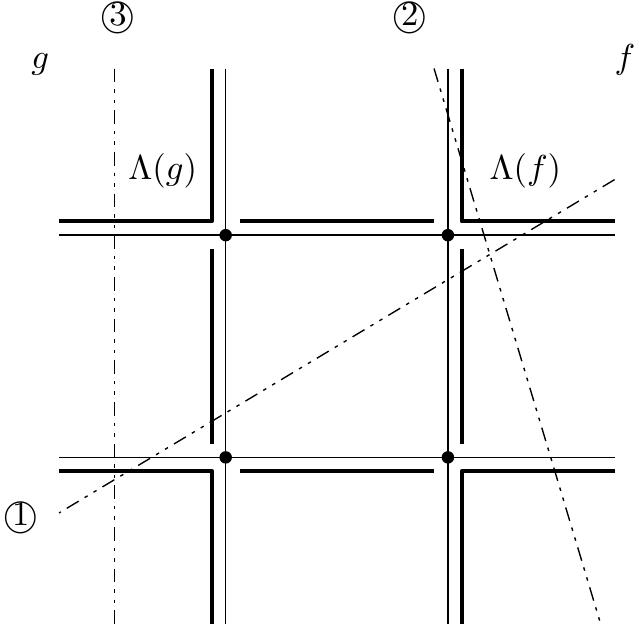}
\caption{Example application of \cref{cor:simple_affine_hom_farkas}.
Let the hyperplanes (thin lines) be oriented such that the square $S$ at the center is on the positive side of each hyperplane.
The bold segments indicate the $\Lambda$ of each region.
Line 1 intersects $S$, and we can check that its intersection with any bold component has no holes.
Line 2 does not intersect the closure $\setcl{S}$, and we see that its intersection with $\Lambda(f)$ is two points, so has a ``zeroth dimension'' hole.
Line 3 does not intersect $\setcl{S}$ either, and its intersection with $\Lambda(g)$ consists of a point in the finite plane and another point on the circle at infinity.}
\label{fig:intro_generalized_homological_farkas}
\end{figure}

\vskip 10mm

The rest of this paper is organized as follows. 
\cref{chap:theory} builds the theory underlying our complexity separation technique.
\cref{sec:background} explains some of the conventions we adopt in this work and more importantly reviews basic facts from combinatorial commutative algebra and collects important lemmas for later use.
\cref{sec:canonical_ideal} defines the central objects of study in our theory, the {\it Stanley-Reisner ideal} and the {\it canonical ideal} of each function class.
The section ends by giving a characterization of when an ideal is the Stanley-Reisner ideal of a class.
\cref{sec:resolutions} discusses how to extract homological data of a class from its ideals via cellular resolutions.
We construct cellular resolutions for the canonical ideals of many classes prevalent in learning theory, such as conjunctions, linear thresholds, and linear functionals over finite fields.
\cref{sec:PF_classes} briefly generalizes definitions and results to partial function classes, which are then used in \cref{sec:combining}.
This section explains, when combining old classes to form new classes, how to also combine the cellular resolutions of the old classes into cellular resolutions of the new classes.

\cref{chap:app} reaps the seeds we have sowed so far. \cref{sec:dimension} looks at notions of dimension, the Stanley-Reisner dimension and the homological dimension, that naturally appear in our theory and relates them to VC dimension, a very important quantity in learning theory.
We observe that in most examples discussed in this work, the homological dimension of a class is almost the same as its VC dimension, and prove that the former is always at least the latter.
\cref{sec:CM} characterizes when a class has Stanley-Reisner ideal and canonical ideal that induce Cohen-Macaulay rings, a very well studied type of rings in commutative algebra.
We define Cohen-Macaulay classes and show that their homological dimensions are always equal to their VC dimensions.
\cref{sec:separation} discusses separation of computational classes in detail, and gives simple examples of this strategy in action.
Here a consequence of our framework is the Homological Farkas Lemma.
\cref{sec:maximal_principle} formulates and proves the maximal principle for threshold functions, and derives an extension of Minsky and Papert's result for general symmetric functions.
\cref{sec:hom_farkas} further extends Homological Farkas Lemma to general linear or affine hyperplane arrangements.
\cref{sec:hilbert_function} examines a probabilistic interpretation of the Hilbert function of the canonical ideal, and shows its relation to hardness of approximation.

Finally, \cref{sec:future_work} considers major questions of our theory yet to be answered and future directions of research.

\section{Theory}
\label{chap:theory}

\subsection{Background and Notation}
\label{sec:background}
In this work, we fix $\kk$ to be an arbitrary field.
We write $\N = \{0, 1, \ldots, \}$ for the natural numbers.
Let $n, m \in \N$ and $A, B$ be sets.
The notation $\pf f:\sbe A \to B$ specifies a partial function $\pf f$ whose domain $\dom \pf f$ is a subset of $A$, and whose codomain is $B$.
The words ``partial function'' will often be abbreviated ``PF.''
We will use Sans Serif font for partial (possibly total) functions, ex. $\pf f, \pf g, \pf h$, but will use normal font if we know a priori a function is total, ex. $f, g, h$.
We denote the empty function, the function with empty domain, by $\emptyfun$.
We write $[n]$ for the set $\{0, 1, \ldots, n-1\}$.
We write $[A \to B]$ for the set of total functions from $A$ to $B$ and $[\sbe A \to B]$ for the set of partial functions from $A$ to $B$.
By a slight abuse of notation, $[n \to m]$ (resp. $[\sbe n \to m]$ is taken to be a shorthand for $[[n] \to [m]]$ (resp. $[\sbe [n] \to [m]]$).
The set $[2^d]$ is identified with $[2]^d$ via binary expansion (ex: $5 \in [2^4]$ is identified with $(0, 1, 0, 1) \in [2]^4$).
A subset of $[A \to B]$ (resp. $[\sbe A \to B]$) is referred to as a {\it class} (resp. {\it partial class}), and we use $\clsf C, \clsf D$ (resp. $\pclsf C, \pclsf D$), and so on to denote it.
Often, a bit vector $v \in [2^d]$ will be identified with the subset of $[d]$ of which it is the indicator function.

For $A \sbe B$, relative set complement is written $B \setminus A$; when $B$ is clearly the universal set from context, we also write $A^\setc$ for the complement of $A$ inside $B$.
If $\{a, b\}$ is any two-element set, we write $\neg a = b$ and $\neg b = a$. 

Denote the $n$-dimensional simplex $\{\es v \in \R^n: \sum_i v_i = 1\}$ by $\splx^n$.
Let $X, Y$ be topological spaces (resp. simplicial complexes, polyhedral complexes).
The join of $X$ and $Y$ as a topological space (resp. simplicial complex, polyhedral complex) is denoted by $X \star Y$.
We abbreviate the quotient $X/\pd X$ to $X/\pd$.

We will use some terminologies and ideas from matroid theory in \cref{sec:threshold_functions} and \cref{sec:separation}.
Readers needing more background can consult the excellently written chapter 6 of \cite{ziegler_lectures_1995}.

\subsubsection{Combinatorial Commutative Algebra}
Here we review the basic concepts of combinatorial commutative algebra.
We follow \cite{miller_combinatorial_2005} closely.
Readers familiar with this background are recommended to skip this section and come back as necessary; the only difference in presentation from \cite{miller_combinatorial_2005} is that we say a labeled complex {\it is} a cellular resolution when in more conventional language it {\it supports} a cellular resolution.

Let $\kk$ be a field and $S = \kk[\xx]$ be the polynomial ring over $\kk$ in $n$ indeterminates $\xx = x_0, \ldots, x_{n-1}$.

\begin{defn}
A \textbf{monomial} in $\kk[\xx]$ is a product $\xx^{\es a} = x_0^{a_0}\cdots x_{n-1}^{a_{n-1}}$ for a vector $\es a = (a_0, \ldots, a_{n-1}) \in \N^n$ of nonnegative integers.
Its {\bf support} $\supp \xx^{\es a}$ is the set of $i$ where $a_i \not = 0$.
We say $\xx^{\es a}$ is squarefree if every coordinate of $\es a$ is 0 or 1.
We often use symbols $\sigma, \tau$, etc for squarefree exponents, and identify them with the corresponding subset of $[n]$.

An ideal $I \sbe \kk[\xx]$ is called a {\bf monomial ideal} if it is generated by monomials, and is called a {\bf squarefree monomial ideal} if it is generated by squarefree monomials.
\end{defn}

Let $\Delta$ be a simplicial complex.
\begin{defn}
The {\bf Stanley-Reisner ideal} of $\Delta$ is defined as the squarefree monomial ideal
$$I_\Delta = \la \xx^\tau: \tau \not\in \Delta\ra$$
generated by the monomials corresponding the nonfaces $\tau$ of $\Delta$.
The {\bf Stanley-Reisner ring} of $\Delta$ is the quotient ring $S/I_\Delta$.
\end{defn}

\begin{defn}
The squarefree {\bf Alexander dual} of squarefree monomial ideal $I = \la \xx^{\sigma_1}, \ldots, \xx^{\sigma_r}\ra$ is defined as
$$I^\star = \mm^{\sigma_1} \cap \cdots \cap \mm^{\sigma_r}.$$
If $\Delta$ is a simplicial complex and $I = I_\Delta$ its Stanley-Reisner ideal, then the simplicial complex $\Delta^\star$ {\bf Alexander dual} to $\Delta$ is defined by $I_{\Delta^\star} = I_{\Delta}^\star$. 
\end{defn}

\begin{prop}[Prop 1.37 of \cite{miller_combinatorial_2005}]
The Alexander dual of a Stanley-Reisner ideal $I_\Delta$ can in fact be described as the ideal $\la \xx^\tau: \tau^\setc \in \Delta\ra$, with minimal generators $\xx^\tau$ where $\tau^\setc$ is a facet of $\Delta$.
\label{prop:alexdual_mingen}
\end{prop}

\begin{defn}
The {\bf link} of $\sigma$ inside the simplicial complex $\Delta$ is
$$\link_\sigma \Delta = \{\tau \in \Delta: \tau \cup \sigma \in \Delta \And \tau \cap \sigma = \emptyset\},$$
the set of faces that are disjoint from $\sigma$ but whose unions with $\sigma$ lie in $\Delta$.
\end{defn}

\begin{defn}
The {\bf restriction} of $\Delta$ to $\sigma$ is defined as
$$\Delta\restrict \sigma = \{\tau \in \Delta : \tau \sbe \sigma\}.$$
\end{defn}

\begin{defn}
A sequence
$$\resf F_\bullet: 0 \gets F_0 \xlar{\phi_1} F_1 \gets \cdots \gets F_{l-1} \xlar{\phi_l} F_l \gets 0$$
of maps of free $S$-modules is called a {\bf complex} if $\phi_i \circ \phi_{i+1} = 0$ for all $i$.
The complex is {\it exact} in homological degree $i$ if $\ker \phi_i = \im \phi_{i+1}$.
When the free modules $F_i$ are $\N^n$-graded, we require that each homomorphism $\phi_i$ to be degree-preserving.

Let $M$ be a finitely generated $\N^n$-graded module $M$.
We say $\resf F_\bullet$ is a {\bf free resolution} of $M$ over $S$ if $\resf F_\bullet$ is exact everywhere except in homological degree 0, where $M = F_0/\im \phi_1$.
The image in $F_i$ of the homomorphism $\phi_{i+1}$ is the $i$th {\bf syzygy module} of $M$.
The {\bf length} of $\resf F_\bullet$ is the greatest homological degree of a nonzero module in the resolution, which is $l$ here if $F_l \not= 0$.
\end{defn}

The following lemma says that if every minimal generator of an ideal $J$ is divisible by $x_0$, then its resolutions are in bijection with the resolutions of $J/x_0$, the ideal obtained by forgetting variable $x_0$.
\begin{lemma} \label{lemma:forget_var}
Let $I \sbe S = \kk[x_0, \ldots, x_{n-1}]$ be a monomial ideal generated by monomials not divisible by $x_0$.
A complex
$$\resf F_\bullet: 0 \gets F_0 \gets F_1 \gets \cdots \gets F_{l-1} \gets F_l \gets 0$$
resolves $x_0 I$ iff for $S/x_0 = \kk[x_1, \ldots, x_{n-1}]$,
$$\resf F_\bullet \otimes_S S/x_0: 0 \gets F_0/x_0 \gets F_1/x_0 \gets \cdots \gets F_{l-1}/x_0 \gets F_l/x_0 \gets 0$$
resolves $I \otimes_S S/x_0$.
\end{lemma}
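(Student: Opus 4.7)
The plan is to use the short exact sequence of complexes
\[
0 \to F_\bullet \xrightarrow{\cdot x_0} F_\bullet \to F_\bullet \otimes_S S/x_0 \to 0
\]
obtained by tensoring $F_\bullet$ with the Koszul sequence $0 \to S \xrightarrow{\cdot x_0} S \to S/x_0 \to 0$, together with its long exact sequence of homology. Two preliminary facts I will invoke throughout: (i) the hypothesis on $I$ gives $I \cap x_0 S = x_0 I$, and combined with the fact that $x_0$ is a non-zerodivisor on $S$ this yields that multiplication by $x_0$ induces a graded isomorphism $I/x_0 I \xrightarrow{\sim} x_0 I / x_0^2 I = x_0 I \otimes_S S/x_0$; (ii) any finitely generated $\N^n$-graded $S$-module supported in the nonnegative cone on which $x_0$ acts as an isomorphism must be zero, by inspection of the minimum first coordinate in its support.

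For the forward direction, if $F_\bullet$ resolves $x_0 I$, then $x_0$ is a non-zerodivisor on $x_0 I \sbe S$, so the Koszul computation gives $\mathrm{Tor}^S_i(x_0 I, S/x_0) = 0$ for all $i \geq 1$. Tensoring the augmented resolution $F_\bullet \to x_0 I \to 0$ with $S/x_0$ therefore preserves exactness, and computes $x_0 I \otimes_S S/x_0$ in degree zero, which is $I \otimes_S S/x_0$ by (i).

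For the backward direction, the long exact sequence gives that $H_i(F_\bullet) \xrightarrow{\cdot x_0} H_i(F_\bullet)$ is an isomorphism for each $i \geq 1$, forcing $H_i(F_\bullet) = 0$ for $i \geq 1$ by (ii); and for $i = 0$ it gives a short exact sequence $0 \to H_0(F_\bullet) \xrightarrow{\cdot x_0} H_0(F_\bullet) \to I/x_0 I \to 0$, so $x_0$ is a non-zerodivisor on $H_0(F_\bullet)$ and $H_0(F_\bullet)/x_0 H_0(F_\bullet) \cong I/x_0 I$.

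The main obstacle is upgrading this last isomorphism to $H_0(F_\bullet) \cong x_0 I$, since the abstract structural data yielded by the long exact sequence alone does not pin down $H_0(F_\bullet)$ up to graded isomorphism. My plan to finish is to compare $F_\bullet$ to any free resolution $G_\bullet$ of $x_0 I$ (which exists by general theory and whose reduction $G_\bullet/x_0$ resolves $I/x_0 I$ by the forward direction). Standard uniqueness of free resolutions provides a quasi-isomorphism $f : F_\bullet/x_0 \to G_\bullet/x_0$ extending the identity on $I/x_0 I$, which lifts via projectivity of the free modules to a chain map $\tilde f : F_\bullet \to G_\bullet$. The mapping cone $C(\tilde f)$ then reduces mod $x_0$ to $C(f)$, which is exact; running the same long-exact-sequence-plus-(ii) argument on $C(\tilde f)$ shows it is itself exact, so $\tilde f$ is a quasi-isomorphism and $H_0(F_\bullet) \cong H_0(G_\bullet) = x_0 I$.
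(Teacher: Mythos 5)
Your forward direction is fine, and the opening moves of the backward direction — the Koszul short exact sequence, the long exact sequence of homology, and the graded Nakayama argument killing $H_i(F_\bullet)$ for $i \geq 1$ — are exactly right. The problem is the final lifting step. The claim that a chain map $f : F_\bullet/x_0 \to G_\bullet/x_0$ ``lifts via projectivity of the free modules to a chain map $\tilde f : F_\bullet \to G_\bullet$'' is not correct as stated. Projectivity gives you module-wise lifts $\tilde f_i : F_i \to G_i$ of $f_i$, but says nothing about the obstruction $\delta_i := d^G \tilde f_i - \tilde f_{i-1} d^F$, which a priori only lies in $x_0\operatorname{Hom}(F_i, G_{i-1})$ rather than vanishing. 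Killing these obstructions is an inductive problem whose very first step already requires an augmentation $\epsilon^F : F_0 \to x_0 I$ with $\epsilon^F \phi_1 = 0$ and a commuting square $\epsilon^G \tilde f_0 = \epsilon^F$ — precisely the data you are trying to establish. For a bare warning that projectivity alone is insufficient: over $\kk[x_0]$ with $F_\bullet = G_\bullet = (0 \to S \xrightarrow{\,x_0\,} S \to 0)$, the reduction mod $x_0$ has zero differential, so $(f_1, f_0) = (2, 1)$ is a chain map $F_\bullet/x_0 \to G_\bullet/x_0$; but any $S$-linear lift must satisfy $\tilde f_0 x_0 = x_0 \tilde f_1$, hence $\tilde f_0 = \tilde f_1$, and no lift exists. (This $f$ is not between resolutions, but it shows the lifting needs the acyclicity of the target together with compatible augmentations, not projectivity.)

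The gap disappears, and the mapping-cone detour becomes unnecessary, if you read ``resolves'' the way it is actually used in the paper: the complex $F_\bullet$ comes with a fixed degree-preserving augmentation $\epsilon : F_0 \to S$ coming from the monomial labels, and ``$F_\bullet$ resolves $x_0 I$'' means the augmented complex $\tilde F_\bullet : 0 \to F_l \to \cdots \to F_0 \xrightarrow{\epsilon} x_0 I \to 0$ is exact; likewise for its reduction mod $x_0$. Since $x_0$ is a non-zerodivisor on every term of $\tilde F_\bullet$ (including on $x_0 I \subseteq S$), the Koszul sequence gives a short exact sequence of complexes $0 \to \tilde F_\bullet \xrightarrow{\,x_0\,} \tilde F_\bullet \to \tilde F_\bullet \otimes_S S/x_0 \to 0$. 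The ``$\Rightarrow$'' is then your Tor-vanishing argument, and the ``$\Leftarrow$'' is your long-exact-sequence-plus-Nakayama argument applied uniformly to \emph{all} homological degrees of $\tilde F_\bullet$, including the degree where $x_0 I$ sits (every $H_i(\tilde F_\bullet)$ is a finitely generated $\N^n$-graded subquotient of either a free module or $x_0 I \subseteq S$, so Nakayama applies). This yields both directions in one shot with no need to identify an abstract $H_0$.
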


\begin{defn}
Let $M$ be a finitely generated $\N^n$-graded module $M$ and
$$\resf F_\bullet: 0 \gets F_0 \gets F_1 \gets \cdots \gets F_{l-1} \gets F_l \gets 0$$
be a minimal graded free resolution of $M$.
If $F_i = \bigoplus_{\es a \in\N^n} S(-\es a)^{\beta_{i, \es a}}$, then the $i$th {\bf Betti number} of $M$ in degree $\es a$ is the invariant $\betti_{i, \es a} = \betti_{i, \es a}(M)$.
\end{defn}

\begin{prop}[Lemma 1.32 of \cite{miller_combinatorial_2005}] \label{prop:Tor_betti}
$\betti_{i, \es a}(M) = \dim_\kk \Tor_i^S(\kk, M)_{\es a}$.
\end{prop}

\begin{prop}[Hochster's formula, dual version] \label{prop:dual_hochster}
All nonzero Betti numbers of $I_\Delta$ and $S/I_\Delta$ lie in squarefree degrees $\sigma$, where
$$\betti_{i, \sigma}(I_\Delta) = \betti_{i+1, \sigma}(S/I_\Delta)
= \dim_{\kk} \widetilde H_{i-1}(\link_{\sigma^\setc}{\Delta^*}; \kk).$$

\end{prop}
\begin{prop}[Hochster's formula]\label{prop:hochster}
All nonzero Betti numbers of $I_\Delta$ and $S/I_\Delta$ lie in squarefree degrees $\sigma$, where
$$\betti_{i-1, \sigma}(I_\Delta) = \betti_{i, \sigma}(S/I_\Delta)
= \dim_{\kk} \widetilde H^{|\sigma|-i-1}(\Delta\restrict \sigma; \kk).$$

\end{prop}
Note that since we are working over a field $\kk$, the reduced cohomology can be replaced by reduced homology, since these two have the same dimension.

Instead of algebraically constructing a resolution of an ideal $I$, one can sometimes find a {\it labeled simplicial complex} whose simplicial chain is a free resolution of $I$.
Here we consider a more general class of complexes, {\it polyhedral cell complexes}, which can have arbitrary polytopes as faces instead of just simplices.

\begin{defn}
A {\bf polyhedral cell complex} $X$ is a finite collection of convex polytopes, called {\it faces} or {\it cells} of $X$, satisfying two properties:
\begin{itemize}
  \item If $\mathcal P$ is a polytope in $X$ and $F$ is a face of $\mathcal P$, then $F$ is in $X$.
  \item If $\mathcal P$ and $\mathcal Q$ are in $X$, then $\mathcal P \cap \mathcal Q$ is a face of both $\mathcal P$ and $\mathcal Q$.
\end{itemize}
In particular, if $X$ contains any point, then it contains the empty cell $\emptycell$, which is the unique cell of dimension $-1$.

Each closed polytope $\mathcal P$ in this collection is called a {\bf closed cell} of $X$; the interior of such a polytope, written $\ocell {\mathcal P}$, is called an {\bf open cell} of $X$.
By definition, the interior of any point polytope is the empty cell.

The complex with only the empty cell is called {\bf the irrelevant complex}.
The complex with no cell at all is called {\bf the void complex}.

The void complex is defined to have dimension $-\infty$; any other complex $X$ is defined to have dimension $\dim(X)$ equal to the maximum dimension of all of its faces. 
\end{defn}
Examples include any polytope or the boundary of any polytope.

Each polyhedral cell complex $X$ has a natural {\it reduced chain complex}, which specializes to the usual reduced chain complex for simplicial complexes $X$.
\begin{defn}
Suppose $X$ is a {\bf labeled cell complex}, by which we mean that its $r$ vertices have {\bf labels} that are vectors $\es a_1, \ldots, \es a_r$ in $\N^r$.
The {\bf label} $\es a_F$ on an arbitrary face $F$ of $X$ is defined as the coordinatewise maximum $\max_{i \in F} \es a_i$ over the vertices in $F$.
The {\bf monomial label} of the face $F$ is $\xx^{\es a_F}$.
In particular, the empty face $\emptycell$ is labeled with the exponent label $\mathbf 0$ (equivalently, the monomial label $1 \in S$).
When necessary, we will refer explicitly to the labeling function $\lambda$, defined by $\lambda(F) = \es a_F$, and express each labeled cell complex as a pair $(X, \lambda)$.
\end{defn}

\begin{defn}\label{defn:cell_res}
Let $X$ be a labeled cell complex.
The {\bf cellular monomial matrix} supported on $X$ uses the reduced chain complex of $X$ for scalar entries, with the empty cell in homological degree 0.
Row and column labels are those on the corresponding faces of $X$.
The {\bf cellular free chain complex} $\resf F_X$ supported on $X$ is the chain complex of $\N^n$-graded free $S$-modules (with basis) represented by the cellular monomial matrix supported on $X$.
The free complex $\resf F_X$ is a {\bf cellular resolution} if it has homology only in degree 0.
We sometimes abuse notation and say $X$ itself is a cellular resolution if $\resf F_X$ is.
\end{defn}
\begin{prop}
Let $(X, \lambda)$ be a labeled complex.
If $\resf F_X$ is a cellular resolution, then it resolves $S/I$ where $I = \la \xx^{\es a_V} : V \in X \text{ is a vertex}\}$. 
$\resf F_X$ is in addition minimal iff for each cell $F$ of $X$, $\lambda(F) \not= \lambda(G)$ for each face $G$ of $F$.
\end{prop}
\begin{prop}
If $X$ is a minimal cellular resolution of $S/I$, then $\betti_{i, \es a}(I)$ is the number of $i$-dimensional cells in $X$ with label $\es a$.  
\end{prop}
Given two vectors $\es a, \es b\in \N^n$, we write $\es a \preceq \es b$ and say $\es a $ precedes $\es b$, $\es b - \es a \in \N^n$.
Similarly, we write $\es a \prec \es b$ if $\es a \preceq \es b$ but $\es a \not = \es b$.
Define $X_{\preceq \es a} = \{F \in X: \es a_F \preceq \es a\}$ and $X_{\prec \es a} = \{F \in X: \es a_F \prec \es a\}$.

Let us say a cell complex is {\it acyclic} if it is either irrelevant or has zero reduced homology.
In the irrelevant case, its only nontrivial reduced homology lies in degree $-1$.
\begin{lemma}[Prop 4.5 of \cite{miller_combinatorial_2005}]
$X$ is a cellular resolution iff $X_{\preceq \es b}$ is acyclic over $\kk$ for all $\es b \in \N^n$.
For $X$ with squarefree monomial labels, this is true iff $X_{\preceq \es b}$ is acyclic over $\kk$ for all $\es b \in [2]^n$. 
When $\resf F_X$ is acyclic, it is a free resolution of the monomial quotient $S/I$ where $I = \la \xx^{\es a_v}: v \in X \text{ is a vertex}\ra$ generated by the monomial labels on vertices.
\label{lemma:acyclicity}
\end{lemma}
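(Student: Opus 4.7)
The plan is to decompose the $\N^n$-graded cellular chain complex $\resf F_X$ degree by degree and identify each graded piece with the reduced chain complex of one of the subcomplexes $X_{\preceq \es b}$. Concretely, each face $F$ of $X$ contributes a summand $S(-\es a_F)$ to $\resf F_X$ in homological degree $\dim F + 1$ (with the empty cell in homological degree $0$). The degree-$\es b$ piece of $S(-\es a_F)$ is a copy of $\kk$ when $\es a_F \preceq \es b$ and is $0$ otherwise. So $(\resf F_X)_{\es b}$, viewed as a complex of $\kk$-vector spaces, has its term in homological degree $i$ spanned by the $(i-1)$-cells of $X_{\preceq \es b}$, with differential given by the scalar part of the cellular boundary. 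This is precisely the reduced chain complex $\widetilde C_\bullet(X_{\preceq \es b}; \kk)$, reindexed so that the empty cell lies in homological degree $0$.

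Acyclicity of $\resf F_X$ in positive homological degrees is therefore equivalent, degree by degree, to the vanishing of $\widetilde H_i(X_{\preceq \es b}; \kk)$ for every $i \geq 0$ and every $\es b \in \N^n$. The definition of ``acyclic'' given in the excerpt (irrelevant, or zero reduced homology) exactly matches this condition, including the edge case $\es b = \mathbf 0$ where $X_{\preceq \mathbf 0}$ is often the irrelevant complex. For the squarefree reduction, note that if every vertex label lies in $[2]^n$ then so does every face label $\es a_F$. Given arbitrary $\es b \in \N^n$, let $\es b' \in [2]^n$ be the indicator of $\supp \es b$; since each $\es a_F$ is squarefree, $\es a_F \preceq \es b$ iff $\es a_F \preceq \es b'$, so $X_{\preceq \es b} = X_{\preceq \es b'}$. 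Testing on $\es b \in [2]^n$ therefore suffices.

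For the final identification, when $\resf F_X$ is acyclic in positive degrees it remains to compute $\coker \phi_1$ in homological degree $0$. The empty cell provides $F_0 = S$ with label $\mathbf 0$, each vertex $v$ contributes a rank-one summand of $F_1$ with label $\es a_v$, and $\phi_1$ sends the corresponding generator to $\xx^{\es a_v} \cdot 1 \in S$ (this being the reduced boundary map, which augments each vertex to the empty cell with coefficient $1$, multiplied by the monomial label ratio). Hence $F_0/\im \phi_1 = S / \la \xx^{\es a_v} : v \text{ a vertex of } X\ra = S/I$, as claimed.

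The only real hazard is bookkeeping: keeping straight that ``dimension'' (with the empty cell at $-1$) differs by one from ``homological degree'' (with the empty cell at $0$), and checking that the $\N^n$-graded shift $S(-\es a_F)$ interacts with the $\preceq$ ordering to carve out exactly $X_{\preceq \es b}$ in each degree. Once these conventions are aligned, all three assertions follow directly from the degree-by-degree decomposition and the definition of a free resolution.
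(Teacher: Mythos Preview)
Your proof is correct and is precisely the standard argument (the one found in Miller--Sturmfels, which the paper cites). The paper itself does not prove this lemma; it simply quotes it as Proposition~4.5 of \cite{miller_combinatorial_2005}, so there is no in-paper proof to compare against. Your degree-by-degree identification of $(\resf F_X)_{\es b}$ with the shifted reduced chain complex of $X_{\preceq \es b}$, the squarefree reduction via the support indicator, and the cokernel computation are all exactly as in the standard treatment.
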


It turns out that even if we only have a nonminimal cellular resolution, it can still be used to compute the Betti numbers. 
\begin{prop}[Thm 4.7 of \cite{miller_combinatorial_2005}]
If $X$ is a cellular resolution of the monomial quotient $S/I$, then the Betti numbers of $I$ can be calculated as
$$\betti_{i, \es b}(I) = \dim_{\kk}\widetilde H_{i-1}(X_{\prec \es b}: \kk)$$
as long as $i \ge 1$.
\label{prop:betti_from_cellular_resolution}
\end{prop} 

\cref{lemma:acyclicity} and \cref{prop:betti_from_cellular_resolution} will be used repeatedly in the sequel.

We will also have use for the dual concept of cellular resolutions, {\it cocellular resolutions}, based on the cochain complex of a polyhedral cell complex.

\begin{defn}
Let $X' \sbe X$ be two polyhedral cell complexes.
The cochain complex $\cochain \bullet (X, X'; \kk)$ of the cellular pair $(X, X')$ is defined by the exact sequence
$$0 \to \cochain\bullet (X, X'; \kk) \to \cochain \bullet (X; \kk) \to \cochain{\bullet} (X' ; \kk) \to 0.$$
The $i$th {\bf relative cohomology} of the pair is $H^i(X, X'; \kk) = H^i \cochain \bullet (X, X'; \kk)$.
\end{defn}

\begin{defn}
Let $Y$ be a cell complex or a cellular pair.
Then $Y$ is called {\bf weakly colabeled} if the labels on faces $G \sbe F$ satisfy $\es a_G \succeq \es a_F$.
In particular, if $Y$ has an empty cell, then it must be labeled as well.
$Y$ is called {\bf colabeled} if, in addition, every face label $\es a _G$ equals the join $\bigvee \es a_F$ of all the labels on facets $F \spe G$.
Again, when necessary, we will specifically mention the labeling function $\lambda(F) = \es a_F$ and write the cell complex (or pair) as $(Y, \lambda)$.
\end{defn}
We have the following well known lemma from the theory of CW complexes.
\begin{lemma}\label{lemma:closed_iff_subcomplex}
Let $X$ be a cell complex.
A collection $\mathcal R$ of open cells in $X$ is a subcomplex of $X$ iff $\bigcup \mathcal R$ is closed in $X$. 
\end{lemma}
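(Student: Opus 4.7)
The plan is to exploit the fact that the open cells of $X$ partition the underlying space, together with the identity $\bar F = \bigsqcup_{G \text{ face of } F} \ocell G$, which holds for every cell $F$ by definition of a polyhedral cell complex.

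For the forward direction, suppose $\mathcal R$ is a subcomplex. I would rewrite $\bigcup \mathcal R = \bigcup_{F \in \mathcal R} \ocell F$ as $\bigcup_{F \in \mathcal R} \bar F$: the containment $\supseteq$ is immediate since $\ocell F \subseteq \bar F$; for the containment $\subseteq$, any point in $\bar F$ lies in $\ocell G$ for some face $G$ of $F$, and $G \in \mathcal R$ because $\mathcal R$ is closed under taking faces. Since $X$ has only finitely many cells, $\bigcup \mathcal R$ is then a finite union of closed polytopes in $X$, hence closed.

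For the backward direction, suppose $\bigcup \mathcal R$ is closed in $X$ and let $F \in \mathcal R$ with $G$ a face of $F$. I would argue that $\ocell F \subseteq \bigcup \mathcal R$ by definition, so taking closures in $X$ gives $\bar F \subseteq \bigcup \mathcal R$. In particular, $\ocell G \subseteq \bar F \subseteq \bigcup \mathcal R$. Now I would invoke the fact that open cells of $X$ are pairwise disjoint: since $\bigcup \mathcal R$ is a union of open cells of $X$, and $\ocell G$ meets $\bigcup \mathcal R$, the open cell $\ocell G$ must coincide with $\ocell{F'}$ for some $F' \in \mathcal R$, whence $G = F' \in \mathcal R$. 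Thus $\mathcal R$ is closed under faces, i.e., is a subcomplex.

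The only subtle point is the backward direction's appeal to the partition property of open cells, which justifies that ``$\bigcup \mathcal R$ contains $\ocell G$'' forces ``$G \in \mathcal R$'' rather than merely ``$\ocell G$ is covered by other open cells of $\mathcal R$.'' This is essentially the content of the CW structure and is the single nontrivial ingredient; both implications otherwise reduce to the tautology $\bar F = \bigsqcup_{G \subseteq F} \ocell G$.
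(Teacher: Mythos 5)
The paper states this lemma without proof, citing it as "well known from the theory of CW complexes," so there is no paper proof to compare against. Your argument is the standard one and is correct: in the forward direction you correctly observe that for a subcomplex $\mathcal R$ the union $\bigcup_{F\in\mathcal R}\ocell F$ coincides with the finite union of closed polytopes $\bigcup_{F\in\mathcal R}\bar F$ (using that $\bar F$ decomposes as the disjoint union of the relative interiors of its faces), hence is closed; in the backward direction you correctly combine closedness of $\bigcup\mathcal R$ with $\overline{\ocell F}=\bar F$ and the fact that the open cells of $X$ partition $\lvert X\rvert$ to conclude every face of a cell in $\mathcal R$ is itself in $\mathcal R$. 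One small caution worth flagging: the paper adopts the unusual convention that the relative interior of a point polytope is the empty cell; if taken literally that would spoil the partition property you rely on, but the lemma (and its downstream uses, e.g.\ \cref{lemma:subcollection_realization}) only make sense under the standard convention where a vertex is its own open cell, which is what you implicitly and correctly assume.
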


If $Y = (X, X')$ is a cellular pair, then we treat $Y$ as the collection of (open) cells in $X \setminus X'$, for the reason that $\cochain i(X, X', \kk)$ has as a basis the set of open cells of dimension $i$ in $X \setminus X'$.
As $Y$ being a complex is equivalent to $Y$ being the pair $(Y, \{\})$ (where $\{\}$ is the void subcomplex), in the sense that the reduced cochain complex of $Y$ is isomorphic to the cochain complex of the pair $(Y, \{\})$, we will only speak of cellular pairs from here on when talking about colabeling.

\begin{defn}
Let $Y = (X, A)$ be a cellular pair and $\mathcal U$ a subcollection of open cells of $Y$.
We say $\mathcal U$ is {\bf realized by a subpair} $(X', A') \sbe (X, A)$ (i.e. $X' \sbe X, A' \sbe A$) if $\mathcal U$ is the collection of open cells in $X' \setminus A'$. 
\end{defn}
\begin{defn}
Define $Y_{\preceq \es b}$ (resp. $Y_{\prec \es b}$ and $Y_{\es b}$) as the collection of open cells with label $\preceq \es b$ (resp. $\prec \es b$ and $\es b$).
\end{defn}
We often consider $Y_{\preceq \es b}$, $Y_{\prec \es b}$, and $Y_{\es b}$ as subspaces of $Y$, the unions of their open cells.
\begin{prop}\label{lemma:subcollection_realization}
Let $Y$ be a cellular pair and $\mathcal U = Y_{\preceq \es b}$ (resp. $Y_{\prec \es b}$ and $Y_{\es b}$).
Then $\mathcal U$ is realized by the pair $(\overline{\mathcal U}, \pbd \mathcal U)$, where the first of the pair is the closure of $\mathcal U$ as a subspace in $Y$, and the second is the {\bf partial boundary} $\pbd \mathcal U : = \overline{\mathcal U} \setminus \mathcal U$.
\end{prop}
\begin{proof}
See \cref{sec:omitted_proofs}.
\end{proof}
Note that if $X'$ is the irrelevant complex, then $H^i(X, X'; \kk) = H^i(X; \kk)$, the unreduced cohomology of $X$.
If $X'$ is the void complex, then $H^i(X, X'; \kk) = \rH^i(X; \kk)$, the reduced cohomology of $X$.
Otherwise $X'$ contains a nonempty cell, and it is well known that $H^i(X, X'; \kk) \cong \rH^i(X/X'; \kk)$.
In particular, when $X' = X$, $H^i(X, X'; \kk) \cong \rH^i(\bullet; \kk) = 0$.

\begin{defn}
Let $Y$ be a cellular pair $(X, X')$, (weakly) colabeled.
The {\bf (weakly) cocellular monomial matrix} supported on $Y$ has the cochain complex $\cochain \bullet (Y; \kk)$ for scalar entries, with top dimensional cells in homological degree 0.
Its row and column labels are the face labels on $Y$.
The {\bf (weakly) cocellular free complex} $\resf F^Y$ supported on $Y$ is the complex of $\N^n$-graded free $S$-modules (with basis) represented by the cocellular monomial matrix supported on $Y$.
If $\resf F^Y$ is acyclic, so that its homology lies only in degree 0, then $\resf F^Y$ is a (weakly) {\bf cocellular resolution}.
We sometimes abuse notation and say $Y$ is a (weakly) cocellular resolution if $\resf F^Y$ is.
\end{defn}

\begin{prop}
Let $(Y, \lambda)$ be a (weakly) colabeled complex or pair.
If $\resf F^Y$ is a (weakly) cocellular resolution, then $\resf F^Y$ resolves $I = \la \xx^{\es a_F}: F \text{ is a top dimensional cell of $Y$}\ra$.
It is in addition minimal iff for each cell $F$ of $Y$, $\lambda(F) \not= \lambda(G)$ for each cell $G$ strictly containing $F$.
\end{prop}

We say a cellular pair $(X, X')$ is of dimension $d$ if $d$ is the maximal dimension of all (open) cells in $X \setminus X'$.
If $Y$ is a cell complex or cellular pair of dimension $d$, then a cell $F$ of dimension $k$ with label $\es a_F$ corresponds to a copy of $S$ at homological dimension $d - k$ with degree $\xx^{\es a_F}$.
Therefore,
\begin{prop} \label{prop:cocell_betti}
If $Y$ is a $d$-dimension minimal (weakly) cocellular resolution of ideal $I$, then $\betti_{i, \es a}(I)$ is the number of $(d-i)$-dimensional cells in $Y$ with label $\es a$.
\end{prop}

We have an acyclicity lemma for cocellular resolutions similar to \cref{lemma:acyclicity}
\begin{lemma}
Let $Y = (X, A)$ be a weakly colabeled pair of dimension $d$.
For any $U \sbe X$, write $\overline U$ for the closure of $U$ inside $X$.
$Y$ is a cocellular resolution iff for any exponent sequence $\mathbf a$, $K := Y_{\preceq \mathbf a}$ satisfies one of the following:

1) The partial boundary $\pbd K := \overline K \setminus K$ contains a nonempty cell, and $H^i(\overline K, \pbd K)$ is 0 for all $i \not= d$ and is either 0 or $\kk$ when $i = d$, or

2) The partial boundary $\pbd K$ is void (in particular does not contain the empty cell), and $\rH^i(K)$ is 0 for all $i \not= d$ and is either 0 or $\kk$ when $i = d$, or

3) $K$ is void.
\label{lemma:cocell_acyc}
\end{lemma}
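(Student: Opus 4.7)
The plan is to mimic the proof of \cref{lemma:acyclicity} by identifying the $\N^n$-graded pieces of $\resf F^Y$ with relative cochain complexes of subpairs of $Y$, then reading off acyclicity on each graded piece.

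First, I would observe that a cell $F$ of $Y$ with $\dim F = k$ and label $\es a_F$ contributes a summand $S(-\es a_F)$ in homological degree $d-k$ of $\resf F^Y$, whose $\es b$-graded piece is $\kk$ precisely when $\es a_F \preceq \es b$. Hence $(\resf F^Y)_\es b$, viewed as a chain complex in homological grading, is spanned by the open cells in $K := Y_{\preceq \es b}$, which by \cref{lemma:subcollection_realization} are exactly the open cells of $\overline K \setminus \pbd K$. After verifying that the cocellular differential matches the relative coboundary of the pair, this yields an identification
\[ (\resf F^Y)_\es b \;\cong\; C^{d-\bullet}(\overline K, \pbd K;\, \kk), \]
so that $H_i(\resf F^Y)_\es b \cong H^{d-i}(\overline K, \pbd K;\, \kk)$.

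Second, I would translate. Being a cocellular resolution of the ideal $I = \la \xx^{\es a_F}: F\text{ top dim}\ra$ means $H_i(\resf F^Y) = 0$ for $i > 0$ and $H_0 = I$, whose $\es b$-graded piece is $0$ or $\kk$ since $I$ is monomial. Via the identification, this becomes: for every $\es b$, $H^i(\overline K, \pbd K;\, \kk) = 0$ for $i \ne d$ (vanishing for $i > d$ being automatic since $Y$ has dimension $d$) and $H^d \in \{0, \kk\}$. Conversely, these conditions on every $\es b$-graded piece imply $\resf F^Y$ is a cocellular resolution, so the equivalence is a direct translation in both directions.

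The three listed cases unpack this single condition according to the nature of $\pbd K$, using the conventions for relative cohomology recorded just before the lemma. Case (1), $\pbd K$ contains a nonempty cell, uses relative cohomology directly. Case (2), $\pbd K$ void, invokes the identity $H^i(\overline K, \pbd K;\, \kk) = \rH^i(\overline K;\, \kk)$, and $\overline K = K$ here since $\pbd K = \overline K \setminus K$ is empty. Case (3), $K$ void, is trivial since both the graded piece of $\resf F^Y$ and $I_\es b$ vanish. The main obstacle is the first step: carefully identifying the $\es b$-graded piece of $\resf F^Y$ with the relative cochain complex, including the correct dimension shift $d-\bullet$ and matching the cocellular differential with the relative coboundary under the closure/partial-boundary operation of \cref{lemma:subcollection_realization}. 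Once this identification is secured, the rest is mechanical translation.
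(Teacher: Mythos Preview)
Your approach is essentially the same as the paper's: identify the degree-$\es b$ strand of $\resf F^Y$ with the (shifted) cochain complex of the pair $(\overline K,\pbd K)$ via \cref{lemma:subcollection_realization}, and then translate acyclicity graded-piece by graded-piece. The paper does exactly this, introducing the reindexed complex $\resf E$ and writing $H_i(\resf E^{\es a}) = H^{d-i}(\overline K,\pbd K)$.

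There is one small gap in your unpacking step. The three cases in the lemma are indexed by ``$\pbd K$ contains a nonempty cell'', ``$\pbd K$ is void'', and ``$K$ is void'', but these are \emph{not} exhaustive: there is a fourth possibility, namely $\pbd K=\{\emptycell\}$ (the irrelevant complex). By the conventions recorded just before the lemma, in that case $H^\bullet(\overline K,\pbd K)=H^\bullet(\overline K)$ is \emph{unreduced} cohomology, so $H^0\neq 0$ whenever $K$ is nonvoid. For $d>0$ this violates your single condition, and the paper uses exactly this observation in the forward direction to argue that $\pbd K=\{\emptycell\}$ forces $K$ void, landing back in case~(3). Your phrase ``the three listed cases unpack this single condition'' hides this extra argument; once you add it, the proof goes through as you outlined.
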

\begin{proof}
See \cref{sec:omitted_proofs}.
\end{proof}

\begin{lemma} \label{lemma:betti_from_cocell}
Suppose $Y = (X, A)$ is a weakly colabeled pair of dimension $d$.
If $Y$ supports a cocellular resolution of the monomial ideal $I$, then the Betti numbers of $I$ can be calculated for all $i$ as
$$\betti_{i, \es b}(I) = \dim_{\kk} H^{d-i}(\overline Y_{\es b}, \pbd Y_{\es b}; \kk).$$
\end{lemma}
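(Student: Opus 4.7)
The plan is to compute $\Tor_i^S(\kk, I)_{\es b}$ using $\resf F^Y$ as a free resolution of $I$, then identify the resulting $\kk$-complex with the relative cochain complex of the pair $(\overline Y_{\es b}, \pbd Y_{\es b})$ (with a degree shift). By \cref{prop:Tor_betti}, $\betti_{i, \es b}(I) = \dim_\kk \Tor_i^S(\kk, I)_{\es b} = \dim_\kk H_i((\resf F^Y \otimes_S \kk)_{\es b})$, so it suffices to identify the degree-$\es b$ strand of $\resf F^Y \otimes_S \kk$ with $\cochain{d - \bullet}(\overline Y_{\es b}, \pbd Y_{\es b}; \kk)$.

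First I would unpack the left-hand side. Since $F^Y_j = \bigoplus_{\dim F = d - j} S(-\es a_F)$, tensoring with $\kk = S/\mm$ and restricting to multidegree $\es b$ kills every summand except those with $\es a_F = \es b$; so $(F^Y_j \otimes_S \kk)_{\es b}$ has $\kk$-basis indexed by cells $F$ of dimension $d - j$ with $\es a_F = \es b$. The cocellular differential sends $e_F$ to $\sum_{G}\pm\, \xx^{\es a_F - \es a_G}\, e_G$, where $G$ ranges over $(d - j + 1)$-cells having $F$ as a facet (the exponent is nonnegative by the colabel property). Reducing mod $\mm$, the monomial survives only when $\es a_F = \es a_G$, so the induced differential $\bar\phi_j$ is $e_F \mapsto \sum_{G \spe F,\ \es a_G = \es b} \pm\, e_G$.

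Next I would identify this with the cochain complex of the pair. By \cref{lemma:subcollection_realization}, the open cells in $(\overline Y_{\es b}, \pbd Y_{\es b})$ are exactly those of $Y$ with label $\es b$, so under the reindexing $k = d - j$ its cochain group $\cochain{k}(\overline Y_{\es b}, \pbd Y_{\es b}; \kk)$ has the same basis as $(F^Y_j \otimes_S \kk)_{\es b}$. The step requiring the most care, and the expected main obstacle, is verifying that the coboundary $\delta^k$ in the pair agrees with $\bar\phi_j$ term-by-term: I must check that the $(k+1)$-cells $G \in \overline Y_{\es b}$ with $F$ as a facet all have $\es a_G = \es b$, so that no spurious terms from $\pbd Y_{\es b}$ appear and every $G$ in $\bar\phi_j$ actually lies in the pair. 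This follows by combining the two labeling constraints: colabel applied to $F \sbe G$ gives $\es a_G \preceq \es a_F = \es b$, while $G \in \overline Y_{\es b}$ forces $\es a_G \succeq \es b$, leaving $\es a_G = \es b$. Having matched bases and differentials, $H_i((\resf F^Y \otimes_S \kk)_{\es b}) \cong H^{d - i}(\overline Y_{\es b}, \pbd Y_{\es b}; \kk)$, and taking $\kk$-dimensions yields the claimed formula.
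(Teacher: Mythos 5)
Your proof is correct and takes essentially the same route as the paper's: use $\resf F^Y$ as a free resolution, tensor with $\kk$ and pass to the degree-$\es b$ strand, and invoke \cref{lemma:subcollection_realization} to identify that strand with the relative cochain complex of $(\overline Y_{\es b}, \pbd Y_{\es b})$. The only difference is that you spell out the verification of the basis and the coboundary term-by-term (in particular the two-sided squeeze $\es a_G \preceq \es b \preceq \es a_G$ forcing $G \in Y_{\es b}$), whereas the paper asserts the identification of the degree-$\es b$ strand with the cochain complex of $Y_{\es b}$ in a single line.
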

\begin{proof}
See \cref{sec:omitted_proofs}.
\end{proof}

Like with boundaries, we abbreviate the quotient $\overline K/\pbd K$ to $K/\pbd$, so in particular, the equation above can be written as
$$\betti_{i, \es b}(I) = \dim_{\kk} \rH^{d-i}(Y_{\es b}/\pbd; \kk).$$

\subsection{The Canonical Ideal of a Function Class}
\label{sec:canonical_ideal}
\begin{defn}
An $n$-dimensional {\bf orthoplex} (or $n$-orthoplex for short) is defined as any polytope combinatorially equivalent to $\{x \in \R^n: \|x\|_1 \le 1\}$, the unit disk under the 1-norm in $\R^n$.
Its boundary is a simplicial complex and has $2^n$ facets.
A {\bf fleshy $(n-1)$-dimensional suboplex}, or {\bf suboflex} is the simplicial complex formed by any subset of these $2^n$ facets.
The {\bf complete $(n-1)$-dimensional suboplex} is defined as the suboplex containing all $2^n$ facets.
In general, a {\bf suboplex} is any subcomplex of the boundary of an orthoplex.
\end{defn}
For example, a 2-dimensional orthoplex is equivalent to a square; a 3-dimensional orthoplex is equivalent to an octahedron.

Let $\clsf{C} \sbe [n \to 2]$ be a class of finite functions.
There is a natural fleshy $(n-1)$-dimensional suboplex $\Sbpx C$ associated to $\clsf C$.
To each $f \in \clsf C$ we associate an $(n-1)$-dimensional simplex $F_f \cong \splx^{n-1}$, which will be a facet of $\Sbpx C$.
Each of the $n$ vertices of $F_f$ is labeled by a pair $(i, f(i))$ for some $i \in [n]$, and each face $G$ of $F_f$ is labeled by a partial function $\pf f \sbe f$, whose graph is specified by the labels of the vertices of $G$.
For each pair $f, g \in \clsf C$, $F_f$ is glued together with $F_g$ along the subsimplex $G$ (in both facets) with partial function label $f \cap g$.
This produces $\Sbpx C$, which we call the {\bf canonical suboplex} of $\clsf C$.

\begin{exmp} \label{exmp:complete_suboplex}
Let $[n \to 2]$ be the set of boolean functions with $n$ inputs.
Then $\Ss_{\fun n}$ is the complete $(n-1)$-dimensional suboplex.
Each cell of $\Sbpx{\fun n}$ is association with a unique partial function $\pf f: \sbe [n] \to [2]$, so we write $F_{\pf f}$ for such a cell.
\end{exmp}

\begin{exmp}
Let $f \in [n \to 2]$ be a single boolean function with domain $[n]$.
Then $\Ss_{\{f\}}$ is a single $(n-1)$-dimensional simplex.
\end{exmp}

\begin{exmp}
Let $\linfun_d^2 \sbe [2^d \to 2]$ be the class $(\Fld_2)^{d*}$ of linear functionals mod 2.
\cref{figure:parity_suboplexes} shows $\Ss_{\linfun_d^2}$ for $d=1$ and $d=2$.
\end{exmp}
\begin{figure*}
	\begin{subfigure}[t!]{.5\textwidth}
	\centering
	\includegraphics[width=0.52\textwidth]{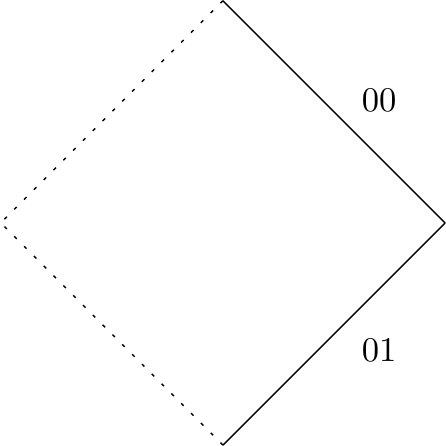}
	\caption{$\linfun_1$ suboplex.
	Dashed lines indicate facets of the complete suboplex not in $\Sbpx{\linfun_1}$.
	Label $00$ is the identically zero function; label $01$ is the identity function.}
	\end{subfigure}
	~
	\begin{subfigure}[t!]{.5\textwidth}
	\centering
	\includegraphics[width=.5\textwidth]{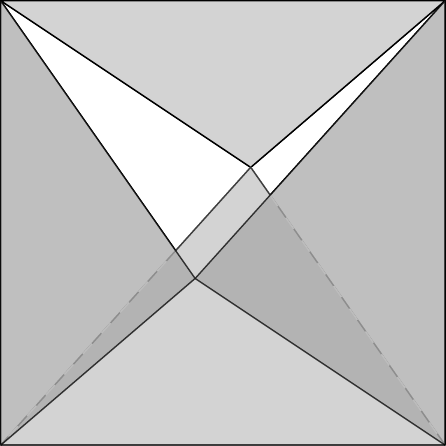}
	\caption{$\Sbpx{\linfun_2}$ is a cone of what is shown, which is a subcomplex of the boundary complex of an octahedron.
	The cone's vertex has label $((0,0), 0)$, so that every top dimensional simplex meets it, because every linear functional sends $(0, 0) \in (\Fld_2)^2$ to 0.}
	\end{subfigure}
\caption{$\linfun_1$ and $\linfun_2$ suboplexes.}
\label{figure:parity_suboplexes}
\end{figure*}

The above gluing construction actually make sense for any $\clsf C \sbe [n \to m]$ (with general codomain $[m]$), even though the resulting simplicial complex will no longer be a subcomplex of $\Sbpx{[n \to 2]}$.
However, we will still call this complex the {\bf canonical suboplex} of $\clsf C$ and denote it $\Sbpx C$ as well.
We name any such complex an {\bf $m$-suboplex}.
The $(n-1)$-dimensional $m$-suboplex $\Sbpx{[n \to m]}$ is called the {\bf complete $(n-1)$-dimensional $m$-suboplex}.

The canonical suboplex of $\clsf C \sbe [n \to m]$ can be viewed as the object generated by looking at the metric space $\clsf C_p$ on $\clsf C$ induced by a probability distribution $p$ on $[n]$, and varying $p$ over all distributions in $\splx^{n-1}$.
This construction seems to be related to certain topics in computer science like derandomization and involves some category theoretic techniques.
It is however not essential to the homological perspective expounded upon in this work, and thus its details are relegated to the appendix (See \cref{sec:cosheaf}).

\begin{defn}
Let $\clsf C \sbe [n \to m]$.
Write $S$ for the polynomial ring $\kk[\xx]$ with variables $x_{i, j}$ for $i \in [n], j \in [m]$.
We call $S$ the {\bf canonical base ring} of $\clsf C$.
The {\bf Stanley-Reisner ideal} $I_{\clsf C}$ of $\clsf C$ is defined as the Stanley-Reisner ideal of $\Sbpx{C}$ with respect to $S$, such that $x_{i, j}$ is associated to the ``vertex'' $(i, j)$ of $\Sbpx C$ (which might not actually be a vertex of $\Sbpx C$ if no function $f$ in $\clsf C$ computes $f(i) = j$).

The {\bf canonical ideal} $\cani{\clsf C}$ of $\clsf C$ is defined as the Alexander dual of its Stanley-Reisner ideal.
\end{defn}
By \cref{prop:alexdual_mingen}, the minimal generators of $\cani{\clsf C}$ are monomials $\xx^\sigma$ where $\sigma^\setc$ is the graph of a function in $\clsf C$.
Let us define $\Gamma \pf f$ to be the complement of $\graph \pf f$ in $[n] \times [m]$ for any partial function $\pf f : \sbe [n] \to [m]$.
Therefore, $\cani{\clsf C}$ is minimally generated by the monomials $\{\xx^{\Gamma f}: f \in \clsf C\}$.
When the codomain $[m] = [2]$, $\Gamma f = \graph (\neg f)$, the graph of the negation of $f$, so we can also write
$$\cani{\clsf C} = \la \xx^{\graph \neg f}: f \in \clsf C\ra.$$ 

\begin{exmp}
Let $\fun n$ be the set of boolean functions with domain $[n]$.
Then $I_{\fun n}$ is the ideal $\la x_{i, 0} x_{i, 1}: i \in [n]\ra$, and $\cani{\fun n}$ is the ideal $\la \xx^{\Gamma f}: f \in \fun n\ra = \la \xx^{\graph g}: g \in \fun n\ra$.
\label{exmp:fun_n_ideals}
\end{exmp}

\begin{exmp}
Let $f \in \fun n$.
The singleton class $\{f\}$ has Stanley-Reisner ideal $\la x_{i, \neg f(i)}: i \in [n] \ra$ and canonical ideal $\la \xx^{\Gamma f} \ra$.
\end{exmp}

The Stanley-Reisner ideal $I_{\clsf C}$ of a class $\clsf C$ has a very concrete combinatorial interpretation.

\begin{prop}
\label{prop:mingen_SR}
Let $\clsf C \sbe [n \to m]$.
$I_{\clsf C}$ is generated by all monomials of the following forms:
\begin{enumerate}
  \item $x_{u, i} x_{u, j}$ for some $u \in [n], i \not= j \in [m]$, or
  \item $\xx^{\graph \pf f}$ for some partial function $\pf f:\sbe [n] \to [m]$ such that $\pf f$ has no extension in $\clsf C$, but every proper restriction of $\pf f$ does.
\end{enumerate}
\end{prop}
It can be helpful to think of case 1 as encoding the fact that $\clsf C$ is a class of functions, and so for every function $f$, $f$ sends $u$ to at most one of $i$ and $j$.
For this reason, let us refer to monomials of the form $x_{u, i}x_{u, j}, i \not=j$ as \textbf{functional monomials} with respect to $S$ and write $\FM_S$, or $\FM$ when $S$ is clear from context, for the set of all functional monomials.
Let us also refer to a PF $\pf f$ of the form appearing in case 2 as an {\bf extenture} of $\clsf C$, and denote by $\ex \clsf C$ the set of extentures of $\clsf C$.
In this terminology, \cref{prop:mingen_SR} says that $I_{\clsf C}$ is minimally generated by all the functional monomials and $\xx^{\graph \pf f}$ for all extentures $\pf f \in \ex \clsf C$.

\begin{proof}
\renewcommand{\IC}{I_{\clsf C}}
The minimal generators of $\IC$ are monomials $\xx^{\es a} \in \IC$ such that $\xx^{\es a}/x_{u, i} \not \in \IC$ for any $(u, i) \in \es a$.
By the definition of $\IC$, $\es a$ is a nonface, but each subset of $\es a$ is a face of the canonical suboplex $\Sbpx C$ of $\clsf C$.
Certainly pairs of the form $\{(u, i), (u, j)\}$ for $u \in [n], i \not= j \in [m]$ are not faces of $\Sbpx C$, but each strict subset of it is a face unless $(u, i) \not \in \Sbpx C$ or $(u, j) \not \in \Sbpx C$.
In either case $\xx^{(u, i)}$ or $\xx^{(u, j)}$ or fall into case 2.
If a minimal generator $\bomega$ is not a pair of such form, then its exponent $\es b$ cannot contain such $\{(u, i), (u, j)\}$ either, or else $r$ is divisible by $x_{u, i}x_{u, j}$.
Therefore $\es b$ is the graph of a partial function $\pf f:\sbe [n \to m]$.
In particular, there is no $f \in \clsf C$ extending $\pf f$, or else $\graph \pf f$ is a face of $\Sbpx C$. But every proper restriction of $\pf f$ must have an extension in $\clsf C$.
Thus $\bomega$ is of the form stated in the proposition.
One can also quickly see that $\xx^{\graph \pf f}$ for any such $\pf f$ is a minimal generator of $\IC$.
\end{proof}
Taking the minimal elements of the above set, we get the following
\begin{prop}
The minimal generators of $\clsf C \sbe [n \to m]$ are $$\{\xx^{\Gamma \pf f}: \pf f \in \ex \clsf C\} \cup \{x_{u, i}x_{u, j}\in\FM: (u \mapsto i) \not \in \ex \clsf C, (u \mapsto j) \not \in \ex \clsf C\}.$$
\end{prop}
Are all ideals with minimal generators of the above form a Stanley-Reisner ideal of a function class?
It turns out the answer is no.
If we make suitable definitions, the above proof remains valid if we replace $\clsf C$ with a class of partial functions (see \cref{prop:mingen_SR_PF}).
But there is the following characterization of the Stanley-Reisner ideal of a (total) function class.

\begin{prop}
Let $I \sbe S$ be an ideal minimally generated by $\{\xx^{\graph \pf f}: \pf f \in \mathcal F\} \cup \{x_{u, i}x_{u, j}\in\FM: (u \mapsto i) \not \in \mathcal F, (u \mapsto j) \not \in \mathcal F\}$ for a set of partial functions $\mathcal F$.
Then $I$ is the Stanley-Reisner ideal of a class of total functions $\clsf C$ precisely when 

\begin{equation}
\label{stmt:SR_char}
\text{\parbox{.7\textwidth}{
For any subset $F \sbe \mathcal F$, if $F(u)$ defined as $\{\pf f(u): \pf f \in F, u \in \dom \pf f\}$ is equal to $[m]$ for some $u \in [n]$, then either $|F(v)| > 1$ for some $v \not = u$ in $[n]$, or $$\vee^u \mathcal F := \bigcup_{\pf f \in \mathcal F, u \in \dom \pf f} \pf f\restrict (\dom \pf f \setminus \{u\})$$ is a partial function extending some $\pf h \in \mathcal F$.}} \tag{$\star$}
\end{equation}
\label{prop:SR_ideal_char}
\end{prop}

\begin{lemma}
\renewcommand{\IC}{I_{\clsf C}}
For $I$ minimally generated as above, $I = \IC$ for some $\clsf C$ iff for any partial $\pf f: \sbe [n] \to [m]$, $\xx^{\graph \pf f} \not \in I$ implies $\xx^{\graph f} \not \in I$ for some total $f$ extending $\pf f$.
\label{lemma:SR_PF_cond}
\end{lemma}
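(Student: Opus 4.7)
The plan is to reformulate the statement geometrically via the simplicial complex $\Delta_I = \{\sigma \sbe [n] \times [m] : \xx^\sigma \not\in I\}$ of $I$. All listed generators are squarefree, so $I$ is a squarefree monomial ideal and coincides with the Stanley-Reisner ideal of $\Delta_I$. Hence $I = I_{\clsf C}$ for some total function class $\clsf C$ iff $\Delta_I = \Sbpx{\clsf C}$, and the facets of $\Sbpx{\clsf C}$ are exactly $\{\graph f : f \in \clsf C\}$. The proposed condition then reads: every face of $\Delta_I$ that is a partial-function graph extends to a facet of $\Delta_I$ that is a total-function graph.

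For the forward direction, assume $I = I_{\clsf C}$ and $\xx^{\graph \pf f} \not\in I$. Then $\graph \pf f \in \Sbpx{\clsf C}$, so it sits inside some facet $\graph f$ with $f \in \clsf C$; this $f$ is a total extension of $\pf f$ with $\xx^{\graph f} \not\in I$, as required.

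For the backward direction, define $\clsf C := \{f : [n] \to [m] \text{ total} : \xx^{\graph f} \not\in I\}$ and verify $\Delta_I = \Sbpx{\clsf C}$. The inclusion $\Sbpx{\clsf C} \sbe \Delta_I$ is immediate: any $\sigma \in \Sbpx{\clsf C}$ lies inside some $\graph f$ with $f \in \clsf C$, so $\xx^\sigma$ divides $\xx^{\graph f} \not\in I$ and therefore $\xx^\sigma \not\in I$. Conversely, take $\sigma \in \Delta_I$. I first argue that $\sigma$ is the graph of some partial function $\pf f$: if $\sigma$ contained two pairs $(u, i), (u, j)$ with $i \ne j$, then either $x_{u, i} x_{u, j}$ is listed as a minimal generator of $I$, or one of $(u \mapsto i), (u \mapsto j)$ lies in $\mathcal F$ and the corresponding variable $x_{u, i}$ or $x_{u, j}$ is itself a minimal generator; in either case $\xx^\sigma \in I$, contradicting $\sigma \in \Delta_I$. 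With $\sigma = \graph \pf f$ in hand, the assumed implication supplies a total $f \supseteq \pf f$ with $\xx^{\graph f} \not\in I$, so $f \in \clsf C$ and $\sigma \sbe \graph f$, placing $\sigma$ in $\Sbpx{\clsf C}$.

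The only step that needs real care is the case analysis showing that faces of $\Delta_I$ are partial-function graphs, because one must remember the degenerate possibility that a singleton $(u \mapsto i) \in \mathcal F$ makes the variable $x_{u, i}$ itself a minimal generator of $I$ (so that the functional monomial $x_{u, i} x_{u, j}$ lies in $I$ without appearing on the list of minimal generators). Apart from this bookkeeping, the argument is a routine translation between the Stanley-Reisner complex of $I$ and the canonical suboplex of $\clsf C$.
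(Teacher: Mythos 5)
Your proof is correct and follows essentially the same route as the paper's: pass to the Stanley--Reisner complex $\Delta_I$, observe that its faces are partial-function graphs because $I$ contains every functional monomial, and translate the stated condition into saying every face of $\Delta_I$ lies in a facet that is a total-function graph (i.e.\ $\Delta_I$ is the canonical suboplex of a total function class). Your extra bookkeeping around the singleton-extenture case $(u \mapsto i) \in \mathcal F$ is a worthwhile clarification of the paper's terser remark that ``$I$ has all functional monomials as generators,'' but it does not change the underlying argument.
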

\begin{proof}[Proof of \cref{lemma:SR_PF_cond}]
\renewcommand{\IC}{I_{\clsf C}}
Let $\Delta_I$ be the Stanley-Reisner complex of $I$.
Then each face of $\Delta_I$ is the graph of a partial function, as $I$ has all functional monomials as generators.
A set of vertices $\sigma$ is a face iff $\xx^{\sigma} \not \in I$.
$I = \IC$ for some $I$ iff $\Delta_I$ is a generalized suboflex, iff the maximal cells of $\Delta_I$ are all $(n-1)$-dimensional simplices, iff every cell is contained in such a maximal cell, iff $\xx^{\graph \pf f} \not \in I$ implies $\xx^{\graph f} \not \in I$ for some total $f$ extending $\pf f$.
\end{proof}
\begin{proof}[Proof of \cref{prop:SR_ideal_char}]
\renewcommand{\IC}{I_{\clsf C}}

$(\Rightarrow)$.
We show the contrapositive.
Suppose for some $F \sbe \mathcal F$ and $u \in [n]$, $F(u) = [m]$ but $|F(v)| \le 1$ for all $v \not= u$ and $\pf g:= \vee^u \mathcal F$ does not extend any $\pf f \in \mathcal F$.
Then $\xx^{\graph \pf g} \not \in I$, and every total $f \spe \pf g$ must contain one of $\pf f \in \mathcal F$, and so $\xx^{\graph f} \in I$.
Therefore $I \not= \IC$ for any $\clsf C$.

$(\Leftarrow)$.
Suppose $\eqref{stmt:SR_char}$ is true.
We show that for any nontotal function $\pf f:\sbe [n] \to [m]$ such that $\xx^{\graph \pf f} \not \in I$, there is a PF $\pf h$ that extends $\pf f$ by one point, such that $\xx^{\graph \pf h} \not \in I$.
By simple induction, this would show that $I = \IC$ for some $\clsf C$.

Choose $u \not \in \dom \pf f$.
Construct $F := \{\pf g \in \mathcal F: u \in \dom \pf g, \pf f \spe \pf g \restrict(\dom \pf g \setminus \{u\}) \}$.

If $F(u) \not = [m]$, then we can pick some $i \not \in F(u)$, and set $\pf h(u) = i$ and $\pf h(v) = \pf f(v), \forall v \not = u$.
If $\pf h \spe \pf k$ for some $\pf k \in \mathcal F$, then $\pf k \in F$, but then $\pf k(u) \not= \pf h(u)$ by assumption.
Therefore $\pf h$ does not extend any PF in $\mathcal F$, and $\xx^{\graph \pf h} \not \in I$.

If $F(u) = [m]$, then by $\eqref{stmt:SR_char}$, either $|F(v)| > 1$ for some $v \not= u$ or $\vee^u \mathcal F$ extends some $\pf h \in \mathcal F$.
The former case is impossible, as $\pf f \spe \pf g \restrict(\dom \pf g \setminus \{u\})$ for all $\pf g \in F$.
The latter case is also impossible, as it implies that $\xx^{\graph \pf f} \in I$.

\end{proof}
\subsection{Resolutions}
\label{sec:resolutions}
Sometimes we can find the minimal resolution of the Stanley-Reisner ideal of a class.
For example, consider the complete class $[n \to 2]$.
Its Stanley-Reisner ideal is $\la x_{i, 0} x_{i, 1}: i \in [n]\ra$ as explained in \cref{exmp:fun_n_ideals}.

\begin{thm}
Let $X$ be an $(n-1)$-simplex, whose vertex $i$ is labeled by monomial $x_{i, 0}x_{i, 1}$.
Then $X$ is a minimal cellular resolution of $S/I_{[n \to 2]}$.
\end{thm}
\begin{proof}
The vertex labels of $X$ generate $I_{\fun n}$, and each face label is distinct from other face labels, so if $X$ is a cellular resolution, then it resolves $S/I_{\fun n}$ and is minimal.
Therefore it suffices to show that $\resf F_X$ is exact. 
By \cref{lemma:acyclicity}, we need to show that $X_{\preceq \es b}$ is acyclic over $\kk$ for all $\es b \sbe [n] \times [2]$.
$X_{\preceq \es b}$ can be described as the subcomplex generated by the vertices $\{i : (i, 0), (i, 1) \in \es b\}$, and hence is a simplex itself and therefore contractible.
This completes the proof.
\end{proof}
\begin{cor}
The Betti numbers of $I_{\fun n}$ are nonzero only at degrees of the form
$$\sigma = \prod_{i \in U} x_{i, 0} x_{i, 1}$$
for subset $U \sbe [n]$.
In such cases,
$$\betti_{i, \sigma}(I_{\fun n}) = \ind(i = |U| - 1).$$
\end{cor}

Similar reasoning also gives the minimal resolution of any singleton class.
\begin{thm} \label{thm:minres_singleton}
Suppose $f \in \fun n$.
Let $X$ be an $(n-1)$-simplex, whose vertex $i$ is labeled by variable $x_{i, \neg f(i)}$.
Then $X$ is a minimal cellular resolution of $S/I_{\{f\}}$.
\end{thm}
\begin{cor}
The Betti numbers of $I_{\{f\}}$ are nonzero only at degrees of the form
$$\sigma = \prod_{i \in U} x_{i, \neg f(i)}$$
for subset $U \sbe [n]$.
In such cases,
$$\betti_{i, \sigma}(I_{\{f\}}) = \ind(i = |U| - 1).$$

\end{cor}

However, in general, minimally resolving the Stanley-Reisner ideal of a class seems difficult.
Instead, we turn to the canonical ideal, which appears to more readily yield cellular resolutions, and as we will see, whose projective dimension corresponds to the VC dimension of the class under an algebraic condition.
For example, a single point with label $\xx^{\Gamma f}$ minimally resolves $S/\cani{\{f\}}$ for any $f \in \fun n$.

We say $(X, \lambda)$ is a {\bf cellular resolution of a class $\clsf C$} if $(X, \lambda)$ is a cellular resolution of $S/\cani{\clsf C}$.
In the following, we construct the cellular resolutions of many classes that are studied in Computational Learning Theory.
As a warmup, we continue our discussion of $[n \to 2]$ by constructing a cellular resolution of its canonical ideal.
\begin{thm}
Let $P$ be the $n$-dimensional cube $[0, 1]^n$, where vertex $v \in [2]^n$ is labeled with the monomial $\prod_{i=1}^n x_{i, v_i}$.
Then $P$ minimally resolves $\fun n$.
\label{thm:fun_n_can_res}
\end{thm}
\begin{proof}
We first show that this labeled cell complex on the cube is a cellular resolution.
Let $\sigma \sbe [n] \times [2]$.
We need to show that $P_{\preceq \sigma}$ is acyclic.
If for some $i$, $(i, 0) \not \in\sigma \And (i,1 ) \not \in \sigma$, then $P_{\preceq \sigma}$ is empty and thus acyclic.
Otherwise, $\sigma^\setc$ defines a partial function $\pf f:\sbe [n] \to [2]$.
Then $P_{\preceq \sigma}$ is the ``subcube''
$$\{v \in [0, 1]^n: v_i = \neg f(i), \forall i \in \dom f\},$$ 
and is therefore acyclic.
This shows that $P$ is a resolution.
It is easy to see that all faces of $P$ have unique labels in this form, and hence the resolution is minimal as well.

$P$ resolves $S/\cani{\fun n}$ by \cref{exmp:fun_n_ideals}
\end{proof}
The above proof readily yields the following description of $\fun n$'s Betti numbers.
\begin{cor}
The Betti numbers for $\cani{\fun n}$ are nonzero only at degrees of the form $\Gamma \pf f$ for partial functions $\pf f : \sbe [n] \to [2]$.
More precisely,
$$\betti_{i, \Gamma \pf f}(\cani{\fun n}) = \ind(|\dom f | = n - i)$$
\end{cor}

We made a key observation in the proof of \cref{thm:fun_n_can_res}, that when neither $(i, 0)$ nor $(i, 1)$ is in $\sigma$ for some $i$, then $P_{\preceq \sigma}$ is empty and thus acyclic.
A generalization to arbitrary finite codomains is true for all complexes $X$ we are concerned with:
\begin{lemma}
 
Let $(X, \lambda)$ be a labeled complex in which each vertex $i$ is labeled with $\Gamma \pf f_i$ for partial function $\pf f_i: [n] \to [m]$.
Then the face label $\lambda(F)$ for a general face $F$ is $\Gamma\lp \bigcap_{i \in F} \pf f_i \rp$. 
A fortiori $X_{\preceq \sigma}$ is empty whenever $\sigma$ is not of the form $\Gamma \pf g$ for some partial function $\pf g: \sbe [n] \to [m]$.
\label{lemma:pf_label_propagate}
\end{lemma}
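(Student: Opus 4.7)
The plan is to unwind the definitions in two short steps, relying only on the convention that a face label is the coordinatewise maximum of its vertex labels.

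First, I would compute $\lambda(F)$ directly. Each vertex label $\Gamma \pf f_i$ is a $\{0,1\}$-valued vector, i.e.\ the indicator of a subset of $[n]\times[m]$, so the coordinatewise maximum over $i \in F$ is precisely the indicator of the set-theoretic union. Applying De Morgan's law inside the ambient set $[n]\times[m]$,
$$\lambda(F) \;=\; \bigcup_{i \in F} \Gamma \pf f_i \;=\; \bigcup_{i \in F} \bigl(([n]\times[m]) \setminus \graph \pf f_i\bigr) \;=\; ([n]\times[m]) \setminus \bigcap_{i \in F} \graph \pf f_i.$$
Now the intersection of graphs of partial functions (as subsets of $[n]\times[m]$) is itself the graph of a partial function---the greatest common restriction, which is $\bigcap_{i\in F} \pf f_i$ when one identifies a partial function with its graph. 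Therefore $\lambda(F) = \Gamma\lp\bigcap_{i \in F} \pf f_i\rp$, as claimed.

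Second, I would deduce the emptiness statement by contraposition. Suppose $F$ is any positive-dimensional face with $\lambda(F) \preceq \sigma$, i.e.\ $\lambda(F) \sbe \sigma$. Complementing in $[n]\times[m]$ yields $\sigma^\setc \sbe \graph\lp\bigcap_{i \in F} \pf f_i\rp$. But any subset of the graph of a partial function is itself the graph of a partial function---no input receives two distinct values---so $\sigma^\setc = \graph \pf g$ for some $\pf g$, whence $\sigma = \Gamma \pf g$. Contrapositively, if $\sigma$ is not of that form, no such $F$ exists, and $X_{\preceq \sigma}$ is empty in the sense intended (as in the proof of \cref{thm:fun_n_can_res}).

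There is no real obstacle here; the lemma is essentially the observation that ``coordinatewise max of indicator vectors equals indicator of union,'' combined with De Morgan. The only cosmetic point is that an empty cell (if present) carries the label $\mathbf 0$, which is not strictly of the form $\Gamma \pf g$; this is harmless for every subsequent use, since acyclicity of an irrelevant subcomplex is automatic.
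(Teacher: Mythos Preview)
Your proof is correct and follows essentially the same route as the paper: the first part is the identical De~Morgan computation, and the second part is the contrapositive of the paper's direct argument (the paper names the witnessing pair $(a,b),(a,b')\notin\sigma$, while you observe that $\sigma^\setc$ inherits the graph property from $\lambda(F)^\setc$). One cosmetic slip: you wrote ``positive-dimensional face'' where you meant ``nonempty face'' (vertices are $0$-dimensional), but the argument applies verbatim to vertices, so this does not affect correctness.
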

\begin{proof}
Treating the exponent labels, which are squarefree, as sets, we have
\begin{align*}
\lambda(F) = \bigcup_{i \in F} \Gamma \pf f_i = \bigcup_{i \in F} (\graph \pf f_i)^\setc = \lp\graph \bigcap_{i \in F} \pf f_i\rp^\setc = \Gamma\left(\bigcap_{i \in F} \pf f_i\right)
\end{align*}
If $\sigma$ is not of the form $\Gamma \pf g$, then for some $a \in [n]$ and $b \not= b' \in [m]$, $(a, b), (a, b') \not \in \sigma$.
But every exponent label is all but at most one of the pairs $(a, *)$.
So $X_{\preceq \sigma}$ is empty.
\end{proof}
If we call a complex as described in the lemma {\bf partial-function-labeled}, or PF-labeled for short, then any PF-labeled complex has a set of {\it partial function labels}, or PF labels for short, along with its monomial/exponent labels.
If $\pf f_F$ denotes the partial function label of face $F$ and $\es a_F$ denotes the exponent label of face $F$, then they can be interconverted via
$$\es a_F = \Gamma\pf f_F\qquad \pf f_F = \es a_F^\setc$$
where on the right we identify a partial function with its graph.
\cref{lemma:pf_label_propagate} therefore says that $F \sbe G$ implies $\pf f_F \spe \pf f_G$, and $\pf f_F = \bigcap_{i \in F} \pf f_i$, for faces $F$ and $G$.
When we wish to be explicit about the PF labeling function, we use the symbol $\mu$, such that $\mu(F) = \pf f_F$, and refer to labeled complexes as pairs $(X, \mu)$ or triples $(X, \lambda, \mu)$.
We can furthermore reword \cref{lemma:acyclicity} for the case of PF-labeled complexes.
Write $X_{\spe \pf f}$ (resp. $X_{\supset \pf f}$) for the subcomplex with partial function labels weakly (resp. strictly) extending $\pf f$.
\begin{lemma}
A PF-labeled complex $X$ is a cellular resolution iff $X_{\spe \pf f}$ is acyclic over $\kk$ for all partial functions $\pf f$.
\label{lemma:pf_acyclicity}
\end{lemma}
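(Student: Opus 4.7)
The plan is to reduce this directly to \cref{lemma:acyclicity}, using \cref{lemma:pf_label_propagate} as the bridge between squarefree exponent labels and partial function labels. Since every face label $\lambda(F) = \Gamma \pf f_F$ is squarefree (viewed as a subset of $[n] \times [m]$), \cref{lemma:acyclicity} tells us that $X$ is a cellular resolution iff $X_{\preceq \sigma}$ is acyclic over $\kk$ for every squarefree $\sigma \sbe [n] \times [m]$. So the task is just to reindex this family of subcomplexes by partial functions.

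First I would split the squarefree $\sigma$'s into two cases. By \cref{lemma:pf_label_propagate}, if $\sigma$ is \emph{not} of the form $\Gamma \pf g$ for any partial function $\pf g$, then $X_{\preceq \sigma}$ is empty, hence vacuously acyclic; such $\sigma$ therefore contribute no condition. So we only need to check acyclicity for $\sigma = \Gamma \pf g$ with $\pf g$ a partial function.

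The key identification is then $X_{\preceq \Gamma \pf g} = X_{\spe \pf g}$. Treating exponent labels as subsets of $[n] \times [m]$, a face $F$ lies in $X_{\preceq \Gamma \pf g}$ iff $\Gamma \pf f_F \sbe \Gamma \pf g$, which by taking complements is equivalent to $\graph \pf f_F \spe \graph \pf g$, i.e.\ $\pf f_F \spe \pf g$, which means exactly $F \in X_{\spe \pf g}$. Combining with the previous case, $X_{\preceq \sigma}$ is acyclic for every squarefree $\sigma$ iff $X_{\spe \pf g}$ is acyclic for every partial function $\pf g$, which gives the lemma.

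There is no serious obstacle here; the only small subtlety is the convention that the empty (void) subcomplex counts as acyclic, which is needed to dispose of the $\sigma \ne \Gamma \pf g$ case cleanly. This is consistent with the usage of \cref{lemma:acyclicity} elsewhere, so I would simply note it in passing rather than belabor it.
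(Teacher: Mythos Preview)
Your proof is correct and is exactly the rewording the paper intends: the paper states the lemma immediately after \cref{lemma:pf_label_propagate} as a direct reformulation of \cref{lemma:acyclicity} for PF-labeled complexes and gives no separate argument. Your bridge $X_{\preceq \Gamma\pf g} = X_{\spe \pf g}$ and the disposal of non-$\Gamma\pf g$ degrees via \cref{lemma:pf_label_propagate} are precisely the two observations that make it a rewording.
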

A {\it PF-colabeled complex or pair} is defined similarly.
The same interconversion equations hold.
We can likewise reword \cref{lemma:cocell_acyc}.
\begin{lemma}
Let $(X, A)$ be a weakly PF-colabeled complex or pair of dimension $d$.
$(X, A)$ is a cocellular resolution if for any partial function $\pf f$, $(X, A)_{\spe \pf f}$ is either

1) representable as a cellular pair $(Y, B)$ -- that is, $X \setminus A$ as a collection of open cells is isomorphic to $Y \setminus B$ as a collection of open cells, such that $H^i(Y, B)$ is 0 for all $i \not= d$, or

2) a complex $Y$ (in particular it must contain a colabeled empty cell) whose reduced cohomology vanishes at all dimensions except $d$. 
\label{lemma:pf_cocell_acyc}
\end{lemma}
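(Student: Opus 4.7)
The plan is to reduce the statement to \cref{lemma:cocell_acyc} by reparametrizing the family $\{(X,A)_{\preceq \es b}\}_{\es b}$ appearing there by partial functions. Concretely, I would first verify that the argument of \cref{lemma:pf_label_propagate} carries over verbatim to weakly PF-colabeled pairs: every face label $\es a_F = \Gamma \pf f_F$ is the complement of the graph of a partial function, so $\es a_F \preceq \es b$ forces $\es b$ to omit at most one pair $(a,*)$ for each $a \in [n]$, whence $(X,A)_{\preceq \es b}$ is void unless $\es b = \Gamma \pf g$ for some partial function $\pf g : \sbe [n] \to [m]$. Moreover, $\es a_F \preceq \Gamma \pf g$ is equivalent, after taking complements, to $\graph \pf f_F \spe \graph \pf g$, i.e.\ $\pf f_F \spe \pf g$, so for $\es b$ of this form
\[
(X,A)_{\preceq \Gamma \pf g} \;=\; (X,A)_{\spe \pf g}.
\]

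Given this dictionary, the three cases of \cref{lemma:cocell_acyc} translate directly. If $\es b$ is not of the form $\Gamma \pf g$, condition (3) of \cref{lemma:cocell_acyc} is fulfilled automatically, because the subcollection is void. For $\es b = \Gamma \pf g$, apply \cref{lemma:subcollection_realization}: the subcollection $(X,A)_{\spe \pf g}$ is realized by the pair $(\overline K, \pbd K)$ with $K = (X,A)_{\preceq \es b}$. If $\pbd K$ contains a nonempty cell, then $(\overline K, \pbd K)$ is the cellular pair $(Y,B)$ appearing in condition (1) of the new statement, and the cohomology hypothesis $H^i(Y,B) = 0$ for $i \ne d$ gives exactly condition (1) of \cref{lemma:cocell_acyc}. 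Otherwise $\pbd K$ is void, meaning $(X,A)_{\spe \pf g}$ is a subcomplex $Y$ which (being nonempty and having no partial boundary) must contain the colabeled empty cell; the reduced cohomology hypothesis of the new condition (2) then matches condition (2) of \cref{lemma:cocell_acyc}.

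The main obstacle I expect is a small mismatch of hypotheses: \cref{lemma:cocell_acyc} additionally constrains $H^d$ (respectively $\rH^d$) to be either $0$ or $\kk$, whereas the new statement only asks for vanishing away from degree $d$. I would want to either (i) tacitly strengthen conditions (1) and (2) here to include the same ``$0$ or $\kk$ at degree $d$'' clause, since this is what is actually used downstream in computing Betti numbers via \cref{lemma:betti_from_cocell}, or (ii) verify that the PF-colabeled structure forces this rank bound on $H^d$ automatically in all instances to which the lemma is applied. Once this point is settled, the rest is routine bookkeeping through the PF/exponent translation, and no new topological or algebraic input beyond \cref{lemma:cocell_acyc}, \cref{lemma:subcollection_realization}, and \cref{lemma:pf_label_propagate} is needed.
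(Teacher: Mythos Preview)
Your approach is exactly the paper's: the lemma is introduced there with the single sentence ``We can likewise reword \cref{lemma:cocell_acyc},'' and the intended argument is precisely the PF/exponent dictionary you spell out, using \cref{lemma:pf_label_propagate} to restrict attention to $\es b = \Gamma \pf g$ and then matching $(X,A)_{\preceq \Gamma \pf g} = (X,A)_{\spe \pf g}$ against the cases of \cref{lemma:cocell_acyc}.

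Your ``main obstacle'' is not one. Recall that by \cref{defn:cell_res} (and the cocellular analogue), $(X,A)$ is a cocellular resolution simply means $\resf F^{(X,A)}$ has homology only in homological degree $0$; there is no rank constraint on that degree-$0$ homology in the definition. Tracing the proof of \cref{lemma:cocell_acyc}, the degree-$\es a$ strand of $\resf F^{(X,A)}$ is (up to reindexing) the relative or reduced cochain complex of $K = (X,A)_{\preceq \es a}$, so vanishing of $H^i$ (resp.\ $\rH^i$) for $i \ne d$ already forces that strand to have homology only at degree $0$. The ``$0$ or $\kk$ at $i = d$'' clause in \cref{lemma:cocell_acyc} is additional information one gets \emph{a posteriori} once one knows the resolution resolves a monomial ideal; it is not needed as an input for the sufficient direction you are proving here. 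So neither of your fixes (i) or (ii) is required.
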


Because any cellular resolution of a class $\clsf C$ only has cells with degree $\Gamma \pf f$ for some PF $\pf f$, the Betti numbers $\betti_{i, \sigma}(\cani{\clsf C})$ can be nonzero only when $\sigma = \Gamma \pf g$ for some PF $\pf g$.
We define {\bf the Betti numbers of a class $\clsf C$} as the Betti numbers of its canonical ideal $\cani{\clsf C}$, and we denote $\betti_{i, \pf f}(\clsf C) := \betti_{i, \Gamma \pf f}(\cani{\clsf C})$.

Finally we note a trivial but useful proposition and its corollary.
\begin{prop}
Let $\clsf C \sbe \fun[m]n$, and let $\pf f : \sbe[n] \to [m]$.
The subset of functions extending $\pf f$, $\{h \in \clsf C: \pf f \sbe h\}$, is the intersection of the collection of sets which extend the point restrictions of $f$, $\bigcap_{i \in \dom \pf f} \{h \in \clsf C: (i, \pf f(i)) \sbe h\}$.

If partial functions $\pf g_1, \ldots, \pf g_k \in \fun [m] n$ satisfy $\bigcup_t \pf g_t = \pf f$, then we also have
$$\{h \in \clsf C: \pf f \sbe h\} = \bigcap_{t = 1}^k \{h \in \clsf C: \pf g_t \sbe h\}.$$
\label{prop:extension_set_as_intersection}
\end{prop}
\begin{cor}
Let $\pf f : \sbe [n] \to [m]$.
Suppose $X$ is a PF-labeled complex.
If partial functions $\pf g_1, \ldots, \pf g_k \in \fun [m] n$ satisfy $\bigcup_t \pf g_t = \pf f$, then 
$$X_{\spe \pf f} = \bigcap_{t=1}^k X_{\spe \pf g_t}.$$
\label{cor:extension_complex_as_intersection}
\end{cor}

With these tools in hand, we are ready to construct cellular resolutions of more interesting function classes.

\subsubsection{Delta Functions}

Let $\cDelta_n \sbe [n \to 2]$ be the class of delta functions $\delta_{i}(j) = \ind(i = j)$.
Form the abstract simplex $X$ with vertices $[n]$.
Label each vertex $i$ with $\delta_i$ and induce PF labels on all higher dimensional faces in the natural way.
One can easily check the following lemma.
\begin{lemma}
For any face $F \sbe [n]$ with $|F| > 1$, its PF label $\pf f_F$ is the function defined on $[n] \setminus F$, sending everything to 0.
Conversely, for every partial $\pf f:\sbe [n] \to [2]$ with $\im \pf f \sbe \{0\}$, there is a unique face $F$ with $\pf f_F = \pf f$ as long as $n - |\dom f| \ge 2$.
\label{lemma:delta_PFlabel}
\end{lemma}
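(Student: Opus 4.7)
The plan is to apply \cref{lemma:pf_label_propagate} directly: since each vertex $i$ carries the total function $\delta_i$, the PF label of a face $F$ equals $\bigcap_{i \in F} \delta_i$, and both halves of the lemma reduce to computing this intersection pointwise.

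For the first statement, I would argue coordinate by coordinate. Fix a face $F$ with $|F| > 1$ and a point $k \in [n]$. If $k \in F$, choose any $i \in F \setminus \{k\}$ (possible since $|F| > 1$); then $\delta_k(k) = 1$ while $\delta_i(k) = 0$, so the two PFs disagree at $k$, forcing $k$ out of $\dom\lp\bigcap_{i \in F}\delta_i\rp$. If on the other hand $k \notin F$, then $\delta_i(k) = 0$ for every $i \in F$, so the intersection is defined at $k$ and equals $0$. Hence $\pf f_F$ is precisely the constant-zero partial function on $[n]\setminus F$, as claimed.

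For the converse, given $\pf f:\sbe [n]\to[2]$ with $\im \pf f \sbe \{0\}$ and $n - |\dom \pf f| \ge 2$, set $F := [n] \setminus \dom \pf f$. Then $|F| \ge 2 > 1$, so by the first half $\pf f_F$ is the constant-zero function on $[n]\setminus F = \dom \pf f$, which coincides with $\pf f$. Uniqueness is immediate from the same computation: if $F'$ is any face with $|F'| > 1$ and $\pf f_{F'} = \pf f$, the first half forces $\dom \pf f = [n] \setminus F'$, i.e.\ $F' = [n] \setminus \dom \pf f = F$.

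There is no real obstacle here; the only subtlety to flag is that the hypothesis $|F| > 1$ in the first statement, and equivalently $n - |\dom \pf f| \ge 2$ in the converse, is genuinely needed --- without it, one lands on a singleton face whose label is an honest $\delta_i$ (with a $1$ in its image) rather than a constant-zero PF, and the characterization breaks.
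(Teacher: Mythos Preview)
Your argument is correct and is exactly the direct computation the paper has in mind; the paper itself leaves this lemma unproved with the remark ``One can easily check the following lemma.'' One tiny tightening: your uniqueness paragraph only quantifies over faces $F'$ with $|F'|>1$, whereas the lemma asserts uniqueness among all faces---but your final paragraph already supplies the missing observation that a singleton face has label $\delta_i$ with $1$ in its image, so cannot equal $\pf f$.
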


\begin{thm}
$X$ is a $(n-1)$-dimensional complex that minimally resolves $\cDelta_n$.
\end{thm}
\begin{proof}
We apply \cref{lemma:pf_acyclicity}:
We show for any $\pf f:\sbe [n] \to [2]$, $X_{\spe \pf f}$ is acyclic.

If $\pf f$ sends two distinct elements to 1, then $X_{\spe \pf f}$ is empty.
If $\pf f$ sends exactly one element $i$ to 1, then $X_{\spe \pf f}$ is the single point $i$.
If $\pf f$ is the empty function, then $X_{\spe \pf f}$ is the whole simplex and thus acyclic.
Otherwise, $\im \pf f = \{0\}$.
If $n - | \dom f| = 1$, then there is exactly one delta function extending $\pf f$, so $X_{\spe \pf f}$ is again a point.
If $n - |\dom f| \ge 2$, then by \cref{lemma:delta_PFlabel}, $X_{\spe \pf f}$ is exactly one face $F$ with $\pf f_F = \pf f$, and therefore acyclic.

$X$ is furthermore minimal because all PF labels are distinct.
\end{proof}

Tabulating the faces by their labels, we obtain
\begin{cor}
For $i > 0$, $\betti_{i, \pf f}({\cDelta_n})$ is nonzero only when $\im \pf f \sbe \{0\}$ and $n - |\dom \pf f| \ge 2$, and $i = n - |\dom \pf f| - 1$.
In that case, $\betti_{i, \pf f}({\cDelta_n}) = 1$.
In particular, the top dimensional Betti number is $\betti_{n-1, \emptyfun}(\cDelta_n) = 1$.
\end{cor}

\subsubsection{Weight-$k$ Functions}
\newcommand{\cYes}{\textsc{wt}}
Write $o := \mathbf 0 \in [n \to 2]$, the function that sends all inputs to 0. 
Let $\cYes(f, k)_n \sbe [n \to 2]$ be the class consisting of all functions $g$ such that there are exactly $k$ inputs $u \in [n]$ such that $g(u) \not = f(u)$.
This is a generalization of $\cDelta$, as $\cYes(o, 1)_n = \cDelta$.
WLOG, we consider the case $f = o$ in this section.
Consider the hyperplane $H_k := \{v \in \R^n: \sum_i v_i = k\}$ and the polytope given by
$$P^k_n := [0, 1]^n \cap H_k.$$
We inductively define its labeling function $\mu_n^k$ and show that $(P^k_n, \mu_n^k)$ is a minimal cellular resolution of $\cYes(f, k)_n$.

For $n = 1$, $P_1^0$ and $P_1^1$ are both a single point.
Set $\mu_1^0(P_1^0) = (0 \mapsto 0)$ and $\mu_1^1(P_1^1) = (0 \mapsto 1)$.
Then trivially, $(P_1^0, \mu_1^0)$ is the minimal resolution of $\cYes(o, 0) = \{0 \mapsto 0\}$ and $(P_1^1, \mu_1^1)$ is the minimal resolution of $\cYes(o, 1) = \{0 \mapsto 1\}$.

Suppose that $\mu_m^k$ is defined and that $(P_m^k, \mu_m^k)$ is a minimal cellular resolution of $\cYes(o, k)$ for all $0 \le k \le m$.
Consider $n = m+1$ and fix $k$.
Write, for each $u \in [n], b \in [2]$, $F_{u, b} := [0, 1]^u \times \{b\} \times [0, 1]^{n - u -1}$ for the corresponding facet of $[0, 1]^n$.
Then $P_n^k$ has boundary given by
$$\bigcup_{u \in [n], b \in [2]} F_{u, b} \cap H_k.$$
But we have $F_{u, 0} \cap H_k \cong P_{n-1}^k$ and $F_{u, 1} \cap H_k \cong P_{n-1}^{k-1}$ (here $\cong$ means affinely isomorphic).
Thus, if $G$ is a face of $F_{u, b} \cap H_k$, we define the labeling functions 
\begin{align*}
\mu_n^k(G) : [n] &\to [2]\\
	i	& \mapsto \mu_{n-1}^{k-b}(G)(i) & &\text{if $i < u$}\\
	i	& \mapsto b					& &\text{if $i =u$}\\
	i	& \mapsto \mu_{n-1}^{k-b}(G)(i-1) & &\text{if $i > u$.}
\end{align*}
If we represent functions as a string of $\{0, 1, .\}$ (where $.$ signifies ``undefined''), then essentially $\mu_n^k(G)$ is obtained by inserting $b \in \{0, 1\}$ at the $u$th position in $\mu_{n-1}^{k-b}(G)$. 
It is easy to see that, when $G$ is both a face of $F_{u, b} \cap H_k$ and a face of $F_{u', b'} \cap H_k$, the above definitions of $\mu_n^k(G)$ coincide.
Finally, we set $\mu_n^k(P_n^k) = \emptyfun$.
This finishes the definition of $\mu_n^k$.

In order to show that $(P_n^k, \mu_n^k)$ is a minimal cellular resolution, we note that by induction hypothesis, it suffices to show that $(P_n^k)_{\spe \emptyfun} = P_n^k$ is acyclic, since $(P_n^k)_{\spe (u \mapsto b) \cup \pf f} \cong (P_{n-1}^{k-b})_{\spe \pf f}$ is acyclic. 
But of course this is trivial given that $P_n^k$ is a polytope.
By an easy induction, the vertex labels of $P_n^k$ are exactly the functions  of $\cYes(o, k)_n$.
Thus 
\begin{thm}
$(P_n^k, \mu_n^k)$ as defined above is a minimal resolution of $\cYes(o, k)_n$.
\end{thm}

\begin{cor} 
For $k \not = 0, n$, $\clsf C :=\cYes(o, k)_n$ has a Betti number $\betti_{i, \emptyfun}(\clsf C) = \ind(i = n-1)$.
Furthermore, for each PF $\pf f$, $\betti_{i, \pf f}(\cYes(o, k)_n)$ is nonzero for at most one $i$, where it is 1.
\end{cor}

\subsubsection{Monotone Conjunction}
Let $L = \{l_1, \ldots, l_d\}$ be a set of literals.
The class of {\bf monotone conjunctions} $\monconj_d$ over $L$ is defined as the set of functions that can be represented as a conjunction of a subset of $L$.
We represent each $h \in \monconj_d$ as the set of literals $L(h)$ in its conjunctive form, and for each subset (or indicator function thereof) $T$ of literals, let $\Lambda(T)$ denote the corresponding function.
For example, $\Lambda\{l_1, l_3\}$ is the function that takes $v \in [2]^d$ to 1 iff $v_1 = v_3 = 1$.
 
\begin{thm}
Let $X$ be the $d$-cube in which each vertex $V \in [2]^d$ has partial function label (that is in fact a total function) $\pf f_V = \Lambda(V)$, where on the RHS $V$ is considered an indicator function for a subset of literals.
Then $X$ resolves $\monconj_d$ minimally.
\label{thm:monconj_res}
\end{thm}

We first show that the induced face labels of $X$ are unique, and hence if $X$ is a resolution, it is minimal.
This will follow from the following three lemmas.

\begin{lemma}
Let ${\pf w}$ be a partial function ${\pf w}: \sbe [d] \to [2]$.
Let $\Sigma_{\pf w}$ be the set of monotone conjunctions $\{h: l_i \in L(h) \text{ if ${\pf w}(i) = 1$ and } l_i \not\in L(h) \text{ if ${\pf w}(i) = 0$} \}$.
Then the intersection of functions (not literals) $\bigcap \Sigma_{\pf w}$ is the partial function $\Lambda({\pf w}) := \pf f: \sbe [2^d] \to [2]$,
$$\pf f(v) = \begin{cases}
0 & \text{if $v_i = 0$ for some $i$ with ${\pf w}(i) = 1$}\\
1 & \text{if $v_i = 1$ for all $i$ with ${\pf w}(i) = 1$ and for all $i$ where ${\pf w}(i)$ is undefined}\\
\text{undefined} & \text{otherwise.}
\end{cases}
$$

When ${\pf w}$ is a total function considered as a bit vector, $\Lambda({\pf w})$ coincides with the previous definition of $\Lambda$.

If $F$ is the face of the cube resolution $X$ with the vertices $\{V: V \spe {\pf w}\}$ (here treating $V \in [2]^d \cong [d \to 2]$ as a function), then the partial function label of $F$ is $\pf f$.
\label{lemma:cap_monconj}
\end{lemma}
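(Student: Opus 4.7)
The plan is to first compute $\bigcap \Sigma_{\pf w}$ pointwise and verify it matches the three-case definition of $\Lambda(\pf w)$; then deduce the face-label claim by applying \cref{lemma:pf_label_propagate} after identifying the vertex set $\{V : V \spe \pf w\}$ of $F$ with $\Sigma_{\pf w}$ via $V \mapsto \Lambda(V)$.

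For the pointwise computation, fix $v \in [2]^d$. The intersection has value $b$ at $v$ iff every $h \in \Sigma_{\pf w}$ satisfies $h(v) = b$. Suppose first that some $i \in \pf w^{-1}(1)$ has $v_i = 0$. Every $h \in \Sigma_{\pf w}$ contains the literal $l_i$ by definition of $\Sigma_{\pf w}$, so $h(v) = 0$ and the intersection equals $0$. Next, suppose $v_i = 1$ for every $i$ with $\pf w(i) = 1$ and for every $i$ with $\pf w(i)$ undefined. For any $h \in \Sigma_{\pf w}$, $L(h) \sbe \pf w^{-1}(1) \cup ([d] \setminus \dom \pf w)$, and $v$ is $1$ on all such indices, so $h(v) = 1$ and the intersection equals $1$.

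The remaining case needs a genuine disagreement to conclude that the intersection is undefined. If neither of the above two cases applies, then $v_i = 1$ on all of $\pf w^{-1}(1)$ but some undefined index $j$ has $v_j = 0$. Take $h_1 := \Lambda(\pf w^{-1}(1))$ and $h_2 := \Lambda(\pf w^{-1}(1) \cup \{j\})$; both lie in $\Sigma_{\pf w}$ by inspection of $L(h_1)$ and $L(h_2)$, and $h_1(v) = 1$ while $h_2(v) = 0$, so the intersection is undefined at $v$. This completes the verification that $\bigcap \Sigma_{\pf w} = \Lambda(\pf w)$.

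For the face-label assertion, note that the vertices of $F$ are precisely the total functions $V : [d] \to [2]$ extending $\pf w$, and the map $V \mapsto \Lambda(V)$ sends this set bijectively to $\Sigma_{\pf w}$ (since $L(\Lambda(V)) = V^{-1}(1)$ contains $\pf w^{-1}(1)$, is disjoint from $\pf w^{-1}(0)$, and is otherwise free on undefined indices). By \cref{lemma:pf_label_propagate}, $\pf f_F = \bigcap_{V \in F} \Lambda(V) = \bigcap \Sigma_{\pf w} = \Lambda(\pf w)$. The only step that goes beyond unwinding definitions is the construction of the two disagreeing conjunctions in the third case, which is settled by toggling a single undefined literal.
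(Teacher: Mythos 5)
Your proof is correct and follows essentially the same route as the paper's: verify the first two cases are forced, exhibit two disagreeing conjunctions in the remaining case to show the intersection is undefined there, and invoke \cref{lemma:pf_label_propagate} for the face label. The only difference is that you name the disagreeing pair $h_1 = \Lambda(\pf w^{-1}(1))$ and $h_2 = \Lambda(\pf w^{-1}(1)\cup\{j\})$ explicitly, whereas the paper merely asserts their existence.
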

\begin{proof}
$\pf f$ is certainly contained in $\bigcap \Sigma_{\pf w}$.
To see that the inclusion is an equality, we show that for any $v$ not of the two cases above, there are two functions $h, h'$ that disagree on $v$.
Such a $v$ satisfies $v_i = 1$ for all ${\pf w}(i) = 1$ but $v_i = 0$ for some ${\pf w}(i)$ being undefined.
There is some $h \in \Sigma_{\pf w}$ with $L(h)$ containing the literal $l_i$ and there is another $h' \in \Sigma_{\pf w}$  with $l_i \not\in L(h')$.
These two functions disagree on $v$.

The second statement can be checked readily.
The third statement follows from \cref{lemma:pf_label_propagate}. 
\end{proof}

\begin{lemma}
For any partial function $\pf f$ of the form in \cref{lemma:cap_monconj}, there is a unique partial function ${\pf w}: \sbe d \to 2$ with $\pf f = \Lambda({\pf w})$, and hence there is a unique cell of $X$ with PF label $\pf f$.
\label{lemma:cube_label_bijection}
\end{lemma}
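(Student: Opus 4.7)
The plan is to show that the map $\pf w \mapsto \Lambda(\pf w)$ is injective on partial functions $\pf w : \sbe [d] \to [2]$; this immediately gives uniqueness of $\pf w$, and uniqueness of the cell of $X$ with PF label $\pf f$ then follows from \cref{lemma:cap_monconj}, which exhibits that cell as the face spanned by $\{V \in [2]^d : V \supseteq \pf w\}$, a set determined by $\pf w$.

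To recover $\pf w$ from $\pf f := \Lambda(\pf w)$, I would partition $[d]$ as $T_0 \cup T_1 \cup T_u$ according to whether $\pf w$ sends $i$ to $0$, to $1$, or is undefined, since this partition is equivalent data to $\pf w$. From the formula for $\Lambda$ in \cref{lemma:cap_monconj}, $\pf f^{-1}(1)$ consists of exactly those $v \in [2]^d$ that are $1$ on $T_1 \cup T_u$, so
\[T_0 = \{i \in [d] : \exists\, v \in \pf f^{-1}(1) \text{ with } v_i = 0\},\]
which reads $T_0$ off from $\pf f$ unambiguously.

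To then separate $T_1$ from $T_u$, I would use the ``single-zero'' vectors: for each $i \in [d]$ let $e_i$ be the vector that is $0$ at coordinate $i$ and $1$ elsewhere. A direct check against the defining formula for $\Lambda$ shows $e_i \in \pf f^{-1}(0)$ iff some coordinate of $e_i$ lying in $T_1$ is $0$, iff $i \in T_1$. Hence $T_1 = \{i : e_i \in \pf f^{-1}(0)\}$ and $T_u = [d] \setminus (T_0 \cup T_1)$, so $\pf w$ is uniquely determined by $\pf f$.

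I do not foresee any serious obstacle here; the argument is essentially a direct inversion formula for $\Lambda$. The only place requiring a moment of care is the degenerate case $T_1 = \emptyset$, in which $\pf f^{-1}(0)$ is empty and one needs to verify that the recipe above still correctly returns $T_1 = \emptyset$, which it does by vacuous satisfaction. Everything else is a routine unwinding of the three-case definition of $\Lambda$.
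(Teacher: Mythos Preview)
Your proposal is correct and follows essentially the same approach as the paper's proof: both recover $\pf w$ from $\pf f$ by reading off $\pf w^{-1}(1)\cup(\dom\pf w)^{\setc}$ (your $T_1\cup T_u$, the paper's $A$) from $\pf f^{-1}(1)$, and then isolating $\pf w^{-1}(1)$ (your $T_1$, the paper's $B$) using the single-zero vectors $e_i$. The only cosmetic difference is that you first name the complement $T_0$ and then $T_1$, whereas the paper first names $A$ and $B$ and then deduces $\dom\pf w$ and $\pf w^{-1}(0)$.
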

\begin{proof}
The set $A := \inv {\pf w} 1 \cup (\dom {\pf w})^{\setc}$ is the set $\{i \in d: v_i = 1, \forall v \in \inv {\pf f} 1\}$, by the second case in $\pf f$'s definition.
The set $B := \inv {\pf w} 1$ is the set of $i \in d$ such that the bit vector $v$ with $v_i = 0$ and $v_j = 1$ for all $j \not= i$ is in $\inv {\pf f} 0$, by the first case in $\pf f$'s definition.
Then $\dom {\pf w} = (A \setminus B)^\setc$, and $\inv {\pf w} 0 = (\dom {\pf w}) \setminus (\inv {\pf w} 1)$.
\end{proof}

\begin{lemma}
The face labels of $X$ are all unique.
\label{lemma:cube_label_unique}
\end{lemma}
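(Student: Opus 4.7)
The plan is to combine \cref{lemma:cap_monconj} and \cref{lemma:cube_label_bijection} with the standard combinatorial description of faces of the $d$-cube. First, I would recall that the faces of $X = [0,1]^d$ are in canonical bijection with partial functions $\pf w : \sbe [d] \to [2]$: the face $F_{\pf w}$ associated to $\pf w$ is the subcube obtained by fixing the $i$th coordinate to $\pf w(i)$ for each $i \in \dom \pf w$ and letting the remaining coordinates range over $[0,1]$; its vertex set is exactly $\{V \in [2]^d : V \spe \pf w\}$. In particular, distinct partial functions $\pf w$ yield distinct faces.

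Next, I would invoke \cref{lemma:cap_monconj} to identify the PF label of $F_{\pf w}$: that lemma tells us exactly that the face whose vertex set is $\{V : V \spe \pf w\}$ has PF label $\Lambda(\pf w)$. Hence the labeling on faces of $X$ factors as $F_{\pf w} \mapsto \Lambda(\pf w)$ through the bijection between faces of the cube and partial functions on $[d]$.

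Finally, I would appeal to \cref{lemma:cube_label_bijection}, which provides the crucial injectivity: the map $\pf w \mapsto \Lambda(\pf w)$ is one-to-one (an explicit inverse is given there via the sets $A$ and $B$ reconstructed from $\pf f$). Composing an injection with a bijection gives an injection, so distinct faces of $X$ carry distinct PF labels, which is the claim.

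The proof is essentially assembly; no new work is required. The only mild subtlety is making the first step — the bijection between cube faces and partial functions $[d] \to [2]$ — explicit and consistent with the vertex-set description used in \cref{lemma:cap_monconj}, so that the two lemmas can be chained without a notational gap. Once that identification is in place, injectivity of $\Lambda$ does all the work.
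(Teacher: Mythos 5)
Your proposal is correct and is essentially the paper's argument spelled out: the paper's proof is the one-line remark that the lemma follows from \cref{lemma:cap_monconj} and \cref{lemma:cube_label_bijection}, and your write-up simply makes explicit the bijection between cube faces and partial functions and then chains the two cited lemmas. Nothing new is needed and nothing is missing.
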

\begin{proof}
Follows from \cref{lemma:cap_monconj} and \cref{lemma:cube_label_bijection}.
\end{proof}

\begin{proof}[Proof of \cref{thm:monconj_res}]
We show that $X$ is a resolution (minimal by the above) by applying \cref{lemma:pf_acyclicity}.
Let $\pf f: \sbe [2^d] \to [2]$ be a partial function and $\pf g_0, \pf g_1$ be respectively defined by $\pf g_t = \pf f \restrict \inv{\pf f}{t}$ for $t = 0, 1$, so that $\pf f = \pf g_0 \cup \pf g_1$.
By \cref{cor:extension_complex_as_intersection}, $X_{\spe \pf f} = X_{\spe \pf g_0} \cap X_{\spe \pf g_1}$.
We first show that $X_{\spe \pf g_1}$ is a face of $X$, and thus is itself a cube.
If $h \in \monconj_d$ is a conjunction, then it can be seen that $h$ extends $\pf g_1$ iff $L(h) \sbe L_1 := \bigcap_{v \in \dom \pf g_1} \{l_i: v_i = 1\}$ (check this!).
Thus $X_{\spe \pf g_1}$ is the subcomplex generated by the vertices $V$ whose coordinates $V_i$ satisfy $V_i = 0, \forall i \not \in L_1$.
This subcomplex is precisely a face of $X$.

Now we claim that each cell of $X_{\spe \pf g_0} \cap X_{\spe \pf g_1}$ is a face of a larger cell which contains the vertex $W$ with $W_1 = 1, \forall i \in L_1$ and $W_i = 0, \forall i \not \in L_1$.
This would imply that $X_{\spe \pf g_0} \cap X_{\spe \pf g_1}$ is contractible via the straight line homotopy to $W$.

We note that if $h, h' \in \monconj_d$ and $L(h) \sbe L(h')$, then $h$ extends $\pf g_0$ only if $h'$ also extends $\pf g_0$.
(Indeed, $h$ extends $\pf g_0$ iff $\forall v \in \dom \pf g_0$, $v_k = 0$ while $l_k \in L(h)$ for some $k$. This still holds for $h'$ if $h'$ contains all literals appearing in $h$).
This means that, if $F$ is a face of $X_{\spe \pf g_0}$, then the face $F'$ generated by $\{V': \exists V \in F, V \sbe V'\}$ (where $V$ and $V'$ are identified with the subset of literals they correspond to) is also contained in $X_{\spe \pf g_0}$; $F'$ can alternatively be described geometrically as the intersection $[0, 1]^d \cap (F + [0, 1]^d)$.
If furthermore $F$ is a face of $X_{\spe \pf g_1}$, then $F' \cap X_{\spe \pf g_1}$ contains $W$ as a vertex, because $W$ is inclusion-maximal among vertices in $X_{\spe \pf g_1}$ (when identified with sets for which they are indicator functions for).
This proves our claim, and demonstrates that $X_{\spe \pf f}$ is contractible.
Therefore, $X$ is a (minimal) resolution, of $\cani{\monconj_d}$ by construction.
\end{proof}

\begin{cor}
$\betti_{i, \pf f}(\monconj_d)$ is nonzero iff $\pf f = \Lambda({\pf w})$ for some PF ${\pf w}: \sbe [d] \to [2]$ and $i = d - |\dom {\pf w}|$, and in that case it is 1.
In particular, the top dimensional nonzero Betti number is $\betti_{d, \mathbf 1 \mapsto 1}(\monconj_d) = 1$.
\end{cor}
\begin{proof}
This follows from \cref{lemma:cap_monconj} and \cref{lemma:cube_label_bijection}.
\end{proof}

We will refer to $X$ as the {\bf cube resolution} of $\monconj_d$.

\subsubsection{Conjunction}
Define $L' := \bigcup_{i=1}^d \{l_i, \neg l_i\}$.
The class of conjunctions $\CLconj_d$ is defined as the set of functions that can be represented as a conjunction of a subset of $L'$.
In particular, $L'$ contains the null function $\bot: v \mapsto 0, \forall v$, which can be written as the conjunction $l_1 \wedge \neg l_1$.

We now describe the polyhedral cellular resolution of $\CLconj_d$, which we call the \textbf{cone-over-cubes} resolution, denoted $\cocres_d$.
Each nonnull function $h$ has a unique representation as a conjunction of literals in $L'$.
We define $L(h)$ to be the set of such literals and $\widetilde{\Lambda}$ be the inverse function taking a set of consistent literals to the conjunction function.
We assign a vertex $V_{h} \in \{-1, 0, 1\}^d\times\{0\} \in \R^{d+1}$ to each nonnull $h$ by
$$(V_{h})_i = \begin{cases}
1 & \text{if $l_i \in L(h)$}\\
-1 & \text{if $\neg l_i \in L(h)$}\\
0 & \text{otherwise}
\end{cases}$$
for all $1 \le i \le d$ (and of course $(V_{h})_{d+1} = 0$), so that the PF label $\pf f_{V_{h}} = h$.
We put in $\cocres_d$ all faces of the $\overbrace{(2, 2, \ldots, 2)}^d$ pile-of-cubes: these are the collection of $2^d$ $d$-dimensional unit cubes with vertices among $\{-1, 0, 1\}^d \times \{0\}$.
This describes all faces over nonnull functions.

Finally, we assign the coordinate $V_{\bot} = (0, \ldots, 0, 1) \in \R^{d+1}$, and put in $\cocres_d$ the $(d+1)$-dimensional polytope $C$ which has vertices $V_{h}$ for all $h \in \CLconj_d$, and which is a cone over the pile of cubes, with vertex $V_{\bot}$.
(Note that this is an improper polyhedron since the $2^d$ facets of $C$ residing on the base, the pile of cubes, all sit on the same hyperplane.)

\cref{figure:COC2} shows the cone-over-cubes resolution for $d=2$.

\begin{figure}
\centering
\includegraphics[width=.8\textwidth]{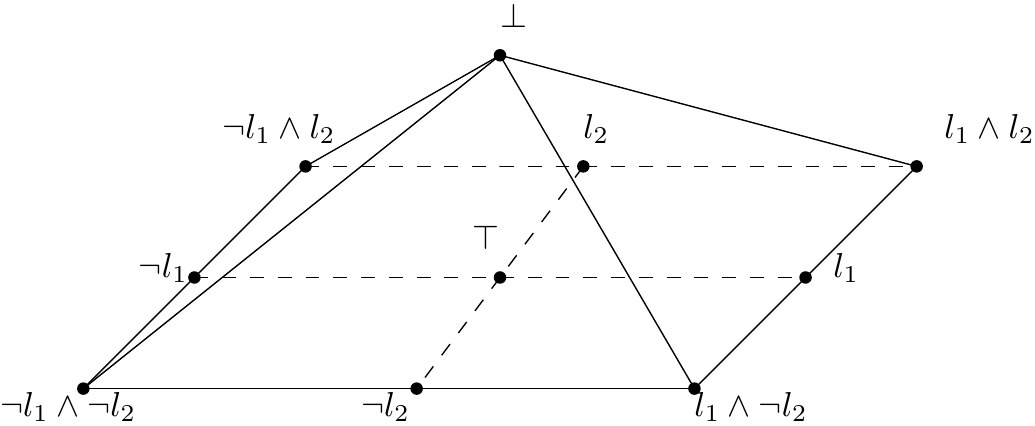}
\caption{Cone-over-cube resolution of $\CLconj_2$.
Labels are PF labels.}
\label{figure:COC2}
\end{figure}

\begin{thm}
$\cocres_d$ is a $(d+1)$-dimensional complex that minimally resolves $\CLconj_d$.
\end{thm}
\begin{proof}
Let $X = \cocres_d$.
We first shot that $X$ is a resolution of $\CLconj_d$.
We wish to prove that for any $\pf f:\sbe [2^d] \to [2]$, the subcomplex of $X_{\spe \pf f}$ is acyclic.

First suppose that $\im \pf f = \{0\}$.
Then $X_{\spe \pf f}$ is a subcomplex that is a cone with $V_{\bot}$ as the vertex, and hence contractible.

Otherwise $\pf f$ sends some point $u \in [2^d] \cong [2]^d$ to $1$.
All $h \in \CLconj_d$ extending $\pf f$ must have $L(h)$ be a subset of $\{l_i^{u_i}: i \in [d]\}$, where $l_i^{u_i}$ is the literal $l_i$ if $u_i = 1$ and $\neg l_i$ if $l_i = 0$.
The subcomplex of $X$ consisting of these $h$ is a single $d$-cube of the pile, given by the opposite pair of vertices $\mathbf 0$ and $2 u - \mathbf 1$ in $\R^d$ considered as the hyperplane containing the pile.
But then this case reduces to the reasoning involved in the proof that the cube resolution resolves $\monconj_d$.
Hence we conclude that $X_{\spe \pf f}$ is acyclic for all $\pf f$, and therefore $X$ resolves $\CLconj_d$.

We prove the uniqueness of PF labels and therefore the minimality of $X$ through the following series of propositions.
\end{proof}

Each face of $\cocres_d$ containing the vertex $V_{\bot}$ is a cone over some subpile-of-subcubes, which has vertices $\mathcal P_{\pf w} = \{V \in \{-1, 0, 1\}^d \times \{0\}: V_i = {\pf w}(i), \forall i \in \dom {\pf w}\}$ for some PF ${\pf w}:\sbe [d] \to \{-1, 1\}$.
We shall write $C_{\pf w}$ for a face associated with such a ${\pf w}$.
Obviously $\dim C_{\pf w} = d+1 - |\dom {\pf w}|$.

\begin{prop}
Let $W \in \{-1, 0, 1\}^d \times \{0\}$ be defined by $W_i = {\pf w}(i), \forall i \in \dom {\pf w}$, and $W_i = 0$ otherwise.
Thus $W$ is the ``center'' of the subpiles-of-subcubes mentioned above.
Its PF label is a total function $f = \pf f_W \in \CLconj_d$.

Then the face $C_{\pf w}$ has a unique PF label $\Lambda'({\pf w}) := f \restrict \inv f 0 = f \cap \bot$ as a partial function $:\sbe [2^d] \to [2]$.
\end{prop}
\begin{proof}
By \cref{lemma:pf_label_propagate}, the PF label of $C_{\pf w}$ is the intersection of the PF labels of its vertices.
Since $\Lambda'({\pf w}) = f \cap \bot$, $\Lambda'({\pf w}) \spe \pf f_{C_{\pf w}}$.

Because $L(f) \sbe L(\pf f_V)$ for all $V \in \mathcal P_{\pf w}$, $f(u) = 0$ implies $\pf f_V(u) = 0, \forall V \in \mathcal P_{\pf w}$.
Thus $\Lambda'({\pf w}) \sbe \pf f_V, \forall V \in C_{\pf w} \implies \Lambda'({\pf w}) = \pf f_{C_{\pf w}}$ as desired.

Uniqueness follows from the uniqueness of preimage of 0.
\end{proof}

The rest of the faces in $\cocres_d$ reside in the base, and for each face $F$, $\bigcup\{L(\pf f_V): V \in F\}$ contains at most one of each pair $\{\neg l_i, l_i\}$.
Define the partial order $\lhd$ on $\{-1, 0, 1\}^d$ as the product order of partial order $0 \lhd' +1, -1$.
It is easy to check that $V \unlhd W$ implies $L(\pf f_V) \sbe L(\pf f_W)$, which further implies $\inv{\pf f_V}(0) \sbe \inv{\pf f_W}(0)$ and $\inv{\pf f_V}(1) \spe \inv{\pf f_W}(1)$.
Each face $F$ can be described uniquely by the least and the greatest vertices in $F$ under this order, which we denote resp. as $\min F$ and $\max F$.
Then the vertices in $F$ are precisely those who fall in the interval $[\min F, \max F]$ under partial order $\lhd$.
\begin{prop}
Let $F$ be a face residing in the base of $\cocres_d$ and write $V := \min F$ and $W := \max F$.
Then $F$ has a unique PF label $\pf f_F = \Lambda(V, W): \sbe [2^d] \to \{-1, 0, 1\}$,
$$u \mapsto \begin{cases}
0 & \text{if $u_i = (1-V_i)/2$ for some $i$ with $V_i \not= 0$}\\
1 & \text{if $u_i = (1+W_i)/2$ for all $i$ with $W_i \not = 0$}\\
\text{undefined} & \text{otherwise.}
\end{cases}$$
\end{prop}
\begin{proof}
By the observation above, we see that $\pf f_F = \bigcap_{U \in F} \pf f_U$ has $\pf f_F^{-1}(0) = \pf f_V^{-1}(0)$ and $\pf f_F^{-1}(1) = \pf f_W^{-1}(1)$.
Both sets are exactly of the form described above.

It remains to check that the map $F \mapsto \pf f_F$ is injective.
Let $\pf f = \pf f_F$ for some face $F$.
We have $(\max F)_i = 1$ iff $\forall u \in \pf f^{-1}(1), u_i = 1$ and $(\max F)_0 = 0$ iff $\forall u \in \pf f^{-1}(1), u_i = -1$.
Thus $\pf f$ determines $\max F$.
Let $v$ be the bit vector defined by $v_j = (1+(\max F)_j)/2$ if $(\max F)_j \not = 0$ and $v_j = 0$ otherwise.
Let $v^i$ denote $v$ with the $i$th bit flipped.
Then $(\min F)_i \not = 0$ iff $\pf f(v^i) = 0$.
For all other $i$, we have $(\min F)_i = (\max F)_i$.
This proves the uniqueness of the label $\pf f_F$.
\end{proof}

\begin{prop}
Every face of $\cocres_d$ has a unique PF label.
\end{prop}
\begin{proof}
The only thing remaining to check after the two propositions above is that faces incident on the vertex $V_{\bot}$ have different PF labels from all other faces.
But it is obvious that functions of the form in the previous proposition have nonempty preimage of 1, so cannot equal $\Lambda'({\pf w})$ for any ${\pf w}$. 
\end{proof}

Summarizing our results, we have the following
\begin{thm}
$\betti_{i, \pf f}(\CLconj_d)$ is nonzero iff $\pf f = \Lambda'({\pf w})$ for some ${\pf w}: \sbe[d] \to \{-1, 1\}$ and $i = d + 1 - |\dom {\pf w}|$ or $\pf f = \Lambda(V, W)$ for some $V, W \in \{-1, 0, 1\}^d, V \unlhd W$.
In either case, the Betti number is 1.

In particular, the top dimensional nonzero Betti number is $\betti_{d+1, \emptyfun}(\CLconj_d) = 1$.
\end{thm}

\subsubsection{Threshold Functions}
\label{sec:threshold_functions}
Let $U \sbe \R^d$ be a finite set of points.
We are interested in the class of linear threshold functions $\linthr_U$ on $U$, defined as the set of functions of the form
$$u \mapsto \begin{cases}
1 & \text{if $c \cdot u > r$}\\
0 & \text{if $c \cdot u \le r$}
\end{cases}$$
for some $c \in \R^d, r \in \R$.
We shall assume $U$ affinely spans $\R^d$; otherwise, we replace $\R^d$ with the affine span of $U$, which does not change the class $\linthr_U$.

When $U = \bcube^d$, this is the class of linear threshold functions on $d$ bits, and we write $\linthr_d$ for $\linthr_U$ in this case.
Define
$$\monom s (u_0, \ldots, u_{d-1}) = (u_0\cdots u_{s-2} u_{s-1}, u_0 \cdots u_{s-2} u_s, \cdots, u_{d-s} \cdots u_{d-2} u_{d-1})$$
as the function that outputs degree $s$ monomials of its input.
For $U = \Monom^k$, the image of $\bcube^d$ under the map 
\begin{align*}
\monom{\le k}: u \mapsto (\monom 1 u, \cdots, \monom k u)
\end{align*}
$\linthr_U$ becomes $\polythr_d^k$, the class of polynomial threshold functions on $d$ bits with degree bound $k$.

We will construct a minimal \emph{cocellular} resolution for $\linthr_U$, which will turn out to be homeomorphic as a topological space to the $d$-sphere $S^d$.
\footnote{For readers familiar with hyperplane arrangements: The cocellular resolution is essentially $S^d$ intersecting the fan of the hyperplane arrangement associated with the matroid on $U$.
The partial function labels on the resolution are induced from the covector labelings of the fan.}

We first vectorize the set $U$ by mapping each point to a vector, $u \mapsto \vec u, (u_1, \ldots, u_d) \mapsto (u_1, \ldots, u_d, 1)$.
We refer to the image of $U$ under this vectorization as $\vec U$.
Each oriented affine hyperplane $H$ in the original affine space $\R^d$ (including the hyperplane at infinity, i.e. all points get labeled positive or all points get labeled negative) corresponds naturally and bijectively to a vector hyperplane $\vec H$ in $\R^{d+1}$ which can be identified by the normal vector $\nu(\vec H)$ on the unit sphere $S^d \sbe \R^{d+1}$ perpendicular to $\vec H$ and oriented the same way.

For each vector $\vec u \in \R^{d+1}$, the set of oriented vector hyperplanes $\vec H$ that contains $\vec u$ is exactly the set of those which have their normal vectors $\nu(\vec H)$ residing on the equator $E_{\vec u} := \nu(\vec H)^\perp \cap S^d$ of $S^d$.
This equator divides $S^d \setminus E_{\vec u}$ into two open sets: $v \cdot\vec u > 0$ for all $v$ in one (let's call this set $R_{\vec u}^+$) and $v\cdot\vec u < 0$ for all $v$ in the other (let's call this set $R_{\vec u}^-$). 
Note that $\bigcap \{E_{\vec u} : u \in U\}$ is empty, since we have assumed at the beginning that $U$ affinely spans $\R^d$, and thus $\vec U$ (vector) spans $\R^{d+1}$.
The set of all such equators for all $\vec u$ divides $S^d$ into distinct open subsets, which form the top-dimensional (open) cells of a cell complex.
More explicitly, each cell $F$ (not necessarily top-dimensional and possibly empty) of this complex has a presentation as $\bigcap \{A_{\vec u}: u \in U\}$ where each $A_{\vec u}$ is one of $\{E_{\vec u}, R_{\vec u}^+, R_{\vec u}^-\}$.
If the cell is nonempty, then this presentation is unique and 
we assign the PF label $\pf f_F: \sbe U \to [2]$ defined by
$$\pf f_F(u) = \begin{cases}
1 & \text{if $A_{\vec u} = R_{\vec u}^+$} \\
0 & \text{if $A_{\vec u} = R_{\vec u}^-$} \\
\text{undefined} & \text{otherwise.}
\end{cases}
$$
It is easily seen that any point in $F$ is $\nu(\vec H)$ for some oriented affine hyperplane $H$ such that $U \setminus \dom \pf f_F$ lies on $H$, $\pf f_F^{-1}(1)$ lies on the positive side of $H$, and $\pf f_H^{-1}(0)$ lies on the negative side of $H$.

If $F = \emptycell$ is the empty cell, then we assign the empty function $\pf f_\emptycell = \emptyfun$ as its PF label.

\cref{fig:linthr_cocell_res} illustrates this construction.

\begin{figure}
\centering
\includegraphics[width=.6\textwidth]{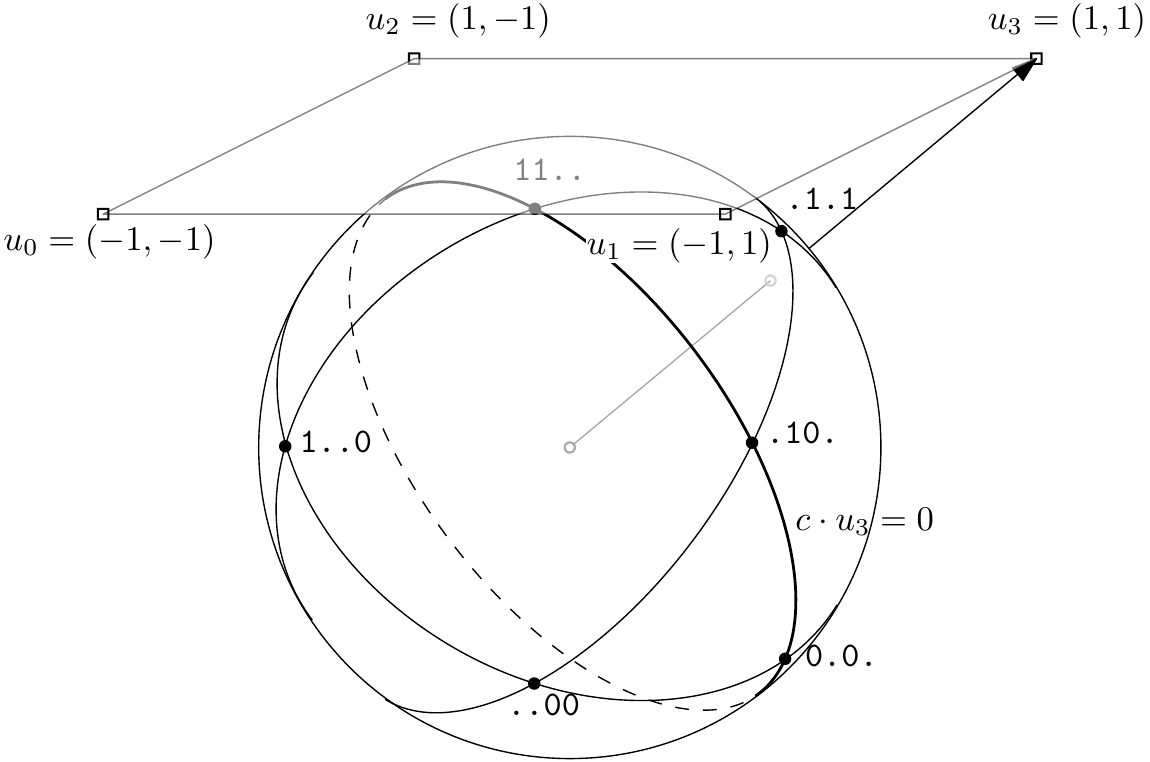}
\caption{Cocellular resolution of $\linthr_U$, where $U = \{u_0, u_1, u_2, u_3\}$ as labeled in the figure.
Each equator is the orthogonal space to the vector $\vec u_i$;
the case for $i = 3$ is demonstrated in the figure.
The text in monofont are the PF labels of vertices visible in this projection.
For example, $\texttt{1..0}$ represents the PF that sends $u_0$ to 1 and $u_3$ to 0, and undefined elsewhere.}
\label{fig:linthr_cocell_res}
\end{figure}

We claim this labeling gives a minimal cocellular resolution $X$ of $\linthr_U$.
We show this via \cref{lemma:pf_cocell_acyc}.

Suppose $\pf f$ is the empty function.
Then $X_{\spe \pf f} = X$, which is a complex with nontrivial reduced cohomology only at dimension $d$, where the rank of its cohomology is 1 (case 2 of \cref{lemma:pf_cocell_acyc}).
Now suppose $\pf f$ is nonempty.
Then $X_{\spe \pf f} = \bigcap_{u: \pf f(u) = 1} R_{\vec u}^+ \cap \bigcap_{u: \pf f(u) = 0} R_{\vec u}^-$ is an intersection of open half-spheres.
It is either empty (case 3 of \cref{lemma:pf_cocell_acyc}) or is homeomorphic, along with its boundary in $S^d$, to the open $d$-disk and its boundary $(D^d, \pd D^d)$, which has cohomology only at degree $d$ because $D^d/\pd \cong S^d$, where its rank is 1 (case 1 of \cref{lemma:pf_cocell_acyc}).

Thus our claim is verified.
$X$ is in fact minimal, as each cell has a unique monomial label.
We have proved the following.
\begin{thm}
The colabeled complex $X$ constructed as above is a minimal cocellular resolution of $\linthr_U$.
\end{thm}
\newcommand{\coballres}{\cresf{coBall}}
\begin{defn}
The colabeled complex $X$ is called the {\bf coBall resolution} of $\linthr_U$, written $\coballres_U$.
\end{defn}

$X$ can be made a polytope in an intuitive way, by taking the convex hull of all vertices on $X$ \footnote{As remarked in the previous footnote, $X$ is the intersection of a polyhedral fan with the unit sphere.
Instead of intersecting the fan with a sphere, we can just truncate the fans to get a polytope.}.
In addition,  we can obtain a minimal polyhedral {\it cellular} resolution $Y$ by taking the polar of this polytope and preserving the labels across polars.
Then the empty cell of $X$ becomes the unique dimension $d+1$ cell of $Y$.
We call this cellular resolution $Y$ the {\bf ball resolution}, written $\ballres_U$ or $\ballres$ when $U$ is implicitly understood, of $\linthr_U$. 

For any partial function $\pf f: \sbe U \to [2]$, define $\sigma \pf f$ to be the function 
$$\sigma \pf f(u) = \begin{cases}
+ & \text{if $\pf f(u) = 1$}\\
- & \text{if $\pf f(u) = 0$}\\
0 & \text{if $\pf f(u)$ is undefined.}
\end{cases}
$$

Let $\mathcal L(U) := \{\sgn(\psi \restrict U): \psi\text{ is a affine linear map}\}$ be the poset of covectors of $U$, under the pointwise order $0 < +, -$, with smallest element $\mathbf{0}$.
Therefore the cocircuits (minimal covectors) are the atoms of $\mathcal L(U)$.
Recall that $\mathcal L(U)$ has a rank function defined as
$$
\begin{cases}
\rank(a) = 0 & \text{if $a$ is a cocircuit;}\\
\rank(b) = 1 + \rank(a) & \text{if $b$ covers $a$.}
\end{cases}
$$
and $\rank(\mathbf 0) = -1$.

From the construction of $\coballres_U$, it should be apparent that each PF label is really a covector (identified by $\sigma$).
There is an isomorphism between $\mathcal L(U)$ and the face poset of $\coballres_U$:
$$F \sbe G \iff \pf f_F \sbe \pf f_G \iff \sigma \pf f_F \le \sigma \pf f_G.$$
Noting that $\rank(\sigma \pf f_F) = \dim F$ (and in particular, $\rank(\sigma \pf f_\emptycell) = -1 = \dim \emptycell$), this observation yields the following via \cref{prop:cocell_betti}
\begin{thm} \label{thm:linthr_betti}
The Betti number $\betti_{i, \pf f}(\linthr_U)$ is nonzero only when $\sigma \pf f$ is a covector of $U$.
In this case, $\betti_{i, \pf f}({\linthr_U}) = 1$ if $i = d - \rank(\sigma \pf f)$, and 0 otherwise.
In particular, the top dimensional Betti number of $\linthr_U$ is $\betti_{d+1, \emptyfun}(\linthr_U) = 1$.
\end{thm}
 
Via Hochster's dual formula, this means that the canonical suboplex of $\linthr_U$ is a homological $(d+1)$-sphere.

Let's look at the example of $U = \Monom^d$, so that $\linthr_U = \polythr_d^d = [\bcube^d \to 2]$.
In this case, $\vec U$ is an orthgonal basis for $\R^{2^d}$, and thus the equators of $\coballres_U$ are cut out by a set of pairwise orthogonal hyperplanes.
In other words, under a change of coordinates, $\coballres_U$ is just the sphere $S^{2^d-1}$ cut out by the coordinate hyperplanes, and therefore is combinatorially equivalent to the complete suboplex of dimension $2^d - 1$, with the PF labels given by the $\f 1 2(\sgn+1)$ function.
Its polar, $\ballres_U$, just recovers the cube resolution of $[2^d \to 2]$ as discussed in the beginning of \cref{sec:resolutions}.

When $\linthr_U = \polythr_d^k$, notice a very natural embedding of the cocellular resolution
$$\coballres_{\Monom^k} \hookrightarrow \coballres_{\Monom^{k+1}}$$ 
as the section of $\coballres_{\Monom^{k+1}}$ cut out by the orthogonal complement of 
$$\{\vec w_\gamma: \gamma \text{ is a monomial of degree $k+1$}\},$$
where $w_\gamma$ is all 0s except at the position where $\gamma$ appears in $\monom{\le k+1}$, and $\vec\cdot$ is the vectorization function as above, appending a 1 at the end.
This corresponds to the fact that a polynomial threshold of degree $k$ is just a polynomial threshold of degree $k+1$ whose coefficents for degree $k+1$ monomials are all zero.

This is in fact a specific case of a much more general phenomenon.
Let's call a subset $P \sbe \R^n$ {\bf openly convex} if $P$ is convex and
$$\forall u, v \in P, \exists \tau > 1: \tau u + ( 1 - \tau) v \in P.$$  
Examples include any open convex set in $\R^n$, any affine subspace of $\R^n$, and the intersections of any of the former and any of the latter.
Indeed, if $P$ and $Q$ are both openly convex, then, $P \cap Q$ is convex: for any $u, v \in P \cap Q$, if the definition of openly convex for $P$ yields $\tau = \rho > 1$ and that for $Q$ yields $\tau = \rho' > 1$, then we may take $\tau = \min(\rho, \rho')$ for $P \cap Q$, which works because $P \cap Q$ is convex.

An openly convex set is exactly one which is convex and, within its affine span, is equal to the interior of its closure.
 
Our proof that $\coballres_U$ is a minimal cocellular resolution can be refined to show the following
\begin{thm} \label{thm:convex_threshold_cocell_res}
Let $U \sbe \R^n$ be a point set that affinely spans $\R^n$.
Let $L$ be an openly convex cone of the vector space $\R^{n+1}$.
Define $Y$ to be the intersection of $X = \coballres_{U}$ with $L$, such that each nonempty open cell $Y \cap \ocell F$ of $Y$ gets the same exponent label $\lambda_Y(Y \cap \ocell F) = \lambda_X(\ocell F)$ as the open cell $\ocell F$ of $X$, and $Y$ has the empty cell $\emptycell$ with monomial label $\xx^{\lambda_Y(\emptycell)} = 1 \in S$ iff $L$ is vector subspace.
Then $Y$ is a minimal cocellular resolution of $\la \xx^{\lambda_Y(\ocell F)}: \ocell F \text{ is a top dimensional cell in $Y$}\ra$.  
\end{thm}
We will need a technical lemma, distinguishing the case when $L$ is a vector subspace and when it is not.
\begin{lemma}\label{lemma:openly_convex_dichotomy}
Let $L$ be an openly convex cone in $\R^q$.
Then either $L$ equals its vector span, or there is an open coordinate halfspace (i.e. $\{v \in \R^q: v_j > 0 \}$ or $\{v \in \R^q: v_j < 0 \}$) that contains $L$.
\end{lemma}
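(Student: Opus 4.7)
The plan is to assume $L \neq \text{span}\, L$ and to produce a coordinate halfspace containing $L$. First I would reinterpret the openly-convex hypothesis in classical terms: the chord-extension condition $\forall u,v\in L,\ \exists \tau>1:\ \tau u+(1-\tau)v\in L$ is exactly the statement that $L$ equals its relative interior inside its affine span. Since $L$ is a cone, that affine span is a genuine vector subspace $V=\text{span}\,L$, and so $L=\mathrm{relint}_V(\overline L)$. If $\overline L=V$ then $L=\mathrm{relint}_V(V)=V$, which is the first alternative, so we may assume $\overline L\subsetneq V$, a proper closed convex cone in $V$.

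Next I would separate. By the supporting-hyperplane theorem applied inside $V$, there is a nonzero linear functional $\phi:V\to\R$ with $\phi(x)\ge 0$ for all $x\in\overline L$. I would then promote ``$\ge$'' to strict positivity on $L$ by the standard trick: if $\phi(x_0)=0$ for some $x_0\in L$, then relative openness of $L$ gives a $V$-ball around $x_0$ inside $L$, and non-negativity of $\phi$ on this entire ball (after centering) forces $\phi\equiv 0$ on $V$, contradicting our choice. So $\phi>0$ on $L$, and in particular $L$ is contained in an open halfspace of $V$.

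The main obstacle is the last step: refining this generic separator into one of the coordinate functionals $\pm e_j^{*}$. Here I would argue by contradiction. Suppose that for every coordinate index $j$ the cone $L$ contained both a point with $v_j>0$ and a point with $v_j<0$. Using positive scaling and convex combinations inside $\overline L$, such pairs can be balanced to yield nontrivial vectors in $\overline L$ lying arbitrarily close to each coordinate axis in both directions; collecting these across all $j$ would force $\overline L$ to contain a basis and its negatives, hence $\overline L=V$, contradicting $\overline L\subsetneq V$. Some coordinate $v_j$ must therefore be sign-definite and nonzero on $L$, which is exactly the desired open coordinate halfspace. I expect this combinatorial-geometric step, where the fixed coordinate frame (rather than an arbitrary basis) enters crucially, to carry essentially all the work of the proof; the preceding separation is standard.
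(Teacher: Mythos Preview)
Your steps 1--4 are sound and already establish the weaker dichotomy: either $L$ equals its span, or $L$ lies in some open halfspace (not necessarily a coordinate one). The problem is entirely in the final step, where you attempt to upgrade the separating functional to a coordinate functional. The balancing argument you sketch---using points of $L$ with opposite $j$-th coordinate signs to manufacture vectors in $\overline L$ near $\pm e_j$---does not work. Take $L=\{(x,y)\in\R^2:x+y>0\}$: this is an openly convex cone spanning $\R^2$ with $L\neq\R^2$, and for each coordinate it contains points of both signs (e.g.\ $(2,-1)$ and $(-1,2)$), so it sits in no open coordinate halfspace. Yet $\overline L=\{x+y\ge0\}$ excludes both $-e_1$ and $-e_2$, so no convex combination or rescaling inside $\overline L$ can approach those directions, and your conclusion $\overline L=V$ does not follow.

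In fact this example shows the lemma as stated (with \emph{coordinate} halfspace) is false, so no argument for the final step can succeed. The paper's own inductive proof has a parallel gap: its inductive step asserts that $L\cap\partial H$ meets every coordinate halfspace of $\partial H$ merely because such halfspaces are restrictions of coordinate halfspaces of $\R^q$, but knowing that $L$ meets $\{v_j>0\}$ does not imply that $L\cap\{v_1=0\}$ does. In the example above with $H=\{x>0\}$ one has $L\cap\partial H=\{(0,y):y>0\}$, which misses $\{y<0\}$ entirely. What remains true---and what your supporting-hyperplane argument in steps 1--4 proves cleanly---is the version with an arbitrary open halfspace in place of a coordinate one; this weaker conclusion is also all that the paper's application to the cocellular resolution actually invokes.
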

\begin{proof}
See \cref{sec:omitted_proofs}.
\end{proof}

\begin{proof}[Proof of \cref{thm:convex_threshold_cocell_res}]
It suffices to show that $Y_{\spe \pf f}$ for any PF $\pf f$ satisfies one of the three conditions of \cref{lemma:cocell_acyc}, and the minimality would follow from the uniqueness of labels.

If $L$ is a vector subspace, $Y_{\spe \emptyfun} = Y$ is a sphere (condition 2). Otherwise, $Y_{\spe \emptyfun} = Y$ is contained in an open halfspace $H$, and thus by projection from the origin onto an affine subspace $\pd H'$ parallel to $\pd H$, $Y$ is homeomorphic to $\pd H' \cap L$, an openly convex set of dimension $\dim Y$ (condition 1).
Whether $L$ is a vector space, for any nonempty PF $\pf f$, $Y_{\spe \pf f}$ is the intersection of the unit sphere (the underlying space of $\coballres_{\Monom^{d}}$), $L$, and a number of open halfspaces, and thus the intersection of openly convex sets contained in an open halfspace.
This is again homeomorphic to an openly convex set of dimension $\dim Y$ via projection to an affine subspace, if it is not empty. (condition 1/condition 3).
\end{proof}

Linear functionals on $\Monom^d$ are bijective with real functions on the boolean $d$-cube $\{-1, 1\}^d$.
Therefore the cone $L$ represents a cone of real functions when $U = \bcube^d$, and $Y$ is a minimal cellular resolution of the threshold functions of $L$.
In other words, we have the following corollary
\begin{cor} \label{cor:openly_convex_cocell_res}
Let $\clsf C \sbe [\{-1, 1\}^d \to 2]$ be the class obtained by {\it strongly thresholding} an openly convex cone $L$ of real functions $\{-1, 1\}^d \to \R$, i.e. $\clsf C = \{\f 1 2(\sgn(f)+1): f \in L, \forall u \in \{-1, 1\}^d[f(u) \not = 0]\} $.
Then $\clsf C$ has a minimal cocellular resolution of dimension equal to the dimension of the affine hull of $L$.
\end{cor}
This corollary specializes to the case when $L$ is any vector subspace of boolean functions.
The examples explored in the beginning of this section took $L$ as degree bounded polynomials.
We make the following formal definitions.
\begin{defn}
Let $L$ be a cone of real functions on $\{-1, 1\}^d$.
Suppose $\clsf C = \{\f 1 2(\sgn(f)+1): f \in L, f(u) \not = 0, \forall u \in \{-1, 1\}^d\}$.
We say $\clsf C$ is the {\bf strongly thresholded class of $L$}, written $\clsf C = \thr L$.
We call $\clsf C$ {\bf thresholded convex} if $L$ is openly convex.
We call $\clsf C$ {\bf thresholded linear} if $L$ is linear.
\end{defn}

While this corollary produces minimal cocellular resolutions for a large class of functions, it does not apply to all classes.
For example, the corollary shows that the Betti numbers of thresholded convex classes are either 0 or 1, but as we show in the next section, the linear functionals over finite fields have very large Betti numbers, so cannot be a thresholded convex class.
%

\subsubsection{Linear Functionals over Finite Fields}
\label{sec:linear_functionals}

Let $p$ be a prime power.
Define $\linfun^{p}_d \cong \Fld_p^{d*} \sbe [p^d \to p]$ to be the class of linear functionals over the $d$-dimensional vector space $[p]^d \cong \Fld_p^d$.
We will refer to elements of $\linfun^p_d$ as {\it covectors}.
Denote the affine span of a set of elements $g_1, \ldots, g_k$ by $\llp g_1, \ldots, g_k \rrp$.
In this section we construct the minimal resolution of $\linfun^d_p$.

Fix a linear order $\lhd$ on $\Fld_p^{d*}$.
We construct as follows a DAG $T_d$ of depth $d+1$ (with levels 1, ..., $d+1$), whose nodes are of the form $(f, V)$ where $V$ is an affine subspace of the dual space $\Fld_p^{d*}$ and $f$ is the $\lhd$-least element of $V$. (Therefore if any affine subspace appears in a node, then it appears only in that node --- indeed, every affine subspace appears in exactly one node.)

There is only one node at level 1, which we call the root.
This is the $\lhd$-least element along with $V = \Fld_p^{d*}$.

For any node $(f, V)$ where $\dim V > 1$, we add as its children the nodes $(g, W)$ where $W$ is a codimension-1 affine subspace of $V$ not containing $f$, and $g$ is the $\lhd$-least element of $W$.
By simple induction, one sees that all affine subspaces appearing on level $i$ of $T_d$ has dimension $d-i$.
In particular, the nodes at level $d+1$, the {\it leaf nodes}, are all of the form $(f, \{f\})$.
This completes the construction of $T_d$.

For each path $(f_1, V_1 = \Fld_p^{d*}), (f_2, V_2), \ldots, (f_{d+1}, V_{d+1})$ from the root to a leaf node, we have by construction $f_{1} \lhd f_{2} \lhd \cdots \lhd f_{d} \lhd f_{d+1}$.
Therefore, every such path is unique.

\begin{lemma}
Any node $(f, V)$ at level $i$ of $T_d$ has exactly $p^{d-i}-1$ children.
\end{lemma}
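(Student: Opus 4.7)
The plan is to reduce the counting of children to a standard linear-algebraic count of affine hyperplanes. By the construction of $T_d$, the children of $(f,V)$ are pairs $(g,W)$ in bijection with the codimension-$1$ affine subspaces $W\subseteq V$ that do not contain $f$ (the element $g$ is forced to be $\min_\lhd W$). So it suffices to count such $W$.

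First I would confirm, by a one-line induction on the level, that $V$ has affine dimension $d-i$ over $\F_p$, using the fact that every child cuts the dimension by exactly one and the root has $V=\F_p^{d*}$ of the appropriate dimension. Then I would translate $V$ by $-f$, identifying it with a $(d-i)$-dimensional $\F_p$-vector space $V'$ in which $f$ becomes the origin. Under this identification, codimension-$1$ affine subspaces of $V$ correspond to cosets of codimension-$1$ vector subspaces of $V'$, and those containing $f$ correspond precisely to the subspaces themselves (the cosets passing through $0$).

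Next I would count. The number of codimension-$1$ linear subspaces of $\F_p^{\,d-i}$ equals $(p^{d-i}-1)/(p-1)$ (by duality with the $1$-dimensional subspaces of the dual). Each such hyperplane $H\subseteq V'$ has exactly $p$ cosets, of which precisely one ($H$ itself) contains the origin; the remaining $p-1$ do not. Multiplying gives
\[
(p-1)\cdot\frac{p^{d-i}-1}{p-1} \;=\; p^{d-i}-1
\]
codimension-$1$ affine subspaces of $V$ not containing $f$, which is exactly the number of children of $(f,V)$.

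There is no real obstacle — the argument is a direct count, with the only subtle point being the bijection between children and codim-$1$ affine subspaces avoiding $f$, which is immediate from the construction. The dimension bookkeeping (ensuring $\dim V = d-i$ at level $i$) is a routine induction and slots in at the start.
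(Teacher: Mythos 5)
Your proof is correct and follows essentially the same route as the paper: identify the children with codimension-$1$ affine subspaces of $V$ missing $f$, then count $(p^{d-i}-1)/(p-1)$ linear hyperplanes times $p-1$ cosets per hyperplane that avoid $f$. The only cosmetic difference is that you translate by $-f$ first, which makes the paper's implicit identification of $V$ with a vector space explicit.
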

\begin{proof}
The children of $(f, V)$ are in bijection with the set of codimension-1 affine subspaces of $V$ not containing $f$.
Each nonzero covector in $V^*$ defines a vector hyperplane in $V$, whose cosets determine $p$ parallel affine hyperplanes.
Exactly one of these affine hyperplanes contain $f$.
Covectors $f$ and $g$ in $V^*$ determine the same hyperplane if $f = cg$ for some constant $c \in \Fld_p^d$, $c \not=0$.
As remarked above, $V$ has dimension $d -i$, and so has cardinality $p^{d-i}$.
Therefore there are $\f{p^{d-i}-1}{p-1}(p - 1) = p^{d-i} - 1$ affine hyperplanes of $V$ not containing $f$.
\end{proof}
\begin{lemma}
There are $\mathcal U_p(d) := \prod_{i=0}^{d-1} (p^{d-i}-1)$ maximal paths in the DAG $T_d$. (When $d = 0$, $\mathcal U_p(d) := 1$.)
\label{lemma:num_maximal_path_Td}
\end{lemma}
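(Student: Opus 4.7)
The plan is to reduce the count of maximal paths to a telescoping product by exploiting the fact that the number of children of a node in $T_d$ depends only on the dimension of its affine subspace (as established by the previous lemma), and that all nodes at a given level have the same dimension. Specifically, I would first verify by induction on the level that the affine subspace attached to any node at level $i$ has the same dimension (one less than that of its parent, starting from $\dim \Fld_p^{d*} = d$ at the root). This gives a well-defined ``dimension profile'' for the tree, decreasing by one at each step until we reach leaves of the form $(f, \{f\})$.

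Next, I would introduce $N(k)$, the number of maximal paths descending from any node whose attached affine subspace has dimension $k$. By the previous lemma this quantity is well-defined (independent of which node we pick of dimension $k$), because every such node has exactly $p^k - 1$ children, and each child has an attached affine subspace of dimension $k - 1$. The recursion
$$N(k) = (p^k - 1) \cdot N(k-1), \qquad N(0) = 1$$
then unwinds to $N(k) = \prod_{j=1}^{k}(p^j - 1)$.

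Applying this with $k = d$ (the dimension of the root's affine subspace) gives the total number of maximal paths in $T_d$ as
$$N(d) = \prod_{j=1}^{d}(p^j - 1) = \prod_{i=0}^{d-1}(p^{d-i} - 1) = \mathcal{U}_p(d),$$
by the re-indexing $j = d - i$. The base case $d = 0$ is handled separately: the root is already a leaf (the only covector in a $0$-dimensional dual space, coupled with $V = \{f\}$), which is the unique maximal path, matching the convention $\mathcal{U}_p(0) := 1$.

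There is no real obstacle here beyond careful bookkeeping: all the structural work is done by the previous lemma and by the observation that the children-count depends only on dimension. The only point worth double-checking is the level/dimension indexing (the root is at level $1$ with dimension $d$, leaves are at level $d+1$ with dimension $0$), so that the product runs over the correct range and no off-by-one slips into the final expression.
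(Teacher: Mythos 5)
Your proof is correct and takes essentially the same route as the paper, which simply notes that the count ``immediately follows from the previous lemma'' giving the number of children at each level. You have spelled out the telescoping product explicitly via the recursion $N(k) = (p^k-1)N(k-1)$ indexed by dimension rather than level, which is a cleaner bookkeeping choice but not a different argument.
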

\begin{proof}
Immediately follows from the previous lemma.
\end{proof}

For example, suppose $p = 2$ and $\lhd$ is the right-to-left lexicographic order on the covectors: $0\cdots00 \lhd 0\cdots01 \lhd 0\cdots10 \lhd \cdots \lhd 1\cdots10 \lhd 1\cdots11$, where a covector $(x_1, \ldots, x_d) \mapsto a_1x_1 + \cdots a_d x_d$ is abbreviated as the bitstring $a_1a_2\cdots a_d$.
When $d=3$, the root is $(000, \Fld^{3*}_2)$.
There are then seven dimension $3 - 1 = 2$ affine planes in $\Fld_p^3$ not containing 000, so seven nodes at level 1:
\begin{itemize}
  \item Covector 001 for all affine planes containing 001, which are
  $$\{001, 111, 101, 011\}, \{001, 111, 100, 010\}, \{001, 101, 110, 010\}, \{001, 100, 110, 011\}.$$
  \item There are 3 other affine planes, which correspond to the following nodes
  \begin{enumerate}
    \item $(100, \{111, 110, 100, 101\})$
    \item $(010, \{111, 011, 010, 110\})$
    \item $(010, \{010, 011, 101, 100\})$
  \end{enumerate}
\end{itemize}


Or, suppose we choose to order covectors by the number of 1s and then lexicographically, $0\cdots000 \prec 0\cdots001 \prec 0\cdots010 \prec 0\cdots100 \prec \cdots 10\cdots000 \prec 0\cdots011 \prec 0 \cdots 101 \prec 0\cdots 110 \prec \cdots \prec 1\cdots11$.
Then the DAG will be exactly the same as above.


Once we have built such a DAG $T_d$, we can construct the corresponding cellular resolution $X$ of $\cani{\linfun^p_d}$. \footnote{If we treat $T_d$ as a poset, then the cellular resolution as a complex is a quotient of the order complex of $T_d$ by identifying $(f, V)$ with $(g, W)$ iff $f = g$.}
The cellular resolution will be simplicial and pure of dimension $d$.
Its vertex set is $\linthr_d^p \cong \Fld_p^{d*}$; each vertex has itself as the PF label.
For each maximal path 
$$(f_1, V_1 = \Fld_p^{d*}), (f_2, V_2), \ldots, (f_{d+1}, V_{d+1}),$$
we add a top simplex (of dimension $d$) with the vertex set $\{f_1, f_2, \ldots, f_{d+1}\}$.
As usual, the PF label of a face $F \sbe \linthr_d^p$ is just the intersection of the PF labels of its vertices.

\begin{lemma}
For an $k$-dimensional face $F$ of $X$, its PF label is a linear functional on a vector subspace of $\Fld_p^d$ of dimension $d - k$.
\label{lemma:flag_PFlabel_dim}
\end{lemma}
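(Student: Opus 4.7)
The plan is to reduce the claim to a statement of affine independence inside $\Fld_p^{d*}$. By construction each vertex $f$ of $X$ has PF label the \emph{total} linear functional $f : \Fld_p^d \to \Fld_p$. By \cref{lemma:pf_label_propagate}, the PF label of a face $F$ with vertex set $\{g_0, g_1, \ldots, g_k\}$ is the intersection $\bigcap_s g_s$, whose domain is $\{v \in \Fld_p^d : g_0(v) = g_1(v) = \cdots = g_k(v)\}$, which equals the intersection of kernels $\bigcap_{s=1}^k \ker(g_s - g_0)$. This is a linear subspace of $\Fld_p^d$, and the PF label there agrees with $g_0$ restricted to this subspace, so it is automatically a linear functional on a vector subspace. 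The only thing to verify is the dimension: it equals $d - \dim\operatorname{span}(g_1 - g_0, \ldots, g_k - g_0)$, which is $d - k$ precisely when $g_0, \ldots, g_k$ are affinely independent in $\Fld_p^{d*}$.

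The main step is therefore the following claim, which I would prove by downward induction on the level $j$: along any root-to-leaf path $(f_1, V_1), (f_2, V_2), \ldots, (f_{d+1}, V_{d+1})$ of $T_d$, the set $\{f_j, f_{j+1}, \ldots, f_{d+1}\}$ is an affine basis of $V_j$. The base case $j = d+1$ is trivial since $V_{d+1} = \{f_{d+1}\}$. For the inductive step, the construction of $T_d$ ensures that $V_{j+1}$ is an affine hyperplane of $V_j$ and that $f_j \in V_j \setminus V_{j+1}$. By the inductive hypothesis, $\{f_{j+1}, \ldots, f_{d+1}\}$ is an affine basis of $V_{j+1}$. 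Adjoining the point $f_j \notin V_{j+1}$ yields an affinely independent set whose affine span strictly contains $V_{j+1}$ and is contained in $V_j$; since $V_{j+1}$ has codimension one in $V_j$, this span must be all of $V_j$. This completes the induction.

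Taking $j = 1$ gives that the entire vertex set $\{f_1, \ldots, f_{d+1}\}$ of any top simplex of $X$ is affinely independent in $V_1 = \Fld_p^{d*}$. Every $k$-dimensional face $F$ of $X$ is a subface of such a top simplex, so its $k+1$ vertices are also affinely independent. Plugging this into the dimension formula from the first paragraph gives exactly $d - k$. I do not anticipate a real obstacle; the only subtlety is to be careful that the ``agreement locus'' is a \emph{linear} (not merely affine) subspace of $\Fld_p^d$, which holds because each $f_i$ is linear and hence $f_i(0) = 0$, so the agreement locus automatically passes through the origin.
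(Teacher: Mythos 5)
Your proof is correct and follows essentially the same route as the paper: the PF label's domain is the agreement locus $\bigcap_s \ker(g_s - g_0)$, and its codimension equals the number of linearly independent differences, which reduces to affine independence of the vertices. The one place you go beyond the paper is that the paper's proof simply invokes ``the affine independence of $\{f_0, \ldots, f_k\}$'' without argument, whereas your downward induction on the level $j$ of the path in $T_d$ (showing $\{f_j, \ldots, f_{d+1}\}$ is an affine basis of $V_j$) supplies the justification; that is a genuine gap you correctly identified and filled.
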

\begin{proof}
$F$ has $k+1$ vertices, $f_0, \ldots, f_k$.
Their intersection is the partial function defined on the subspace $W = \bigcap_{i=1}^k \ker (f_i - f_{i-1})$, and equals the restriction of $f_i$ to $W$ for any $i$.
The affine independence of $\{f_0, \ldots, f_k\}$ implies the vector independence of $\{(f_1 - f_0), \ldots, (f_k - f_{k-1})\}$.
Therefore $W$ has codimension $k$, as desired.
\end{proof}

Now, to show that $X$ is a minimal resolution, we will require the following lemma.

\begin{lemma}
Fix any linear order $\lhd$ on $\Fld_p^{d*}$.
Suppose $(g_1, g_2, \ldots, g_k)$ is a sequence of covectors such that $g_i$ is the $\lhd$-least element of the affine space generated by $(g_i, g_{i+1}, \ldots, g_k)$.
Then there is a maximal path in $T_d$ containing $(g_1, \ldots, g_k)$ as a subsequence.
\label{lemma:parity_path_extend}
\end{lemma}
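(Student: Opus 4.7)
The plan is to induct on the dimension $d$. For the base case $d = 0$, the DAG $T_0$ has a single node and its unique maximal path; any sequence satisfying the hypothesis has length at most one, and the conclusion is immediate.

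For the inductive step, let $e$ denote the $\lhd$-least element of $\Fld_p^{d*}$, so that the root of $T_d$ is the node $(e, \Fld_p^{d*})$. I will split based on whether $g_1 = e$ or $g_1 \neq e$, and in each case descend to a carefully chosen codimension-1 subspace $V_2$ to which the inductive hypothesis can be applied.

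If $g_1 = e$, take $f_1 = g_1$ at the root and descend to a codimension-1 affine subspace $V_2 \subset \Fld_p^{d*}$ that contains $W_2 := \llp g_2, \ldots, g_k\rrp$ but not $g_1$. Such a $V_2$ exists because $g_1 \notin W_2$: otherwise $g_2 = \min_\lhd W_2$ would satisfy $g_2 \lhd g_1$, contradicting $g_1 = e$ being the global minimum. A hyperplane count in $V_1/W_2$ then produces at least one valid choice. Recursively apply the lemma to the sub-DAG rooted at $V_2$, which is canonically isomorphic to $T_{d-1}$ built on $V_2 \cong \Fld_p^{(d-1)*}$ with the induced $\lhd$-order, and to the shorter subsequence $(g_2, \ldots, g_k)$; this subsequence still satisfies the hypothesis because spans and orders are ambient-invariant. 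Prepending the root node to the resulting sub-path yields the desired maximal path in $T_d$.

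If $g_1 \neq e$, take $f_1 = e$ at the root and descend instead to a codimension-1 affine subspace $V_2 \subset \Fld_p^{d*}$ containing $W_1 := \llp g_1, \ldots, g_k\rrp$ but not $e$. Existence of $V_2$ follows from $e \notin W_1$: otherwise $e$ would be an element of $W_1$ strictly $\lhd$-smaller than $g_1$, contradicting $g_1 = \min_\lhd W_1$. Apply the inductive hypothesis inside the sub-DAG rooted at $V_2$ to the full sequence $(g_1, \ldots, g_k)$ to obtain a maximal sub-path containing the sequence; prepending the root closes the induction.

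The main obstacle is verifying the existence of a suitable $V_2$ in each case, which in both cases boils down to excluding $g_1 \in W_2$ or $e \in W_1$ using the extremality hypothesis $g_i = \min_\lhd W_i$. A related subtlety is that this hypothesis implicitly forces the $g_i$ to be pairwise distinct in any nontrivial setting: a repeated $g_i = g_{i+1}$ could never appear as a subsequence of the strictly $\lhd$-increasing covector sequence along any maximal path of $T_d$, so the lemma is only substantive in the distinct case, which is what is needed for the subsequent application.
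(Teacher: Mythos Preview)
Your argument is correct and uses the same descent idea as the paper, but organizes the induction differently. The paper inducts on the length $k$ of the sequence: it first runs an explicit loop, descending from the root through successive codimension-one subspaces $V_1 \supset V_2 \supset \cdots$ all containing $W = \llp g_1,\ldots,g_k\rrp$, until reaching a node whose least element is $g_1$; it then applies the inductive hypothesis to $(g_2,\ldots,g_k)$ in the sub-DAG below that node. Your induction on $d$ flattens this: your case $g_1 \neq e$ is exactly one iteration of the paper's loop, and your case $g_1 = e$ is the loop's termination followed by the paper's recursive call. In both proofs the key step is the same---use $g_1 = \min_\lhd W_1$ to show the current node's least element lies outside the relevant span, guaranteeing a child containing that span---so the two arguments are really the same proof unwound along different axes. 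Your version avoids nesting a loop inside an induction, while the paper's version makes the ``walk down to $g_1$'' structure more visible.
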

\begin{proof}
We proceed by induction on $k$.
When $k=0$, the claim is vacuously true.
Assume $k \ge 1$.
We will show that there is a path $\wp$ from the root to a node $(g_1, V)$ with $V$ containing $W = \llp g_1, \ldots, g_k\rrp$, the affine subspace generated by $g_1, \ldots, g_k$.
Then we apply the induction hypothesis with $\Fld_p^{d*}$ replaced by $W$ and $(g_1, g_2, \ldots, g_k)$ replaced by $(g_2, g_3, \ldots, g_k)$ to obtain a path from $(g_1, V)$ to a leaf node, which would give us the desired result.

The first node of $\wp$ is of course the root.
We maintain the invariant that each node $(f, W')$ added to $\wp$ so far satisfies $W' \spe W$.
If we have added the nodes $(f_1, V_1), (f_2, V_2), \ldots, (f_j, V_j)$ in $p$, then either $f_j = g_1$, in which case we are done, or $V_j$ is strictly larger than $W$.
In the latter case, there exists an affine subspace $V_{j+1}$ of $V_j$ with $W \sbe V_{j+1} \subset V_j$ and $\dim V_{j+1} = \dim V_j - 1$, and we add $(f_{j+1}, V_{j+1})$ to the path $\wp$, with $f_{j+1}$ being the $\lhd$-least element of $V_{j+1}$.
This process must terminate because the dimension of $V_j$ decreases with $j$, and when it does, we must have $f_j = g_1$, and the path $\wp$ constructed will satisfy our condition.
\end{proof}
\begin{thm}
$X$ is a $d$-dimensional complex that minimally resolves $\linfun^p_d$.
\end{thm}

\begin{proof}
To prove that $X$ is a cellular resolution, it suffices to show that $X_{\spe \pf f}$ for any partial function $\pf f: \sbe [p^d \to p]$ is acyclic.
The set of $f \in \Fld_p^{d*}$ extending $\pf f$ is an affine subspace $W$.
Our strategy is to prove that if $\{g_1, \ldots, g_k\}$ generates an affine subspace of $W$ and is a face of $X$, then $\{g, g_1, \ldots, g_k\}$ is also a face of $X$, where $g$ is the $\lhd$-least element of $W$.
This would show that $X_{\spe \pf f}$ is contractible and thus acyclic.
But this is precisely the content of \cref{lemma:parity_path_extend}:
Any such $\{g_1, \ldots, g_k\}$ can be assumed to be in the order induced by being a subsequence of a maximal path of $T_d$.
This means in particular that $g_i$ is the least element of $\llp g_i, \ldots, g_k\rrp$.
A fortiori, $\{g, g_1, g_2, \ldots, g_k\}$ must also satisfy the same condition because $g$ is the least element of $W$.
Therefore \cref{lemma:parity_path_extend} applies, implying that $\{g, g_1, g_2, \ldots, g_k\}$ is a face of $X$, and $X$ is a cellular resolution as desired.

The resolution is minimal since the PF label of any face $P$ is a covector defined on a strictly larger subspace than those of its subfaces.
\end{proof}
\begin{defn}
The resolution $X$ is called the \textbf{flag resolution}, $\flagres^p_d$, of $\linfun^p_d$ with respect to $\lhd$.
\end{defn}

\begin{thm}
The Betti number $\betti_{i, \pf g}(\linfun_d^p)$ is nonzero only when $\pf g$ is a linear functional defined on a subspace of $\Fld_p^d$, and $i = d - \dim \dom \pf g$.
In this case, it is equal to $\mathcal U_p(i)$ (as defined in \cref{lemma:num_maximal_path_Td}).
\end{thm}
\begin{proof}
All the cells in the resolution $X$ have exponent labels of the form $\Gamma \pf g$ as stated in the theorem, and by \cref{lemma:flag_PFlabel_dim}, such cells must have dimension $i = d - \dim(\dom \pf g)$.
It remains to verify that the number $B$ of cells with PF label $\pf g$ is $\mathcal U_p(i)$.

The subset of $\Fld_p^{d*}$ that extends $\pf g$ is an affine subspace $W$ of dimension $d - \dim \dom \pf g = i$.
The number $B$ is the number of sequences $(g_0, \ldots, g_i) \in W^{i+1}$ such that $g_j$ is the $\lhd$-least element of $\llp g_j, \ldots, g_i\rrp$ for each $j$, and such that $\llp g_0, \ldots, g_i\rrp = W$.
If we treat $W \cong \Fld_p^{i*}$ and construct $T_i$ on $W$, then $B$ is exactly the number of maximal paths of $T_i$, which is $\mathcal U_p(i)$ by \cref{lemma:num_maximal_path_Td}.
\end{proof}

As discussed in \cref{sec:threshold_functions}, we have the following corollary because the Betti numbers of $\linfun_d^2$ can be greater than 1.
\begin{cor}
$\linfun_d^2$ is not a thresholded convex class.
\end{cor}



\subsubsection{Abnormal Resolutions}
\label{sec:abnormal}
\begin{figure}
\centering
\includegraphics[height=0.2\textheight]{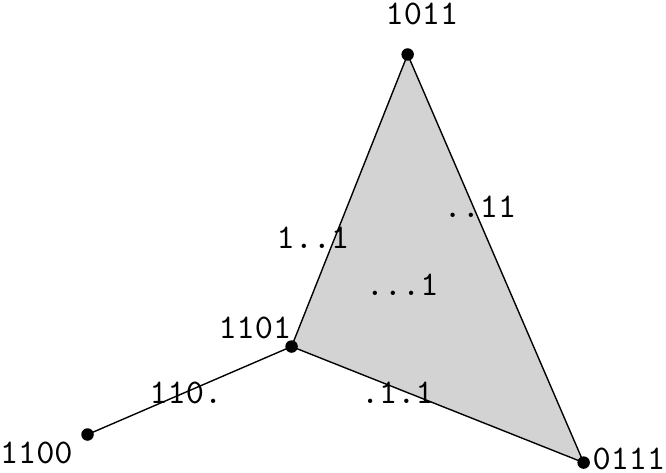}
\caption{An example of a nonpure minimal resolution of a boolean function class $\sbe [4 \to 2]$.
The labels are PF labels.
For example, $..11$ represents a partial function sending 2 and 3 to 1, and undefined elsewhere.}
\label{fig:nonpure_resolution}
\end{figure}
All of the classes exhibited above have pure minimal resolutions, but this need not be the case in general.
\cref{fig:nonpure_resolution} gives an example of a nonpure minimal resolution of a class $\sbe [4 \to 2]$.
It consists of a segment connected to a (solid) triangle.
This example can be generalized as follows.
Let $\clsf C \sbe [n+1 \to 2]$ be $\{\neg \delta_i = \Ind(u \not = i): i \in [n]\} \cup \{g := \ind(u \not\in \{n-1, n\})\}$.
Let $X$ be the simplicial complex on vertex set $\clsf C$, consisting of an $(n-1)$-dimensional simplex on $\{\neg \delta_i: i \in [n]\}$, and a segment attaching $\neg\delta_{n-1}$ to $g$.
With the natural PF labels, $X$ minimally resolves $\clsf C$ and is nonpure.

\begin{figure}
\centering
\includegraphics[width=.5\textwidth]{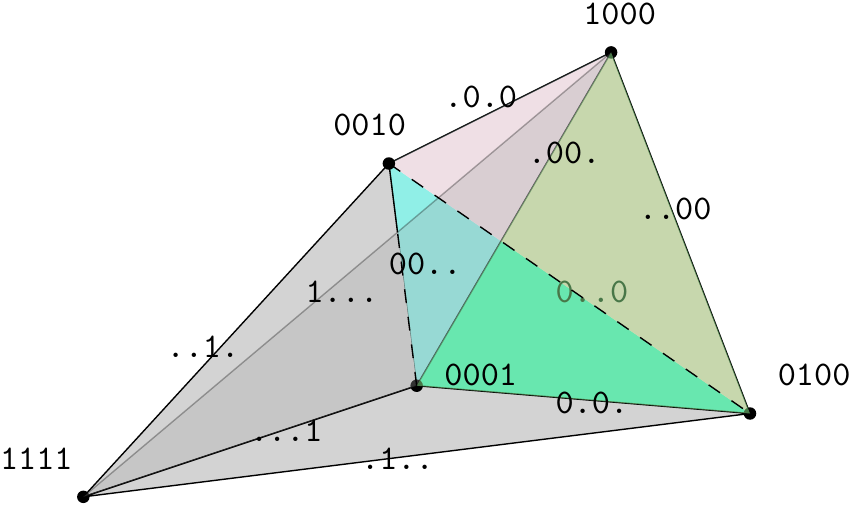}
\caption{Another example of nonpure minimal resolution of a boolean function class $\sbe [4 \to 2]$.
Only the vertices and edges are labeled (with PF labels).
Note that the maximal cells are the three triangles incident on the vertex $1111$ and the tetrahedron not incident on $1111$.
They have the same PF label, the empty function $\emptyfun$.
Therefore it is possible for a boolean function class to have nonzero Betti numbers in different dimensions for the same degree.}
\label{fig:mixed_homology}
\end{figure}

\begin{defn}
We say a class $\clsf C$ {\bf has pure Betti numbers} if for every PF $\pf f$, $\betti_{i, \pf f}(\clsf C) \not = 0$ for at most one $i$. 
\end{defn}
All of the classes above discussed in the previous sections have pure Betti numbers.
But this is not true in general.
\cref{fig:mixed_homology} shows a minimal resolution of a class $\clsf D\sbe [4 \to 2]$ that has three triangles and one tetrahedron as its top cells, and they all have the empty function as the PF label.
Thus $\betti_{2, \emptyfun}(\clsf D) = \betti_{3, \emptyfun}(\clsf D) = 1$.
This example can be generalized as follows.
Let $\clsf D \sbe [n \to 2]$ be $\{\delta_i: i \in [n]\} \cup \{\mathbf 1\}$.
Let $X$ be the simplicial complex on vertex set $\clsf D$, consisting of an $(n-1)$-dimensional simplex on $\{\delta_i: i \in [n]\}$ and triangles on each triple $\{\delta_i, \delta_j, \mathbf 1\}$ for each $i \not= j$.
With the natural PF labels, $X$ is a minimal cellular resolution of $\clsf D$, and the cells with PF label $\emptyfun$ are exactly the $(n-1)$-dimensional simplex and each of the triangles incident on $\mathbf 1$.
Thus the gap between the highest nontrivial Betti number and the lowest nontrivial Betti number for the same partial function can be linear in the size of the input space.

\subsection{Partial Function Classes}
\label{sec:PF_classes}
Most of the definitions we made actually apply almost verbatim to partial function classes $\pclsf C \sbe [\sbe n \to 2]$.
Here we list the corresponding definitions for PF classes and the propositions that hold PF classes as well as for function classes.
We omit the proofs as they are similar to the ones given before.

\begin{defn}
Let $\pclsf C \sbe [\sbe n \to m]$.
The {\bf canonical suboplex} $\Sbpx{\pclsf C}$ of $\pclsf C$ is the subcomplex of the complete $(n-1)$-dimensional $m$-suboplex consisting of all cells $F_{\pf f}$ where $\pf f$ has an extension in $\pclsf C$.

The {\bf canonical base ring} $S$ of $\pclsf C$ is the same as the canonical base ring of $[n \to m]$.
The {\bf Stanley-Reisner ideal} $I_{\pclsf C}$ of $\pclsf C$ is defined as the Stanley-Reisner ideal of $\Sbpx{\pclsf C}$ with respect to $S$.
The {\bf canonical ideal} of $\pclsf C$ is the dual ideal $\cani{\pclsf C}$ of its Stanley-Reisner ideal.
It is generated by $\{\xx^{\Gamma \pf f}: \pf f \in \pclsf C\}$, and generated minimally by $\{\xx^{\Gamma \pf f}: \pf f \in \pclsf C \text{ is maximal}\}$.

A Betti number $\betti_{i, \es b}(\cani{\pclsf C})$ is nonzero only if $\es b = \Gamma \pf f$ for some partial function $\pf f$ with extension in $\pclsf C$.
Thus we define $\betti_{i, \pf f}(\pclsf C) = \betti_{i, \Gamma \pf f}(\cani{\pclsf C})$.

\end{defn}

\begin{prop}[Counterpart of \cref{prop:mingen_SR}]\label{prop:mingen_SR_PF}
Let $\pclsf C \sbe [\sbe n \to m]$.
Each minimal generator of $I_{\pclsf C}$ is either 1) $x_{u, i} x_{u, j}$ for some $u \in [n], i \not= j \in [m]$, or 2) $\xx^{\graph \pf f}$ for some partial function $\pf f:\sbe [n] \to [m]$ such that $\pf f$ has no extension in $\clsf C$, but every proper restriction of $\pf f$ does.
In addition, the set of all such monomials is exactly the set of minimal generators of $I_{\clsf C}$.
\end{prop}

\begin{defn}
Let $\pclsf C \sbe [\sbe n \to m]$.
A labeled complex $(X, \lambda)$ is a {\bf (co)cellular resolution of partial class $\pclsf C$} if $(X, \lambda)$ is a (co)cellular resolution of $S/\cani{\pclsf C}$. 
\end{defn}

\begin{prop}[Counterpart of \cref{lemma:pf_label_propagate}]
If $(X, \lambda)$ is a cellular resolution of a partial class $\pclsf C \sbe [\sbe n \to m]$, then it is PF-labeled as well.
The PF label $\lambda(F)$ of a face $F$ is $\bigcap_{V \in F} \pf f_V$.
\end{prop}

\cref{lemma:pf_acyclicity} and \cref{lemma:pf_cocell_acyc} give conditions on when a PF-(co)labeled complex is a resolution, and they apply verbatim to resolutions of partial classes as well.
\cref{prop:extension_set_as_intersection} and \cref{cor:extension_complex_as_intersection} hold as well when $\clsf C$ is replaced by a partial class $\pclsf C$, but we will not use them in the sequel.

\subsection{Combining Classes}
\label{sec:combining}
We first give a few propositions on obtaining resolutions of a combination of two classes $\clsf C$ and $\clsf D$ from resolutions of $\clsf C$ and $\clsf D$.
\begin{prop}
Let $I$ and $J$ be two ideals of the same polynomial ring $S$.
If $(X_I, \lambda_I)$ is a polyhedral cellular resolution of $S/I$, and $(X_J, \lambda_J)$ is a cellular resolution of $S/J$, then the join $(X_I \star X_J, \lambda_I \star \lambda_J)$ is a cellular resolution of $S/(I + J)$, where we define $\lambda_I \star \lambda_J(F \star G) := \lcm(\lambda_I(F), \lambda_J(G))$.\label{prop:join_res}
\end{prop}

\begin{proof}
Let $\es a$ be an exponent sequence.
$(X_I \star X_J)_{\preceq \es a}$ is precisely $(X_I)_{\preceq \es a} \star (X_J)_{\preceq \es a}$, which is acyclic when both $(X_I)_{\preceq \es a}$ and $(X_J)_{\preceq \es a}$ are acyclic.
So $X_I \star X_J$ is a resolution.

The 0-cells of $X_I \star X_J$ are just the 0-cells of $X_I$ union the 0-cells of $X_J$, with the same labels, so $X_I \star X_J$ resolves $I + J$.
\end{proof}

Note however that in general $X_I \star X_J$ is not minimal even when $X_I$ and $X_J$ both are.

\begin{prop}
Let $\clsf C$ and $\clsf D$ be classes $\sbe \fun [n]m$.
If $(X_C, \lambda_C)$ is a cellular resolution of $\clsf C$, and $(X_D, \lambda_D)$ is a cellular resolution of ${\clsf D}$, then the join $(X_C \star X_D, \lambda_C \star \lambda_D)$ is a cellular resolution of ${\clsf C \cup \clsf D}$.
If $\mu_C$ is the PF labeling function of $X_C$ and $\mu_D$ is the PF labeling function of $X_D$, then the PF labeling function of $X_C \star X_D$ is given by
$$\mu_C \star \mu_D (F \star G):= \mu_C(F) \cap \mu_D(G).$$

\label{prop:union_res}
\end{prop}
\begin{proof}
By the above proposition, $(X_C \star X_D, \lambda_C \star \lambda_D)$ resolves $\cani{\clsf C} + \cani{\clsf D}$, which has minimal generators $\{\xx^{\Gamma f}: f \in \clsf C \cup \clsf D\}$.
The characterization of $\mu_C \star \mu_D$ follows from the the definition of $\lambda_C \star \lambda_D$.
\end{proof}

We will need to examine the ``difference'' between the Betti numbers of $I + J$ and those of $I$ and $J$.
The following lemma gives a topological characterization of this difference.

\begin{lemma} \label{lemma:product_join_exact_seq}
Let $I$ and $J$ be two monomial ideals of the same polynomial ring $S$.
Suppose $(X_I, \lambda_I)$ is a polyhedral cellular resolution of $S/I$, and $(X_J, \lambda_J)$ is a cellular resolution of $S/J$.
Label $X_I \times X_J$ by the function $\lambda_I \times \lambda_J: F\times G \mapsto \lcm(\lambda_I(F), \lambda_J(G))$ for nonempty cells $F$ and $G$; the empty cell has exponent label $\mathbf 0$.
If $\sigma$ is an exponent sequence, then there is a long exact sequence
\begin{align*}
\cdots \to \widetilde H_i((X_I \times X_J)_{\prec \sigma}) \to \widetilde H_i((X_I)_{\prec \sigma}) \oplus \widetilde H_i((X_J)_{\prec \sigma}) \to \widetilde H_i((X_I \star X_J)_{\prec \sigma}) \to \cdots
\end{align*}
where $i$ decreases toward the right.
\label{lemma:join_prec_long_exact}
\end{lemma}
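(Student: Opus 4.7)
The plan is a topological Mayer--Vietoris argument applied to a natural open cover of $Z := (X_I \star X_J)_{\prec \sigma}$. First, realize the join as the quotient $X_I \star X_J \cong (X_I \times X_J \times [0,1])/\sim$, collapsing $(x,y,0) \sim (x,y',0)$ and $(x,y,1) \sim (x',y,1)$; each open cell of the join then has the form $\ocell F$ (at $t=0$), $\ocell G$ (at $t=1$), or $\ocell F \times \ocell G \times (0,1)$ for $F,G$ both nonempty, and the face $F \star G$ carries label $\lcm(\lambda_I(F),\lambda_J(G))$.

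The key observation is that for $F,G$ both nonempty, membership $F \star G \in Z$ forces $\lambda_I(F) \prec \sigma$ and $\lambda_J(G) \prec \sigma$ individually: any coordinate at which $\lambda_I(F)$ attains $\sigma$ forces the lcm to attain $\sigma$ there too, so the inequality $\lcm \prec \sigma$ rules this out. Hence the ``middle'' cells of $Z$ biject with the nonempty cells $F \times G$ of $P := (X_I \times X_J)_{\prec \sigma}$. Now define the open cover
\[
U := Z \cap \{t < 2/3\}, \qquad V := Z \cap \{t > 1/3\}.
\]
The straight-line homotopy $(x,y,t) \mapsto (x,y,(1-s)t)$ deformation retracts $U$ onto $A := (X_I)_{\prec \sigma}$: the endpoint lies in $\ocell F \subseteq A$ by the key observation, and intermediate points remain inside the same open cell of $Z$. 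A symmetric homotopy retracts $V$ onto $B := (X_J)_{\prec \sigma}$, and the homotopy $t \mapsto 1/2$ retracts $U \cap V$ onto its slice at $t=1/2$, which by the bijection above is homeomorphic to the topological realization of $P$.

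Finally, applying the reduced Mayer--Vietoris long exact sequence for the open cover $\{U,V\}$ of $Z$,
\[
\cdots \to \widetilde H_i(U \cap V) \to \widetilde H_i(U) \oplus \widetilde H_i(V) \to \widetilde H_i(Z) \to \widetilde H_{i-1}(U \cap V) \to \cdots,
\]
and substituting $U \simeq A$, $V \simeq B$, $U \cap V \simeq P$ produces the claimed sequence. The main obstacle is the bookkeeping around the deformation retractions --- verifying that each one stays inside $Z$ uses both the monotonicity of $\lambda$ under taking faces and the key observation above --- together with reconciling topological reduced homology with the paper's reduced-chain-complex convention on the empty cell, notably in the edge case $\sigma = \mathbf 0$, where all four complexes degenerate and the sequence is trivially exact.
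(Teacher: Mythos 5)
Your Mayer--Vietoris argument on the open cover $\{t<2/3\}, \{t>1/3\}$ of the double mapping cylinder is the standard concrete realization of the homotopy pushout exact sequence, which is exactly what the paper invokes after observing that $(X_I\star X_J)_{\prec\sigma}$ is the homotopy pushout of $(X_I)_{\prec\sigma}\gets(X_I\times X_J)_{\prec\sigma}\to(X_J)_{\prec\sigma}$; so the approach is the same, just spelled out.

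One wrinkle in the justification of your key observation: $\lcm\prec\sigma$ means only $\lcm\preceq\sigma$ and $\lcm\neq\sigma$, not strict coordinatewise inequality, so the lcm attaining $\sigma$ at some coordinate is entirely compatible with $\lcm\prec\sigma$ and your ``rules this out'' step does not follow as written. The correct one-liner is: $\lambda_I(F)\preceq\lcm(\lambda_I(F),\lambda_J(G))\prec\sigma$, and $a\preceq b\prec c$ with $a=c$ would force $b=c$, contradiction; hence $\lambda_I(F)\prec\sigma$ (symmetrically $\lambda_J(G)\prec\sigma$). The rest of your argument, including the remark that reduced Mayer--Vietoris with the convention $\widetilde H_{-1}(\emptyset)=\kk$ automatically handles the degenerate cases (not just $\sigma=\mathbf 0$ but also the case where one of $(X_I)_{\prec\sigma}$ or $(X_J)_{\prec\sigma}$ is irrelevant), is sound.
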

\begin{proof}
One can check that $(X_I \star X_J)_{\prec \sigma}$ is the homotopy pushout of $(X_I)_{\prec \sigma} \gets (X_I \times X_J)_{\prec \sigma} \to (X_J)_{\prec \sigma}$.
The lemma then follows from the homotopy pushout exact sequence.
\end{proof}

We also have an algebraic version.
\begin{lemma}
Let $I$ and $J$ be two monomial ideals of the same polynomial ring $S$.
For each exponent sequence $\es a$, there is a long exact sequence
\begin{align*}
\cdots \to \kk^{\betti_{i, \es a}(I \cap J)} \to \kk^{\betti_{i, \es a}(I)} \oplus \kk^{\betti_{i, \es a}(J)} \to \kk^{\betti_{i, \es a}(I + J)} \to \kk^{\betti_{i-1, \es a}(I \cap J)} \to \cdots
\end{align*}
\end{lemma}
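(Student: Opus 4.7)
The plan is to derive the long exact sequence from a short exact sequence of $\N^n$-graded modules together with Proposition \ref{prop:Tor_betti}. Specifically, there is a standard short exact sequence
\begin{equation*}
0 \to I \cap J \xrightarrow{\Delta} I \oplus J \xrightarrow{\nabla} I + J \to 0,
\end{equation*}
where $\Delta(x) = (x, x)$ and $\nabla(a, b) = a - b$. Exactness is immediate: $\Delta$ is injective since $x \mapsto (x,x)$ is, the composition $\nabla \circ \Delta$ is zero, $\nabla$ is surjective by definition of the sum ideal, and if $(a,b) \in \ker \nabla$ then $a = b$ lies in both $I$ and $J$, hence in $I \cap J$. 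Each of these maps is $\N^n$-graded (degree-preserving), so this is a short exact sequence in the category of $\N^n$-graded $S$-modules.

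Next I would apply $\kk \otimes_S^{\mathbf L} -$ to obtain the long exact sequence of $\Tor$:
\begin{equation*}
\cdots \to \Tor_i^S(\kk, I \cap J) \to \Tor_i^S(\kk, I) \oplus \Tor_i^S(\kk, J) \to \Tor_i^S(\kk, I + J) \to \Tor_{i-1}^S(\kk, I \cap J) \to \cdots
\end{equation*}
Here I use that $\Tor$ commutes with finite direct sums on either argument, so $\Tor_i^S(\kk, I \oplus J) \cong \Tor_i^S(\kk, I) \oplus \Tor_i^S(\kk, J)$. Because the original short exact sequence is $\N^n$-graded, the connecting homomorphisms and induced maps in the long exact sequence respect the $\N^n$-grading as well.

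Finally, I would pick out the degree-$\es a$ strand of this long exact sequence. Exactness is preserved by restricting to a single graded piece (since the graded decomposition is a direct-sum decomposition of $\kk$-vector spaces and all maps respect it). Applying Proposition \ref{prop:Tor_betti}, namely $\betti_{i,\es a}(M) = \dim_\kk \Tor_i^S(\kk, M)_{\es a}$, identifies each term $\Tor_i^S(\kk, M)_{\es a}$ with $\kk^{\betti_{i,\es a}(M)}$, yielding exactly the asserted long exact sequence.

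The proof is essentially routine homological algebra; there is no real obstacle, only the minor bookkeeping of verifying $\N^n$-gradedness of the Mayer--Vietoris short exact sequence and confirming that the graded decomposition passes through the long exact sequence. It is worth noting the parallel with Lemma \ref{lemma:join_prec_long_exact}: the topological version arises from the homotopy pushout $X_I \leftarrow X_I \times X_J \to X_J$ with pushout $X_I \star X_J$, whereas the algebraic version arises from the Mayer--Vietoris pushout $I \leftarrow I \cap J \to J$ with pushout $I + J$.
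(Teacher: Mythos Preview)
The proposal is correct and follows exactly the same approach as the paper: write the Mayer--Vietoris short exact sequence $0 \to I \cap J \to I \oplus J \to I + J \to 0$ and apply $\Tor(-,\kk)$ together with Proposition~\ref{prop:Tor_betti}. Your write-up simply supplies the details (explicit maps, verification of exactness, gradedness, and passage to the degree-$\es a$ strand) that the paper's two-line proof leaves implicit.
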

\begin{proof}
We have a short exact sequence
\begin{align*}
0 \to I \cap J \to I \oplus J \to I + J \to 0.
\end{align*}
By \cref{prop:Tor_betti}, we can apply $\Tor(-, \kk)$ to obtain the long exact sequence as stated.
\end{proof}

The ideal $I \cap J$ is generated by $\{\lcm (m_i, m_j): m_i \in \mingen(I), m_j \in \mingen(J)\}$.
When $I = \cani{\clsf C}$ and $J = \cani{\clsf D}$, $I \cap J = \la \xx^{\Gamma(f \cap g)}: f \in \clsf C, g \in \clsf D\}$.
Define the {\bf Cartesian Intersection} $\clsf C \cartcap \clsf D$ of $\clsf C$ and $\clsf D$ to be $\{f \cap g: f \in \clsf C, g \in \clsf D\}$.
This is a class of partial functions, and we can check $\cani{\clsf C} \cap \cani{\clsf D} = \cani{\clsf C \cartcap \clsf D}$.
So the above lemma can be restated as follows
\begin{lemma} \label{lemma:cartcap_is_diff}
Let $\clsf C, \clsf D \sbe [n \to m].$
For each PF $\pf f: \sbe [n] \to [m]$, there is a long exact sequence
\begin{align*}
\cdots \to \kk^{\betti_{i, \pf f}(\clsf C \cartcap \clsf D)} \to \kk^{\betti_{i, \pf f}(\clsf C)} \oplus \kk^{\betti_{i, \pf f}(\clsf D)} \to \kk^{\betti_{i, \pf f}(\clsf C \cup \clsf D)} \to \kk^{\betti_{i-1, \pf f}(\clsf C \cartcap \clsf D)} \to \cdots
\end{align*}
\end{lemma}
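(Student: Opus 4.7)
The plan is to deduce this lemma as an immediate corollary of the preceding algebraic long exact sequence (the one relating Betti numbers of $I$, $J$, $I\cap J$, and $I+J$), applied to the choice $I = \cani{\clsf C}$ and $J = \cani{\clsf D}$. To make that substitution produce exactly the sequence in the statement, I need two identifications of ideals: first, $\cani{\clsf C} + \cani{\clsf D} = \cani{\clsf C \cup \clsf D}$, and second, $\cani{\clsf C} \cap \cani{\clsf D} = \cani{\clsf C \cartcap \clsf D}$. Once these are in place, the exact sequence written at the end of the excerpt is literally the previous lemma rewritten in the new notation.

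The first identification is essentially by definition: both sides are generated by $\{\xx^{\Gamma f} : f \in \clsf C \cup \clsf D\}$, since $\cani{\clsf C}$ is generated by $\{\xx^{\Gamma f} : f \in \clsf C\}$ and similarly for $\clsf D$, and addition of monomial ideals is just taking the union of generators. For the second identification, I use the fact recalled just before the lemma, that the intersection of two monomial ideals is generated by the pairwise lcms of their generators; so $\cani{\clsf C} \cap \cani{\clsf D}$ is generated by $\{\lcm(\xx^{\Gamma f}, \xx^{\Gamma g}) : f \in \clsf C,\ g \in \clsf D\}$. Since exponent vectors of squarefree monomials are just subsets of $[n] \times [m]$, the lcm corresponds to set union, and the key combinatorial identity is
\begin{equation*}
\Gamma f \cup \Gamma g \;=\; ([n]\times[m]) \setminus (\graph f \cap \graph g) \;=\; ([n]\times[m]) \setminus \graph(f \cap g) \;=\; \Gamma(f \cap g),
\end{equation*}
which holds because $\Gamma$ is just set complementation of the graph inside $[n]\times[m]$. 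Thus the generators are $\{\xx^{\Gamma(f \cap g)} : f \in \clsf C,\ g \in \clsf D\}$, which is exactly a generating set for $\cani{\clsf C \cartcap \clsf D}$ by the definition of $\clsf C \cartcap \clsf D$.

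With both identifications verified, substituting $I = \cani{\clsf C}$, $J = \cani{\clsf D}$ into the previous lemma transports $\betti_{i, \es a}(I)$, $\betti_{i,\es a}(J)$, $\betti_{i,\es a}(I+J)$, $\betti_{i,\es a}(I\cap J)$ to $\betti_{i,\pf f}(\clsf C)$, $\betti_{i,\pf f}(\clsf D)$, $\betti_{i,\pf f}(\clsf C \cup \clsf D)$, $\betti_{i,\pf f}(\clsf C \cartcap \clsf D)$ (recalling the notational convention $\betti_{i,\pf f} := \betti_{i, \Gamma \pf f}$), giving the sequence as stated. There is no real obstacle; the only thing worth checking carefully is the complement identity $\Gamma(f \cap g) = \Gamma f \cup \Gamma g$ for partial functions, which extends the unambiguous squarefree degree bookkeeping to Cartesian intersections of classes.
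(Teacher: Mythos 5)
Your proof is correct and takes essentially the same approach as the paper: the lemma is derived by substituting $I = \cani{\clsf C}$, $J = \cani{\clsf D}$ into the preceding algebraic long exact sequence, using the identifications $\cani{\clsf C} + \cani{\clsf D} = \cani{\clsf C \cup \clsf D}$ and $\cani{\clsf C} \cap \cani{\clsf D} = \cani{\clsf C \cartcap \clsf D}$ (the latter via the lcm/complement bookkeeping $\Gamma f \cup \Gamma g = \Gamma(f \cap g)$, which you verified explicitly and which the paper records in the sentence immediately preceding the lemma).
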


Next, we seek to produce from cellular resolutions of $\clsf C$ and $\clsf D$ a cellular resolution of the {\bf Cartesian Union} $\clsf C \amalg \clsf D$ of two classes $\clsf C \sbe [U \to V], \clsf D \sbe [U' \to V']$, defined as the class with elements $f \amalg g: U \sqcup U' \to V \sqcup V'$ for $f \in \clsf C, g \in \clsf D$, defined by 
$$f \amalg g(u) = \begin{cases}
f(u) & \text{if $u \in U$}\\
g(u) & \text{else.}
\end{cases}
$$
We start with the general version for ideals, and specialize to function classes.
\begin{prop}\label{prop:amalg_res}
Let $I$ be an ideal of polynomial ring $S$ and let $J$ be an ideal of polynomial ring $T$ such that $S$ and $T$ share no variables.
If $(X_I, \lambda_I)$ resolves $S/I$ and $(X_J, \lambda_J)$ resolves $S/J$, then $(X_I \times X_J, \lambda_I \amalg \lambda_J)$ resolves the ideal $S/(I \otimes J)$ with $I \otimes J := (I \otimes T)(S \otimes J)$ in the ring $S \otimes T$, where $\lambda_I \amalg \lambda_J(F \times G) = \lambda_I(F) \lambda_J(G)$ for any cells $F \in X_I$ and $G \in X_J$.
(Here, tensor $\otimes$ is over base ring $\kk$).
Furthermore, if $X_I$ and $X_J$ are both minimal then $(X_I \times X_J, \lambda_I \amalg \lambda_J)$ is minimal as well.
\end{prop}
\begin{proof}
Let $\bomega_0, \ldots, \bomega_{p-1}$ be minimal monomial generators of $I$ and let $\bgamma_0, \ldots, \bgamma_{q-1}$ be minimal monomial generators of $J$.
The ideal $I \otimes J$ is generated by $\{\bomega_i \bgamma_j: (i, j) \in [p] \times [q]\}$, which are furthermore minimal because $\{\bomega_i\}_i$ and $\{\bgamma_j\}_j$ are respectively minimal, and $S$ and $T$ share no variables.
The complex $X_I \times X_J$ has vertices $V_i \times V'_j$ for vertices $V_i \in X_I$ and $V_j \in X_J$.
If $V_i$ has label $\bomega_i$ and $V'_j$ has label $\bgamma_j$, then $V_i \times V'_j$ has label $\bomega_i \bgamma_j$ via $\lambda_I \amalg \lambda_J$.
Thus $X_I \times X_J$ resolves $S/(I \otimes J)$, if it is a resolution.

And in fact, it is, because for any exponent sequence $\es a$ wrt $S$ and exponent sequence $\es b$ wrt $T$, $(X_I \times X_J)_{\preceq \es a \amalg \es b} = (X_I)_{\preceq \es a} \times (X_J)_{\preceq \es b}$, which is acyclic (Here $\es a \amalg \es b$ is the exponent sequence whose values on variables in $S$ come from $\es a$ and whose values on variables in $T$ come from $\es b$).

The faces of a cell $F\times G \in X_I \times X_J$ are
$$\{F \times G': G' \sbe \pd G, \dim G' = \dim G -1\} \cup \{F' \times G: F' \sbe \pd F, \dim F' = \dim F - 1\}.$$
If $\lambda_I(F) \not= \lambda_I(F')$ for any $F' \subset F$ and $\lambda_J(G) \not=\lambda_J(G')$ for any $G' \subset G$, then $\lambda_I \amalg \lambda_J(F \times G) = \lambda_I(F) \lambda_J(G)$ is not equal to any of $\lambda_I(F') \lambda_J(G)$ or $\lambda_I(F) \lambda_J(G')$ for any of the above $F'$ or $G'$.
Therefore $(X_I \times X_J, \lambda_I \amalg \lambda_J)$ is minimal if $X_I$ and $X_J$ are.
\end{proof}

\begin{prop}
Let $\clsf C \sbe \fun [V]U$ and $\clsf D \sbe \fun[V']{U'}$.
If $(X_C, \lambda_C)$ is a cellular resolution of $\clsf C$, and $(X_D, \lambda_D)$ is a cellular resolution of $\clsf D$, then the product $(X_C \times X_D, \lambda_C \amalg \lambda_D)$ is a cellular resolution of $\clsf C \amalg \clsf D$.
Furthermore, if $X_C$ and $X_D$ are both minimal then $(X_C \times X_D, \lambda_C \amalg \lambda_D)$ is minimal as well.
\end{prop}

Finally, we want to construct cellular resolutions of restrictions of a function class to a subset of its input space.
\begin{defn}
Let $\clsf C \sbe [U \to V]$ and $U' \sbe U$.
Then the {\bf restriction class} $\clsf C \restrict U' \sbe [U' \to V]$ is defined as $\clsf C \restrict U' = \{f \restrict U': f \in \clsf C\}$.
\end{defn}

Again we start with a general algebraic version and then specialize to restriction classes.
\begin{prop}
Let $\mathbf X := \{x_i: i \in [n]\}$ and $\mathbf Y := \{y_j: j \in [m]\}$ be disjoint sets of variables.
Let $I$ be an ideal of polynomial ring $S = \kk[\mathbf X \sqcup \mathbf Y]$.
Suppose $(X, \lambda)$ resolves $I$.
Then $(X, \lambda \restrict_{\mathbf Y})$ resolves the ideal $I / \la x_i-1: x_i \in \mathbf X\ra$ in the ring $\kk[\mathbf Y]$, where
$$\lambda \restrict_{\mathbf Y} (F) := \lambda(F) / \la x_i - 1: x_i \in \mathbf X\ra.$$
\end{prop}

Essentially, if we just ignore all the variables in $\mathbf X$ then we still get a resolution, though most of the time the resulting resolution is nonminimal even if the original resolution is.

\begin{proof}
The subcomplex $(X, \lambda \restrict_{\mathbf Y})_{\preceq \mathbf y^{\es a}}$ for a monomial $\mathbf y^{\es a}$ in $\kk[\mathbf Y]$ is exactly the subcomplex $(X, \lambda)_{\preceq \xx^{\mathbf 1}\mathbf y^{\es a}}$, and hence acyclic. 
\end{proof}

One can easily see that the Stanley-Reisner ideal of $\clsf C \restrict U'$ is $I_{\clsf C} / \la x_{u, v} -1: u \not \in U', v \in V\ra$ and similarly the canonical ideal of $\clsf C \restrict U'$ is $\cani C / \la x_{u, v}-1: u \not \in U', v \in V\ra$ (both ideals are of the polynomial ring $S[x_{u, v}: u \in U', v \in V]$).
Then the following corollary is immediate.

\begin{prop}
Let $\clsf C \sbe \fun[V]U$ and $U' \sbe U$.
If $(X, \lambda)$ is a cellular resolution of $\clsf C$, then $(X, \lambda \restrict U' \times V)$ resolves $\clsf C \restrict U'$, where $\lambda \restrict U' \times V := \lambda \restrict \{x_{u, v}: u \in U', v \in V\}$.
Similarly, if $\resf C$ is an algebraic free resolution of $I_C^\star$, then $\resf C \restrict U' := \resf C / \la x_{u, v} - 1: u \not\in U', v \in V\ra$ is an algebraic free resolution of $I_{\clsf C \restrict U'}^\star$. 
\label{prop:restrict_resolution}
\end{prop}

Finally we show that there is a series of exact sequences relating the Betti numbers of $\clsf C \sbe [n \to 2]$ to the Betti numbers of $\clsf C \restrict U \sbe [n]$.
All of the below homology are with respect to $\kk$.
\begin{defn}
Let $\clsf C \sbe [n \to m]$ and $\pf f :\sbe [n] \to [m]$.
{\bf The class $\clsf C$ filtered by $\pf f$}, $\clsf C \filt \pf f$, is $\{f \setminus \pf f: \pf f \sbe f \in \clsf C\}$.
For any $U \sbe [n] \times [m]$ that forms the graph of a partial function $\pf f$, we also write $\clsf C \filt U = \clsf C \filt \pf f$.
\end{defn}

It should be immediate that $\Sbpx{\clsf C \filt U} = \link_U{\Sbpx{\clsf C}}$, so that by Hochster's dual formula,
$$\betti_{i, \pf f}(\clsf C) = \dim_\kk \rH_{i-1}(\link_{\graph \pf f}{\Sbpx{\clsf C}}) = \dim_\kk \rH_{i-1}(\Sbpx{\clsf C \filt\pf f}).$$

\begin{figure}
\centering
\includegraphics[width=.4\textwidth]{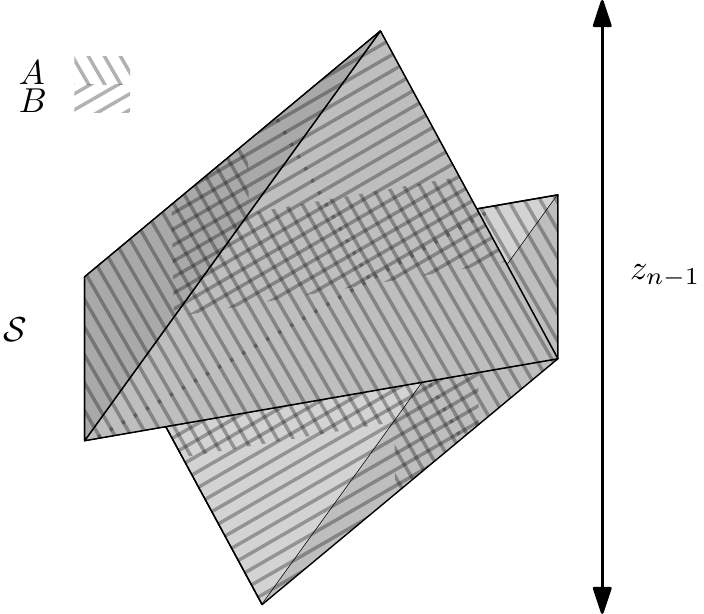}
\caption{$\Ss$ as the union $A \cup B$.}
\label{fig:restriction_mayer_vietoris}
\end{figure}
Consider the standard embedding of the complete $(n-1)$-dimensional suboplex $S^{n-1}_1 \cong \{z \in \R^n: \|z\|_1 = 1\}$.
Then $\Sbpx{\clsf C} \sbe S^{n-1}_1$ is the union of two open sets: $A := \Sbpx{\clsf C} \cap \{z \in \R^n: |z_{n-1}| < 2/3\}$ and $B := \Sbpx{\clsf C} \cap \{z \in \R^n: |z_{n-1}| > 1/3\}$ (see \cref{fig:restriction_mayer_vietoris}).
If all functions in $\clsf C$ sends $n-1$ to the same output, then $B$ is homotopy equivalent to a single point; otherwise $B$ contracts to 2 points.
$A$ deformation retracts onto $\Sbpx{\clsf C \restrict [n-1]}$.
The intersection $A \cap B$ deformation retracts to the disjoint union of two spaces, respectively homeomorphic to the links of $\Sbpx{\clsf C}$ with respect to the vertices $(n-1, 0), (n-1, 1) \in [n] \times [2].$ 
We therefore have the following long exact sequence due to Mayer-Vietoris
\begin{align*}
\cdots \to \rH_{i+1}(\Sbpx{\clsf C}) \to
\rH_i(\Sbpx{\clsf C\filt (n-1, 0)}) \oplus \rH_i(\Sbpx{\clsf C\filt (n-1, 1)}) \to  
\rH_i(\Sbpx{\clsf C \restrict [n-1]}) \oplus \rH_i(B) \to 
\rH_i(\Sbpx{\clsf C}) \to \cdots 
\end{align*}

If every function $f \in \clsf C$ has $f(n-1) = 1$, then $\clsf C \filt (n-1, 1) = \clsf C \restrict [n-1]$; a similar thing happens if all $f(n-1) = 0$.
So suppose $\clsf C \restrict \{n-1\} = [2]$.
Then $B \simeq \bullet \bullet$, and neither $\clsf C \filt (n-1, 0)$ nor $\clsf C\filt (n-1, 1)$ are empty.
Therefore the long exact sequence simplifies down to
\begin{align*}
\cdots \to \rH_{i+1}(\Sbpx{\clsf C}) \to
\rH_i(\Sbpx{\clsf C\filt (n-1, 0)}) \oplus \rH_i(\Sbpx{\clsf C\filt (n-1, 1)}) \to  
\rH_i(\Sbpx{\clsf C \restrict [n-1]}) \oplus \Z^{\ind(i=0)} \to 
\rH_i(\Sbpx{\clsf C}) \to \cdots 
\end{align*}

Note that for any simplicial complex $\Delta$, the link and restriction operations commute:
$$\link_\tau (\Delta \restrict \sigma) = (\link_\tau \Delta) \restrict \sigma.$$
Correspondingly, for function class $\clsf C$, filtering and restricting commute:
$$\clsf C \filt U \restrict V = \clsf C \restrict V \filt U.$$
Let $U := \graph \pf f$ for some $\pf f :\sbe [n-1] \to [2]$ and denote $U_0 := U \cup \{(n-1, 0)\}, U_1 := U \cup \{(n-1, 1)\}$.
The above long exact sequence generalizes to the following, by replacing $\clsf C$ with $\clsf C \filt U$ and applying the commutativity above:
\begin{align*}
\cdots \to \rH_{i+1}(\Sbpx{\clsf C\filt U}) \to
\rH_i(\Sbpx{\clsf C\filt U_0}) \oplus \rH_i(\Sbpx{\clsf C\filt U_1}) \to  
\rH_i(\Sbpx{\clsf C \restrict [n-1] \filt U}) \oplus \Z^{\ind(i=0)} \to 
\rH_i(\Sbpx{\clsf C \filt U}) \to \cdots 
\end{align*}
This yields via Hochster's formulas the following sequence relating the Betti numbers of $\clsf C$ and $\clsf C \restrict [n-1]$.
\begin{thm}\label{thm:restriction_betti_exact_seq}
Let $\clsf C \sbe [n \to 2]$, $\pf f :\sbe [n-1] \to [2]$, and $\pf f_0 := \pf f \cup (n-1\mapsto 0), \pf f_1 := \pf f \cup (n-1 \mapsto 1)$.
We have an exact sequence
\begin{align*}
\cdots \to \kk^{\betti_{i+1, \pf f}(\clsf C)} \to \kk^{\betti_{i, \pf f_0}(\clsf C) + \betti_{i, \pf f_1}(\clsf C)} \to \kk^{\betti_{i, \pf f}(\clsf C\restrict [n-1])+\ind(i=-1)} \to \kk^{\betti_{i, \pf f}(\clsf C)} \to \cdots
\end{align*}
\end{thm}

Using \cref{thm:restriction_betti_exact_seq} we can recapitulate the following fact about deletion in oriented matroids.
Below we write $V\setminus u$ for $V\setminus \{u\}$ in the interest of clarity.
\begin{cor}
Let $V$ be a point configuration with affine span $\R^d$ and $u \in V$.
Suppose $V\setminus u$ has affine span $\R^{d-e}$, where $e$ is either 0 or 1.
Then $\tau \in \{-, 0, +\}^{V\setminus u}$ is a covector of rank $r$ of $V \setminus u$ iff one of the following is true:
\begin{enumerate}
  \item $\tau_- := \tau \cup (u \mapsto -)$ is a covector of rank $r+e$ of $V$. \label{_tau-}
  \item $\tau_+ := \tau \cup (u \mapsto +)$ is a covector of rank $r+e$ of $V$. \label{_tau+}
  \item $\tau_0 := \tau \cup (u \mapsto 0)$ is a covector of rank $r+e$ of $V$, but $\tau_-$ and $\tau_+$ are not covectors of $V$. \label{_tau0}
\end{enumerate}
\end{cor}
\begin{proof}
Let $\clsf C = \linthr_V$ and $\clsf D = \linthr_{V \setminus u} = \clsf C \restrict (V \setminus u)$.
Write $\pf f := \inv\sigma \tau,\ \pf f_0 := \inv \sigma \tau_0,\ \pf f_+ := \inv \sigma \tau_+,\ \pf f_- := \inv \sigma \tau_-$.
$\betti_{i, \pf f}(\clsf C) = 1$ iff $\sigma \pf f$ is a covector of $V$ of rank $d - i$ by \cref{thm:linthr_betti}.

If \cref{_tau-} is true, but not \cref{_tau+}, then $\tau_0$ cannot be a covector of $V$ (or else subtracting a small multiple of $\tau_-$ from $\tau_0$ yields $\tau_+$).
As $\clsf C$ and $\clsf D$ both have pure Betti numbers, we have an exact sequence
\begin{align*}
0 \to \kk^{\betti_{j, \pf f_-}(\clsf C)} \to \kk^{\betti_{j, \pf f}(\clsf D)} \to 0
\end{align*}
where $j = d - \rank\tau_-$.
This yields that $\tau$ is a covector of rank $d-e-j = \rank\tau_- - e$.
The case that \cref{_tau+} is true but not \cref{_tau-} is similar.

If \cref{_tau-} and \cref{_tau+} are both true, then $\tau_0$ must also be a covector.
Furthermore, it must be the case that $\rank \tau_- = \rank\tau_+ = \rank \tau_0 + 1$.
Again as $\clsf C$ and $\clsf D$ have pure Betti numbers, we have an exact sequence
\begin{align*}
0 \to \kk^{\betti_{j+1, \pf f_0}(\clsf C)} \to \kk^{\betti_{j, \pf f_-}(\clsf C) + \betti_{j, \pf f_+}(\clsf C)} \to \kk^{\betti_{j, \pf f}(\clsf D)} \to 0 
\end{align*}
where $j = d - \rank\tau_-$.
Thus $\tau$ is a covector of rank $d-e-j=\rank\tau_- - e$.

Finally, if \cref{_tau0} is true, we immediately have an exact sequence
\begin{align*}
0 \to \kk^{\betti_{j, \pf f}(\clsf D)} \to \kk^{\betti_{j, \pf f_0}(\clsf C)} \to 0
\end{align*}
with $j = d - \rank \tau_0$, so $\tau$ is a covector of rank $d-e-j = \rank\tau_0 - e$.
\end{proof}

In general, if $\clsf C \sbe [n \to 2]$ and $\clsf C \restrict [n-1]$ are known to have pure Betti numbers, then \cref{thm:restriction_betti_exact_seq} can be used to deduce the Betti numbers of $\clsf C \restrict [n-1]$ directly from those of $\clsf C$.
This strategy is employed in the proof of \cref{cor:CM_under_restriction} in a later section.
It is an open problem to characterize when a class has pure Betti numbers.

\section{Applications}
\label{chap:app}

\subsection{Dimension Theory}
\label{sec:dimension}
In this section we investigate the relationships between VC dimension and other algebraic quantities derived from the Stanley-Reisner ideal and the canonical ideal.
\begin{defn}
Suppose $\clsf C \sbe \fun n$.
We say $\clsf C$ {\bf shatters} a subset $U \sbe [n]$ if $\clsf C \restrict U = [U \to 2]$.
The {\bf VC dimension} of $\clsf C$, $\vcdim \clsf C$, is defined as the largest $k$ such that there is a subset $U \sbe [n]$ of size $k$ that is shattered by $\clsf C$.
The {\bf VC radius} of $\clsf C$, $\vcr \clsf C$, is defined as the largest $k$ such that all subsets of $[n]$ of size $k$ are shattered by $\clsf C$.
\end{defn}
The VC dimension is a very important quantity in statistical and computational learning theory.
For example, suppose we can obtain data points $(u, f(u))$ by sampling from some unknown distribution $u \sim \mathcal P$, where $f$ is an unknown function known to be a member of a class $\clsf C$.
Then the number of samples required to learn the identity of $f$ approximately with high probability is $O(\vcdim \clsf C)$ \cite{kearns_introduction_1994}.
Simultaneous ideas also popped up in model theory \cite{shelah_combinatorial_1972}.
In this learning theory perspective, an extenture $\pf f$ of $\clsf C$ is what is called a minimal {\it nonrealizable sample}: there is no function in $\clsf C$ that {\it realizes} the input/output pairs of $\pf f$, but there is such functions for each proper subsamples (i.e. restrictions) of $\pf f$.

Note that $\clsf C$ shatters $U$ iff $I_{\clsf C\restrict U} = I_{\clsf C} \otimes_S S/ J_U$ equals $\la x_{u, 0} x_{u, 1}: u \in U\ra$ as an ideal of $S/J_U$, where $J_U = \la x_{\bar u, v} - 1: \bar u\not \in U, v \in V\ra$.
In other words, every nonfunctional minimal monomial generator of $I_{\clsf C}$ gets killed when modding out by $J_U$; so $\clsf C$ shatters $U$ iff every extenture of $\clsf C$ is defined on a point outside $U$.
Therefore if we choose $U$ to be any set with $|U| < \min\{|\dom \pf f|: \pf f \in \ex \clsf C \}$, then $\clsf C$ shatters $U$.
Since $\dom \pf f$ is not shattered by $\clsf C$ if $\pf f$ is any extenture, this means that
\begin{thm}\label{thm:vcdim+1_ge_min_extenture}
For any $\clsf C \subset \fun n$ not equal to the whole class $[n \to 2]$, 
$$\vcr \clsf C = \min\{|\dom \pf f|: \pf f \in \ex \clsf C \} - 1.$$
\end{thm}

Define the {\bf collapsing map} $\pi: \kk[x_{u, 0}, x_{u, 1}: u \in [n]] \to \kk[x_u: u \in [n]]$ by $\pi(x_{u, i}) = x_u$.
If $U \sbe [n]$ is shattered by $\clsf C$, then certainly all subsets of $U$ are also shattered by $\clsf C$.
Thus the collection of shattered sets form an abstract simplicial complex, called the {\bf shatter complex} $\Shx_{\clsf C}$ of $\clsf C$.
\begin{thm} \label{thm:shatter_iff_not_in_collapsed_ideal}
Let $I$ be the the Stanley-Reisner ideal of the shatter complex $\Shx_{\clsf C}$ in the ring $S' = \kk[x_u: u \in [n]]$.
Then $\pi_* I_{\clsf C} = I + \la x_u^2: u \in [n]\ra$.
Equivalently, $U \in \Shx_{\clsf C}$ iff $\xx^U \not\in \pi_* I_{\clsf C}$.
\end{thm}
\begin{proof}
$U$ is shattered by $\clsf C$ iff for every $\pf f: U \to [2]$, $\pf f$ has an extension in $\clsf C$, iff $\xx^{\graph \pf f} \not \in I_{\clsf C}, \forall \pf f:U \to [2]$, iff $\xx^U \not \in \pi_* I_{\clsf C}$. 
\end{proof}
We immediately have the following consequence.
\begin{thm}
$\vcdim \clsf C = \max\{|U|: \xx^U \not \in \pi_* I_{\clsf C}\}$.
\end{thm}

Recall the definition of projective dimension \cite{miller_combinatorial_2005}.
\begin{defn}
The length of a minimal resolution of a module $M$ is the called the {\bf projective dimension}, $\projdim M$, of $M$.
\end{defn}

We make the following definitions in the setting of function classes.
\begin{defn}
For any $\clsf C \sbe [n \to 2]$, the {\bf homological dimension} $\hdim \clsf C$ is defined as the projective dimension of $\cani{\clsf C}$, the length of the minimal resolution of $\cani{\clsf C}$.
The {\bf Stanley-Reisner dimension} $\srdim \clsf C$ is defined as the projective dimension of the Stanley-Reisner ring $S/I_{\clsf C}$.
\end{defn}

One can quickly verify the following lemma.
\begin{lemma}
If $S/I_{\clsf C}$ has a minimal cellular resolution $X$, then $\srdim \clsf C = \dim X + 1$.
If $\clsf C$ has a minimal cellular resolution $X$, then $\hdim \clsf C = \dim X$.
The same is true for cocellular resolutions $Y$ if we replace $\dim X$ with the difference between the dimension of a top cell in $Y$ and that of a bottom cell in $Y$.
\end{lemma}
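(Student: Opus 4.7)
The plan is essentially bookkeeping of degree conventions, combining the definitions of $\srdim$ and $\hdim$ as projective dimensions with the homological placement of cells in (co)cellular resolutions. The two background facts I will invoke are (a) the length of any minimal free resolution of a finitely generated graded module $M$ equals $\projdim M$, and (b) the short exact sequence $0 \to I \to S \to S/I \to 0$ yields $\projdim(S/I) = \projdim(I) + 1$ for any nonzero proper monomial ideal $I$.

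For the cellular statements, first note that by \cref{defn:cell_res} the empty cell of $X$ lies at homological degree $0$ and a $k$-cell at degree $k+1$, so the free complex $\resf F_X$ has length $\dim X + 1$. The minimality hypothesis on $X$ (distinct labels between a cell and each of its faces) ensures, via the proposition immediately following \cref{defn:cell_res}, that $\resf F_X$ is in fact a minimal free resolution of $S/I_{\clsf C}$. Applying (a) then gives $\srdim \clsf C = \projdim(S/I_{\clsf C}) = \dim X + 1$. For the second part, a minimal cellular resolution of $\clsf C$ is by definition one of $S/\cani{\clsf C}$, so the same argument yields $\projdim(S/\cani{\clsf C}) = \dim X + 1$; applying (b) then gives $\hdim \clsf C = \projdim \cani{\clsf C} = \dim X$.

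For the cocellular case, the observation recorded in the paragraph preceding \cref{prop:cocell_betti} is that in a cocellular complex whose top cells have dimension $d = t$, a $k$-cell sits in homological degree $d - k$; so if the bottom cells have dimension $b$, the free complex $\resf F^Y$ has length $t - b$. The cocellular minimality criterion again guarantees that this is a minimal free resolution of whichever ideal the pair natively resolves. Repeating the cellular argument with $\dim X + 1$ replaced throughout by $t - b$, and inserting (b) exactly when we need to pass between $I$ and $S/I$ (once for each of the two statements, symmetric to the cellular case), produces both parts of the lemma with $\dim X$ replaced by $t - b$. No step presents a real obstacle; the whole proof is a translation between combinatorial cell dimensions and homological length.
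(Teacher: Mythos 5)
Your proof is correct and takes the only reasonable route — the paper itself supplies no argument beyond ``one can quickly verify,'' and your argument is the verification: identify the length of the (co)cellular free complex, invoke minimality to identify that length with a projective dimension, then use $\projdim(S/I) = \projdim(I) + 1$ to move between $I$ and $S/I$ as the definitions of $\srdim$ and $\hdim$ require. The one place your exposition is a little muddled is the parenthetical ``(once for each of the two statements, symmetric to the cellular case).'' The actual accounting is anti-symmetric: a cellular resolution resolves $S/I$, so fact (b) is needed for the $\hdim$ clause but not the $\srdim$ clause; a cocellular resolution resolves $I$ itself, so (b) is needed for the $\srdim$ clause but not the $\hdim$ clause. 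Each statement of the lemma invokes (b) in exactly one of the cellular/cocellular halves, not once in each. This does not affect the conclusions you reach, but the parenthetical as written suggests a symmetric pattern that would produce the wrong arithmetic if followed literally; it is worth rewording so a reader checking the cocellular case does not stumble.
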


Recall the definition of regularity \cite{miller_combinatorial_2005}.
\begin{defn}
The {\bf regularity} of a $\N^n$-graded module $M$ is
$$\reg M = \max\{|\es b| - i: \betti_{i, \es b}(M) \not= 0\},$$
where $|\es b| = \sum_{j=1}^n b_i.$
\end{defn}

There is a well known duality between regularity and projective dimension.
\begin{prop}\cite[thm 5.59]{miller_combinatorial_2005}\label{prop:regularity_projdim}
Let $I$ be a squarefree ideal.
Then $\projdim(S/I) = \reg(I^\star)$.
\end{prop}

This implies that the Stanley-Reisner dimension of $\clsf C$ is equal to the regularity of $\cani{\clsf C}$.
For each minimal resolutions we have constructed, it should be apparent that $\max\{|\Gamma \pf f| - i: \betti_{i, \pf f}(\clsf C) \not = 0\}$ occurs when $i$ is maximal, and thus for such an $\pf f$ with smallest domain it can be computed as $\#\text{variables} - |\dom \pf f| - \hdim \clsf C$.
Altogether, by the results of \cref{sec:resolutions}, we can tabulate the different dimensions for each class we looked at in this work in \cref{tab:dims}.
\begin{table}[]
\centering
\begin{tabular}{M|MMM}
             & \hdim                   & \srdim & \vcdim                  \\
\text{[}n \to 2\text{]}
             & n                       & n      & n                       \\
\{f\}        & 0                       & n      & 0                       \\
\cDelta_n    & n-1                     & n+1      & 1                       \\
\monconj_d   & d                       & 2^{d+1} - d - 1      & d\quad\text{\cite{natschlager_exact_1996}}\\
\CLconj_d    & d+1                     & 2^{d+1}-d-1      & d \quad\text{\cite{natschlager_exact_1996}}\\
\linthr_d    & d+1                     & 2^{d+1}-d-1       & d+1 \quad\text{\cite{anthony_discrete_2001}}
\\
\polythr_d^k & \Sigma_0^k              & 2^{d+1} - \Sigma_0^k = 2^d + \Sigma_{k+1}^d    & \Sigma_0^k\quad\text{\cite{anthony_discrete_2001}}
\\
\linfun_d^2  & d                       & 2^{d+1} - d - 1       & d                      
\end{tabular}
\caption{Various notions of dimensions for boolean function classes investigated in this work.
$\Sigma_j^k := \sum_{i=j}^k \binom d i$.
The VC dimensions without citation can be checked readily.}
\label{tab:dims}
\end{table}

For all classes other than $\cDelta_n$, we see that $\hdim$ is very close to $\vcdim$.
We can in fact show the former is always at least thte latter.
\begin{prop}\label{prop:hom_dim_ineq}
Let $\clsf C \sbe \fun[V]U$ and $U' \sbe U$.
Then $\hdim \clsf C \ge \hdim \clsf C \restrict U'$.
\end{prop}
\begin{proof}
Follows from \cref{prop:restrict_resolution}.
\end{proof}
\begin{thm} \label{thm:hdim_ge_vcdim}
For any $\clsf C \sbe [n \to 2]$, $\hdim \clsf C \ge \vcdim \clsf C$. 
\end{thm}
\begin{proof}
Let $U \sbe [n]$ be the largest set shattered by $\clsf C$.
We have by the above proposition that $\hdim \clsf C \ge \hdim \clsf C \restrict U$.
But $\clsf C \restrict U$ is the complete function class on $U$, which has the cube minimal resolution of dimension $|U|$.
Therefore $\hdim \clsf C \ge |U| = \vcdim \clsf C$. 
\end{proof}
As a consequence, we have a bound on the number of minimal generators of an ideal $I$ expressable as a canonical ideal of a class, courtesy of the Sauer-Shelah lemma \cite{kearns_introduction_1994}.
\begin{cor}
Suppose ideal $I$ equals $\cani{\clsf C}$ for some $\clsf C \sbe [n \to 2]$.
Then $I$ is minimally generated by a set no larger than $O(n^d)$, where $d$ is the projective dimension of $I$.
\end{cor}

However, in contrast to VC dimension, note that homological dimension is not monotonic: $\cDelta_{2^d} \sbe \CLconj_d$ but the former has homological dimension $2^d$ while the latter has homological dimension $d+1$.
But if we know a class $\clsf C \sbe [n \to 2]$ has $\hdim \clsf C = \vcdim \clsf C$, then $\clsf C \sbe \clsf D$ implies $\hdim \clsf C \le \hdim \clsf D$ by the monotonicity of VC dimension.
We write this down as a corollary.
\begin{cor} \label{cor:hdim_monotone}
Suppose $\clsf C, \clsf D \sbe [n \to 2]$.
If $\hdim \clsf C = \vcdim \clsf C$, then $\clsf C \sbe \clsf D$ only if $\hdim \clsf C \le \hdim \clsf D$.
\end{cor}

The method of restriction shows something more about the Betti numbers of $\clsf C$.
\begin{thm} \label{thm:shatter_iff_betti_ge1}
$\clsf C$ shatters $U \sbe [n]$ iff for every partial function $\pf f: \sbe U \to [2]$, there is some $\pf g: \sbe [n] \to [2]$ extending $\pf f$ such that $\betti_{|U| - |\dom \pf f|, \pf g}(\clsf C) \ge 1$.
\end{thm}
\begin{proof}
The backward direction is clear when we consider all total function $\pf f: U \to [2]$.

From any (algebraic) resolution $\resf F$ of $\cani{\clsf C}$, we get an (algebraic) resolution $\resf F \restrict U$ of $\cani{\clsf C \restrict U}$ by ignoring the variables $\{ \xx_{u, v}: u \not \in U, v \in [2]\}$.
If for some $\pf f: \sbe U \to [2]$, for all $\pf g: \sbe [n] \to [2]$ extending $\pf f$, $\betti_{|U| - |\dom \pf f|, \pf g}{\clsf C} = 0$, then there is the $(|U| - |\dom \pf f|)$th module of $\resf F \restrict U$ has no summand of degree $\Gamma\pf g$, which violates the minimality of the cube resolution of $\clsf C \restrict U$. 
\end{proof}

There is also a characterization of shattering based on the Stanley-Reisner ideal of a class.
We first prove a trivial but important lemma.

\begin{lemma} \label{lemma:top_hom_nonzero_iff_complete_suboplex}
Suppose $\Delta$ is an $n$-dimensional suboplex.
Then $\rH_n(\Delta) \not= 0$ iff $\Delta$ is complete.\footnote{
The proof given actually works as is when $\Delta$ is any pure top dimensional subcomplex of a simplicial sphere.}
\end{lemma}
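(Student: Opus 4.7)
The plan is to work directly with the simplicial chain complex, exploiting the fact that a suboplex is by definition a subcomplex of the boundary of an $(n+1)$-orthoplex, which is topologically $S^n$.

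First I would observe that since $\Delta$ has dimension $n$, the chain group $C_{n+1}(\Delta)$ vanishes, so $\rH_n(\Delta) = \ker \partial_n^\Delta$ where $\partial_n^\Delta \colon C_n(\Delta) \to C_{n-1}(\Delta)$ is the simplicial boundary. Let $O$ denote the $(n+1)$-orthoplex whose boundary contains $\Delta$ as a subcomplex, and write $\Sigma = \partial O$ for the complete $n$-dimensional suboplex. The inclusion $\Delta \hookrightarrow \Sigma$ induces an inclusion of chain complexes, so the boundary map on $C_n(\Delta)$ is just the restriction of $\partial_n^{\Sigma}$ to $C_n(\Delta) \subseteq C_n(\Sigma)$. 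Consequently
\[
\ker \partial_n^\Delta \;=\; \ker \partial_n^\Sigma \,\cap\, C_n(\Delta).
\]

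Next I would use the standard fact that $\Sigma$ is a triangulation of $S^n$, so $\rH_n(\Sigma) \cong \kk$, generated by the fundamental class $[\Sigma] = \sum_F \epsilon_F \cdot F$ where $F$ runs over the $2^{n+1}$ facets of $\Sigma$ and $\epsilon_F \in \{\pm 1\}$ are the orientation signs making the sum a cycle. Since $\Sigma$ has no $(n+1)$-chains, $\ker \partial_n^\Sigma = \rH_n(\Sigma)$ is one-dimensional and spanned by $[\Sigma]$. Therefore a nonzero element of $\ker \partial_n^\Delta$ exists precisely when some nonzero scalar multiple of $[\Sigma]$ lies in $C_n(\Delta)$, which happens iff every facet $F$ of $\Sigma$ appears in $\Delta$, i.e.\ iff $\Delta$ is complete.

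There is no real obstacle here; the argument is essentially a one-line consequence of the fundamental class of $S^n$ generating the top cycle space. The only care needed is to note that each facet of $\Sigma$ has coefficient $\pm 1$ (not $0$) in $[\Sigma]$, which is immediate from the explicit description of $\partial O$ as the join-of-pairs $\star_{i=0}^n\{e_i,-e_i\}$ of antipodal vertex pairs, where every top simplex picks exactly one vertex from each pair and all such choices occur.
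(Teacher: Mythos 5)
Your proof is correct, and it takes a genuinely different route from the paper's. The paper argues via Mayer--Vietoris: it picks a facet $F \notin \Delta$, lets $\nabla$ be the complex generated by the remaining facets not in $\Delta$, observes that $\nabla \cup \Delta$ is the sphere minus the interior of $F$ (hence contractible), and extracts $\rH_n(\Delta) = 0$ from the resulting long exact sequence. You instead argue entirely at the chain level: since $C_{n+1}(\Delta) = 0$, the top reduced homology is just the cycle group $\ker \partial_n^\Delta$, which is the intersection of $\ker \partial_n^\Sigma$ with $C_n(\Delta)$ inside the complete suboplex $\Sigma$; that kernel is one-dimensional and spanned by the fundamental cycle $\sum_F \epsilon_F F$ with all $\epsilon_F \in \{\pm 1\}$, so it meets the coordinate subspace $C_n(\Delta)$ nontrivially exactly when $\Delta$ contains every facet. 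Your argument is somewhat more direct (no exact sequence needed) and makes transparent where the "iff" comes from: it is precisely the fact that the fundamental cycle of an oriented triangulated sphere has full support. Both arguments extend verbatim to a top-dimensional subcomplex of any triangulated sphere, as the paper's footnote notes; yours in fact generalizes to any connected, closed, oriented triangulated $n$-pseudomanifold, since all that is used is that the top cycle group is one-dimensional with a generator supported on every facet.
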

\begin{proof}
The backward direction is clear.

Write $S^n_1$ for the complete $n$-dimensional suboplex. 
Suppose $\Delta \not= S^n_1$.
Choose an $n$-dimensional simplex $F$ not contained in $\Delta$.
Let $\nabla$ be the complex formed by the $n$-dimensional simplices not contained in $\Delta$ or equal to $F$.
By Mayer-Vietoris for simplicial complexes, we have a long exact sequence
\begin{align*} 
\cdots \to \rH_n(\nabla \cap \Delta) \to \rH_n(\nabla) \oplus \rH_n(\Delta) \to \rH_n(\nabla \cup \Delta) \to \rH_{n-1}(\nabla \cap \Delta) \to \cdots
\end{align*}
Now $\nabla \cup \Delta$ is just $S^n_1 \setminus \intrr F$, which is homeomorphic to an $n$-dimensional disk, and hence contractible.
Hence $\rH_m(\nabla \cup \Delta) = 0, \forall m > 0$, and therefore $\rH_m(\nabla \cap \Delta) \cong \rH_m(\nabla) \oplus \rH_m(\Delta), \forall m > 0$.
But $\nabla \cap \Delta$ has dimension at most $n-1$, so $\rH_n(\nabla \cap \Delta) = 0$, implying $\rH_n(\nabla) = \rH_n(\Delta) = 0$, as desired.
\end{proof}

\begin{thm} \label{thm:shatter_iff_nonzero_SR_betti}
Let $\clsf C \sbe [n \to 2]$.
Suppose $U \sbe [n]$ and let $\tau = U \times [2]$.
Then $\clsf C$ shatters $U$ iff $\betti_{|U| - 1, \tau}(I_{\clsf C}) \not= 0$.
\end{thm}
\begin{proof}
$\clsf C$ shatters $U$ iff $\clsf C \restrict U = [U \to 2]$.
The canonical suboplex of $\clsf C \restrict U$ is $\Sbpx{\clsf C \restrict U} = \Sbpx{\clsf C} \restrict \tau$.
By the above lemma, $\Sbpx{\clsf C} \restrict \tau$ is complete iff $\rH_{|U| - 1}(\Sbpx{\clsf C} \restrict \tau) \not = 0$ iff $\rH^{|U| - 1}(\Sbpx{\clsf C} \restrict \tau; \kk) \not = 0$.
By Hochster's formula (\cref{prop:hochster}), the dimension of this reduced cohomology is exactly $\betti_{|U|-1, \tau}(I_{\clsf C})$.
\end{proof}

The above yields another proof of the dominance of homological dimension over projective dimension.
\begin{proof}[Second proof of \cref{thm:hdim_ge_vcdim}]
By \cref{prop:regularity_projdim}, $\hdim \clsf C + 1 = \projdim(S/\cani{\clsf C}) = \reg(I_{\clsf C})$.
By \cref{thm:shatter_iff_nonzero_SR_betti}, the largest shattered set $U$ must satisfy $\betti_{|U|-1, U\times[2]}(I_{\clsf C}) \not = 0$, so by the definition of regularity,
$$\vcdim \clsf C = |U| = |U\times[2]| - (|U|-1) - 1 \le \reg(I_{\clsf C}) -1 = \hdim \clsf C.$$
\end{proof}

From the same regularity argument, we obtain a relation between homological dimension and the maximal size of any minimal nonrealizable samples.
\begin{thm}
For any minimal nonrealizable sample $\pf f$ of $\clsf C$, we have
$$|\pf f| \le \hdim \clsf C + 1.$$
\end{thm}
\begin{proof}
Again, $\hdim \clsf C + 1 = \reg(I_{\clsf C})$.
For each extenture (i.e. minimal nonrealizable sample) $\pf f$, $\xx^{\graph \pf f}$ is a minimal generator of $I_{\clsf C}$, so we have $\betti_{0, \graph \pf f}(I_\clsf C) = 1$.
Therefore,
$$|\pf f| \le \reg(I_{\clsf C}) = \hdim \clsf C + 1.$$
\end{proof}

It is easy to check that equality holds for $\clsf C = \monconj, \linfun, \polythr$.

Combining \cref{thm:shatter_iff_not_in_collapsed_ideal}, \cref{thm:shatter_iff_betti_ge1}, and \cref{thm:shatter_iff_nonzero_SR_betti}, we have the equivalence of three algebraic conditions
\begin{cor}
Let $\clsf C \sbe [n \to 2]$ and $U \sbe [n]$.
The following are equivalent
\begin{enumerate}
  \item $\clsf C$ shatters $U$.
  \item $\xx^U \not \in \pi_* I_{\clsf C}$.
  \item $\forall \pf f: \sbe U \to [2]$, there is some $\pf g: \sbe[n] \to [2]$ extending $\pf f$ such that $\betti_{|U|-|\dom f|, \pf g}(\clsf C) \ge 1$.
  \item $\betti_{|U|-1, U \times [2]}(I_{\clsf C}) \not = 0$.
\end{enumerate}
\end{cor}
The above result together with \cref{cor:hdim_monotone} implies several algebraic conditions on situations in which projective dimension of an ideal is monotone.
Here we write down one of them.
\begin{cor}
Let $S = \kk[x_{u, i}: u \in [n], i \in [2]]$.
Suppose ideals $I$ and $J$ of $S$ are generated by monomials of the form $\xx^{\Gamma f}, f \in [n \to 2]$.
If $\max\{|U|: \xx^U \not\in \pi_* I\} = \projdim I$, then $I \sbe J$ implies $\projdim I \le \projdim J$.
\end{cor}

\subsection{Cohen-Macaulayness}
\label{sec:CM}
We can determine the Betti numbers of dimension 1 of any class of boolean functions.
Let $\clsf C \sbe [n \to 2]$.
Write $\clsf C_{\spe \pf f} := \{h \in \clsf C: h \spe \pf f\}$.
Then we have the following theorem.

\begin{thm}\label{thm:dim1_betti}
The 1-dimensional Betti numbers satisfy 
$$\betti_{1, \pf f}(\clsf C) = \begin{cases}
1 & \text{if $|\clsf C_{\spe \pf f}| = 2$}\\
0 & \text{otherwise.}
\end{cases}$$

More precisely, let $\{\epsilon_f: f \in \clsf C\}$ be a set of basis, each with degree $\Gamma f$, and define
$$\phi: \bigoplus_{f \in \clsf C} S \epsilon_f \onto \cani{\clsf C},\quad
\phi(\epsilon_f) = \xx^{\Gamma f}.$$
Let $\bomega_{f, g} = \xx^{\Gamma f} / \xx^{\Gamma (f \cap g)}$ and $\zeta_{f, g} := \bomega_{f, g} \epsilon_g - \bomega_{g, f} \epsilon_f$.
Then $\ker \phi$ has minimal generators
$$\{\zeta_{f, g}: \clsf C_{\spe \pf f} = \{f, g\}, f \prec g\},$$
where $\prec$ is lexicographic ordering (or any linear order for that matter).
\end{thm}
We will use the following lemma from \cite{eisenbud_commutative_1994}.
\begin{lemma}[\cite{eisenbud_commutative_1994}~Lemma 15.1 bis]
$\ker \phi$ is generated by $\{\zeta_{h, h'}: h, h' \in \clsf C\}$. 
\end{lemma}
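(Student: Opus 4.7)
The plan is to establish the lemma degree-by-degree by exploiting that $\phi$ is degree-preserving with respect to the $\N^{[n]\times[m]}$-grading in which $\epsilon_f$ sits in degree $\Gamma f$. Because $\ker\phi$ is multigraded, it suffices to prove that each multigraded piece $(\ker\phi)_\beta$ lies in the submodule generated by the $\zeta_{h,h'}$.

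Fix $\beta\in\N^{[n]\times[m]}$ and set $F_\beta := \{f\in\clsf C : \Gamma f\preceq\beta\}$. The degree-$\beta$ strand of $\bigoplus_f S\epsilon_f$ has $\kk$-basis $\{\xx^{\beta-\Gamma f}\epsilon_f : f\in F_\beta\}$, and each of these elements is sent by $\phi$ to the single monomial $\xx^\beta$. Therefore $(\ker\phi)_\beta$ consists precisely of the $\kk$-linear combinations $\sum_{f\in F_\beta}c_f\,\xx^{\beta-\Gamma f}\epsilon_f$ with $\sum_f c_f = 0$.

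By elementary linear algebra, the kernel of the summation map $\kk^{F_\beta}\to\kk$ is $\kk$-spanned by pairwise differences $e_f - e_g$. Lifting, every element of $(\ker\phi)_\beta$ is a $\kk$-linear combination of the ``elementary syzygies''
$$\xx^{\beta-\Gamma f}\epsilon_f \;-\; \xx^{\beta-\Gamma g}\epsilon_g, \qquad f,g\in F_\beta.$$
The crucial identity is that each such elementary syzygy is already an $S$-multiple of a single $\zeta$: since $f,g\in F_\beta$ forces $\beta\succeq\Gamma f\cup\Gamma g = \Gamma(f\cap g)$, and since $\bomega_{f,g}$ and $\bomega_{g,f}$ are built so that $\bomega_{g,f}\cdot \xx^{\Gamma f} = \bomega_{f,g}\cdot \xx^{\Gamma g} = \xx^{\Gamma(f\cap g)}$ (the $\lcm$ of $\xx^{\Gamma f}$ and $\xx^{\Gamma g}$), one computes directly
$$\xx^{\beta-\Gamma(f\cap g)}\cdot \zeta_{f,g} \;=\; \xx^{\beta-\Gamma g}\epsilon_g - \xx^{\beta-\Gamma f}\epsilon_f.$$
Summing over $\beta$ then shows that $\ker\phi$ lies in the $S$-module generated by $\{\zeta_{h,h'} : h, h' \in \clsf C\}$, while the reverse containment is immediate from $\phi(\zeta_{h,h'}) = \bomega_{h,h'}\xx^{\Gamma h'} - \bomega_{h',h}\xx^{\Gamma h} = 0$.

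I expect no substantive obstacle: the reduction to multigraded pieces is automatic for monomial ideals, and the pairwise-difference observation is essentially the surjectivity of the Taylor differential onto the first syzygy module. The only point requiring care is the bookkeeping to verify that an elementary syzygy in a given multidegree really is a single $S$-multiple of one $\zeta_{f,g}$ rather than some more intricate combination, and this is exactly what the displayed identity makes explicit.
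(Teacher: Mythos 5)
Your argument is correct. Note first that the paper itself offers no proof of this lemma: it is cited verbatim as Eisenbud's Lemma~15.1~bis, so there is no ``paper's proof'' to compare against. What you have written is the standard multigraded argument for the first syzygies of a monomial ideal (essentially the case of Schreyer's theorem where the leading terms are the generators themselves), and it is the same argument Eisenbud gives. The key points all check out: $\ker\phi$ is a multigraded submodule, its degree-$\beta$ strand is the kernel of the ``all ones'' map $\kk^{F_\beta}\to\kk$, that kernel is spanned by pairwise differences of basis vectors, and each such difference lifts to $\xx^{\beta-\Gamma(f\cap g)}\zeta_{f,g}$ using $\Gamma(f\cap g)=\Gamma f\cup\Gamma g\preceq\beta$. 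One small remark: the paper's displayed formula $\bomega_{f,g}=\xx^{\Gamma f}/\xx^{\Gamma(f\cap g)}$ is a typo (as written it is not even a monomial, since $\Gamma f\subseteq\Gamma(f\cap g)$), and the intended definition is $\bomega_{f,g}=\xx^{\Gamma(f\cap g)}/\xx^{\Gamma g}$, which is what you silently and correctly used via the identity $\bomega_{f,g}\,\xx^{\Gamma g}=\bomega_{g,f}\,\xx^{\Gamma f}=\xx^{\Gamma(f\cap g)}$; this is the only reading consistent with the degree bookkeeping in the proof of Theorem~\ref{thm:dim1_betti}.
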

\begin{proof}[Proof of \cref{thm:dim1_betti}]
It's clear that the latter claim implies the former claim about Betti numbers.

We first show that $G = \{\zeta_{f, g}: \clsf C_{\spe \pf f} = \{f, g\}, f \prec g\}$ is a set of generators as claimed.
By the lemma above, it suffices to show that $\zeta_{h, h'}$ for any two functions $h\prec h' \in \clsf C$ can be expressed as a linear combinations of $G$.
Denote by $\|f - g\|_1$ the $L_1$ distance $n - |\dom(f \cap g)|$.
We induct on the size of the disagreement $p = \|h - h'\|_1$.
When $p = 1$, $\zeta_{f, g} \in G$, so there's nothing to prove.
Suppose the induction hypothesis is satisfied for $p \le q$ and set $p = q+1$.
Let $\pf f = h \cap h'$.
If $\clsf C_{\spe \pf f}$ has size 2 then we are done.
So assume $|\clsf C_{\spe \pf f}| \ge 3$ and let $h''$ be a function in $\clsf C_{\spe \pf f}$ distinct from $h$ or $h''$.
There must be some $u, u'\in [n] \setminus \dom \pf f$ such that $h(u) = h''(u) = \neg h'(u)$ and $h'(u') = h''(u') = \neg h(u')$.
Indeed, if such a $u$ does not exist, then $h''(v) = h'(v)$ for all $v \in [n] \setminus \dom \pf f$, and thus $h'' = h'$, a contradiction; similarly, if $u'$ does not exist, we also derive a contradiction.
Therefore $\|h - h''\|_1, \|h' - h''\| \le q$, and by induction hypothesis, $\zeta_{h, h''}$ and $\zeta_{h', h''}$ are both expressible as linear combination of $G$, and thus $\zeta_{h, h'} = \zeta_{h, h''} - \zeta_{h'', h'}$ is also expressible this way.
This proves that $G$ is a set of generators.

For any partial $\pf f$, if $\clsf C_{\spe \pf f} = \{f, g\}$, then the degree $\xx^{\Gamma \pf f}$ strand of $\phi$ is the map of vector spaces
$$\kk \bomega_{f,g} \epsilon_g\oplus \kk \bomega_{g, f} \epsilon_f \to \kk \xx^{\Gamma \pf f}, (\omega, \omega') \mapsto \omega + \omega'$$
whose kernel is obviously $\kk \zeta_{f, g}$.
Therefore, $G$ must be a minimal set of generators.
\end{proof}

\begin{defn}
Let $\clsf C \sbe [n \to 2]$ and $f,g \in \clsf C.$
If $\clsf C_{f \cap g} = \{f, g\}$, then we say {\bf $f$ and $g$ are neighbors in $\clsf C$}, and write $f \nei_{\clsf C} g$, or $f \nei g$ when $\clsf C$ is clear from context.
\end{defn}

Next, we discuss the conditions under which $S/I_{\clsf C}$ and $S/\cani{\clsf C}$ could be Cohen-Macaulay.
Recall the definition of Cohen-Macaulayness.
\begin{defn} [\cite{miller_combinatorial_2005}]
A monomial quotient $S/I$ is {\bf Cohen-Macaulay} if its projective dimension is equal to its {\bf codimension} $\codim S/I := \min\{\supp\bomega: \bomega \in \mingen(I^\star)\}$.
\end{defn}
Cohen-Macaulay rings form a well-studied class of rings in commutative algebra that yields to a rich theory at the intersection of algebraic geometry and combinatorics.
The mathematician Melvin Hochster famously wrote ``Life is really worth living'' in a Cohen-Macaulay ring \cite{hochster_canonical_1989}.

By \cite[Prop 1.2.13]{bruns_cohen-macaulay_1998}, we have that $S/I$ is Cohen-Macaulay for $I$ squarefree only if every minimal generator of $I^\star$ has the same support size.
Then the following theorem shows that requiring $S/\cani{\clsf C}$ to be Cohen-Macaulay filters out most interesting function classes, including every class considered above except for singleton classes.
We first make a definition to be used in the following proof and in later sections.
\begin{defn} \label{defn:full}
Let $\clsf D \sbe [n \to m]$.
We say $\clsf D$ is {\bf full} if for every pair $(u, v) \in [n] \times [m]$, there is some function $h \in \clsf D$ with $h(u) = v$ --- i.e. $\bigcup \{\graph h : h \in \clsf D\} = [n] \times [m]$. 
\end{defn}
\begin{thm} \label{thm:cohen_macaulay_canonical}
Let $\clsf C \sbe [n \to 2]$.
The following are equivalent
\begin{enumerate}
  \item $S/\cani{\clsf C}$ is Cohen-Macaulay.
  \item Under the binary relation $\sim$, $\clsf C_{\spe \pf f}$ forms a tree for every PF $\pf f: \sbe[n] \to [2].$
  \item $\hdim {\clsf C} \le 1$.
\end{enumerate}
\end{thm}
\begin{proof}
We will show the equivalence of the first two items; the equivalence of the second and third items falls out during the course of the proof.

First suppose that $\clsf C$ is not full.
Then $I_{\clsf C}$ has a minimal generator $x_{u, b}$ for some $u \in [n], b \in [2]$.
If $S/\cani{\clsf C}$ is Cohen-Macaulay, then all minimal generators of $I_{\clsf C}$ must have the same support size, so for each functional monomial $x_{v, 0} x_{v, 1}$, either $x_{v, 0}$ or $x_{v, 1}$ is a minimal generator of $I_{\clsf C}$.
This means that $\clsf C$ is a singleton class, and thus is a tree under $\sim$ trivially.
Conversely, $S/\cani{\{f\}}$ is Cohen-Macaulay for any $f \in [n \to 2]$ because the projective dimension of $S/\cani{\{f\}}$ is $\hdim \{f\} + 1 = 1$ which is the common support size of $I_{\{f\}}$ (\cref{thm:minres_singleton}).

Now assume $\clsf C$ is full.
Then $\mingen(I_{\clsf C}) \spe \FM$ and $\min\{|\supp \bomega|: \bomega \in \mingen(I_{\clsf C})\} = 2$.
Hence $S/\cani{\clsf C}$ is Cohen-Macaulay iff the projective dimension of $S/\cani{\clsf C}$ is 2 iff the homological dimension of ${\clsf C}$ is 1.
This is equivalent to saying that the 1-dimensional cell complex $X$ with vertices $f \in \clsf C$ and edges $f \sim g$ minimally resolves $\cani{\clsf C}$ with the obvious labeling, which is the same as the condition specified in the theorem.
\end{proof}

\begin{cor}
Let $\clsf C \sbe [n \to 2]$.
If $S/\cani{\clsf C}$ is Cohen-Macaulay, then $\clsf C$ has a minimal cellular resolution and has pure Betti numbers which are 0 or 1.
\end{cor}

\begin{exmp} \label{exmp:cDelta+o}
Let $o: [n] \to [2]$ be the identically zero function.
The class $\clsf C := \cDelta_n \cup \{o\}$ satisfies $S/\cani{\clsf C}$ being Cohen-Macaulay.
Indeed, $f \sim_{\clsf C} g$ iff $\{f, g\} = \{\delta_i, o\}$ for some $i$, so $\sim_{\clsf C}$ forms a star graph with $o$ at its center.
For each nonempty $\pf f:\sbe[n] \to [2]$, if $\im \pf f= \{0\}$, then $\clsf C_{\spe \pf f}$ contains $o$ and thus is again a star graph.
If $\pf f(i) = 1$ for a unique $i$, then $\clsf C_{\spe \pf f} = \delta_i$, which is a tree trivially.
Otherwise, $\clsf C_{\spe \pf f} = \emptyset$, which is a tree vacuously.   
\end{exmp}
It seems unlikely that any class $\clsf C$ with Cohen-Macaulay $S/\cani{\clsf C}$ is interesting computationally, as \cref{thm:cohen_macaulay_canonical} and \cref{thm:hdim_ge_vcdim} imply the VC dimension of $\clsf C$ is at most 1.
By the Sauer-Shelah lemma \cite{kearns_introduction_1994}, any such class $\clsf C \sbe [n \to 2]$ has size at most $n+1$.

In contrast, the classes $\clsf C \sbe [n \to 2]$ with Cohen-Macaulay $S/I_{\clsf C}$ form a larger collection, and they all have cellular resolutions.
For this reason, we say {\bf $\clsf C$ is Cohen-Macaulay} if $S/I_{\clsf C}$ is Cohen-Macaulay.
\begin{defn}
Let $\cube_n$ be the $n$-dimensional cube with vertices $[2]^n$.
A {\bf cublex} (pronounced Q-blex) is a subcomplex of $\cube_n$.

$\cube_n$ has a natural PF labeling $\eta =\eta^\cube$ that labels each vertex $V \in [2]^n$ with the corresponding function $\eta(V): [n] \to [2]$ with $\eta(V)(i) = V_i$, and the rest of the PF labels are induced via intersection as in \cref{lemma:pf_label_propagate}.
Specifically, each face $F_{\pf w}$ is associated to a unique PF $\pf w:\sbe [n] \to [2]$, such that $F_{\pf w}$ consists of all vertices $V$ with $\eta(V) \spe \pf w$;
we label such a $F_{\pf w}$ with $\eta(F_{\pf w}) = \pf w$.
A cublex $X$ naturally inherits $\eta^\cube$, which we call the {\bf canonical PF label function} of $X$. 
\end{defn}

Rephrasing Reisner's Criterion \cite[thm 5.53]{miller_combinatorial_2005}, we obtain the following characterization.
\begin{prop}[Reisner's Criterion] \label{prop:reisner_criterion}
$\clsf C \sbe [n \to 2]$ is Cohen-Macaulay iff
$$\betti_{i, \pf f}(\clsf C) = \dim_\kk \rH_{i-1}(\Sbpx{\clsf C \filt \pf f}; \kk) = 0 \text{ for all $i\not=n - |\dom \pf f|$}.$$
\end{prop}
\begin{thm}
Let $\clsf C \sbe [n \to 2]$.
The following are equivalent.
\begin{enumerate}
  \item $\clsf C$ is Cohen-Macaulay. \label{_CM}
  \item $\srdim \clsf C = n$. \label{_CM_iff_SRdim}
  \item $\clsf C = \{\eta^\cube(V): V \in X\}$ for some cublex $X$ such that $X_{\spe \pf f}$ is acyclic for all $\pf f:\sbe[n] \to [2]$. \label{_CM_iff_cubplex} 
\end{enumerate}
\end{thm}
\begin{proof}
($\ref{_CM} \iff \ref{_CM_iff_SRdim}$). This is immediate after noting that $\codim S/I_{\clsf C} = n$.

($\ref{_CM_iff_cubplex} \implies \ref{_CM_iff_SRdim}$). $X$ is obviously a minimal cellular resolution of $\clsf C$, and for each $\pf f$, the face $F_{\pf f}$ with PF label $\pf f$, if it exists, has dimension $n - |\dom \pf f|$, so Reisner's Criterion is satisfied.

($\ref{_CM_iff_SRdim} \implies \ref{_CM_iff_cubplex}$). Let $X$ be the cubplex containing all faces $F_{\pf f}$ such that $\betti_{i, \pf f}(\clsf C) \not=0$ for $i=n-|\dom \pf f|$.
This is indeed a complex: $\rH_{i-1}(\Sbpx{\clsf C \filt \pf f}; \kk) \not = 0$ iff $\Sbpx{\clsf C \filt \pf f}$ is the complete $(i-1)$-dimensional suboplex by \cref{lemma:top_hom_nonzero_iff_complete_suboplex}; hence for any $\pf g \spe \pf f$, $\Sbpx{\clsf C \filt \pf g}$ is the complete $(j-1)$-dimensional suboplex, where $j = n - |\dom \pf g|$, implying that $\betti_{j, \pf g}(\clsf C) = 1$.

We prove by induction on poset structure of $\pf f:\sbe[n] \to [2]$ under containment that $X_{\spe \pf f}$ is acyclic for all $\pf f$.
The base case of $\pf f$ being total is clear.
Suppose our claim is true for all $\pf g \supset \pf f$.
If $X_{\spe \pf f}$ is an $(n-|\dom \pf f|)$-dimensional cube, then we are done.
Otherwise, 
$$X_{\spe \pf f} = \bigcup_{\substack{\pf g \supset \pf f\\|\dom \pf g| = |\dom \pf f| + 1}} X_{\spe \pf g}.$$
By induction hypothesis, each of $X_{\spe \pf g}$ is acyclic, so the homology of $X_{\spe \pf f}$ is isomorphic to the homology of the nerve $\mathcal N$ of $\{X_{\spe \pf g}\}$.
We have for any collection $\mathcal F$ of such $\pf g$,
$$\bigcap_{\pf g \in \mathcal F} X_{\spe \pf g} \not = \emptyset \iff \exists f \in \clsf C\ \forall \pf g \in \mathcal F [f \spe \pf g].$$
Therefore $\mathcal N$ is isomorphic to $\Sbpx{\clsf C \filt \pf f}$ as simplicial complexes.
As $\rH_\bullet(\Sbpx{\clsf C \filt \pf f};\kk) = 0$ (since $X_{\pf f}$ is empty), $X_{\spe \pf f}$ is acyclic as well.

$X$ is obviously minimal since it has unique PF labels, and its vertex labels are exactly $\clsf C$.
\end{proof}

The minimal cublex cellular resolution of Cohen-Macaulay $\clsf C$ constructed in the proof above is called the {\bf canonical cublex resolution} of $\clsf C$.
\begin{cor} \label{cor:CM_implies_pure_Betti}
If $\clsf C \sbe [n \to 2]$ is Cohen-Macaulay, then $\clsf C$ has a minimal cellular resolution and has pure Betti numbers which are 0 or 1.
\end{cor}

It should be easy to see that if $\clsf C$ is Cohen-Macaulay, then so is the filtered class $\clsf C \filt \pf f$ for any PF $\pf f: \sbe [n] \to [2]$.
It turns out this is also true for restrictions of $\clsf C$.
\begin{cor}[Cohen-Macaulayness is preserved under restriction] \label{cor:CM_under_restriction}
If $\clsf C \sbe [n \to 2]$ is Cohen-Macaulay, then so is $\clsf C \restrict U$ for any $U \sbe [n].$
Its canonical cublex resolution is the projection of the canonical cublex resolution of $\clsf C$ onto the subcube $F_{\pf w}$ of $\cube_n$, where $\pf w:\sbe [n] \to [2]$ takes everything outside $U$ to 0.
Consequently, $\betti_{\bullet, \pf f}(\clsf C \restrict U) = 0$ iff $\betti_{\bullet, \pf f'}(\clsf C) = 0$ for all $\pf f' \spe \pf f$ extending $\pf f$ to all of $[n] \setminus U$. 
\end{cor}
\begin{proof}
It suffices to consider the case $U = [n-1]$ and then apply induction.
Fix $\pf f:\sbe[n-1] \to [2]$, and let $\pf f_0:= \pf f \cup (n-1 \mapsto 0), \pf f_1 = \pf f\cup (n-1 \mapsto 1)$.
We wish to show $\betti_{i, \pf f}(\clsf C \restrict U) = 0$ for all $i \not = n-1-|\dom \pf f|$. 
We have three cases to consider.
\begin{enumerate}
  \item $\betti_{\bullet, \pf f_0}(\clsf C) = \betti_{\bullet, \pf f_1}(\clsf C) = 0$.
  Certainly, $\betti_{\bullet, \pf f}(\clsf C)$ would also have to be 0 (the existence of the subcube $F_{\pf f}$ would imply the existence of $F_{\pf f_0}$ and $F_{\pf f_1}$ in the canonical cublex resolution of $\clsf C$).
  By \cref{thm:restriction_betti_exact_seq}, this implies $\betti_{\bullet, \pf f}(\clsf C \restrict U) = 0$ as well.
  \item WLOG $\betti_{i, \pf f_0}(\clsf C) = \ind(i = n - |\dom \pf f| - 1)$ and $\betti_{\bullet, \pf f_1}(\clsf C) = 0$.
  Again, $\betti_{\bullet, \pf f}(\clsf C) = 0$ for the same reason.
  So \cref{thm:restriction_betti_exact_seq} implies $\betti_{i, \pf f}(\clsf C \restrict U) = \ind(i = n - |\dom \pf f| - 1)$.
  \item $\betti_{i, \pf f_0}(\clsf C) = \betti_{i, \pf f_1}(\clsf C) = \ind(i = n - |\dom \pf f| - 1)$.
  Then $\clsf C = [n \to 2]$ and therefore  $\betti_{i, \pf f}(\clsf C) = \ind(i = n - |\dom \pf f|)$.
  \cref{thm:restriction_betti_exact_seq} yields an exact sequence
  $$0 \to \kk^{\betti_{j+1, \pf f}(\clsf C \restrict U)} \to \kk^{\betti_{j+1, \pf f}(\clsf C)} \to \kk^{\betti_{j, \pf f_0}(\clsf C) + \betti_{j, \pf f_1}(\clsf C)} \to \kk^{\betti_{j, \pf f}(\clsf C \restrict U)} \to 0,$$
  where $j = n - |\dom \pf f| - 1$.
  Because $\clsf C$ has pure Betti numbers by \cref{cor:CM_implies_pure_Betti}, the only solution to the above sequence is $\betti_{i, \pf f}(\clsf C \restrict U) = \ind(i = n - |\dom \pf f| - 1)$.
\end{enumerate}
This shows by \cref{prop:reisner_criterion} that $\clsf C \restrict U$ is Cohen-Macaulay.
The second and third statements then follow immediately.
\end{proof}

\begin{lemma}
If $\clsf C \sbe [n \to 2]$ is Cohen-Macaulay, then $\betti_{i, \pf f}(\clsf C) = \ind(i = n - |\dom \pf f|)$ iff $f \in \clsf C$ for all total $f$ extending $\pf f$.
\end{lemma}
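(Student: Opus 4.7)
The plan is to reduce the statement, via Reisner's criterion, to a purely topological question about the canonical suboplex of a filtered class, and then close the argument using Hochster's formula together with \cref{lemma:top_hom_nonzero_iff_complete_suboplex}.

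First I would observe that, since $\clsf C$ is Cohen-Macaulay, \cref{prop:reisner_criterion} forces $\betti_{i, \pf f}(\clsf C) = 0$ for every $i \neq n - |\dom \pf f|$. Consequently the hypothesis $\betti_{i, \pf f}(\clsf C) = \ind(i = n - |\dom \pf f|)$ is equivalent to the single assertion $\betti_{n - |\dom \pf f|, \pf f}(\clsf C) = 1$. So the job reduces to proving that this top-degree Betti number equals $1$ iff every total extension of $\pf f$ lies in $\clsf C$.

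Next I would apply Hochster's dual formula (as rewritten in the discussion preceding \cref{thm:restriction_betti_exact_seq}) to get
$$\betti_{n - |\dom \pf f|, \pf f}(\clsf C) = \dim_\kk \rH_{n - |\dom \pf f| - 1}(\Sbpx{\clsf C \filt \pf f}; \kk).$$
The simplicial complex $\Sbpx{\clsf C \filt \pf f}$ is a suboplex on vertex set $([n] \setminus \dom \pf f) \times [2]$, so its dimension is at most $n - |\dom \pf f| - 1$; thus the cohomology in question is in top degree. By \cref{lemma:top_hom_nonzero_iff_complete_suboplex}, this top homology is nonzero iff $\Sbpx{\clsf C \filt \pf f}$ is the complete $(n - |\dom \pf f| - 1)$-dimensional suboplex, and in that case it is just $\kk$ (since a complete suboplex is homeomorphic to a sphere), so its dimension is exactly $1$.

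Finally I would unpack what completeness of $\Sbpx{\clsf C \filt \pf f}$ means combinatorially: every total function $h \colon [n] \setminus \dom \pf f \to [2]$ must appear as $f \setminus \pf f$ for some $f \in \clsf C$ extending $\pf f$, which (as $\pf f \sbe f$ determines $f = \pf f \cup h$) says exactly that every total extension of $\pf f$ belongs to $\clsf C$. Chaining these equivalences gives the lemma. The argument is essentially a packaging of already-established tools; the only mildly delicate point is invoking \cref{lemma:top_hom_nonzero_iff_complete_suboplex} to promote ``top Betti nonzero'' to ``top Betti equals $1$'', and this is immediate once one notices we are in top dimension of a suboplex.
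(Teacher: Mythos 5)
Your proof is correct and follows essentially the same route as the paper's (which is stated in one line as a chain of equivalences: Betti condition $\iff$ $\link_{\pf f}\Sbpx{\clsf C}$ complete $\iff$ all total extensions in $\clsf C$); you have simply supplied the omitted justifications — Reisner's criterion to collapse to top degree, Hochster's dual formula, and \cref{lemma:top_hom_nonzero_iff_complete_suboplex} — all of which are exactly the tools the paper intends to be invoked here.
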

\begin{proof}
$\betti_{i, \pf f}(\clsf C) = \ind(i = n - |\dom \pf f|)$ iff $\link_{\pf f}(\Sbpx C)$ is the complete suboplex iff $f \in \clsf C$ for all total $f$ extending $\pf f$.
\end{proof}
\begin{cor} \label{cor:CM_implies_hdim=vcdim}
If $\clsf C \sbe [n \to 2]$ is Cohen-Macaulay, then $\hdim \clsf C = \vcdim \clsf C$.
\end{cor}
\begin{proof}
$\hdim \clsf C$ is the dimension of the largest cube in the canonical cublex resolution of $\clsf C$, which by the above lemma implies $\clsf C$ shatters a set of size $\hdim \clsf C$.
Therefore $\hdim \clsf C \le \vcdim \clsf C$.
Equality then follows from \cref{thm:hdim_ge_vcdim}.
\end{proof}
\begin{exmp}
The singleton class $\{f\}$, $\cDelta \cup \{o\}$ as defined in \cref{exmp:cDelta+o}, and the complete class $[n \to 2]$ are all Cohen-Macaulay.
However, inspecting \cref{tab:dims} shows that, for $d \ge 1$, none of $\cDelta$, $\monconj$, $\CLconj$, $\linthr$, or $\linfun$ on $d$-bit inputs are Cohen-Macaulay, as their Stanley-Reisner dimensions are strictly greater than $2^d$.
Likewise, $\polythr_d^k$ is not Cohen-Macaulay unless $k = d$.
Consequently, the converse of \cref{cor:CM_implies_hdim=vcdim} cannot be true.
\end{exmp}
\newcommand{\cNeigh}{\textsc{nb}}
\begin{exmp}
We can generalize $\cDelta \cup \{o\}$ as follows.
Let $\cNeigh(f)^k_n$ be the class of functions on $[n]$ that differs from $f \in [n \to 2]$ on at most $k$ inputs.
Then $\cNeigh(f)^k_n$ is Cohen-Macaulay; its canonical cublex resolution is the cublex with top cells all the $k$-dimensional cubes incident on $f$.
For example, $\cDelta \cup \{o\} = \cNeigh(o)^1_n$.
\end{exmp}

Finally, we briefly mention the concept of sequential Cohen-Macaulayness, a generalization of Cohen-Macaulayness.
\begin{defn}[\cite{stanley_combinatorics_1996}]
A module $M$ is {\it sequential Cohen-Macaulay} if there exists a finite filtration
$$0 = M_0 \sbe M_1 \sbe \cdots \sbe M_r = M$$
of $M$ be graded submodules $M_i$ such that
\begin{enumerate}
  \item Each quotient $M_i/M_{I-1}$ is Cohen-Macaulay, and
  \item $\dim(M_1/M_0) < \dim(M_2/M_1) < \cdots < \dim(M_r/M_{r-1})$, where $\dim$ denotes Krull dimension.
\end{enumerate}
\end{defn}
Sequentially Cohen-Macaulay rings $S/I$ satisfy $\projdim S/I = \max\{|\supp\es a|: \xx^{\es a} \in \mingen(I^\star)\}$ by a result of \cite{faridi_projective_2013}.
If $S/I_{\clsf C}$ is sequentially Cohen-Macaulay, this means it is actually Cohen-Macaulay, since all minimal generators of $\cani{\clsf C}$ have the same total degree.
Thus what can be called ``sequentially Cohen-Macaulay'' classes coincide with Cohen-Macaulay classes.

\subsection{Separation of Classes}
\label{sec:separation}

In this section, unless specificed otherwise, all homologies and cohomologies are taken against $\kk$.
Suppose $\clsf C, \clsf D \sbe [n \to m]$.
If $\clsf C \sbe \clsf D$, then $\clsf C \cup \clsf D = \clsf D$, and $\cani{\clsf C} + \cani{\clsf D} = \cani{\clsf C \cup \clsf D} = \cani{\clsf D}$.
In particular, it must be the case that for every $i$ and $\sigma$,
$$\betti_{i, \sigma}(\cani{\clsf C} + \cani{\clsf D}) = \betti_{i, \sigma}(\cani{\clsf C \cup \clsf D}) = \betti_{i, \sigma}(\cani{\clsf D}).$$
Thus $\clsf C \subset \clsf D$ if for some $i$ and $\pf f$, $\betti_{i, \pf f}(\clsf C) \not = \betti_{i, \pf f}(\clsf C \cup \clsf D)$.
The converse is true too, just by virtue of $\betti_{0, -}$ encoding the elements of each class.
By \cref{thm:dim1_betti}, $\clsf C \subset \clsf D$ already implies that $\betti_{1, -}$ must differ between the two classes.
However, we may not expect higher dimensional Betti numbers to certify strict inclusion in general, as the examples in \cref{sec:abnormal} show.

This algebraic perspective ties into the topological perspective discussed in the introduction as follows.
Consider $\clsf C \sbe [2^d \to \{-1, 1\}]$ and a PF $\pf f :\sbe [2^d] \to \{-1, 1\}$.
By Hochster's dual formula (\cref{prop:dual_hochster}), $\betti_{i, \pf f}(\clsf C) = \dim_\kk \rH_{i-1}(\Sbpx{\clsf C \filt \pf f}; \kk) = \dim_\kk \rH_{i-1}(\link_{\graph \pf f}\Sbpx{\clsf C}; \kk)$.
When $\pf f = \emptyfun$, this quantity is the ``number of holes of dimension $i-1$'' in the canonical suboplex of $\clsf C$.
When $\graph \pf f = \{(u, \pf f(u))\}$ has a singleton domain, $\link_{\graph \pf f}\Sbpx{\clsf C}$ is the section of $\Sbpx{\clsf C}$ by a hyperplane.
More precisely, if we consider $\Sbpx{\clsf C}$ as embedded the natural way in $S^{2^d-1}_1 = \{z\in \R^{2^d}: \|z\|_1 = 1\}$ (identifying each coordinate with a $v \in [2^d] \cong [2]^d$), $\link_{\graph \pf f}\Sbpx{\clsf C}$ is homeomorphic to $\Sbpx{\clsf C} \cap \{z: z_u = \pf f(u)/2\}$.
\cref{fig:link_as_section} illustrates this.
\begin{figure}
\centering
\includegraphics[width=.4\textwidth]{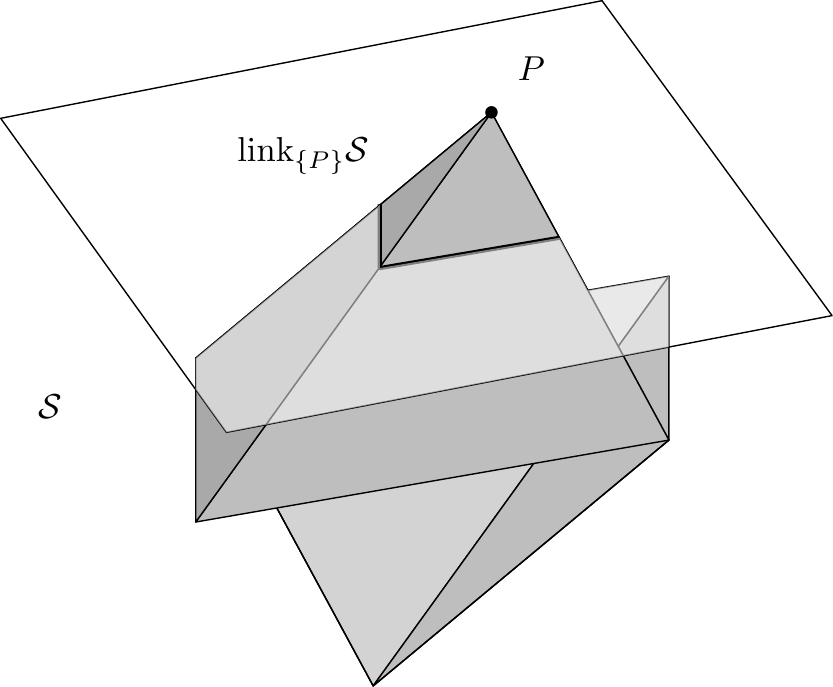}
\caption{The link of suboplex $\Ss$ with respect to vertex $P$ is homeomorphic to the intersection of $\Ss$ with a hyperplane.}
\label{fig:link_as_section}
\end{figure}
For general $\pf f$, we have the homeomorphism
$$\link_{\graph \pf f}\Sbpx{\clsf C} \cong \Sbpx{\clsf C} \cap \{z: z_u = \pf f(u)/2, \forall u \in \dom \pf f\}.$$
Thus comparing the Betti numbers of $\clsf D$ and $\clsf C \cup \clsf D$ is the same as comparing ``the number of holes'' of $\Sbpx{\clsf D}$ and $\Sbpx{\clsf C \cup \clsf D}$ and their corresponding sections.

If PF-labeled complex $(X_C, \mu_C)$ resolves $\clsf C$ and PF-labeled complex $(X_D, \mu_D)$ resolves $\clsf D$, then the join $(X_C \star X_D, \mu_C \star \mu_D)$ resolves $\clsf C \cup \clsf D$ by \cref{prop:union_res}.
The Betti numbers can then be computed by
$$\betti_{i, \pf f}(\clsf C \cup \clsf D) = \dim_\kk \widetilde H_{i-1}((X_C \star X_D)_{\supset \pf f}; \kk)$$
via \cref{prop:betti_from_cellular_resolution}.
Here are some simple examples illustrating this strategy.
\begin{thm}
Let $d \ge 2$.
Let $\indone \in [2^d \to 2]$ be the indicator function $u \mapsto \ind(u = \mathbf 1 = 1\cdots1 \in [2]^d)$.
Consider the partial linear functional $\pf g: \mathbf 0 \to 0, \mathbf 1 \to 1$.
Then $\betti_{i, \pf g}(\linfun^2_d \cup \{\indone\}) = 0$ for all $i$.
\label{exmp:ind_not_in_linfun}
\end{thm}
The proof below is in essence the same as the proof of $\indone \not \in \linfun^2_d$ given in the introduction, but uses the theory we have developed so far.
The application of the Nerve Lemma there is here absorbed into the Stanley-Reisner and cellular resolution machineries.
\begin{proof}
Let $(X, \mu)$ be the flag resolution of $\linfun^2_d$ and $\bullet$ be the one point resolution of $\{\indone\}$.
Then $X \star \bullet$ is the cone over $X$, with labels $\mu'(F \star \bullet) = \mu(F) \cap \indone$ and $\mu'(F) = \mu(F)$ for cells $F$ in $X$.

Consider $Z := (X \star \bullet)_{\supset \pf g}$.
Every cell $F$ of $X$ in $Z$ has PF label a linear functional on a linear subspace of $\Fld_2^d$ strictly containing $\mathcal V := \{\mathbf 0, \mathbf 1\}$.
As such, $\mu(F) \cap \indone$ strictly extends $\pf g$, because $\mu(F)$ sends something to 0 outside of $\mathcal V$.
This means $Z$ is a cone over $X_{\supset \pf g}$, and thus is acyclic.
Therefore $\betti_{i, \pf g}(\linfun^2_d \cup \{\indone\}) = 0$ for all $i$. 
\end{proof}

But $\betti_{d-1, \pf g}(\linfun^2_d)$ is nonzero, so we obtain the following corollary.
\begin{cor}
$\indone \not \in \linfun^2_d$ for $d \ge 2$.
\end{cor}
\cref{exmp:ind_not_in_linfun} says the following geometrically: the canonical suboplex of $\linfun_d^2 \filt \pf g$ (a complex of dimension $2^{2^d} - 2$) has holes in dimension $d-1$, but these holes are simultaneously covered up when we add $\indone$ to $\linfun_d^2$.

\begin{thm} \label{exmp:parity_polythr}
Let $\parity_d$ be the parity function on $d$ bits.
Then $\betti_{i, \emptyfun}(\polythr_d^k \cup \{\parity_d\}) = 0$ for all $i$ if $k < d$.
\end{thm}
Let us work over $\{-1, 1\}$ instead of $\{0, 1\}$, under the bijection $\{0, 1\} \cong \{1, -1\}, a \mapsto (-1)^a$, so that $\parity_d(u_0, \ldots, u_{d-1}) = u_0\cdots u_{d-1}$ for $u \in \{-1, 1\}^d$ and $\polythr_d^k$ consists of $\sgn(p)$ for polynomials $p$ with degree at most $k$ not taking 0 on any point in $\bcube^d$.
\begin{proof}
Fix $k < d$.
Let $(X, \mu)$ denote the ball resolution of $\polythr_d^k$ and $\bullet$ be the one point resolution of $\{f\}$.
Then $X \star \bullet$ is the cone over $X$, with labels $\mu'(F \star \bullet) = \mu(F) \cap f$ and $\mu'(F) = \mu(F)$ for cells $F$ in $X$.

Consider $Z := (X \star \bullet)_{\supset \emptyfun}$.
Every PF label $\pf f :\sbe \bcube^d \to \bcube$ of $X$ intersects $\parity_d$ nontrivially if $\pf f \not = \emptyfun$.
Otherwise, suppose $p$ is a polynomial function such that $p(u) > 0 \iff \pf f(u) = 1,\ p(u) < 0 \iff \pf f(u) = -1,$ and $p(u) = 0 \iff \pf u \not\in \dom \pf f$.
Then by discrete Fourier transform \footnote{See the opening chapter of \cite{odonnell_analysis_2014} for a good introduction to the concepts of Fourier analysis of boolean functions.}, the coeffient of $p$ for the monomial $\parity_d(u) = \prod_{i=0}^{d-1}u_i$ is
$$\sum_{a \in \{-1, 1\}^d} p(a) \parity_d(a) < 0$$
because whenever $p(a)$ is nonzero, its sign is the opposite of $\parity_d(a)$.
This contradicts $k < d$.
Thus in particular, the PF label of every cell of $X$ except for the top cell (with PF label $\emptyfun$) intersects $\parity_d$ nontrivially.
Therefore $Z$ is a cone and thus $\betti_{\bullet, \emptyfun}(\polythr_d^k \cup \{\parity_d\}) = 0$.
\end{proof}

But $\betti_{e, \emptyfun}(\polythr_d^k) = 1$, where $e = \sum_{j=0}^k \binom d j$ is the homological dimension of $\polythr_d^k$.
So we recover the following result by Minsky and Papert.
\begin{cor}[\cite{minsky_perceptrons:_1969}]
$\parity_d \not \in \polythr_d^k$ unless $k = d$.
\end{cor}

From the analysis below, we will see in fact that adding $\parity_d$ to $\polythr_d^k$ causes changes to Betti numbers in every dimension up to $\vcdim \polythr_d^k = \hdim \polythr_d^k$, so in some sense $\parity_d$ is {\it maximally homologically separated} from $\polythr_d^k$.
This ``maximality'' turns out to be equivalent to the lack of weak representation \cref{cor:weak_rep_betti}.

%

\newcommand{\Xcapf}[1]{{#1}^{\cartcap f}}
By \cref{lemma:product_join_exact_seq}, the ``differences'' between the Betti numbers of $\clsf C \cup \clsf D$ and those of $\clsf C$ and of $\clsf D$ are given by the homologies of $(X_C \times X_D, \mu_C \times \mu_D)_{\supset \pf f}$.  
Suppose $\clsf C$ consists of a single function $f$.
Then $X_C$ is a single point with exponent label $\Gamma f$.
$(X_C \times X_D, \mu_C \times \mu_D)$ is thus isomorphic to $X_D$ as complexes, but the exponent label of each nonempty cell $F \in X_C \times X_D$ isomorphic to cell $F' \in X_D$ is now $\lcm(\lambda_D(F'), \Gamma f)$, and the PF label of $F$ is $\mu_D(F') \cap f$; the empty cell $\emptycell \in X_C \times X_D$ has the exponent label $\mathbf 0$.
We denote this labeled complex by $\Xcapf{(X_D)}$.

Notice that $\Xcapf{(X_D)}$ is a (generally nonminimal) cellular resolution of the PF class $\Xcapf{\clsf D} := \clsf D \cartcap \{f\}$, because $\Xcapf{(X_D)}_{\spe \pf f} = (X_D)_{\spe \pf f}$ whenever $\pf f \sbe f$ and empty otherwise, and therefore acyclic.
So the (dimensions of) homologies of $(X_C \times X_D)_{\supset \pf f}$ are just the Betti numbers of $\Xcapf{\clsf D}$.
This is confirmed by \cref{lemma:cartcap_is_diff}.
Another perspective is that $\Sbpx{\clsf C \cartcap \clsf D}$ is the intersection $\Sbpx{\clsf C} \cap \Sbpx{\clsf D}$, so by Mayer-Vietoris, $\cani{\clsf C \cartcap \clsf D}$ gives the ``difference'' in Betti numbers between $\betti_{\bullet, -}(\clsf C) + \betti_{\bullet, -}(\clsf D)$ and $\betti_{\bullet, -}(\clsf C \cup \clsf D)$.

$\cani{\Xcapf{\clsf D}}$ determines the membership of $f$ through several equivalent algebraic conditions.
\begin{lemma} \label{lemma:xcapf_determines_membership}
Let $\clsf D \sbe [n \to m]$ be a full class (see \cref{defn:full}).
Then the following are equivalent:
\begin{enumerate}
  \item $f \in \clsf D$ \label{_f_in_D}
  \item $\cani{\Xcapf{\clsf D}}$ is principally generated by $\xx^{\Gamma f}$ \label{_principal_gen}
  \item $\cani{\Xcapf{\clsf D}}$ is principal \label{_principal}
  \item $\betti_{i, \pf f}(\Xcapf{\clsf D}) = 1$ for exactly one partial $\pf f$ when $i = 0$ and equals 0 for all other $i$. \label{_betti_0}
  \item $\betti_{i, \pf f}({\Xcapf{\clsf D}}) = 0$ for all $\pf f$ and all $i \ge 1$. \label{_betti_1}
  \item $\betti_{i, \pf f}({\Xcapf{\clsf D}}) = 0$ for all $\pf f \not= f$ and all $i \ge 1$. \label{_betti_notf}
\end{enumerate}
\end{lemma}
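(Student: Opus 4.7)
The plan is to establish $(1) \Rightarrow (2) \Rightarrow (3) \Rightarrow (1)$ and $(2) \Rightarrow (4) \Rightarrow (5) \Rightarrow (6) \Rightarrow (1)$, resting on one structural observation used throughout: every element of $\Xcapf{\clsf D} = \{g \cap f : g \in \clsf D\}$ is a sub-PF of $f$, so $f \in \Xcapf{\clsf D}$ iff $f \in \clsf D$, and when this holds $f$ is automatically the unique $\sbe$-maximal element. Accordingly, the minimal generators of $\cani{\Xcapf{\clsf D}}$ are exactly $\{\xx^{\Gamma \pf g} : \pf g \text{ is $\sbe$-maximal in } \Xcapf{\clsf D}\}$.

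With that in hand, $(1) \Rightarrow (2)$ is immediate and $(2) \Rightarrow (3)$ is trivial. For $(3) \Rightarrow (1)$ I would use fullness: if $\cani{\Xcapf{\clsf D}} = (\xx^{\Gamma \pf g})$ with $\pf g \sbe f$ the unique maximal element, then for each $u \in [n]$ fullness supplies $h_u \in \clsf D$ with $h_u(u) = f(u)$, whence $(u, f(u)) \in \graph(h_u \cap f) \sbe \graph \pf g$, forcing $\pf g(u) = f(u)$ for every $u$, so $\pf g = f$ and $f \in \clsf D$. For $(2) \Rightarrow (4)$, the minimal free resolution of the principal squarefree ideal $(\xx^{\Gamma f})$ is simply $0 \leftarrow (\xx^{\Gamma f}) \leftarrow S(-\Gamma f) \leftarrow 0$, which gives exactly the Betti picture in $(4)$. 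The implications $(4) \Rightarrow (5) \Rightarrow (6)$ are pure weakenings.

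The only nontrivial remaining step, and hence the main obstacle, is $(6) \Rightarrow (1)$, which I would prove by contrapositive. Assuming $f \notin \clsf D$, every maximal element $\pf g$ of $\Xcapf{\clsf D}$ is a proper sub-PF of $f$, and by the fullness argument used for $(3) \Rightarrow (1)$ the ideal $\cani{\Xcapf{\clsf D}}$ cannot be principal, so it admits at least two distinct minimal generators $\xx^{\Gamma \pf g_1}, \xx^{\Gamma \pf g_2}$. Hence $\projdim \cani{\Xcapf{\clsf D}} \ge 1$ and some $\betti_{1, \pf h}(\Xcapf{\clsf D})$ is nonzero. By the standard fact (see \cite{miller_combinatorial_2005}) that every multidegree of a nonzero Betti number of a monomial ideal is an LCM of some subset of its minimal generators, I may write $\Gamma \pf h = \bigcup_{j \in S} \Gamma \pf g_j$ for some subset $S$ of maximal elements, necessarily with $|S| \ge 2$ (since $|S| = 1$ contributes only to $\betti_0$). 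Equivalently, $\pf h = \bigcap_{j \in S} \pf g_j \sbe \pf g_j \subsetneq f$, so $\pf h \neq f$, contradicting $(6)$.

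The crux is the final LCM-lattice step: one must certify that any first syzygy forced by having $\ge 2$ minimal generators lives in a multidegree strictly greater than $\Gamma f$, equivalently a PF strictly smaller than $f$. Fullness collapses the potentially troublesome ``principal but $f \notin \clsf D$'' case, while the LCM-lattice observation converts the algebraic existence of a syzygy into the combinatorial statement that its degree sits strictly below $f$ as a PF.
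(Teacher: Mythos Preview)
Your argument is correct and follows essentially the same route as the paper's proof. The paper establishes the cycle $(1)\Leftrightarrow(2)\Leftrightarrow(3)$, then $(3)\Leftrightarrow(4)$, then $(4)\Leftrightarrow(5)\Leftrightarrow(6)$, with the only nontrivial reverse implication handled exactly as you do: if the ideal is not principal, the first syzygy module is nonzero and lives in degrees $\succ \Gamma f$, hence at some $\pf h\neq f$. Your use of the LCM-lattice fact to justify this last step is a bit more explicit than the paper (which simply asserts the degree inequality), and you close the chain through $(6)\Rightarrow(1)$ rather than $(6)\Rightarrow(4)$, but these are cosmetic differences.
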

\begin{proof}
(\ref{_f_in_D} $\implies$ \ref{_principal_gen} $\implies$ \ref{_principal})
If $f \in \clsf D$, then $\cani{\Xcapf{\clsf D}}$ is principally generated by $\xx^{\Gamma f}$.

(\ref{_principal} $\implies$ \ref{_principal_gen} $\implies$ \ref{_f_in_D})
If $\cani{\Xcapf{\clsf D}}$ is principal, then it's generated by $\xx^{\Gamma \pf g}$ for some partial function $\pf g$.
This implies that $h \cap f \sbe \pf g \implies \graph h \sbe \Gamma f \cup \graph \pf g, \forall h \in \clsf D$.
But taking the union over all $h \in \clsf D$ contradicts our assumption on $\clsf D$ unless $\pf g = f$.
Thus there is some $h \in \clsf D$ with $h \cap f = f \implies h = f$.

(\ref{_principal}) $\iff$ \ref{_betti_0})
This should be obvious.

(\ref{_betti_0} $\iff$ \ref{_betti_1} $\iff$ \ref{_betti_notf})
The forward directions are obvious.
Conversely, if $\cani{\Xcapf{\clsf D}}$ has more than one minimal generator, then its first syzygy is nonzero and has degrees $\succ \Gamma f$, implying the negation of \cref{_betti_1} and \cref{_betti_notf}.
\end{proof}

Thus $\cani{\Xcapf{\clsf D}}$ by itself already determines membership of $f \in \clsf D$.
It also yields information on the Betti numbers of $\clsf C \cup \clsf D$ via \cref{lemma:cartcap_is_diff}.
Thus in what follows, we study $\cani{\Xcapf{\clsf D}}$ in order to gain insight into both of the membership question and the Betti number question.

Let us consider the specific case of $\clsf D = \linthr_U$, with minimal cocellular resolution $\coballres_U = (Y, \mu)$.
Then $\Xcapf{\linthr_U}$ has minimal cocellular resolution $(Y, \Xcapf\mu)$, where we relabel cells $F$ of $Y$ by $\Xcapf \mu(F) = \mu(F) \cap f$, so that, for example, the empty cell still has PF label the empty function.
Choose $U$ to be a set of $n$ points such that the vectorization $\vec U$ forms a set of orthogonal basis for $\R^n$.
Then $\linthr_U = [U \to 2]$, and $Y$ is homeomorphic to the unit sphere $S^{n-1}$ as a topological space and is isomorphic to the complete $(n-1)$-dimensional suboplex as a simplicial complex.
It has $2^{n}$ top cells $\splx_g$, one for each function $g \in [U \to 2]$; in general, it has a cell $\splx_{\pf f}$ for each PF $\pf f: \sbe U \to 2$, satisfying $\splx_{\pf f} = \bigcap_{f \spe \pf f} \splx_f$.
\newcommand{\flip}{\operatorname\Diamond}
\newcommand{\bdy}{\pbd_{\pf f} \splx_{f\flip \pf f}}
\newcommand{\splxf}{\splx_{f \flip \pf f}}

Let us verify that $\betti_{i, \pf f}(\Xcapf \linthr_U)$ equals $\betti_{i, \Gamma \pf f}(\la \xx^{\Gamma f}\ra) = \ind(i=0 \And \pf f = f)$ by \cref{lemma:betti_from_cocell}.
For any $\pf f \sbe f$, define $f \flip \pf f$ to be the total function
$$f \flip \pf f: u \mapsto f(u)\ \forall u \in \dom \pf f,\quad
u \mapsto \neg f(u)\ \forall u \not \in \dom \pf f.$$
Define the {\bf $(f, \pf f)$-star $\bigstar(f, \pf f)$} to be the collection of open cells $\ocell\splx_{\pf g}$ with PF label $\pf f \sbe \pf g \sbe f \flip \pf f$.
This is exactly the collection of open cells realized by the cellular pair $(\splxf, \bdy)$, where $\bdy$ denotes the partial boundary of $\splxf$ that is the union of the closed cells with PF labels $(f \flip \pf f) \setminus (i \mapsto \pf f(i))$ for each $i \in \dom \pf f$.
In particular, $\bigstar(f, f)$ is realized by $(\splx_f, \pd \splx_f)$, and $\bigstar(f, \emptyfun)$ is realized by $(\splx_{\neg f}, \{\})$ (where $\{\}$ is the void complex).
\renewcommand{\bdy}{\pbd_{\pf f} \splx}
\renewcommand{\splxf}{\splx}
In the following we suppress the subscript to write $(\splxf, \bdy)$ for the sake of clarity.
When $\pf f \not= \emptyfun, f$, $\bdy$ is the union of faces intersecting $\splx_{\neg f}$; intuitively, they form the subcomplex of faces directly visible from an observer in the interior of $\splx_{\neg f}$.
This is illustrated in \cref{fig:ff-star}. 

\begin{figure}
\centering
\includegraphics[width=.4\textwidth]{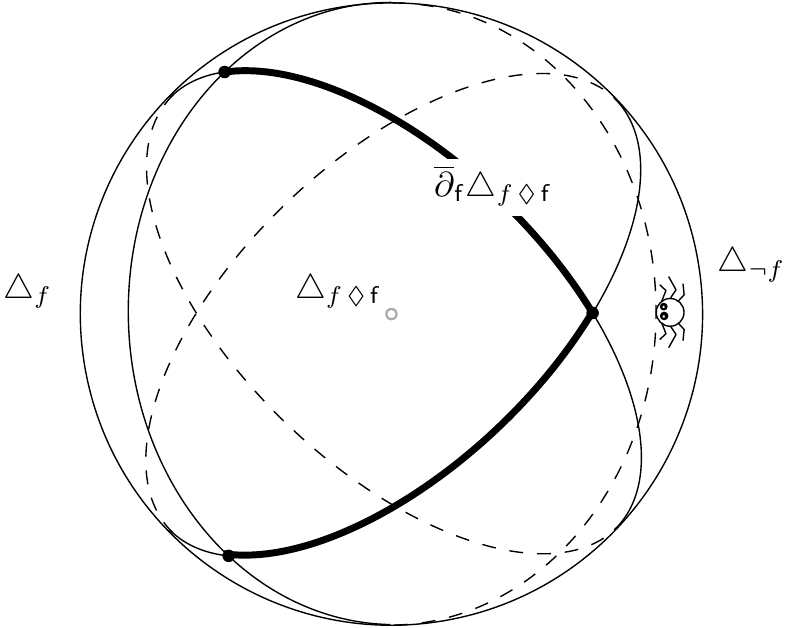}
\caption{The bold segments form the partial boundary $\pbd_{\pf f}\splx_{f \flip \pf f}$.
In particular, this partial boundary contains three vertices.
It is exactly the part of $\splx_{f \flip \pf f}$ visible to a spider in the interior of $\splx_{\neg f}$, if light travels along the sphere.}
\label{fig:ff-star}
\end{figure}
Then the part of $(Y_U, \Xcapf\mu_U)$ with PF label $\pf f$ is exactly the $(f, \pf f)$-star.
If $\pf f \not = f$, the closed top cells in $\bdy$ all intersect at the closed cell with PF label $f \flip \pf f \setminus \pf f = \neg(f \setminus \pf f)$, and thus their union $\bdy$ is contractible.
This implies via the relative cohomology sequence
$$\cdots \gets \rH^j(\bdy) \gets \rH^j(\splxf) \gets H^j(\splxf, \bdy) \gets \rH^{j-1}(\bdy) \gets \cdots$$
that
$0 = \dim_\kk \rH^j(\splxf) = \dim_\kk H^j(\splxf, \bdy) = \betti_{n-1-j, \pf f}(\Xcapf \linthr_U)$.
If $\pf f = f$, then $\bdy = \pd \splxf$, so $H^j(\splxf, \bdy) \cong \rH^j (\splxf/\pd) \cong \kk^{\ind(j = {n-1})}$.
This yields $\betti_{k, f}(\Xcapf \linthr_U) = \ind(k = 0)$.

The analysis of the Betti numbers of any thresholded linear class $\thr L$ is now much easier given the above.
As discussed in \cref{sec:threshold_functions}, the cocellular resolution $(Z, \mu_Z)$ of $\thr L$ is just the intersection of $\coballres_U = (Y, \mu)$ with $L$, with the label of an intersection equal to the label of the original cell, i.e. $Z = Y \cap L, \mu_Z(F \cap L) = \mu(F)$.
Similarly, the cocellular resolution of $\Xcapf{(\thr L)}$ is just $(Z, \Xcapf\mu_Z)$ with $Z = Y \cap L, \Xcapf \mu_Z(F \cap L) = \Xcapf\mu(F)$.
If $L$ is not contained in any coordinate hyperplane of $Y$, then $\thr L$ is full.
By \cref{lemma:xcapf_determines_membership}, $f \in \thr L$ iff $\betti_{i, \pf f}(\Xcapf{\thr L}) = 0$ for all $i \ge 1$.
This is equivalent by \cref{lemma:betti_from_cocell} to the statement that for all $\pf f$, the degree $\Gamma \pf f$ part of $(Z, \Xcapf \mu_Z)$, $\bigstar_L(f, \pf f) := \bigstar(f, \pf f) \cap L$, has the homological constraint
$$H^{\dim Z - i}(\overline{\bigstar_L(f, \pf f)}, \pbd \bigstar_L(f, \pf f)) = H^{\dim Z - i}(\splx_{f \flip \pf f} \cap L, \bdy_{f \flip \pf f}\cap L) = 0, \forall i \ge 0.$$
But of course, $f \in \thr L$ iff $L \cap \ocell \splx_f \not=\emptyset$.
We therefore have discovered half of a remarkable theorem.
\begin{thm}[Homological Farkas]\label{thm:hom_farkas}
Let $L$ be a vector subspace of dimension $l \ge 2$ of $\R^n$ not contained in any coordinate hyperplane, let $\mathbb P$ denote the positive cone $\{v \in \R^n: v > 0\}$, and let $\mathbf 1: [n] \to \{-1, 1\}, j \mapsto 1$.
For any $g: [n] \to \{-1, 1\}$, define $\Xi(g)$ to be the topological space represented by the complex $\pbd\bigstar_L(\mathbf 1, \mathbf 1 \cap g)$.
Then the following are equivalent:\footnote{Our proof will work for all fields $\kk$ of any characteristic, so the cohomologies can actually be taken against $\Z$.}
\begin{enumerate}
  \item $L$ intersects $\mathbb P$. \label{_HF_L_cap_P}
  \item For all $g\not=\mathbf 1,\neg\mathbf 1$, $\rH^{\bullet}(\Xi(g);\kk) = 0$ as long as $\splx_g \cap L \not=\emptyset$.\label{_HF_i_ge_0}
\end{enumerate}
\end{thm}

This theorem gives {\it homological certificates} for the non-intersection of a vector subspace with the positive cone, similar to how Farkas' lemma \cite{ziegler_lectures_1995} gives {\it linear certificates} for the same thing.
Let's give some intuition for why it should be true.
As mentioned before, $\pbd \bigstar(\mathbf 1, \mathbf 1 \cap g)$ is essentially the part of $\splx_{\mathbf 1 \flip (\mathbf 1 \cap g)} = \splx_g$ visible to an observer Tom in $\ocell\splx_{\neg \mathbf 1}$, if we make light travel along the surface of the sphere, or say we project everything into an affine hyperplane.
Since the simplex is convex, the image Tom sees is also convex.
If $L$ indeed intersects $\ocell\splx_{\mathbf 1}$ (equivalently $\ocell \splx_{\neg \mathbf 1}$), then for $\Xi(g)$ he sees some affine space intersecting a convex body, and hence a convex body in itself.
As Tom stands in the interior, he sees everything (i.e. his vision is bijective with the actual points), and the obvious contraction he sees will indeed contract $\Xi(g)$ to a point, and $\Xi(g)$ has trivial cohomology.

Conversely, this theorem says that if Tom is outside of $\ocell\splx_{\mathbf 1}$ (equivalently $\ocell \splx_{\neg \mathbf 1}$), then he will be able to see the nonconvexity of $\pbd \bigstar(\mathbf 1, \mathbf 1 \cap g)$ for some $g$, such that its intersection with an affine space is no longer contractible to a single point.

\begin{proof}[Proof of $\ref{_HF_L_cap_P} \implies \ref{_HF_i_ge_0}$]
Note that $\Xi(g)$ is a complex of dimension at most $l-2$, so it suffices to prove the following equivalent statement:
\begin{equation*}
\parbox{.8\textwidth}{For all $g\not=\mathbf 1,\neg\mathbf 1$, $\rH^{l -2- i}(\Xi(g);\kk) = 0$ for all $i \ge 0$ as long as $\splx_g \cap L \not=\emptyset$.}
\end{equation*}
$L$ intersects $\mathbb P$ iff $L$ intersects $\ocell\splx_{\mathbf 1}$ iff $\mathbf 1 \in\thr L$.
By \cref{lemma:xcapf_determines_membership}, this implies $\betti_{i, \pf f}(\Xcapf{\thr L}) = 0$ for all $\pf f \not = \mathbf 1, \emptyfun$ and $i \ge 0$.
As we observed above, this means
$$H^{l - 1 - i}(\overline{\bigstar_L(\mathbf 1, \pf f)}, \pbd \bigstar_L(\mathbf 1, \pf f)) = 0, \forall i \ge 0.$$
Write $A = \overline{\bigstar_L(\mathbf 1, \pf f)}$ and $B = \pbd \bigstar_L(\mathbf 1, \pf f)$ for the sake of brevity.
Suppose $A = \splx_{\mathbf 1 \flip \pf f} \cap L$ is nonempty.
Then for $\pf f \not = \emptyfun$ as we have assumed, both $A$ and $B$ contain the empty cell, and therefore we have a relative cohomology long exact sequence with reduced cohomologies,
\begin{align*}
\cdots \gets H^{l-1-i}(A, B) \gets \rH^{l-2-i}(B) \gets \rH^{l-2-i}(A) \gets H^{l-2-i}(A, B) \gets \cdots 
\end{align*}
Because $\rH^\bullet(A) = 0$, we have
$$ H^{l - 1 - i}(A, B) \cong \rH^{l - 2 - i}(B), \forall i.$$
This yields the desired result after observing that $\mathbf 1 \flip \pf f \not = \mathbf 1, \neg \mathbf 1$ iff $\pf f \not = \mathbf 1, \emptyfun$. 
\end{proof}
Note that we cannot replace $L$ with any general openly convex cone, because we have used \cref{lemma:xcapf_determines_membership} crucially, which requires $\thr L$ to be full, which can happen only if $L$ is a vector subspace, by \cref{lemma:openly_convex_dichotomy}.

The reverse direction is actually quite similar, using the equivalences of \cref{lemma:xcapf_determines_membership}.
But straightforwardly applying the lemma would yield a condition on when $g = \neg \mathbf 1$ as well which boils down to $L \cap \splx_{\neg \mathbf 1} \not=\emptyset$, that significantly weakens the strength of the theorem.\footnote{Note that the condition says $L$ intersects the closed cell $\splx_{\neg \mathbf 1}$, not necessarily the interior, so it doesn't completely trivialize it.}
To get rid of this condition, we need to dig deeper into the structures of Betti numbers of $\thr L$.

\begin{thm} \label{thm:thr_top_betti}
Suppose $L$ is linear of dimension $l$, $\thr L$ is a full class, and $f \not \in \thr L$.
Let $\pf g$ be such that $\sigma \pf g$ is the unique covector of $L$ of the largest support with $\pf g \sbe f$ (where we let $\pf g = \mathbf 0$ if no such covector exists).
We say $\pf g$ is the {\bf projection} of $f$ to $\thr L$, and write $\pf g = \Pi(f, L)$. 
Then the following hold: 
\begin{enumerate}
  \item $\betti_{i, \pf g}(\Xcapf{\thr L}) = \ind(i = l - 1 - \rank \sigma \pf g)$. (Here $\rank$ denotes the rank wrt matroid of $L$ as defined in \cref{sec:threshold_functions})\label{_top_betti_thrL}
  \item For any $\pf h \not \spe \pf g$, $\betti_{\bullet, \pf h}(\Xcapf{\thr L}) = 0$. \label{_top_betti_deg_is_least} 
  \item For any PF $\pf r$ with domain disjoint from $\dom \pf g$, $\betti_{i, \pf r \cup \pf g}(\Xcapf{\thr L}) = \betti_{i, \pf r}(\Xcapf{\thr L} \restrict ([n] \setminus \dom \pf g))$. \label{_betti_from_restrict}
\end{enumerate}
\end{thm}
Note that such a $\sigma \pf g$ would indeed be unique, since any two covectors with this property are consistent and thus their union gives a covector with weakly bigger support.
\begin{proof}
(\cref{_top_betti_thrL})
The assumption on $\pf g$ is exactly that $L$ intersects $\splx_f$ at $\ocell\splx_{\pf g} \sbe \splx_f$ and $\pf g$ is the maximal such PF.
Then $\bigstar(f, \pf g) \cap L = \bigcup_{f\flip \pf g \spe \pf h \spe \pf g} \ocell \splx_{\pf h} \cap L = \ocell \splx_{\pf g} \cap L$.
Therefore $\betti_{i, \pf g}(\Xcapf{\thr L}) = \rH^{l - 1 - i}((\splx_{\pf g} \cap L)/ \pd) = \ind(l - 1 - i = \dim (\splx_{\pf g} \cap L))$ (note that when $\splx_g \cap L$ is a point (resp. the empty cell), the boundary is the empty cell (resp. the empty space), so that this equality still holds in those cases).
But $\dim (\splx_{\pf g} \cap L)$ is $\rank \sigma \pf g$.
So the Betti number is $\ind(i = l - 1 - \rank \sigma \pf g)$.

(\cref{_top_betti_deg_is_least})
We show that $\cani{\Xcapf{\thr L}}$ is generated by monomials of the form $\xx^{\Gamma \pf f}$ for $\pf f \spe \pf g$.
It suffices to demonstrate that for any function $h \in \thr L$, the function $h \rtimes \pf g$ defined by
$$h\rtimes \pf g(u) := \begin{cases} 
\pf g(u) & \text{if $u \in \dom \pf g$}\\
h(u) & \text{otherwise.}
\end{cases}$$
is also in $\thr L$, as $f \cap (h \rtimes \pf g) \spe f \cap h$.

Let $\varphi \in L$ be a function $\varphi: U \to \R$ such that $\sgn(\varphi) = \sigma \pf g$.
If $\psi \in L$ is any function, then for sufficiently small $\epsilon > 0$, $\sgn(\epsilon \psi + \varphi) = \sgn(\psi) \rtimes \sgn(\varphi) = \sgn(\psi) \rtimes \pf g$.
Since $L$ is linear, $\epsilon \psi + \varphi \in L$, and we have the desired result.

(\cref{_betti_from_restrict})
As shown above, the minimal generators of $\cani{\Xcapf{\thr L}}$ are all divisible by $\xx^{\Gamma \pf g}$.
The result then follows from \cref{lemma:forget_var}.
\end{proof}
\begin{cor} \label{cor:thr_determines_membership}
Suppose $L$ is linear of dimension $l \ge 2$ and $\thr L$ is a full class.
Then $f \in \thr L$ iff $\betti_{i, \pf f}(\Xcapf{\thr L}) = 0$ for all $\pf f \not= f, \emptyfun$ and all $i \ge 1$.

If $l \ge 1$, then we also have $f \in \thr L$ iff $\betti_{i, \pf f}(\Xcapf{\thr L}) = 0$ for all $\pf f \not= f, \emptyfun$ and all $i \ge 0$. 
\end{cor}
\begin{proof}
We show the first statement.
The second statement is similar.

The forward direction follows from \cref{lemma:xcapf_determines_membership}.
If $\betti_{\bullet, \emptyfun}(\Xcapf{\thr L}) = 0$, then the same lemma also proves the backward direction.

So assume otherwise, and in particular, $f \not \in \thr L$.
By \cref{thm:thr_top_betti}, it has to be the case that $\betti_{i, \emptyfun}(\Xcapf{\thr L}) = \ind(i = l-1-(-1)) = \ind(i = l)$ since $\rank\mathbf 0 = -1$.
Consequently, $\betti_{l-1, \pf f}(\Xcapf{\thr L}) \not = 0$ for some $\pf f \supset \emptyfun$.
If $l \ge 2$, then this contradicts the right side of the equivalence, as desired.
\end{proof}

We can now finish the proof of \cref{thm:hom_farkas}.
\begin{proof}[Proof of $\ref{_HF_i_ge_0} \implies \ref{_HF_L_cap_P}$ in \cref{thm:hom_farkas}]

Assume $l \ge 2$.
(\ref{_HF_i_ge_0}) says exactly that $\betti_{j, \pf f}(\Xcapf{\thr L}) = 0$ for all $\pf f \not= \mathbf 1, \emptyfun$ and all $j \ge 1$. 
So by \cref{cor:thr_determines_membership}, $\mathbf 1 \in \thr L$ and therefore $\thr L$ intersects $\mathbb P$.
\end{proof}

From the literature of threshold functions in theoretical computer science, we say a real function $\varphi: U \to \R$ on a finite set $U$ {\bf weakly represents} a function $f: U \to \{-1, 1\}$ if $\varphi(u) > 0 \iff f(u) = 1$ and $\varphi(u) < 0 \iff f(u) = -1$, but we don't care what happens when $\varphi(u) = 0$.
In these terms, we have another immediate corollary of \cref{thm:thr_top_betti}.
\begin{cor}\label{cor:weak_rep_betti}
A function $f$ is weakly representable by $\polythr_d^k$ iff $\betti_{i, \emptyfun}(\Xcapf{(\polythr_d^k)}) = 0, \forall i$. 
\end{cor}
This is confirmed by \cref{exmp:parity_polythr}.
By \cref{lemma:cartcap_is_diff}, this result means that $f$ is weakly representable by $\polythr_d^k$ iff adding $f$ to $\polythr_d^k$ did not change the homology of $\Ss_{\polythr_d^k}$.

\begin{remk}
\cref{_betti_from_restrict} of \cref{thm:thr_top_betti} reduces the characterization of Betti numbers of $\Xcapf{\thr L}$ to the case when $f$ is not ``weakly representable'' by $\thr L$.
\end{remk}

\newcommand{\Xcapfu}[1]{{#1}^{\cartcap f^u}}
The following theorem says that as we perturb a function $f \not \in \thr L$ by a single input $u$ to obtain $f^u$, a nonzero Betti number $\betti_{i, \pf f}$ of ``codimension 1'' of $\Xcapf{\thr L}$ remains a nonzero Betti number of ``codimension 1'' of $\Xcapfu{\thr L}$ if we truncate $\pf f$.
\begin{thm} [Codimension 1 Stability] 
Suppose $L$ is linear of dimension $l \ge 2$ and $\thr L$ is a full class.
Let $f$ be a function not in $\thr L$ and write $\pf g = \Pi(f, L)$.
Assume $\rank \sigma \pf g = s$ (so that $\betti_{l-s-1, \pf g}(\Xcapf{\thr L}) = 1$) and let $\pf f: \sbe[n] \to [2]$ be such that $\betti_{l-s-2, \pf f}(\Xcapf{\thr L}) \not= 0$.
Then $\betti_{l-s-2, \pf f}(\Xcapf{\thr L}) = 1$.

Furthermore, if $|\dom (\pf f \setminus \pf g)| > 1$ and $u \in \dom (\pf f \setminus \pf g)$, set $\pf f' := \pf f \setminus (u \mapsto f(u))$.
Then we have
$$\betti_{l-s-2, \pf f'}(\Xcapfu{\thr L}) = 1$$
where 
$$f^u(v) := \begin{cases} 
f(v) & \text{if $v \not = u$}\\
\neg f(v) & \text{if $v = u$.}
\end{cases}$$
\end{thm}
\begin{proof}
By \cref{thm:thr_top_betti}, it suffices to show this for $\pf g = \emptyfun$; then $s = -1$.

Recall $\bigstar_L(f, \pf f) = \bigstar(f, \pf f) \cap L $.
By \cref{lemma:betti_from_cocell}, $\betti_{l-1, \pf f}(\Xcapf{\thr L}) = \dim_\kk H^{0}(\overline{\bigstar_L(f, \pf f)}, \pbd \bigstar_L(f, \pf f))$.
If $\pbd \bigstar_L(f, \pf f)$ contains more than the empty cell, then the RHS is the zeroth reduced cohomology of the connected space $\bigstar_L(f, \pf f)/ \pbd$, which is 0, a contradiction.
Therefore $\pbd \bigstar_L(f, \pf f) = \{\emptycell\}$, i.e. as geometric realizations, $L$ does not intersect $\pbd \bigstar(f, \pf f)$.
Consequently, $\betti_{l-1, \pf f}(\Xcapf{\thr L}) = \dim_\kk H^0(\splx_{f \flip \pf f}, \{\emptycell\}) = \dim_\kk H^0(\splx_{f \flip \pf f}) = 1$.

Now $f^u \flip \pf f' = f \flip \pf f$, and $\pbd_{\pf f'} \splx_{f \flip \pf f} \subset \bdy_{f \flip \pf f}$ since $\pf f' \subset \pf f$.
Therefore $\pbd \bigstar_L(f^u, \pf f') \sbe \pbd \bigstar_L(f, \pf f)$ also does not intersect $L$.
So $\betti_{l-1, \pf f'}(\Xcapfu{\thr L}) = \dim_\kk H^0(\splx_{f \flip \pf f}, \{\emptycell\}) = \dim_\kk H^0(\splx_{f \flip \pf f}) = 1$.
\end{proof}

Below we give some examples of the computation of Betti numbers of $\thr L^{\cartcap f}$.

\begin{thm}
Let $f := \parity_d \in [\{-1, 1\}^d \to \{-1, 1\}]$ and $\clsf C := \linthr_d \sbe [\{-1, 1\}^d \to \{-1, 1\}]$.
Then $\betti_{i, f \cap \onef}(\clsf C^{\cartcap f}) = (2^{d-1} - 1)\ind(i = 1)$.
\end{thm}
\begin{proof}
Let $(Y, \mu)$ be the coball resolution of $\linthr_d$.
Consider the cell $F_\onef$ of $Y$ with PF label $\onef$.
It has $2^d$ facets since $$\ind_s := \begin{cases}
r \mapsto -1 & \text{if $r = s$}\\
r \mapsto 1 & \text{if $r \not = s$}
\end{cases}$$
is a linear threshold function (it ``cuts'' out a corner of the $d$-cube), so that each facet of $F_\onef$ is the cell $G_s := F_{\onef \cap \ind_s}$ with PF label
$$\onef \cap \ind_s = \begin{cases} r \mapsto 1 &\text{if $r = s$}\\ \text{undefined} & \text{otherwise.}\end{cases}$$

If for $s, s' \in \{-1, 1\}^d$, $f(s) = f(s')$, then $G_s$ and $G_{s'}$ do not share a codimension 2 face (do not share a facet of their own).
(If they do, then
$$\begin{cases}
r \mapsto + & \text{if $r \not= s, s'$}\\
r \mapsto 0 & \text{else}
\end{cases}$$
is a covector of the $d$-cube.
But this means that $(s, s')$ is an edge of the $d$-cube, implying that $f(s) \not= f(s')$).

Now note that $\pbd_{f \cap \onef} F_\onef = \bigcup\{G_s: f(s) = 1\}$.
For each $G_s \not \sbe \pbd_{f \cap \onef} F_\onef$, we have $\pd G_s \sbe \pbd_{f \cap \onef} F_\onef$ by the above reasoning.
Let $\alpha := d + 1 = \hdim \clsf C$ and $n = 2^d$.
Therefore, $\pbd_{f \cap \onef } R_\onef \cong S^{\alpha - 2} \setminus \bigsqcup_{i=1}^{n/2} D^{n-2}$, the $(\alpha-2)$-sphere with $n/2$ holes.
So
\begin{align*}
\rH^{m}(\pbd_{f \cap \onef} F_\onef) = \begin{cases} \Z^{n/2 - 1} & \text{if $m = \alpha - 3$}\\
0 & \text{otherwise}
\end{cases}
\end{align*}
Hence
\begin{align*}
\betti_{i, f \cap \onef}(\clsf C) &= \dim_\kk \rH^{\alpha - 1 - i}(F_\onef, \pbd_{f \cap \onef}F_\onef; \kk) \\
	&= \rank \rH^{\alpha - 2 - i}(\pbd_{f \cap \onef} F_\onef)\\
	&= (n/2 - 1)\ind(\alpha - 2 -i = \alpha - 3)\\
	&= (2^{d-1} - 1) \ind(i = 1).
\end{align*}

\end{proof}

\begin{thm}
Let $f := \parity_d \in [\{-1, 1\}^d \to \{-1, 1\}]$ and $\clsf C := \polythr_d^{d-1} \sbe [\{-1, 1\}^d \to \{-1, 1\}]$.
Then $\betti_{i, f \cap \onef}(\clsf C^{\cartcap f}) = (2^{d-1} - 1)\ind(i = 1)$.
\end{thm}
\begin{proof}
Let $(Y, \mu)$ be the coball resolution of $\linthr_d$.
Consider the cell $F_\onef$ of $Y$ with PF label $\onef$.
It has $2^d$ facets since $$\ind_s := \begin{cases}
r \mapsto -1 & \text{if $r = s$}\\
r \mapsto 1 & \text{if $r \not = s$}
\end{cases}$$
is a linear threshold function (it ``cuts'' out a corner of the $d$-cube), so that each facet of $F_\onef$ is the cell $G_s := F_{\onef \cap \ind_s}$ with PF label
$$\onef \cap \ind_s = \begin{cases} r \mapsto 1 &\text{if $r = s$}\\ \text{undefined} & \text{otherwise.}\end{cases}$$

Note that a function $g: \{-1, 1\}^d \to \{-1, 0, 1\}$ is the sign function of a polynomial $p$ with degree $d-1$ iff $\im (gf) \spe \{-1, 1\}$ (i.e. $g$ hits both 1 and $-1$).
Indeed, by Fourier Transform, the degree constraint on $p$ is equivalent to
$$\la p, f\ra = \sum_{u \in \bcube^d} p(u)f(u) = 0.$$
If $\im gf = \im \sgn(p)f $ does not contain $-1$, then this quantity is positive as long as $p \not = 0$, a contradiction.
So suppose $\im gf \spe \bcube$.
Set $m_p := \#\{u: g(u)f(u) = 1\}$ and $m_n := \#\{u: g(u)f(u) = -1\}$.
Define the polynomial $p$ by $p(u) = \f {g(u)} {m_p}$ if $g(u)f(u) = 1$ and $p(u) = \f {g(u)}{m_n}$ if $g(u)f(u) = -1$.
Then $\la p, f \ra = 0$ and $\sgn p = g$ by construction, as desired.

As $\pbd_{f \cap \onef} F_\onef$ is the complex with the facets $\mathcal F := \{G_s: f(s) = 1\}$, to find its homology it suffices to consider the nerve of the facet cover.
For a function $g: \bcube^d \to \{-1, 0, 1\}$, write $\bar g$ for the partial function $g \restrict \inv g(\{-1, 1\})$ (essentially, we are marking as undefined all inputs that $g$ send to 0).
But by the above, any proper subset $\mathcal G \subset \mathcal F$ must have nontrivial intersection (which is a cell with PF label $\bar g$ for some $g$ with $\im g \spe \{-1, 1\}$), while $\bigcap\mathcal F$ must have PF label a subfunction $\pf g$ of $\bar h$ for
$$h(u) = \begin{cases} 
1 & \text{if $f(u) = -1$}\\
0 & \text{otherwise.}
\end{cases}$$
Again, by the last paragraph, this implies that $\bigcap \mathcal F = \emptyset$.
Therefore, in summary, the nerve $\mathcal N_{\mathcal F}$ is the boundary of a $(n/2 - 1)$-dimensional simplex, where $n = 2^d$, so that $\rH^j(\pbd_{f \cap \onef}F_\onef) \cong \rH^j(\mathcal N_{\mathcal F}) = \ind(j = n/2 - 2) \cdot \Z$.
Let $\alpha = \hdim \polythr^{d-1}_d = 2^d - 1$.
Then
\begin{align*}
\betti_{i, \onef \cap f}(\clsf C) &= \dim_\kk \rH^{\alpha - 1 - i}(F_\onef, \pbd_{\onef \cap f} F_\onef; \kk)\\
	&= \rank \rH^{\alpha - 2 - i}(\pbd_{\onef \cap f}F_\onef)\\
	&= \ind(\alpha - 2 - i = n/2 - 2)\\
	&= \ind(i = 2^{d-1} - 1)
\end{align*}
as desired.
\end{proof}
\subsection{The Maximal Principle for Threshold Functions}

\label{sec:maximal_principle}
By looking at the 0th Betti numbers of $\thr L^{\cartcap f}$, we can obtain a ``maximal principle'' for $\thr L$.
\begin{thm} \label{thm:maximal_principle_thr}
Suppose there exists a function $g \in \thr L$ such that
\begin{itemize}
    \item $g \not = f$ and,
    \item for each $h \in \thr L$ that differs from $g$ on exactly one input $u$, we have $g(u) = f(u) = \neg h(u)$. \label{_local_max}
\end{itemize}
Then $\betti_{i, f \cap g}(\thr L^{\cartcap f}) = \ind(i = 0)$ and $f \not \in \thr L$.
Conversely, any function $g \in \thr L$ satisfying $\betti_{i, f \cap g}(\thr L^{\cartcap f}) = \ind(i = 0)$ also satisfies condition (\ref{_local_max}).  
\end{thm}

Informally, \cref{thm:maximal_principle_thr} says that if we look at the partial order on $\thr L$ induced by the mapping from $\thr L$ to the class of partial functions, sending $g$ to $g \cap f$, then, assuming $f$ is in $\thr L$, any function $g$ that is a ``local maximum'' in $\thr L$ under this partial order must also be a global maximum and equal to $f$.
We shall formally call any function $g \in \thr L$ satisfying condition \ref{_local_max} a {\it local maximum} with respect to $f$.

\begin{proof}
Let $\coballres = (Y, \mu)$ be the minimal cocellular resolution of $\thr L$.
Let $F_g$ denote the face of $Y$ with label $g$.
Each facet of $F_g$ has the label $g \cap h$ for some $h$ differing from $g$ on exactly one input.
Condition (\ref{_local_max}) thus says that $\pbd_{f \cap g} F_g = \pd F_g$.
Therefore, if $l = \dim L$,
\begin{align*}
\betti_{i, f \cap g}(\thr L^{\cartcap f}) &= \dim_\kk \rH^{l - 1 - i}(F_g / \pbd_{f \cap g}; \kk)\\
	&= \dim_\kk \rH^{l-1-i}(F_g/\pd; \kk)\\
	&= \ind(l-1-i = \dim F_g)\\
	&= \ind(i = 0)
\end{align*}
This shows that $f \not \in \thr L$ as desired.

For the converse statement, we only need to note that the Betti number condition implies $\pbd_{f \cap g} F_g = \pd F_g$, by reversing the above argument.
\end{proof}
\newcommand{\thrdeg}{\operatorname{thrdeg}}
For any $f: \{-1, 1\}^d \to \{-1, 1\}$, define $\thrdeg f$ to be the minimal degree of any polynomial $P$ with $0 \not \in P(\{-1, 1\}^d)$ and $\sgn(P) = f$.
The maximal principle enables us to compute $\thrdeg f$ for any symmetric $f$ (a result that appeared in \cite{aspnes_expressive_1994}).
\begin{thm} 
Suppose $f: \{-1, 1\}^d \to \{-1, 1\}$ is symmetric, i.e. $f(u) = f(\pi \cdot u)$ for any permutation $\pi$.
Let $r$ be the number of times $f$ changes signs.
Then $\thrdeg f = r$.
\end{thm}
\begin{proof}
\textbf{To show $\thrdeg f \le r$:}
Let $s(u) := \sum_i (1 - u_i)/2$.
Because $f$ is symmetric, it is a function of $s$, say $\bar f(s(u)) = f(u)$ \footnote{$f$ can be expressed as a polynomial in $\{-1, 1\}^d$, and by the fundamental theorem of symmetric polynomials, $f$ is a polynomial in the elementary symmetric polynomials.
But with respect to the Boolean cube $\{-1, 1\}^d$, all higher symmetric polynomials are polynomials in $\sum_i u_i$, so in fact $f$ is a univariate polynomial in $s$.}.
Suppose WLOG $\bar f(0) > 0$ and $\bar f$ changes signs between $s$ and $s+1$ for $s = t_1, \ldots, t_r$.
Then define the polynomial $Q(s):= \prod_{i=1}^r (t_i + \f 1 2 - s)$.
One can immediately see that $\sgn Q(s) = \bar f(s) = f(u)$.
Therefore $\thrdeg f \le r$.

\noindent\textbf{To show $\thrdeg f \ge r$:}
Let $k = r - 1$ and consider the polynomial $Q'(s) = \prod_{i=1}^{r-1} (t_i + \f 1 2 - s)$ and its sign function $\bar g (s) = \sgn Q'(s) \in \polythr^k_d$.
We show that $g(u) = \bar g(s(u))$ a local maximum.
Since $\bar g(s) = \bar f(s)$ for all $s \in [0, t_r]$, it suffices to show that for any $v$ with $s(v) > t_r$, the function
$$
g^v(u) := \begin{cases}
g(u) & \text{if $u \not= v$}\\
\neg g(u) & \text{if $u = v.$}
\end{cases}$$
is not in $\polythr^k_d$.
WLOG, assume $v = (-1, \ldots, -1, 1, \ldots, 1)$ with $\sigma := s(v)$ $-1$'s in front.
For the sake of contradiction, suppose there exists degree $k$ polynomial $P$ with $\sgn P = g^v$.
Obtain through symmetrization the polynomial $R(z_1, \ldots, z_\sigma) := \sum_{\pi \in S_\sigma} P(\pi \cdot z, 1, \ldots, 1)$.
$R$ is a symmetric polynomial, so expressable as a univariate $R'(q)$ in $q := \sum_j (1 - z_j)/2 \in [0, \sigma]$ on the Boolean cube.
Furthermore, $\sgn R'(q) = \bar g(q)$ for all $q \not = \sigma$, and $\sgn R'(\sigma) = - \bar g(\sigma)$.
Thus $R'$ changes sign $k+1$ times on $[0, \sigma]$ but has degree at most $k$, a contradiction.
This yields the desired result.

\end{proof}

The proof above can be extended to give information on the zeroth Betti numbers of $\polythr^k_d \cartcap \{f\}$.
Suppose $f$ is again symmetric, and as in the proof above,
\begin{align*}
r &:= \thrdeg f\\
s(u) &:= \sum_i (1 - u_i)/2\\
\bar f(s(u)) &= f(u)\\
Q(s) &:= \bar f(0) \prod_{i=1}^r (t_i + \f 1 2 - s)
\end{align*}
where $\bar f$ changes signs between $s$ and $s+1$ for $s = t_1, \ldots, t_r$.
\begin{thm}
Let $k < r$ and $a < b \in [r]$ be such that $b - a = k-1$.
Set $Q'(s) = \bar f(t_a) \prod_{i=a}^{b}(t_i + \f 1 2 - s)$ and $g(u) := \bar g(s(u)) := \sgn Q'(s(u))$.
Then $\betti_{i, f \cap g}(\polythr^k_d \cartcap \{f\}) = \ind(i = 0)$.
\end{thm}
\begin{proof}
We prove the equivalent statement (by \cref{thm:maximal_principle_thr}) that $g$ is a local maximum.
Since $\bar g(s) = \bar f(s)$ for $s \in [t_a, t_b + 1]$, we just need to show that for any $v$ with $s(v) \not \in [t_a, t_b + 1]$, the function
$$
g^v(u) := \begin{cases}
g(u) & \text{if $u \not= v$}\\
\neg g(u) & \text{if $u = v.$}
\end{cases}$$
is not in $\polythr^k_d$.

If $s(v) > t_b + 1$, then WLOG assume $v = (-1, \ldots, -1, 1, \ldots, 1)$ with $\sigma := s(v)$ $-1$'s in front.
For the sake of contradiction, suppose there exists degree $k$ polynomial $P$ with $\sgn P = g^v$.
Obtain through symmetrization the polynomial $R(z_1, \ldots, z_\sigma) := \sum_{\pi \in S_\sigma} P(\pi \cdot z, 1, \ldots, 1)$.
$R$ is a symmetric polynomial, so expressable as a univariate $R'(q)$ in $q := \sum_j (1 - z_j)/2 \in [0, \sigma]$ on the Boolean cube.
Furthermore, $\sgn R'(q) = \bar g(q)$ on $q \in [0, \sigma-1]$, and $\sgn R'(\sigma) = - \bar g(\sigma)$.
Thus $R'$ changes sign $k+1$ times on $[0, \sigma]$ but has degree at most $k$, a contradiction.

If $s(v) < t_a$, then WLOG assume $v = (1, \ldots, 1, -1, \ldots, -1)$ with $\sigma := s(v)$ $-1$'s in the back.
For the sake of contradiction, suppose there exists degree $k$ polynomial $P$ with $\sgn P = g^v$.
Obtain through symmetrization the polynomial $R(z_1, \ldots, z_\sigma) := \sum_{\pi \in S_\sigma} P(\pi \cdot z, -1, \ldots, -1)$.
$R$ is a symmetric polynomial, so expressable as a univariate $R'(q)$ in $q := \sum_j (1 - z_j)/2 \in [0, d - \sigma]$ on the Boolean cube.
Furthermore, $\sgn R'(q) = \bar g(q + \sigma)$ on $q \in [1, \sigma-1]$, and $\sgn R'(0) = - \bar g(\sigma)$.
Thus $R'$ changes sign $k+1$ times on $[0, d-\sigma]$ but has degree at most $k$, a contradiction.
\end{proof}

\subsection{Homological Farkas}
\label{sec:hom_farkas}
\cref{thm:hom_farkas} essentially recovers \cref{thm:informal_hom_farkas}, after we define $\Lambda(g)$ to be $\pbd\bigstar_L(\neg\mathbf 1, \neg\mathbf 1 \cap g)$, and utilize the symmetry $\mathbf 1 \in \thr L \iff \neg \mathbf 1 \in \thr L$.
Then $\Lambda(g)$ indeed coincides with the union of facets of $\splx_g$ whose linear spans separates $\splx_g$ and $\splx_{\mathbf 1}$.

\newcommand{\hypar}{{\mathcal{H}}}
\newcommand{\boldplus}{{\boldsymbol{+}}}
\newcommand{\boldminus}{{\boldsymbol{-}}}
We can generalize the homological Farkas' lemma to arbitrary linear hyperplane arrangements.
Let $\hypar = \{H_i\}_{i=1}^n$ be a collection of hyperplanes in $\R^k$, and $\{w_i\}_i$ be a collection of row matrices such that
$$H_i = \{x \in \R^k: w_i x = 0\}.$$
Set $W$ to be the matrix with rows $w_i$.
For $\mathbf b \in \{-, +\}^n$, define $R_{\mathbf b}:= \{x \in \R^k: \sgn(Wx) = \mathbf b\}$.
Thus $R_{\boldplus} = \{x \in \R^k: Wx > 0\}.$
Suppose $W$ has full rank (i.e. the normals $w_i$ to $H_i$ span the whole space $\R^k$), so that $W$ is an embedding $\R^k \into \R^n$.
Each region $R_{\mathbf b}$ is the preimage of $P_{\mathbf b}$, the cone in $\R^n$ with sign $\mathbf b$.
Therefore, $R_{\mathbf b}$ is linearly isomorphic to $\im W \cap P_{\mathbf b}$, via $W$.

Let $L \sbe \R^k$ be a linear subspace of dimension $l$.
Then
\begin{align*}
L \cap R_{\boldplus} \not= \emptyset &\iff W(L) \cap P_{\boldplus} \not= \emptyset \\
	&\iff \forall \mathbf b \not = \boldplus, \boldminus, [W(L) \cap \Lambda(\mathbf b) \not= \emptyset \implies \rH^{l - 2- i}(W(L) \cap \Lambda(\mathbf b)) = 0 \forall i \ge 0]\\
	&\iff \forall \mathbf b \not = \boldplus, \boldminus, [L \cap \inv W\Lambda(\mathbf b) \not= \emptyset \implies \rH^{l - 2- i}(L \cap \inv W \Lambda(\mathbf b)) = 0 \forall i \ge 0]
\end{align*}

This inspires the following definition.
\begin{defn}
Let $\hypar = \{H_i\}_{i=1}^n$ and $W$ be as above, with $W$ having full rank.
Suppose $\mathbf b \in \{-, +\}^n$.
Then $\Lambda_\hypar(\mathbf b)$ is defined as the union of the facets of $R_{\mathbf b} \cap S^{k-1}$ whose linear spans separate $R_{\mathbf b}$ and $R_{\boldplus}$.
\end{defn}
One can immediately see that $\Lambda_\hypar(\mathbf b) = \inv W \Lambda(\mathbf b)$.

In this terminology, we have shown the following
\begin{cor} \label{cor:linear_hom_farkas}
Let $\hypar = \{H_i\}_{i=1}^n$ be a collection of linear hyperplanes in $\R^k$ whose normals span $\R^k$ (This is also called an {\it essential hyperplane arrangement.}).
Suppose $L \sbe \R^k$ is a linear subspace of dimension $l$.
Then either
\begin{itemize}
    \item $L \cap R_{\boldplus} \not= \emptyset$, or
    \item there is some $\mathbf b \not = \boldplus, \boldminus$, such that $L \cap \Lambda_\hypar(\mathbf b) \not = \emptyset$ and $\rH^{l - 2 - i}(L \cap \Lambda_\hypar(\mathbf b)) \not= 0$ for some $i\ge 0$,
\end{itemize}
but not both.
\end{cor}

This corollary can be adapted to the affine case as follows.
Let $\Aa = \{A_i\}_{i=1}^n$ be an essential oriented affine hyperplane arrangement in $\R^{k-1}$.
The hyperplanes $\Aa$ divide $\R^{k-1}$ into open, signed regions $R_{\mathbf b}, \mathbf b \in \{-, +\}^n$ such that $R_{\mathbf b}$ lies on the $\mathbf b_i$ side of $A_i$.
We can define $\Lambda_\Aa(\mathbf b)$ as above, as the union of facets $F$ of $R_{\mathbf b}$ such that $R_{\mathbf b}$ falls on the negative side of the affine hull of $F$, along with their closures in the ``sphere at infinity.''

Let $H_b := \{(x, b): x \in \R^{k-1}\}$.
Treating $\R^{k-1} \hookrightarrow \R^k$ as $H_1$, define $\vec \Aa = \{\vec A_i\}_{i=1}^n$ to be the oriented linear hyperplanes vectorizing $A_i$ in $\R^k$.
Vectorization produces from each $R_{\mathbf b}$ two cones $\vec R_{\mathbf b}, \vec R_{\neg \mathbf b} \sbe \R^k$, defined by
\begin{align*}
\vec R_{\mathbf b} &:= \{v \in \R^k: \exists c>0, cv \in R_{\mathbf b}\}\\
\vec R_{\neg\mathbf b} &:= \{v \in \R^k: \exists c<0, cv \in R_{\mathbf b}\}.
\end{align*}
Define $\neg \Aa$ as the hyperplane arrangement with the same hyperplanes as $\Aa$ but with orientation reversed.
Let $\neg R_{\mathbf b}$ denote the region in $\neg \Aa$ with sign $\mathbf b$.
Set $\Lambda_{\neg\Aa}(\mathbf b)$ analogously for $\neg \Aa$, as the union of facets $F$ of $\neg R_{\mathbf b}$ such that $\neg R_{\mathbf b}$ falls on the negative side of the affine hull of $F$, along their closures in the ``sphere at infinity.''
Thus the natural linear identity between $\neg \Aa$ and $\Aa$ identifies $\neg R_{\neg \mathbf b}$ with $R_{\mathbf b}$, and $\Lambda_{\neg\Aa}(\neg \mathbf b)$ with the union of facets {\it not in} $\Lambda_\Aa(\mathbf b)$.
See \cref{fig:affine_hom_farkas}.

\begin{figure}
\centering
\includegraphics[height=.28\textheight]{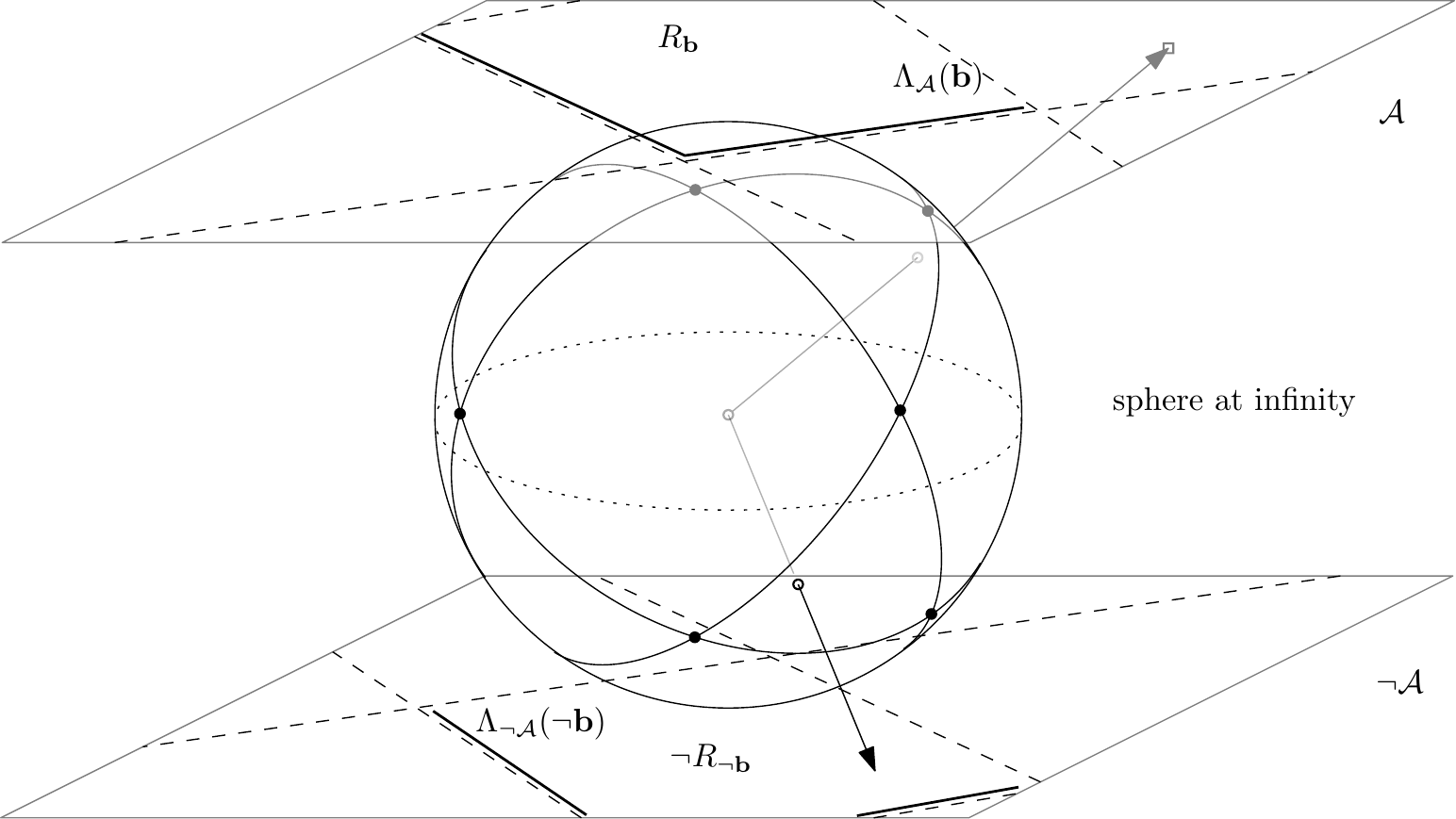}
\caption{Illustration of the symbols introduced so far.}
\label{fig:affine_hom_farkas}
\end{figure}
Note that, by construction, $\vec A_i \cap H_1 = A_i$ as oriented hyperplanes, and by symmetry, $\vec A_i \cap H_{-1} = \neg A_i$.
By projection with respect to the origin, $\Aa$ and $\neg \Aa$ can be glued along the ``sphere at infinity'' to form $\{\vec A_i \cap S^{n-1}\}_i$. 
Similarly, $R_{\mathbf b}$ and $\neg R_{\mathbf b}$ can be glued together along a subspace of the ``sphere at infinity'' to obtain $\vec R_{\mathbf b}$, and $\Lambda_\Aa(\mathbf b)$ and $\Lambda_{\neg\Aa}(\mathbf b)$ can be glued together likewise to obtain $\Lambda_{\vec \Aa}(\mathbf b)$.
We denote this ``gluing at infinity'' construction by $- \sqcup_\infty -$, so that we write $\vec R_{\mathbf b} = R_{\mathbf b} \sqcup_\infty \neg R_{\mathbf b}$ and $\Lambda_{\vec \Aa}(\mathbf b) = \Lambda_\Aa(\mathbf b) \sqcup_\infty \Lambda_{\neg \Aa}(\mathbf b).$

Let $N$ be an affine subspace of $\R^{k-1}$ of dimension $l-1$, and let $\vec N$ be its vectorization in $\R^k$.
Then 
\begin{align*}
N \cap R_{\boldplus} \not=\emptyset &\iff \vec N \cap \vec R_{\boldplus} \not=\emptyset \iff \vec N \cap \vec R_{\boldminus} \not=\emptyset\\
	&\iff \forall \mathbf b \not= \boldplus, \boldminus, [\vec N \cap \Lambda_{\vec \Aa}(\mathbf b) \not= \emptyset \implies \rH^{l-2-i}(\vec N \cap \Lambda_{\vec \Aa}(\mathbf b)) = 0 \forall i \ge 0]
\end{align*}
But $\vec N \cap \Lambda_{\vec \Aa}(\mathbf b) = (N \cap \Lambda_\Aa(\mathbf b)) \sqcup_\infty (N \cap \Lambda_{\neg \Aa}(\mathbf b))$, so we get the following

\begin{cor}\label{cor:affine_glue_hom_farkas}
$N$ does not intersect $R_\boldplus$ iff there is some $\mathbf b \not= \boldplus, \boldminus$ such that $(N \cap \Lambda_\Aa(\mathbf b)) \sqcup_\infty (N \cap \Lambda_{\neg \Aa}(\mathbf b))$ is nonempty and is not nulhomotopic.
\end{cor}

When $N$ does not intersect the closure $R_\boldplus$, we can just look at $\Lambda_\Aa(\mathbf b)$ and the component at infinity for a homological certificate.

\begin{cor} \label{cor:simple_affine_hom_farkas}
Let $\Aa = \{A_i\}_{i=1}^n$ be an affine hyperplane arrangement in $\R^{k-1}$ whose normals affinely span $\R^{k-1}$.
Suppose $R_\boldplus$ is bounded and let $N$ be an affine subspace of dimension $l-1$.
Then the following hold:
\begin{enumerate}
    \item If $R_\boldplus \cap N \not=\emptyset$, then for all $\mathbf b \not= \boldplus, \boldminus$, $N \cap \Lambda_\Aa(\mathbf b) \not = \emptyset \implies \rH^\bullet(N \cap \Lambda_\Aa(\mathbf b)) = 0$. \label{_intersects_int}
    \item If $\setcl R_\boldplus \cap N =\emptyset$, then for each $j \in [0, l-2]$, there exists $\mathbf b \not= \boldplus, \boldminus$ such that $N \cap \Lambda_\Aa(\mathbf b) \not= \emptyset$ and $\rH^j(N \cap \Lambda_\Aa(\mathbf b)) = 0$ for some $j$. \label{_miss_closure}
\end{enumerate}
\end{cor}
\begin{proof}
(\cref{_intersects_int})
Consider $\vec \Bb := \vec \Aa \cup \{H_0\}$, where $H_0$ is the linear hyperplane of $\R^k$ with last coordinate 0, oriented toward positive side.
Write $\vec R'_{\mathbf c}$ for the region with sign $\mathbf c$ with respect to $\Bb$ (where $\mathbf c \in \{-, +\}^{n+1}$).
Because $R_\boldplus$ is bounded, $\vec R_\boldplus$ does not intersect $H_0$ other than at the origin.
Then $N \cap R_\boldplus \not= \emptyset \iff \vec N \cap R'_\boldplus \not= \emptyset \iff \forall \mathbf c, \vec N \cap \Lambda_{\vec \Bb}(\mathbf c) \text{ is nulhomotopic if nonempty}$.
Note that for any $\mathbf b \in \{-, +\}^n$, we have $\Lambda_{\vec \Bb}(\mathbf b \append +) \cong \Lambda_\Aa(\mathbf b)$, and $\vec N \cap \Lambda_{\vec \Bb}(\mathbf c) \cong N \cap \Lambda_{\Aa}(\mathbf b)$. (Here $\mathbf b \append +$ means $+$ appended to $\mathbf b$).
Substituing $\mathbf c = \mathbf b \append +$ into the above yields the result.

(\cref{_miss_closure})
The most natural proof here adopts the algebraic approach.

Let $W_{\vec B}: \R^k \to \R^{n+1}$ be the embedding matrix for $\vec B$.
Consider $\clsf C := \thr W_{\vec B}(\vec N) \sbe [[n+1] \to \{-, +\}]$.
This is the class of functions corresponding to all the sign vectors achievable by $\vec N$ as it traverses through the regions of $\vec B$.
Define $\pclsf C := \clsf C ^{\cartcap \boldplus}$.
Since $N \cap \setcl R_\boldplus = \emptyset$, $\betti_{i, \emptyfun}(\pclsf C) = \ind(i = l)$ by \cref{thm:thr_top_betti}.
By the minimality of Betti numbers, for all $j \ge 0$, $\betti_{l-1-j, \pf f}(\pclsf C) \not = 0$ for some $\pf f:\sbe [n+1] \to \{-, +\}, \pf f \not= \emptyfun, \boldplus$ with $n+1 \not \in \dom \pf f$.
But this means that $\rH^{j}(\Xi_{\vec \Bb}(\boldplus \flip \pf f) \cap \vec N) \not = 0$ by the proof of \cref{thm:thr_top_betti}.
Of course, $(\boldplus \flip \pf f)(n+1) = -$, meaning that $\Xi_{\vec \Bb}(\boldplus \flip \pf f) \cap \vec N \cong \Lambda_\Aa(\neg(\boldplus \flip \pf f)) \cap N$.
For the desired result, we just set $\mathbf b = \neg(\boldplus \flip \pf f)$.
\end{proof}

\begin{figure}
\centering
\includegraphics[width=.4\textwidth]{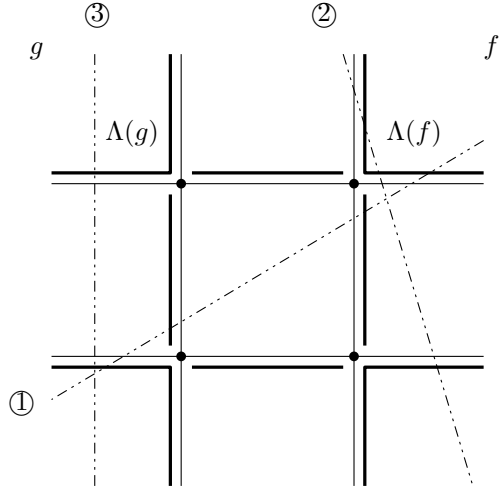}
\caption{Example application of \cref{cor:simple_affine_hom_farkas}.
Let the hyperplanes (thin lines) be oriented such that the square at the center is $R_\boldplus$.
The bold segments indicate the $\Lambda$ of each region.
Line 1 intersects $R_\boldplus$, and we can check that its intersection with any bold component is nulhomotopic.
Line 2 does not intersect $\setcl{R_\boldplus}$, and we see that its intersection with $\Lambda(f)$ is two points, so has nontrivial zeroth reduced cohomology.
Line 3 does not intersect $\setcl{R_\boldplus}$ either, and its intersection with $\Lambda(g)$ consists of a point in the finite plane and another point on the circle at infinity.}
\label{fig:generalized_homological_farkas}
\end{figure}

\cref{fig:generalized_homological_farkas} gives an example application of \cref{cor:simple_affine_hom_farkas}.

%
%

%
%

\subsection{Probabilistic Interpretation of Hilbert Function}
\label{sec:hilbert_function}
\newcommand{\bettip}{\mathcal{B}}
\newcommand{\eulerchar}{\chi}
\newcommand{\Kpoly}{\mathcal{K}}
\newcommand{\totdeg}{\operatorname{totdeg}}
\newcommand{\Hapx}{\aleph}
\newcommand{\Hilbs}{\mathcal{HS}}
\newcommand{\Hilbf}{\mathcal{HF}}
\newcommand{\Hilbp}{\mathcal{HP}}
\newcommand{\monop}{\wp}

In this section we exhibit a probabilistic interpretation of the Hilbert function of a canonical ideal.
For a graded module $M$ over $S = \kk[x_0, \ldots, x_{n-1}]$, the {\bf graded Hilbert function} $\Hilbf(M; \es a)$ takes an exponent sequence $\es a$ to the dimension over $\kk$ of the component of $M$ with degree $\es a$.
Its generating function
$$\Hilbs(M; \xx) = \sum_{\es a} \Hilbf(M; \es a)\xx^{\es a}$$
is called the {\bf graded Hilbert series} of $M$.
It is known \cite{miller_combinatorial_2005} that
$$\Hilbf(M; \es a) = \f{\Kpoly(M; \es a)}{\prod_{i=0}^{n-1}(1-x_i)}$$
for some polynomial $\Kpoly$ in $n$ variables.
This polynomial is called the {\bf K-polynomial} of $M$.
If one performs fractional decomposition on this rational function, then one can deduce that the Hibert function coincides with a polynomial when $\es a$ has large total degree.
(This is briefly demonstrated below for the $\N$-graded version).
This polynomial is called the {\bf Hilbert polynomial} and is written $\Hilbp(M; \es a)$.

Let $\eulerchar_{M}(\xx)$ denote the {\bf graded Euler characteristic} of a module $M$:
$$\eulerchar_{M}(\xx) = \sum_{i \ge 0}\sum_{\es a \succeq \mathbf 0} (-1)^{i}\betti_{i, \es a}(M)\xx^{\es a}.$$
For example, for $M = \cani{\clsf C}$, we write $\eulerchar_{\clsf C}(\xx) := \eulerchar_{\cani{\clsf C}}(\xx)$, and it takes the form
$$\eulerchar_{{\clsf C}}(\mathbf x) = \sum_{f \in \clsf C} \xx^{\Gamma f} - \sum_{f \nei_{\clsf C} g} \xx^{\Gamma(f \cap g)} + \cdots.$$
It is shown in \cite[Thm 4.11, 5.14]{miller_combinatorial_2005} that
$$\eulerchar_{I^\star}(1 - \xx) = \Kpoly(I^\star; 1 - \xx) = \Kpoly(S/I; \xx) = \eulerchar_{S/I}(\xx)$$
for any squarefree monomial ideal $I$.

Now let $\clsf C \sbe [n \to 2]$, and let $S$ be its canonical base ring.
For any $f \not \in \clsf C$, the minimal generators of $\cani{\Xcapf{\clsf C}}$ are $\xx^{\Gamma (f \cap g)}$ for $g \in \clsf C$ ``closest'' to $f$.
In particular, for every function $h \in \clsf C$, $|\dom f \cap h| \le |\dom \pf f| = 2^{d+1}- \totdeg \xx^{\Gamma \pf f}$ for some $\xx^{\Gamma \pf f} \in \mingen(\cani{\Xcapf{\clsf C}})$.
\begin{defn}
Define the {\bf hardness of approximating $f$ with $\clsf C$} as $\Hapx(f, \clsf C) = \min\{2^d - |\dom f \cap h|: h \in \clsf C\}$
\end{defn}
Then $\aleph(f, \clsf C)$ is the smallest total degree of any monomial appearing in $\eulerchar_{\Xcapf{\clsf C}}$ minus $2^d$.

Therefore,
\begin{align*}
\Hapx(f, \clsf C) &= \lim_{\zeta \to 0+} \f{\log \chi_{\Xcapf{\clsf C}}(\zeta, \ldots, \zeta)}{\log \zeta} - 2^d\\
&=\lim_{\vartheta \to 1-} \f{\log \Kpoly(S/I_{\Xcapf{\clsf C}}; \vartheta, \ldots, \vartheta)}{\log 1 - \vartheta} -2^d\\
&=\lim_{\vartheta \to 1-} \f{\log \Hilbs(S/I_{\Xcapf{\clsf C}}; \vartheta, \ldots, \vartheta)}{\log 1 - \vartheta} + 2^{d}\\
\end{align*}
where the last equality follows from
$$\Hilbs(S/I; t, \ldots t) = \Kpoly(S/I; t, \ldots, t)/(1 - t)^{2^{d+1}}.$$
The $\N$-graded Hilbert series expands into a Laurent polynomial in $(1 - t)$,
$$\Hilbs(S/I; t, \ldots, t) = \f{a_{-r-1}}{(1 - t)^{r+1}} + \cdots + \f{a_{-1}}{(1 - t)} + a_0 + \cdots a_s t^s$$
such that the $\N$-graded Hilbert polynomial $\Hilbp(S/I; t, \ldots, t)$ has degree $r$.
Thus
\begin{align*}
\Hapx(f, \clsf C) &= 2^{d} - (r+1)\\
&= 2^{d} - \deg \Hilbp(S/I_{\Xcapf{\clsf C}}; t, \ldots, t) - 1\\
&= 2^{d} - \totdeg \Hilbp(S/I_{\Xcapf{\clsf C}}) - 1
\end{align*}
Note that total number of monomials in degree $k$ is $\binom{k + 2^{d+1} - 1}{2^{d+1}-1} = \Theta(k^{2^{d+1}-1})$.
Therefore, if we define $\wp(k; f, \clsf C)$ to be the probability that a random monomial $\bomega$ of degree $k$ has $\supp \bomega \sbe f \cap h$ for some $h \in \clsf C$, then $\wp(k; f, \clsf C) = \Theta\left(\f{\Hilbp(S/I_{\Xcapf{\clsf C}}; k)}{k^{2^{d+1}-1}}\right)$, and
$$\Hapx(f, \clsf C) = - \lim_{k \to \infty} \f{\log \wp(k; f, \clsf C)}{\log k} - 2^d.$$


\newcommand{\mProb}{\mathcal{Q}}
Now, $\wp(k)$ is really the probability that a PF $\pf f$ has extension in $\clsf C$ \textit{and} is extended by $f$, where $\pf f$ is chosen from the distribution $\mProb_k$ that assigns a probability to $\pf f$ proportional to the number of monomials of total degree $k$ whose support is $\pf f$.
More precisely,
\begin{align*}
\mProb_k(\pf f) = \f{\binom{k - 1}{|\dom \pf f| - 1}}
					{\binom{k + 2^{d+1} - 1}{2^{d+1}-1}}
\end{align*}
Note that there is a nonzero probability of choosing an invalid partial function, i.e. a monomial that is divisible by $x_{u, 0}x_{u, 1}$ for some $u \in [2^d]$.
Under this distribution, a particular PF of size $d+1$ is $\Theta(k)$ times as likely as any particular PF of size $d$.
As $k \to \infty$, $\mProb_k$ concentrates more and more probability on the PFs of large size. 

By a similar line of reasoning using $\clsf C$ instead of $\Xcapf{\clsf C}$, we see that $\deg \Hilbp(S/I_{\clsf C}) + 1 = 2^d$, so we define $\Hapx(\clsf C) = 0$.
Therefore the probability that a PF $\pf f$ has extension in $\clsf C$ when $\pf f$ is drawn from $\mProb_k$ is
$$\wp(k; \clsf C) \sim k^{-2^d}.$$
We deduce that 
\begin{thm}
The probability that a PF $\pf f$ drawn from $\mProb_k$ is extended by $f$ when it is known to have extension in $\clsf C$ is $\Theta(k^{-\Hapx(f, \clsf C)})$.
\end{thm}
Note that we are assuming $f$ and $\clsf C$ are fixed, and in particular when we are interested in a parametrized family $(f_d, \clsf C_d)$, there might be dependence on $d$ that is not written here.
The main point we want to make, however, is that the Betti numbers of $\clsf C$ and $\Xcapf{\clsf C}$ affect the behavior of these classes under certain kinds of probability distributions.
By considering higher Betti numbers and their dependence on the parameter $d$, we may compute the dependence of $\wp$ on $d$ as well.
Conversely, carrying over results from subjects like statistical learning theory could yield bounds on Betti numbers this way.

\section{Discussion}

We have presented a new technique for complexity separation based on algebraic topology and Stanley-Reisner theory, which was used to give another proof of Minsky and Papert's lower bound on the degree of polynomial threshold function required to compute parity.
We also explored the connection between the algebraic/topological quantity $\hdim \clsf C$ and learning theoretical quantity $\vcdim \clsf C$, and surprisingly found that the former dominates the latter, with equality in common computational classes.
The theory created in this paper seems to have consequences even in areas outside of computation, as illustrated the Homological Farkas Lemma.
Finally, we exhibited a probabilistic interpretation of the Hilbert function that could provide a seed for future developments in hardness of approximation.

\subsection{Geometric Complexity Theory}
For readers familiar with Mulmuley's Geometric Complexity program \cite{mulmuley_geometric_2001}, a natural question is perhaps in what ways is our theory different?
There is a superficial similarity in that both works associate mathematical objects to complexity classes and focus on finding obstructions to equality of complexity classes.
In the case of geometric complexity, each class is associated to a variety, and the obstructions sought are of representation-theoretic nature.
In our case, each class is associated to a labeled simplicial complex, and the obstructions sought are of homological nature.
But beyond this similarity, the inner workings of the two techniques are quite distinct.
Whereas geometric complexity focuses on using algebraic geometry and representation theory to shed light on primarily the determinant vs permanent question, our approach uses combinatorial algebraic topology and has a framework general enough to reason about any class of functions, not just determinant and permanent.
This generality allowed, for example, the unexpected connection to VC dimension.
It remains to be seen, however, whether these two algebraic approaches are related to each other in some way.

\subsection{Natural Proofs}

So this homological theory is quite different from geometric complexity theory.
Can it still reveal new insights on the $\P = \NP$ problem?
Based on the methods presented in this paper, one might try to show $\P/\poly \not= \NP$ by showing that the ideal $\cani{\SIZE(d^c) \cartcap \{3\SAT_d\}}$ is not principal, for any $c$ and large enough $d$.
Could Natural Proofs \cite{razborov_natural_1997} present an obstruction?

A predicate $\mathcal P: [2^d \to 2] \to [2]$ is called {\bf natural} if it satisfies
\begin{itemize}
    \item (Constructiveness) It is polynomial time in its input size: there is an $2^{O(d)}$-time algorithm that on input the graph of a function $f \in [2^d \to 2]$, outputs $\mathcal P(f)$.
    \item (Largeness) A random function $f \in [2^d \to 2]$ satisfies $\mathcal P(f) = 1$ with probability at least $\f 1 n$.
\end{itemize}

Razborov and Rudich's celebrated result says that
\begin{thm}\cite{razborov_natural_1997}
Suppose there is no subexponentially strong one-way functions.
Then there exists a constant $c$ such that no natural predicate $\mathcal P$  maps $\SIZE(d^c) \sbe [2^d \to 2]$ to 0.
\end{thm}
This result implicates that common proof methods used for proving complexity separation of lower complexity classes, like Hastad's switching lemma used in the proof of $\parity \not \in \AC^0$ \cite{arora_computational_2009}, cannot be used toward $\P$ vs $\NP$. 

In our case, since $\SIZE(d^c)$ has $2^{\poly(d)}$ functions, naively computing the ideal $\cani{\SIZE(d^c) \cartcap \{3\SAT_d\}}$ is already superpolynomial time in $2^d$, which violates the ``constructiveness'' of natural proofs.
Even if the ideal $\cani{\SIZE(d^c)^{\cartcap 3\SAT_d}}$ is given to us for free, computing the syzygies of a general ideal is $\NP$-hard in the number of generators $\Omega(2^d)$ \cite{bayer_computation_1992}.
Thus a priori this homological technique is not natural (barring the possibility that in the future, advances in the structure of $\Ss_{\SIZE(d^c)}$ yield $\poly(2^d)$-time algorithms for the resolution of $\cani{\SIZE(d^c) \cartcap \{3\SAT_d\}}$).


\subsection{Homotopy Type Theory}
A recent breakthrough in the connection between algebraic topology and computer science is the emergence of Homotopy Type Theory (HoTT) \cite{the_univalent_foundations_program_homotopy_2013}.
This theory concerns itself with rebuilding the foundation of mathematics via a homotopic interpretation of type theoretic semantics.
Some of the key observations were that dependent types in type theory correspond to fibrations in homotopy theory, and equality types correspond to homotopies.
One major contribution of this subfield is the construction of a new (programming) language which ``simplifies'' the semantics of equality type, by proving, internally in this language, that isomorphism of types ``is equivalent'' to equality of types.
It also promises to bring automated proof assistants into more mainstream mathematical use.
As such, HoTT ties algebraic topology to the B side (logic and semantics) of theoretical computer science.

Of course, this is quite different from what is presented in this paper, which applies algebraic topology to complexity and learning theory (the A side of TCS).
However, early phases of our homological theory were inspired by the ``fibration'' philosophy of HoTT.
In fact, the canonical suboplex was first constructed as a sort of ``fibration'' (which turned out to be a cosheaf, and not a fibration) as explained in \cref{sec:cosheaf}.
It remains to be seen if other aspects of HoTT could be illuminating in future research.
\section{Future Work}

\label{sec:future_work}

In this work, we have initiated the investigation of function classes through the point of view of homological and combinatorial commutative algebra.
We have built a basic picture of this mathematical world but left many questions unanswered.
Here we discuss some of the more important ones.

\paragraph{Characterize when $\vcdim = \hdim$, or just approximately.}
We saw that all of the interesting computational classes discussed in this work, for example, $\linthr$ and $\linfun$, have homological dimensions equal to their VC dimensions.
We also showed that Cohen-Macaulay classes also satisfy this property.
On the other hand, there are classes like $\cDelta$ whose homological dimensions are very far apart from their VC dimensions.
A useful criterion for when this equality can occur, or when $\hdim = O(\vcdim)$, will contribute to a better picture when the homological properties of a class reflect its statistical/computational properties.
Note that adding the all 0 function to $\cDelta$ drops its homological dimension back to its VC dimension.
So perhaps there is a notion of ``completion'' that involves adding a small number of functions to a class to round out the erratic homological behaviors?

\paragraph{Characterize the Betti numbers of $\Xcapf{\thr L}$.}
We showed that the Betti numbers of $\Xcapf{\thr L}$ has nontrivial structure, and that some Betti numbers correspond to known concepts like weak representation of $f$.
However, we only discovered a corner of this structure.
In particular, what do the ``middle dimension'' Betti numbers look like?
We make the following conjecture.
\begin{conject}
Let $\clsf C = \polythr^k_d$ and $f \not \in \clsf C$.
For every PF $\pf f$ in $\Xcapf{\clsf C}$, there is some $i$ for which $\betti_{i, f \setminus \pf f}(\clsf C)$ is nonzero.
\end{conject}
It can be shown that this is not true for general $\thr L$ classes, but computational experiments suggest this seems to be true for polynomial thresholds.

\paragraph{How do Betti numbers of $\Xcapf{\thr L}$ change under perturbation of $f$?}
We proved a stability theorem for the ``codimension 1'' Betti numbers.
In general, is there a pattern to how the Betti numbers respond to perturbation, other than remaining stable?

\paragraph{Does every boolean function class have a minimal cellular or cocellular resolution?}
It is shown in \cite{velasco_minimal_2008} that there exist ideals whose minimal resolutions are not (CW) cellular.
A natural question to ask here is whether this negative results still holds when we restrict to canonical ideals of boolean, or more generally finite, function classes.
If so, we may be able to apply techniques from algebraic topology more broadly.

\paragraph{When does a class $\clsf C$ have pure Betti numbers?}
If we can guarantee that restriction preserves purity of Betti numbers, then \cref{thm:restriction_betti_exact_seq} can be used directly to determine the Betti numbers of restriction of classes.
Is this guarantee always valid?
How do we obtain classes with pure Betti numbers?

\paragraph{Under what circumstances can we expect separation of classes using high dimensional Betti numbers?}
Betti numbers at dimension 0 just encode the members of a class, and Betti numbers at dimension 1 encode the ``closeness'' relations on pairs of functions from the class.
On the other hand, the maximal dimension Betti number of $\Xcapf{\thr L}$ encodes information about weak representation of $f$.
So it seems that low dimension Betti numbers reflect more raw data while higher dimension Betti numbers reflect more ``processed'' data about the class, which are probably more likely to yield insights different from conventional means.
Therefore, the power of our method in this view seems to depend on the dimension at which differences in Betti number emerges (as we go from high dimension to low dimension).


\paragraph{Extend the probabilistic interpretation of Hilbert function.}
One may be able to manipulate the distribution $\mathcal Q_k$ in \cref{sec:hilbert_function} to arbitrary shapes when restricted to total functions, by modifying the canonical ideal.
This may yield concrete connections between probabilistic computation and commutative algebra.

\paragraph{Prove new complexity separation results using this framework}
We have given some examples of applying the homological perspective to prove some simple, old separation results, but hope to find proofs for nontrivial separations in the future.

\newpage

\appendix
\renewcommand\thefigure{\thesection.\arabic{figure}}
\setcounter{table}{0}
\renewcommand*{\thetable}{\thesection.\arabic{table}}

\setcounter{figure}{0}
\appendixpage
\section{Omitted Proofs}
\label{sec:omitted_proofs}
\begin{proof}[Proof of \cref{lemma:subcollection_realization}]
The set of open cells in $\overline{\mathcal U} \setminus \pbd \mathcal U$ is obviously $\mathcal U$.
So we need to show that $\overline {\mathcal U}$ and $\pbd \mathcal U$ are both subcomplex of $Y$.
The first is trivial by \cref{lemma:closed_iff_subcomplex}.

Suppose $\mathcal U = Y_{\preceq \es b}$.
An open cell $\ocell F$ is in $\pbd \mathcal U$ only if its label $\es a_F \not \preceq\es b$.
But then any cell in its boundary $\pd F$ must fall inside $\pbd \mathcal U$ as well, because its exponent label majorizes $\es a_F$.
Thus the closed cell satsifies $F \in \pbd \mathcal U$.
This shows $\pbd \mathcal U$ is closed and thus a subcomplex by \cref{lemma:closed_iff_subcomplex}.

The case of $\mathcal U = Y_{\prec \es b}$ has the same proof.

For $\mathcal U = Y_{\es b}$, the only difference is the proof of $\pbd \mathcal U$ being closed.
We note that an open cell $\ocell F$ is in $\pbd \mathcal U$ iff $\ocell F \in \overline{\mathcal U}$ and its label $\es a_F \succ \es b$.
Thus any open cell $\ocell G$ in its boundary $\pd F$ falls inside $\pbd \mathcal U$ as well, because its exponent label $\es a_G \succeq\es a_F \succ \es b$.
So $ F \in \pbd \mathcal U$, and $\pbd \mathcal U$ is closed, as desired.
\end{proof}

\begin{proof}[Proof of \cref{lemma:cocell_acyc}]

Let $\resf E$ be the chain complex obtained from cochain complex $\resf F^{(X, A)}$ by placing cohomological degree $d$ at homological degree 0.
For each $\es a$, we show the degree $\xx^{\es a}$ part $\resf E^{\es a}$ of $\resf E$ has rank 0 or 1 homology at homological degree 0 and trivial homology elsewhere iff one of the three conditions are satisfied.

As a homological chain complex, $\resf E^{\es a}$ consists of free modules $\resf E^{\es a}_i$ at each homological degree $i$ isomorphic to a direct sum $\bigoplus_{F \in \Delta_{d-i}((X, A)_{\preceq \es a})} S$, where $\Delta_i(X, A)$ denotes the pure $i$-skeleton of the pair $(X, A)$ (i.e. the collection of open cells of dimension $i$ in $X \setminus A$).
Writing $S_F$ for the copy of the base ring $S$ corresponding to the cell $F$, the differential is given componentwise by
$$d: \resf E^{\es a}_i \to S_G \in \resf E^{\es a}_{i-1}, a \mapsto \sum_{\text{facets} F \subset G} \mathrm{sign}(F, G) a_F.$$

If $K$ is void, this chain is identically zero.

Otherwise if $\pbd K$ is empty, then $\resf E^{\es a}$ just reproduces the reduced simplicial cochain complex of $K$ --- reduced because the empty cell is in $K$ and thus has a corresponding copy of $S$ at the highest homological degree in $\resf E^{\es a}$.
Then $H_i(\resf E^{\es a}) = \rH^{d-i}(K)$ is nonzero only possible at $i = 0$, as desired, and at this $i$, the rank of the homology is 0 or 1 by assumption.

Finally, if $\pbd K$ contains a nonempty cell, then $\resf E^{\es a}$ recovers the relative cochain complex for $(\overline K, \pbd K)$.
Then $H_i(\resf E^{\es a}) = \tilde H^{d-i}(\overline K, \pbd K)$ is nonzero only possible at $i = 0$, where the rank of the homology is again 0 or 1.

This proves the reverse direction ($\Leftarrow$).

For the forward direction ($\Rightarrow$), suppose $\pbd K$ only contains an empty cell (i.e. does not satisfy conditions 1 and 2).
Then $\resf E^{\es a}$ is the nonreduced cohomology chain complex of $K$, and therefore it must be the case that $H^i(K) = H_{d-i}(\resf E^{\es a}) = 0$ at all $i \not= d$.
But $H^0(K) = 0$ implies $K$ is empty, yielding condition 3.

Otherwise, if $\pbd K$ is void, this implies condition 2 by the reasoning in the proof of the backward direction.
Similarly, if $\pbd K$ is nonempty, this implies condition 1. 

\end{proof}

\begin{proof}[Proof of \cref{lemma:betti_from_cocell}]
Let $\resf E$ be the chain complex obtained from cochain complex $\resf F^Y$ by placing cohomological degree $d$ at homological degree 0.
Then $\betti_{i, \es b}(I) = \dim_{\kk} H_i(\resf E\otimes \kk)_{\es b} = \dim_\kk H^{d-i}(\resf F^Y \otimes \kk)_{\es b}$.
But the degree $\es b$ part of $\resf F^Y \otimes \kk$ is exactly the cochain complex of the collection of open cells $Y_{\es b}$.
By \cref{lemma:subcollection_realization}, $Y_{\es b}$ is realized by $(\overline Y_{\es b}, \pbd Y_{\es b})$, so $H^{d-i}(\resf F^Y \otimes \kk)_{\es b} = H^{d-i}(\overline Y_{\es b}, \pbd Y_{\es b})$, which yields the desired result. 
\end{proof}

\begin{proof}[Proof of \cref{lemma:openly_convex_dichotomy}]
WLOG, we can replace $\R^q$ with the span of $L$, so we assume $L$ spans $\R^q$.
We show by induction on $q$ that if $L \not\sbe H$ for every open halfspace, then $0 \in L$.
This would imply our result:
As $L$ is open in $\R^q$, there is a ball contained in $L$ centered at the origin.
Since $L$ is a cone, this means $L = \R^q$.

Note that $L \not\sbe H$ for every open coordinate halfspace $H$ is equivalent to that $L \cap H \not = \emptyset$ for every open coordinate halfspace $H$.
Indeed, if $L \cap H' = \emptyset$, then $\R^q \setminus H'$ contains the open set $L$, and thus the interior $\intrr (\R^q \setminus H)$ is an open coordinate halfspace that contains $L$.
If $L$ intersects every open coordinate halfspace, then certainly it cannot be contained in any single $H$, or else $\intrr (\R^q \setminus H)$ does not intersect $L.$

We now begin the induction.
The base case of $q = 1$: $L$ has both a positive point and negative point, and thus contains 0 because it is convex.

Suppose the induction hypothesis holds for $q = p$, and let $q = p+1$.
Then for any halfspace $H$, $L \cap H$ and $L \cap \intrr(\R^q \setminus H)$ are both nonempty, and thus $L$ intersects the hyperplane $\pd H$ by convexity.
Certainly $L \cap \pd H$ intersects every open coordinate halfspace of $\pd H$ because the latter are intersections of open coordinate halfspaces of $\R^q$ with $\pd H$.
So by the induction hypothesis, $L \cap \pd H$ contains 0, and therefore $0 \in L$ as desired.
\end{proof}

\section{Cosheaf Construction of the Canonical Suboplex}
\label{sec:cosheaf}

Let $\clsf C \sbe [n \to m]$ and $p$ be a probability distribution on $[n].$
$p$ induces an $L_1$ metric space $\clsf C_p$ by $d(f, g) = \f 1 n \|f - g\|_1$.
If we vary $p$ over $\splx^{n-1}$, then $\clsf C_p$ traces out some kind of shape that ``lies over'' $\splx^{n-1}$.
For $\clsf C = [2 \to 2]$, this is illustrated in \cref{fig:2to2_cosheaf}.
\begin{figure} 
\centering
\includegraphics[width=.5\textwidth]{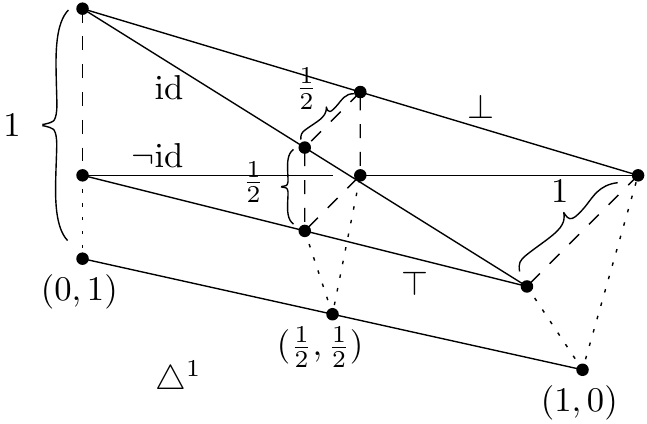}
\caption{``Gluing'' together the metric spaces $\clsf [2\to2]_p$ for all $p \in \splx^1$.
The distances shown are $L_1$ distances of functions within each ``fiber.''
$\bot$ is the identically 0 function; $\top$ is the identically 1 function; $\id$ is the identity; and $\neg \id$ is the negation function.
If we ignore the metric and ``untangle'' the upper space, we get the complete 1-dimensional suboplex.}
\label{fig:2to2_cosheaf}
\end{figure}
In this setting, Impagliazzo's Hardcore Lemma \cite{arora_computational_2009} would say something roughly like the following:
\begin{equation*}
\pbox{.8\textwidth}{Let $\clsf C \sbe [n \to 2]$ and $\overline {\clsf C}$ be the closure of $\clsf C$ under taking majority over ``small'' subsets of $\clsf C$.
For any $f \in [n \to 2]$, either in the fiber $[n \to 2]_{\mathcal U}$ over the uniform distribution $\mathcal U$, $f$ is ``close'' to $\overline{\clsf C}$, or in the fiber $[n \to 2]_q$ for some $q$ ``close'' to $\mathcal U$, $f$ is at least distance $1/2+\epsilon$ from $\clsf C$.}
\end{equation*}
Thus this view of ``fibered metric spaces'' may be natural for discussion of hardness of approximation or learning theory.

If we ignore the metrics and ``untangle'' the space, we get the canonical suboplex of $[2 \to 2]$, the complete 1-dimensional suboplex.
In general, the canonical suboplex of a class $\clsf C \sbe [n \to m]$ can be obtained by ``gluing'' together the metric spaces $\clsf C_p$ for all $p \in \splx^{n-1}$, so that there is a map $\Upsilon: \Sbpx C \to \splx^{n-1}$ whose fibers are $\clsf C_p$ (treated as a set).
But how do we formalize this ``gluing'' process?

In algebraic topology, one usually first tries to fit this picture into the framework of fibrations or the framework of sheaves.
But fibration is the wrong concept, as our ``fibers'' over the ``base space'' $\splx^{n-1}$ are not necessarily homotopy equivalent, as seen in \cref{fig:2to2_cosheaf}.
So $\Sbpx{\clsf C}$ cannot be the total space of a fibration over base space $\splx^{n-1}$. 
Nor is it the \'etal\'e space of a sheaf, as one can attest to after some contemplation.

It turns out the theory of {\it co}sheaves provide the right setting for this construction.

\subsection{Cosheaves and Display Space}
\newcommand{\shF}{\mathcal{F}}
\newcommand{\contf}{\psi}
\begin{defn}
A {\bf precosheaf} is a covariant functor $\shF: \topOpen(X) \to \catSet$ from the poset of open sets of a topological space $X$ to the category of sets.
For each inclusion $\imath: U \hookrightarrow V$, the set map $\shF\imath: \shF(U) \to \shF(V)$ is called the {\bf inclusion map} from $\shF(U)$ to $\shF(V)$.

A precosheaf $\shF$ is further called a {\bf cosheaf} if it satisfies the following cosheaf condition:
For every open covering $\{U_i\}_i$ of an open set $U \sbe X$ with $\bigcup \{U_i\}_i = U$,
$$\coprod_{k} \shF(U_k) \gets \coprod_{k\not=l}\shF(U_k \cap U_l)\to \coprod_{l} \shF(U_l)$$ 
has pushout $\shF(U)$.
Here each arrow is the coproduct of inclusion maps.
\end{defn}

There is a concept of costalk dual to the concept of stalks in sheaves.
\begin{defn}
Let $\shF: \topOpen(X) \to \catSet$ be a cosheaf and let $p \in X$.
Then the costalk $\shF_p$ is defined as the cofiltered limit
$$\shF_p := \lim_{U \in p} \shF(U),$$
of $\shF(U)$ over all open $U$ containing $p$.
\end{defn}

Analogous to the \'etal\'e space of a sheaf, cosheaves have something called a {\bf display space} \cite{funk_display_1995} that compresses all of its information in a topological space.
We first discuss the natural cosheaf associated to a continuous map.

Let $\contf: Y \to X$ be a continuous map between locally path-connected spaces $Y$ and $X$.
We have a cosheaf $\shF^\psi: \topOpen(X) \to \catSet$ induced as follows:
For each $U \in \topOpen(X)$, $\shF^\contf(U) = \pi_0(\inv \contf U)$, where $\pi_0$ denotes the set of connected components.
For an inclusion $\imath: U \hookrightarrow V$, $\shF^\contf(\imath)$ maps each component in $Y$ of $\inv \contf U$ into the component of $\inv \contf V$ that it belongs to.
For open cover $\{U_i\}_i$ with union $U$, 
$$\coprod_{k} \shF^\contf(U_k) \gets \coprod_{k\not=l}\shF^\contf(U_k \cap U_l)\to \coprod_{l} \shF^\contf(U_l)$$
has pushout $\shF^\contf(U)$.
Indeed, this is just the standard gluing construction of pushouts in $\catSet$ for each component of $\inv \contf U$.
(An alternative view of $\shF^\contf$ is that it is the direct image cosheaf of $\shF^\id$, where $\id: Y \to Y$ is the identity).

Now we reverse the construction.
Let $X$ be a topological space, and $\shF: \topOpen(X) \to \catSet$ be a cosheaf.
We construct the display space $Y$ and a map $\contf: Y \to X$ such that $\shF \cong \shF^\contf$.
For the points of $Y$, we will take the disjoint union of all costalks,
$$|Y| := \bigsqcup_{p \in X} \shF_p.$$
Then the set-map $|\contf|$ underlying the desired $\contf$ will be
$$\shF_p \ni y \mapsto p \in X.$$

Now we topologize $Y$ by exhibiting a basis.
For any $U \in \topOpen(X)$, there is a canonical map
$$g_U := \bigsqcup_{p \in U} m_{p, U}: \bigsqcup_{p \in U} \shF_p \to \shF(U)$$
formed by the coproduct of the limit maps $m_{p, U}: \shF_p \to \shF(U)$.
Then each fiber of $g_U$ is taken as an open set in $Y$:
For each $s \in \shF(U)$, we define
$$[s, U] := g_U^{-1}(s)$$
as an open set.
Note that $[s, U] \cap [t, U] = \emptyset$ if $s, t \in \shF(U)$ but $s \not= t$.
We claim that for $s \in \shF(U), t \in \shF(V)$,
\begin{align}
[s, U] \cap [t, V] = \bigsqcup \{[r, U \cap V]: \shF(i_U)(r) = s \And \shF(i_V)(r)=t\}
\label{open_intersect}
\end{align}
where $i_U: U \cap V \to U$ and $i_V: U \cap V \to V$ are the inclusions.
The inclusion of the RHS into the LHS should be clear.
For the opposite direction, suppose $p \in U \cap V$ and $y \in \shF_p$ with $g_U(y) = m_{p, U}(y) = s$ and $g_V(y) = m_{p, V}(y) = t$.
Since $\shF_p$ is the cofiltered limit of $\{\shF(W): p \in W\}$, we have the following commutative diagram
$$\begin{tikzcd}
        &	\shF_p \arrow[d, "m_{p, U \cap V}"] \arrow[ddl, bend right, "m_{p, U}" above left] \arrow[ddr, bend left, "m_{p, V}"]\\
        &	\shF(U \cap V) \arrow[dl, "\shF(j)"] \arrow[dr, "\shF(k)" below left]&	\\
\shF(U)	&			& \shF(V)
\end{tikzcd}$$
Therefore there is an $r \in \shF(U \cap V)$ such that $m_{p, U \cap V}(y) = r$ and $\shF(j)(r) = s$ and $\shF(k)(r) = t$.
Then $y \in [r, U \cap V] \sbe \text{RHS of \ref{open_intersect}}$.
Our claim is proved, and $\{[s, U]: s \in \shF(U)\}$ generates a topological basis for $Y$.

Finally, to complete the verification that $\shF \cong \shF^\contf$, we show that $\shF(U) \cong \pi_0(\inv \contf U)$, natural over all $U \in \topOpen(X)$.
It suffices to prove that for each $U \in \topOpen(X)$ and $s \in \shF(U)$, $[s, U]$ is connected; then $\shF(U) \ni s \mapsto [s, U]$ is a natural isomorphism.

Suppose for some $s \in \shF(U)$ this is not true: there exists a nontrivial partition $\bigcup_{i \in A}\{[s_i, U_i]\}\ \sqcup\ \bigcup_{j \in B}\{[s_j, U_j]\} = [s, U]$ of $[s, U]$ by open sets $\bigcup_{i \in A}\{[s_i, U_i]\}$ and $\bigcup_{j \in B}\{[s_j, U_j]\}$.
We assume WLOG that $\bigcup_{i \in A} U_i \cup \bigcup_{j \in B} U_j = U$ (in case that for some $x \in U$, $\shF_p = \emptyset$, we extend each $U_i$ and $U_j$ to cover $x$).
Then by the cosheaf condition, the pushout of the following
$$\coprod_{k \in A \cup B} \shF(U_k) \gets \coprod_{k\not=l}\shF(U_k \cap U_l)\to \coprod_{l \in A \cup B} \shF(U_l)$$
is $\shF(U)$.
By assumption, $\shF(U_i \into U)(s_i) = s$ for all $i \in A \cup B$.
So there must be some $i \in A, j \in B$ and $t \in \shF(U_i \cap U_j)$ such that $\shF(U_i \cap U_j \into U_i)(t) = s_i$ and $\shF(U_i \cap U_j \into U_j)(t) = s_j$.
This implies that $[t, U_i \cap U_j] \sbe [s_i, U_i] \cap [s_j, U_j]$.
If $X$ is first countable and locally compact Hausdorff, or if $X$ is metrizable, then by \cref{lemma_nonempty_open}, $[t, U_i \cap U_j]$ is nonempty, and therefore $\bigcup_{i \in A}\{[s_i, U_i]\}\ \cap\ \bigcup_{j \in B}\{[s_j, U_j]\} \not=\emptyset$, a contradiction, as desired.
\begin{lemma} \label{lemma_nonempty_open}
If $X$ is first countable and locally compact Hausdorff, or if $X$ is metrizable, then $[s, U]$ is nonempty for every $U \in \topOpen(X)$ and $s \in \shF(U)$.
\end{lemma}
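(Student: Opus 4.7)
The plan is to construct, for any $U \in \topOpen(X)$ and $s \in \shF(U)$, a point $p \in U$ together with an element $y \in \shF_p$ satisfying $m_{p, U}(y) = s$; by definition this gives a point of $[s, U]$. The one crucial algebraic tool is the cosheaf condition itself: for any open cover $\{V_i\}$ of an open set $V \subseteq U$, the canonical map $\coprod_i \shF(V_i) \to \shF(V)$ is surjective (being a map to a pushout in $\catSet$), so \emph{every} section on $V$ lifts through some member of the cover.

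I would iterate this lifting to build a descending sequence of opens $U = U_0 \supseteq U_1 \supseteq U_2 \supseteq \cdots$ together with compatible elements $s_n \in \shF(U_n)$ such that $s_0 = s$ and $\shF(U_n \hookrightarrow U_{n-1})(s_n) = s_{n-1}$. At each step the cosheaf condition allows us to choose $U_n$ to be any member of a cover of $U_{n-1}$ we like, so we have freedom to impose additional topological constraints that the hypothesis of first countable LCH or metrizability provides. In the first countable LCH case, I would arrange at step $n$ that $\overline{U_n}$ is compact and contained in $U_{n-1}$; nested compactness then makes $\bigcap_n \overline{U_n}$ nonempty, and any point $p$ in it satisfies $p \in \bigcap_n U_n$. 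In the metrizable case, I would refine with covers by metric balls of radius $1/2^n$, so the chosen centers form a Cauchy sequence whose limit $p$ (when it exists) lies in all $U_n$.

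Once $p$ and $(s_n)$ are in hand, assembling $y$ is routine: every open $W \ni p$ contains some $U_n$, so setting $y_W := \shF(U_n \hookrightarrow W)(s_n)$ is well-defined (by the compatibility of the $s_n$), coherent across $W$, and therefore gives an element of the limit $\shF_p$ with $m_{p, U}(y) = s_0 = s$.

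The main obstacle I foresee is reconciling two things that pull in different directions: the adaptive cosheaf lifting forces $U_n$ to lie in a particular cover element (which we cannot choose freely once $s_{n-1}$ is fixed), while for $y$ to land in $\shF_p$ we need $\{U_n\}$ to be a neighborhood basis at $p$, not just a nested sequence containing $p$. The fix should be a diagonal/interleaving argument: at step $n$, simultaneously refine the cover so that every element already sits inside the $n$th member $B_n$ of a prescribed neighborhood-basis scheme (supplied by first countability in case (i), by $1/2^n$-balls in case (ii)), and then invoke cosheaf surjectivity against the refined cover. This forces $U_n \subseteq B_n$, which together with compactness (case (i)) or the Cauchy argument (case (ii)) yields both a limit point $p$ and the neighborhood-basis property. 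Verifying that the metrizable case truly goes through without a supplementary local-completeness hypothesis is the one point I would want to double-check before writing a final proof.
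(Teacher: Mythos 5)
Your proposal matches the paper's proof strategy: iterate cosheaf surjectivity against finer and finer covers to build a nested chain $U_0 \supseteq U_1 \supseteq \cdots$ of opens carrying compatible sections $s_n$, produce a limit point $p \in \bigcap_n U_n$, and argue that $\{U_n\}$ is a neighborhood basis at $p$ so that $(s_n)$ assembles to an element of $\shF_p$ lying over $s$. You also put your finger on precisely the delicate step the paper must negotiate: the cosheaf lift dictates which cover element $U_n$ you may shrink into, yet you need the resulting chain to be a local basis at a point $p$ that you only discover after the whole construction is finished.

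Your proposed ``diagonal/interleaving'' fix does not, as stated, close that gap in the first countable LCH case, and it is worth noting that the paper's own proof has the same weak spot. Forcing $U_n = B_{x_n}^{k_n}$ for a large index $k_n$ controls smallness relative to the center $x_n$, not relative to the a posteriori limit point $p$; absent a metric there is no triangle inequality to transfer one to the other. The paper simply asserts $U_i \subseteq B_z^i$ for the limit point $z$ — this is true when one takes $B_x^n = B(x, 2^{-n})$ in a metric space, by the triangle inequality, but it is unsupported for a general first countable LCH space, where the local bases at distinct nearby points need bear no relation to one another. Your secondary worry is also correct: for plain metrizable $X$ without local compactness the nested-compactness argument for $\bigcap_n U_n \neq \emptyset$ is unavailable, and without completeness the Cauchy-sequence argument is unavailable either, so the ``metrizable'' hypothesis as stated is not obviously enough. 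In the paper's only application the base space is the compact metric simplex $\splx^{n-1}$, where both local compactness and completeness hold and the argument goes through; but as a general lemma, both your sketch and the paper's proof leave this cofinality step unresolved outside the metric setting.
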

\begin{proof}
We give the proof for the case when $X$ is first countable and locally compact Hausdorff.
The case of metrizable $X$ is similar.

For each $x \in X$, fix a countable local basis $x \sbe \cdots \sbe B_x^n \sbe B_x^{n-1} \sbe \cdots \sbe B_x^2 \sbe B_x^1$, with the property that $\setcl {B_x^n} \sbe B_x^{n-1}$ and is compact.
Fix such a $U$ and $s \in \shF(U)$.
Let $U_0 := U$ and $s_0 := s$.
We will form a sequence $\la U_i, s_i\ra$ as follows.
Given $U_{i-1}$ and $s_{i-1}$, for each point $x \in U_{i-1}$, choose a $k_x > i$ such that $\setcl{B_x^{k_x}}$ is contained in $U_{i-1}$.
These sets $\{B_x^{k_x}\}_x$ form an open covering of $U_{i-1}$, and by the sheaf condition, for some $x$, $\im \shF(B_x^{k_x} \into U_{i-1})$ contains $s_{i-1}$.
Then set $U_i := B_x^{k_x}$ and choose any element of $\shF(B_x^{k_x} \into U_{i-1})^{-1}(s_{i-1})$ to be $s_i$.
Hence by construction $s_i \in \shF(U_i)$.

Following this procedure for all $i \in \N$, we obtain a sequence $\la U_i, s_i \ra_{i \ge 0}$ with the property that $U_0 \spe \setcl{U_1} \spe U_1 \spe \setcl{U_2} \spe U_2 \cdots $.
As each of $\setcl{U_i}$ is compact, $\bigcap \setcl{U_i}$, and hence $\bigcap U_i = \bigcap \setcl{U_i}$, is nonempty.
Let $z$ be one of its elements.
Then $U_i \sbe B_z^i$ for all $i \ge 1$.
Therefore $z$ must be the unique element of $\bigcap U_i$, and the sequence $\la U_i \ra_i$ is a local basis of $z$.
Furthermore, $\la s_i\ra_i$ is an element of the costalk at $z$, as it can easily be seen to be an element of the inverse limit $\lim_{i \to \infty} \shF(U_i) = \lim \{\shF(V): z \in V\}$.
This shows that $[s, U]$ is nonempty.
\end{proof}

Note that without assumptions on $X$, \cref{lemma_nonempty_open} cannot hold.
In fact, something quite extreme can happen.
\begin{prop}
There exists a cosheaf $\shF: \topOpen(X) \to \catSet$ whose costalks are all empty.
\end{prop}
\begin{proof}
This proof is based on Waterhouse's construction \cite{waterhouse_empty_1972}.
Let $X$ be an uncountable set with the cofinite topology.
Define $\shF(U)$ to be the set of injective functions from the finite set $X\setminus U$ to the integers.
The map $\shF(U \into V)$ just restricts a function $g: X \setminus U \to \Z$ to $g\restrict (X \setminus V): X \setminus V \to \Z$.
One can easily check that the cosheaf sequence is a pushout.
Thus $\shF$ is a cosheaf.

For any $x \in X$, each point of the inverse limit of $\{\shF(U): x \in U\}$ has the following description: a sequence of injective functions $\la f_A: A \into \Z\ra_A$ indexed by finite sets $A \sbe X$, such that if $A \sbe B$ are both finite sets, then $f_A \sbe f_B$.
Such a sequence would determine an injective function $\bigcup_A f_A: X \to \Z$, but that is impossible as $X$ was assumed to be uncountable.
\end{proof}

Back to our case of canonical suboplex.
For any $\Ss = \Ss_{[n \to m]}$, there is a canonical embedding $\Xi: \Ss \into \splx^{mn-1} \sbe \R^{mn}$, defined by taking vertex $V_{u, i}, (u, i) \in [n] \times [m]$ to $e_{u, i}$, the basis vector of $\R^{mn}$ corresponding to $(u, i)$, and taking each convex combination $\sum_{u=0}^{n-1} p(u)V_{u, f(i)}$ in the simplex associated to $f: [n] \to [m]$ to $\sum_{u=0}^{n-1} p(u)e_{u, f(i)}$.
The map $\Upsilon: \Sbpx C \to \splx^{n-1}$ we sketched in the beginning of this section can then be formally described as $\Upsilon = \Pi \circ \Xi \restrict \Sbpx C$, where $\Pi$ is the linear projection defined by $e_{u, i} \mapsto e_u \in \splx^{n-1}$.
As we have shown, $\Upsilon$ induces a cosheaf $\shF^\Upsilon: \topOpen(\splx^{n-1}) \to \catSet$, sending each open $U \sbe \splx^{n-1}$ to $\pi_0(\inv \Upsilon U)$.
For example, if $U$ is in the interior of $\splx^{n-1}$, then $\shF^\Upsilon(U)$ has size equal to the size of $\clsf C$.
If $U$ is a small ball around the vertex $e_u$, then $\shF^\Upsilon(U)$ is bijective with the set of values $\clsf C$ takes on $u \in [n]$.
It is easy to check that the costalk $\shF^\Upsilon_p$ at each point $p \in \splx^{n-1}$ is just $\pi_0(\inv \Upsilon p) = |\clsf C_p|$, the set underlying the metric space $\clsf C_p$, so we have successfully ``glued'' together the pieces into a topological space encoding the separation information in $\clsf C$.

One may naturally wonder whether the cosheaf homology of such a cosheaf matches the homology of the display space.
One can show that this is indeed the case for our canonical suboplex, via identification of the cosheaf homology with Cech homology and an application of the acyclic cover lemma.

What is disappointing about this construction is of course that it ignores metric information in all of the costalks $\clsf C_p$.
Directly replacing $\catSet$ with the category $\catf{Met}$ of metric spaces with metric maps (maps that do not increase distance) does not work, because it does not have coproducts.
It remains an open problem whether one can find a suitable category to replace $\catSet$ such that $\Sbpx C$ can still be expressed as the display space of a cosheaf on $\splx^{n-1}$, while preserving metric information in each costalk, and perhaps more importantly, allows the expression of results like Impagliazzo's Hardcore Lemma in a natural categorical setting.
Perhaps a good starting point is to notice that the embedding $\Xi$ actually preserves the $L_1$ metric within each fiber $\clsf C_p$. 


\bibliographystyle{plain}
\bibliography{sheafbool}
\end{document}